\numberwithin{equation}{section}
\newtheorem{theorem}{Theorem}[section]
\newtheorem{corollary}{Corollary}[section]
\newtheorem{lemma}[theorem]{Lemma}
\newtheorem{proposition}{Proposition}[section]
\theoremstyle{definition}
\newtheorem{remark}{Remark}[section]
\newtheorem{definition}{Definition}[section]
\DeclareMathOperator\supp{supp}
\title{Large deviation bounds for the Airy point process}
\author{Chenyang Zhong}
\affil{Department of Statistics, Columbia University}
\date{}
\begin{document}
\maketitle
\begin{abstract}
In this paper, we establish the first large deviation bounds for the Airy point process, thus providing a partial answer to a conjecture raised in \cite{Cor} and \cite{CGKLT}. The proof is based on a novel approach which relies upon the approximation of the Airy point process using the Gaussian unitary ensemble (GUE) up to an exponentially small probability, together with precise estimates for the stochastic Airy operator and edge rigidity for beta ensembles. 

As a by-product of our estimates for the Airy point process, we significantly improve upon previous results (\cite{Cor,Kim}) on the lower tail probability of the one-point distribution of the KPZ equation with narrow-wedge
initial data and the half-space KPZ equation with Neumann boundary
parameter $A=-1\slash 2$ and narrow-wedge initial data in a unified and much shorter manner. Our bounds
hold for all sufficiently large time $T$, and for the first time establish sharp super-exponential decay with exponent $3$ for tail depth less than $T^{2\slash 3}$ (with
sharp leading prefactors $1\slash 12$ and $1\slash 24$ for tail depth less than $T^{
1\slash 6}$).
\end{abstract}

\section{Introduction}\label{Sect.1}

The Airy point process, introduced by Tracy and Widom in \cite{TW}, is a simple determinantal point process on $\mathbb{R}$  which arises as the limit of rescaled eigenvalue configuration of the Gaussian unitary ensemble (GUE) near its spectral edge. The correlation kernel of the Airy point process is given by
\begin{equation*}
    K^{\text{Ai}}(x,y)=\frac{\text{Ai}(x)\text{Ai}'(y)-\text{Ai}(y)\text{Ai}'(x)}{x-y},
\end{equation*}
where $\text{Ai}(x)$ is the Airy function. The Airy point process plays a central role in random matrix theory and integrable probability. For example, it is the universal limit of rescaled eigenvalues near the soft edge of a wide class of random matrix ensembles \cite{BEY, DG, For, Sos2}, and describes the largest parts of random partitions picked from the Plancherel measure \cite{BOO} as well as the liquid-solid transition in random tiling models \cite{Joh}. The Airy point process is also part of the Airy line ensemble \cite{CH, PS}, a universal object in Kardar-Parisi-Zhang universality. For an introduction to determinantal point processes including the Airy point process, we refer to \cite[Section 4.2]{AGZ}. We also refer the reader to, for example, \cite{Buf, CC2, Cla, Sos} for recent developments on the Airy point process. 

The Airy point process is also connected to the stochastic Airy operator $H_{\beta}$ (where $\beta>0$ is a parameter) introduced in \cite{RRV}. The spectrum of the stochastic Airy operator describes the limiting rescaled eigenvalue configuration near the spectral edge of $\beta$-ensembles (see e.g. \cite{BEY,DE,KRV,RRV})---an important family of random tridiagonal matrix ensembles that are connected to Calogero-Sutherland quantum systems and Jack polynomials. When $\beta=2$, the negative of the spectrum of $H_{\beta}$ has the same distribution as the point configuration of the Airy point process. Formal definitions of the Airy point process and the stochastic Airy operator, together with relevant background materials, will be presented in Section \ref{Sect.n} of this article.

In this article, we focus on \emph{large deviations} of the Airy point process, which describe rare events involving the configuration of random points from this process. Besides its intrinsic mathematical interest, large deviations of the Airy point process are also connected to the lower tail of the Kardar-Parisi-Zhang (KPZ) equation and positive temperature free fermions (see e.g. \cite{Cor,DLMS} for details). In particular, a conjecture asking to rigorously establish large deviations of the Airy point process was proposed in \cite[Section 2.3]{Cor} and \cite{CGKLT}. Our main result, presented in Theorem \ref{Main1} below, establishes the first rigorous large deviation bounds for the Airy point process, offering a partial answer to this conjecture. Our result also extends to the Airy$_{\beta}$ point process---the point process formed by the space reversal of the eigenvalues of the stochastic Airy operator $H_{\beta}$---for general $\beta\in\mathbb{N}_{+}$. When $\beta=1$, this process is connected to the half-space KPZ equation (see the discussion before Theorem \ref{KPZ2}); for general $\beta$, this process is also connected to interacting trapped fermions \cite{SLMS}. Additionally, through Theorem \ref{Main3} below, our method yields large deviation bounds for the collection of extreme eigenvalues of Gaussian $\beta$-ensembles near the spectral edge; such large deviations are of interest in random matrix theory (see e.g. \cite{FS, For2}) and connect large deviations for random matrices and large deviations for the KPZ equation (see \cite[Open Problems 2.9]{Guio}).

Below we provide further background on the KPZ equation and the above-mentioned connection between large deviations of the Airy point process and the KPZ equation. The KPZ equation was originally introduced in \cite{KPZ} as a model for random surface growth and is related to various physical phenomena such as directed polymers in random environments \cite{hhf} and interacting particle systems. We refer to \cite{Cor2,Qua,QS} for further background on the KPZ equation. Formally, the KPZ equation is given by
\begin{equation}
    \partial_T H(T,X)=\frac{1}{2}\partial^2_{X}H(T,X)+\frac{1}{2}(\partial_X H(T,X))^2+\xi(T,X),\quad (T,X)\in [0,\infty)\times \mathbb{R},
\end{equation}
where $\xi(T,X)$ is the space-time white noise. A physically relevant solution to the KPZ equation is given by the Cole-Hopf solution with narrow-wedge initial data, which can be described as follows. Let $Z(T,X)$ be the solution to the $(1+1)d$ stochastic heat equation (SHE) with multiplicative white noise $\xi(T,X)$:
\begin{equation}\label{stochastic_heat_eq}
    \partial_T Z(T,X)=\frac{1}{2}\partial^2_X Z(T,X)+Z(T,X)\xi(T,X),\quad (T,X)\in [0,\infty)\times\mathbb{R}.
\end{equation}
The reader is referred to \cite{Cor2,Qua,Wal} for further background on the SHE. The Cole-Hopf solution to the KPZ equation with narrow-wedge initial data is then given by
\begin{equation}\label{CHsolution}
    H(T,X):=\log{Z(T,X)}, \text{ with }Z(0,X)=\delta_{X=0}.
\end{equation}
Besides the full-space case, there has also been much mathematical interest in the half-space KPZ equation with Neumann boundary conditions \cite{BBCW, CS, Par}. Referring to \cite[Definition 7.1]{BBCW}, we denote by $H^{hf}(T,X)$ the Cole-Hopf solution to the half-space KPZ equation with Neumann boundary parameter $A=-1\slash 2$\\ and narrow-wedge initial data. Recently, it was proposed in \cite{CGKLT} that large deviations of the Airy point process can be used to derive the lower tail of the Cole-Hopf solution (\ref{CHsolution}) to the KPZ equation. In \cite{CGKLT}, based on Coulomb gas heuristics, a non-rigorous derivation of a large deviation principle (LDP) for the Airy point process (and hence the lower tail of the KPZ equation) was proposed; in \cite{Cor}, Corwin and Ghosal used estimates for the Airy point process to obtain upper and lower bounds for the lower tail probability, and raised the question of large deviations for the Airy point process; in \cite{Tsa}, for the regime where the tail depth is of order $T^{2\slash 3}$, Tsai used the stochastic Airy operator to obtain exact lower tail large deviations of the KPZ equation; in \cite{CC}, Cafasso and Claeys obtained exact lower tail large deviations of the KPZ equation using a Riemann-Hilbert approach. For the half-space KPZ equation with Neumann boundary parameter $A=-1\slash 2$ and narrow-wedge initial data, Kim \cite{Kim} obtained upper and lower bounds for the lower tail probability. However, the problem of a rigorous derivation of large deviations for the Airy point process remains open.  

Now we introduce our main results on large deviations of the Airy point process. Throughout the article, we denote the ordered points of the Airy point process by $a_1>a_2>\cdots$. We define the scaled and space-reversed Airy point process empirical measure as 
\begin{equation}\label{mains}
\mu_k:=\frac{1}{k}\sum_{i=1}^{\infty}\delta_{-k^{-2\slash 3}a_i},
\end{equation}
where $k\in\mathbb{N}_{+}$ is a scaling parameter which will be sent to infinity. We note that for any fixed $R>0$, the restriction of the measure $\mu_k$ to the interval $[-R,R]$ captures the behavior of the random points $\{a_i\}_{i=1}^{\infty}$ from the Airy point process that fall into the large interval $[-R k^{2\slash 3}, R k^{2\slash 3}]$. From \cite{Sos}, we have 
\begin{equation}\label{EQd}
    \lim_{T\rightarrow\infty} T^{-3\slash 2} \mathbb{E}[\#\{i\in\mathbb{N}_{+}:a_i\geq -T\}]=\frac{2}{3\pi}.
\end{equation}
Using (\ref{EQd}), we can deduce that as $k\rightarrow\infty$, the number of points $\{a_i\}_{i=1}^{\infty}$ that fall into $[-R k^{2\slash 3}, R k^{2\slash 3}]$ is of order $k$ with high probability. This justifies the $k^{-1}$ normalization in the definition of $\mu_k$.

Now we define $\nu_0$ to be the Borel measure on $\mathbb{R}$ with density $\pi^{-1}\sqrt{x}\mathbbm{1}_{[0,\infty)}(x)$ with respect to the Lebesgue measure. Note that from (\ref{EQd}), one can deduce that $\mu_k$ approaches $\nu_0$ in vague topology with high probability as $k\rightarrow\infty$. Thus for each $k\in\mathbb{N}_{+}$, we center $\mu_k$ by $\nu_0$ and obtain
\begin{equation}
    \nu_k := \mu_k-\nu_0.
\end{equation}
For any $R>0$, we denote by $\nu_{0;R}$ the restriction of $\nu_0$ to $[-R,R]$ (so $\nu_{0;R}$ is the Borel measure on $[-R,R]$ with density $\pi^{-1}\sqrt{x} \mathbbm{1}_{[0,R]}(x)$ with respect to the Lebesgue measure). Thus for any $R>0$, the restriction of $\nu_k$ to $[-R,R]$ is given by
\begin{equation}\label{nukr}
    \nu_{k;R}:=\frac{1}{k}\sum_{i\in\mathbb{N}_{+}:-k^{-2\slash 3}a_i\in [-R,R]}\delta_{-k^{-2\slash 3}a_i}-\nu_{0;R}. 
\end{equation}

Throughout this article, we use the following distance called the Kantorovich-Rubinshtein distance \cite{Hanin}.

\begin{definition}\label{Defn1.1}
For any $R>0$ and finite signed Borel measures $\mu,\nu$ on $[-R,R]$, we define the Kantorovich-Rubinshtein distance between $\mu$ and $\nu$ to be
\begin{equation*}
d_R(\mu,\nu):=\sup_{\substack{f:[-R,R]\rightarrow \mathbb{R},\\
    \|f\|_{BL}\leq 1}} \bigg|  \int_{[-R,R]} fd\mu-\int_{[-R,R]} fd\nu\bigg|,
\end{equation*}
where $\|f\|_{BL}:=\max\{\|f\|_{\infty},\|f\|_{Lip}\}$ is the bounded Lipschitz norm.
\end{definition}

Now we set up the topology. Throughout the rest of the article, we fix an arbitrary $R_0\geq 1$. We take the topological space $\mathcal{X}$ to be
\begin{eqnarray}\label{Def:X}
    \mathcal{X}&:=&\text{ the set of finite signed Borel measures }\mu \text{ on }[-R_0,R_0] \text{ such that }\nonumber\\ && \mu+\nu_{0;R_0} \text{ is a positive measure on }  [-R_0,R_0].
\end{eqnarray}
The set $\mathcal{X}$ is equipped with the metric $d_{R_0}$. Note that $d_{R_0}$ metrizes the weak topology on $\mathcal{X}$. We also define
\begin{eqnarray}\label{DefiZ}
    \mathcal{Z}&:=& \text{ the set of compactly supported finite signed Borel measures } \mu  \text{ on } \mathbb{R}\nonumber \\
    &&\text{ such that } \mu+\nu_0 \text{ is a positive measure }.
\end{eqnarray}

The following definition concerning the rate function will be involved in our main theorem.
\begin{definition}
For any $s\in [0,\infty],t\in (0,\infty]$ and any finite signed Borel measure $\mu$ on $\mathbb{R}$ such that $\mu+\nu_0$ is a positive measure, we define
\begin{equation*}
    I_{s,t}(\mu):=-\int_{[-s,s]^2}\log(\max\{|x-y|,t^{-3}\})d\mu(x)d\mu(y)+\frac{4}{3}\int_{(-\infty,0]}|x|^{3\slash 2}d\mu(x).
\end{equation*}
For any $\mu\in\mathcal{X}$, we further define
\begin{equation*}
    I_1(\mu):=\inf_{\substack{\tilde{\mu}\in\mathcal{Z},\tilde{\mu}(\mathbb{R})=0,\\ \tilde{\mu}|_{[-R_0,R_0]}=\mu} }\{I_{\infty,\infty}(\tilde{\mu})\}.
\end{equation*}
\end{definition}

Now we state the main result of this paper, which establishes the first rigorous large deviation bounds for the Airy point process. 

\begin{theorem}\label{Main1}
Let $I_1$ be given as in the preceding and recall (\ref{nukr}). We have the following:
\begin{enumerate}
 \item[(a)](LDP lower bound) For any open set $O\subseteq \mathcal{X}$,
 \begin{equation}\label{LDP_LBD}
 \liminf_{k\rightarrow\infty}\frac{1}{k^2}\log{\mathbb{P}(\nu_{k;R_0}\in O)}\geq -\inf_{\mu\in O}I_1(\mu).
 \end{equation}
 \item[(b)](LDP upper bound) For any closed set $F\subseteq \mathcal{X}$,
 \begin{equation}\label{LDP_UBD}
 \limsup_{k\rightarrow\infty}\frac{1}{k^2}\log{\mathbb{P}(\nu_{k;R_0}\in F)}\leq -\inf_{\mu\in F}I_2(\mu),
 \end{equation}
 where $I_2$ is an explicit non-negative function (which will be defined in Definition \ref{I2} in Section \ref{Sect.3.4}), such that $\inf\limits_{\mu\in F}I_2(\mu)$ is strictly positive for any closed set $F$ that does not contain $0$.
 \item[(c)](Exponential tightness) Take
 $K_{\eta}:=\big\{\mu\in\mathcal{X}:|\mu|([-R_0,R_0])\leq \eta R_0^{3\slash 2}\big\}$ for any $\eta>0$. Then $K_\eta$ is a compact subset of $\mathcal{X}$, and
 \begin{equation}\label{exponential.tightness}
    \lim_{\eta\rightarrow\infty}\limsup_{k\rightarrow\infty}\frac{1}{k^2}\log{\mathbb{P}(\nu_{k;R_0}\notin K_{\eta})}=-\infty.
 \end{equation}
\end{enumerate}
\end{theorem}
\begin{remark}
Note that in Theorem \ref{Main1}, we obtain large deviation bounds for the restricted measure $\nu_{k;R_0}$. We do \emph{not} require $\nu_k$ to be compactly supported, and thus unbounded measures can be covered by the theorem. The result can be directly transferred to large deviation bounds for $\mu_k$ defined in (\ref{mains}) (see Remark \ref{R} for details).
\end{remark}
\begin{remark}
A similar result holds for the Airy$_{\beta}$ point process---the point process formed by the space reversal of the eigenvalues of the stochastic Airy operator $H_{\beta}$---with general $\beta\in\mathbb{N}_{+}$ using the same argument.
\end{remark}
\begin{remark}
Theorem \ref{Main1} allows us to recover the lower bound part of the exact lower tail of the KPZ equation with narrow-wedge initial data (\cite[Theorem 1.1 and Corollary 1.3]{Tsa}) by rigorously justifying the heuristic argument given in \cite{CGKLT}: For fixed $\zeta>0$,
\begin{equation*}
    \liminf_{T\rightarrow \infty} \frac{1}{T^2}\log{\mathbb{P}\Big(H(2T,0)+\frac{T}{12}<-\zeta T\Big)}\geq -\Phi_{-}(-\zeta),
\end{equation*}
\begin{equation*}
    \liminf_{T\rightarrow \infty} \frac{1}{T^2}\log{\mathbb{P}\Big(H^{hf}(2T,0)+\frac{T}{12} <-\zeta T\Big)}\geq -\frac{1}{2} \Phi_{-}(-\zeta),
\end{equation*}
where for any $z\leq 0$,
\begin{equation*}
    \Phi_{-}(z):=\frac{4}{15\pi^6}(1-\pi^2 z)^{5\slash 2}-\frac{4}{15\pi^6}+\frac{2}{3\pi^4}z-\frac{1}{2\pi^2}z^2.
\end{equation*}
A non-trivial upper bound (for $\limsup$) can also be obtained from Theorem \ref{Main1}. If a full LDP for $\nu_{k;R_0}$ with rate $I_1$ could be established (it suffices to strengthen part (b) with $I_2$ replaced by $I_1$), then a matching upper bound (for $\limsup$) can be obtained, and the heuristic argument in \cite{CGKLT} would be fully justified.
\end{remark}
\begin{remark}
The LDP lower bound matches the (non-rigorous) physical prediction given by \cite{CGKLT}. We conjecture that the lower bound is tight. A sharper LDP upper bound estimate for tubes around measures that decay sufficiently fast at infinity is obtained in Theorem \ref{Main2} below.
\end{remark}

For measures that decay sufficiently fast at infinity, we obtain sharper LDP upper bound estimates for tubes around them. Specifically, we introduce the following definition.

\begin{definition}
We define $\mathcal{W}$ to be the set of finite signed Borel measures $\mu$ on $\mathbb{R}$ such that $\mu+\nu_0$ is a positive measure and $\lim\limits_{R\rightarrow\infty}\log(R)|\mu|((-R,R)^c)=0$.
\end{definition}

We have the following sharper LDP upper bound estimates for tubes around measures in $\mathcal{W}$. Hereafter, for a finite signed Borel measure $\mu$ on $\mathbb{R}$ and an interval $I\subseteq \mathbb{R}$, we denote by $\mu|_I$ the restriction of $\mu$ to $I$. 

\begin{theorem}\label{Main2}
For any $\mu\in\mathcal{W}$, if $\mu(\mathbb{R})=0$, then
\begin{equation}
 \limsup_{R\rightarrow\infty}\limsup_{\delta\rightarrow 0^{+}}\limsup_{k\rightarrow\infty}\frac{1}{k^2}\log{\mathbb{P}(d_{R}(\nu_{k;R},\mu|_{[-R,R]})\leq \delta)}\leq -\limsup_{R\rightarrow \infty} I_{R,R}(\mu).
\end{equation}
For any $\mu\in\mathcal{W}$, if $\mu(\mathbb{R})\neq 0$, then for sufficiently small $\delta>0$ (depending only on $\mu$),
\begin{equation}
    \limsup_{R\rightarrow\infty}\limsup_{k\rightarrow\infty}\frac{1}{k^2}\log{\mathbb{P}(d_{R}(\nu_{k;R},\mu|_{[-R,R]})\leq \delta)}=-\infty.
\end{equation}
\end{theorem}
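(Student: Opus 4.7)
The plan is to combine the Airy--GUE coupling that drives Theorem \ref{Main1} with the Ben Arous--Guionnet LDP for the GUE empirical measure, and to exploit the decay hypothesis $\log(R)|\mu|((-R,R)^c)\to 0$ to sharpen the rate from $I_2$ up to $I_{R,R}$.

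For the case $\mu(\mathbb{R})=0$ I would proceed in three stages. First, I would invoke the coupling sketched in the abstract: it approximates the rescaled Airy point process by the edge eigenvalues of a GUE matrix of appropriate size up to a probability of the form $e^{-ck^2}$ for arbitrarily large $c$, which reduces $\mathbb{P}(d_R(\nu_k,\mu)\leq \delta)$ to an analogous tube probability for the edge-rescaled centered GUE empirical measure. Second, I would apply the Ben Arous--Guionnet LDP upper bound for the GUE empirical measure at scale $n^2$, whose rate function is the Voiculescu log-energy relative to the semicircle plus the confinement potential $x^2/2$. Third, after the edge rescaling $y=2-\lambda$ and a quadratic expansion of the GUE free energy about the semicircle equilibrium, this rate specializes on the Airy scale to $\mu\mapsto -\int_{[-R,R]^2}\log\max\{|x-y|,R^{-3}\}\,d\mu(x)d\mu(y)+\int_{-\infty}^{0}\tfrac{4}{3}|x|^{3/2}d\mu(x)=I_{R,R}(\mu)$, the truncation $R^{-3}$ appearing as the typical microscopic spacing of Airy points in the window $[-R,R]$. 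Sending $\delta\to 0$, then $R\to\infty$ and using the decay of $\mu$ at infinity then yields the stated bound.

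For the case $\mu(\mathbb{R})\neq 0$ I would argue directly from rigidity. The constant $f\equiv 1$ lies in the unit ball of the bounded-Lipschitz norm on $[-R,R]$, so $d_R(\nu_k,\mu)\leq \delta$ implies $|\nu_k([-R,R])-\mu([-R,R])|\leq \delta$. Since $\mu\in\mathcal{W}$ yields $\mu([-R,R])\to \mu(\mathbb{R})\neq 0$, choosing $\delta<|\mu(\mathbb{R})|/4$ and $R$ sufficiently large forces $|\nu_k([-R,R])|>|\mu(\mathbb{R})|/4$. Writing $\nu_k([-R,R])=\frac{1}{k}\#\{i:-k^{-2/3}a_i\in[-R,R]\}-\frac{2}{3\pi}R^{3/2}$, the Airy edge rigidity and counting large-deviation bounds developed en route to Theorem \ref{Main1} imply that this event has probability at most $\exp(-k^2 c(R))$ with $c(R)\to\infty$ as $R\to\infty$, producing the claimed $-\infty$ bound.

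\textbf{Main obstacle.} The delicate step will be the rate-function matching in the first case: demonstrating that the GUE LDP rate, after edge scaling and localization to the window $[-R,R]$, coincides with $I_{R,R}(\mu)$ up to errors that vanish in the iterated limits. The logarithmic singularity at the scale of the microscopic Airy spacing, and in particular the appearance of the natural truncation $\max\{|x-y|,R^{-3}\}$, must be tracked against the $k^2$ LDP rate. The hypothesis $\log(R)|\mu|((-R,R)^c)\to 0$ built into $\mathcal{W}$ is precisely the control on the logarithmic interaction between $\mu|_{[-R,R]}$ and its tail that one needs to pass from a per-$R$ bound involving $I_{R,R}(\mu)$ to $-\limsup_R I_{R,R}(\mu)$ on the right-hand side.
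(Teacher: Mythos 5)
Your proposal has genuine gaps in both cases, and in both cases the paper uses a substantially more delicate argument than you outline.

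\textbf{Case $\mu(\mathbb{R})=0$.} You propose to ``apply the Ben Arous--Guionnet LDP upper bound at scale $n^2$'' and then rescale to the edge. This is not a step that can be taken off-the-shelf: the Ben Arous--Guionnet LDP is a macroscopic statement about the full empirical measure at speed $n^2$, whereas the event $\{d_R(\mu_{n,k},\mu)\le\delta\}$ concerns only the $O(k)$ eigenvalues at the spectral edge after zooming in by $(n/k)^{2/3}$, with $k\le n^{1/10000}$, and the target speed is $k^2\ll n^2$. A perturbation of the semicircle confined to the edge on this scale has total-variation distance $O(k/n)$ from $\rho_{sc}$ and rate-function value $O(k^2/n^2)$; the standard LDP does not give precise asymptotics of the rate near its minimizer at this sub-macroscopic resolution, nor does it let you condition out the bulk. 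The paper instead works directly from the joint eigenvalue density, conditions on the configuration outside a window $[-R_1,R_1]$ (the events $\mathcal{L}$), controls the Coulomb field generated by the far-away particles on the window via Lemma \ref{field} (which itself needs Propositions \ref{PropAiry1}--\ref{PropAiry2}, edge rigidity from Proposition \ref{Edge}, and Theorem \ref{Main3}), and then produces an explicit near-optimal configuration $\vec{c}$ to lower-bound the conditional normalizer. The truncation $\max\{|x-y|,R^{-3}\}$ appears from splitting $J(\mu_1)$ into $K_1+K_2$ and using that $\mu_1+\mu_0$ is positive — not from a ``typical spacing'' heuristic. None of this machinery is recoverable from a quadratic expansion of the Voiculescu rate; your plan elides the central difficulty.

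\textbf{Case $\mu(\mathbb{R})\neq 0$.} Your single-scale argument does not give $-\infty$. From $d_R(\nu_k,\mu)\le\delta$ you correctly extract an anomalous count of roughly $\kappa k$ particles in $[-Rk^{2/3},Rk^{2/3}]$. But a single application of Proposition \ref{PropAiry2} at level $\lambda\approx Rk^{2/3}$ — which corresponds to index $m\approx R^{3/2}k$ — with $k'\approx\kappa k$ yields a bound of order $\exp(Cm\log m)\exp(-c\,k'^2/\log(2m/k'))$, i.e.\ asymptotically $\exp(-c\kappa^2 k^2/\log R)$ after dividing out the lower-order first factor. Dividing by $k^2$ gives a quantity that tends to $0$ as $R\to\infty$, not $-\infty$. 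The paper's proof avoids this by introducing geometrically spaced scales $L_i=K^i$ with $K=\log\log R$, noting that the deficit of $\kappa k$ particles must persist (up to a small loss) across each window $[L_{i-1}k^{2/3},L_ik^{2/3}]$, and using the strong Markov property of the Riccati diffusion to make the window events approximately independent. Multiplying the per-window bounds produces $\exp\bigl(-ck^2\sum_i\frac{1}{i\log K}\bigr)\approx\exp\bigl(-ck^2\frac{\log\log R}{\log\log\log R}\bigr)$, which does give $-\infty$ in the iterated limit. Without the multi-scale decomposition and the independence-across-windows device, the claim fails.
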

\begin{remark}\label{R}
Let $\tilde{\mathcal{X}}$ be the space of locally finite positive Borel measures on $\mathbb{R}$ endowed with the topology of vague convergence (see e.g. \cite{Kal}). Note that $\mu_k\in\tilde{\mathcal{X}}$ for all $k\in\mathbb{N}_{+}$. For any $\eta>0$, take
\begin{equation*}
\tilde{K}_{\eta}:=\{\mu\in\tilde{\mathcal{X}}:\mu([-R,R])\leq \eta R^{3\slash 2}\text{ for all }R\geq 1\}.
\end{equation*}
It can be shown that $\tilde{K}_{\eta}$ is a compact subset of $\tilde{\mathcal{X}}$. Moreover, by Proposition \ref{PropAiry1} in Section \ref{Sect.2}, it can be shown that
\begin{equation*}
\lim_{\eta\rightarrow\infty}\limsup_{k\rightarrow\infty}\frac{1}{k^2}\log{\mathbb{P}(\mu_k\notin \tilde{K}_{\eta})}=-\infty.
\end{equation*}
This establishes the exponential tightness of $\{\mu_k\}_{k=1}^{\infty}$. 

We note that Theorem \ref{Main2} provides a local LDP upper bound for measures in $\tilde{W}:=\{\mu+\nu_0:\mu\in\mathcal{W}\}$ in this setting. A local LDP upper bound for \emph{all} measures $\mu$ in $\tilde{\mathcal{X}}$ (with a rate that is strictly positive when $\mu\neq 0$) can be obtained from Theorem \ref{Main1}, part (b).
We also note that a local LDP lower bound (with a rate matching the local LDP upper bound obtained in Theorem \ref{Main2} for compactly supported measures) in this setting can be obtained from the proof of Theorem \ref{Main1}, part (a).

Therefore, if similar estimates as in Theorem \ref{Main2} can be worked out for $\mu\in\tilde{\mathcal{X}}\backslash\tilde{\mathcal{W}}$, then a full LDP for $\{\mu_k\}_{k=1}^{\infty}$ (and also $\{\nu_{k;R_0}\}_{k=1}^{\infty}$) can be established.
\end{remark}

Besides being useful in establishing large deviation bounds for the Airy point process, our estimates for the Airy point process established in Section \ref{Sect.2} (Propositions \ref{PropAiry1} and \ref{PropAiry2}) significantly improve upon previously best-known estimates on the Airy point process (see Theorems 1.4-1.5 of \cite{Cor}, which are also the technical heart of that paper) with a shorter proof. Combining our estimates for the Airy point process with the arguments in \cite{Cor} that lead to \cite[Theorem 1.1]{Cor} allows us to obtain sharp quantitative estimates for the lower tail probability of the KPZ equation. Specifically, our result (Theorem \ref{KPZ1}) significantly improves upon the main result (Theorem 1.1) of \cite{Cor}. Another advantage of our method is that the result holds for the stochastic Airy operator with general $\beta\in\mathbb{N}_{+}$, which allows us to simultaneously obtain similar estimates for the lower tail probability of the half-space KPZ equation with Neumann boundary parameter $A=-1\slash 2$ and narrow-wedge initial data. In comparison, the main estimate (Theorem 1.4) of \cite{Cor} relies on determinantal structures (in particular, Ablowitz-Segur solution of Painlev\'e II) and does not generalize to general $\beta$. Our result for this case (Theorem \ref{KPZ2}) also significantly improves upon previous results (\cite[Theorem 1.4]{Kim}).

In the following, we introduce our results on the lower tail of the KPZ equation. When time increases, the KPZ equation exhibits an overall linear decay at rate $-T\slash 24$ with fluctuations growing like $T^{1\slash 3}$. We set 
\begin{equation*}
    \gamma_T:=T^{-1\slash 3}(H(2T,0)+T\slash 12).
\end{equation*}
In \cite{ACQ}, it was shown that
\begin{equation}\label{Dis}
    \lim_{T\rightarrow \infty}\mathbb{P}(\gamma_T\leq s)= F_{GUE}(s), 
\end{equation}
where $F_{GUE}$ is the GUE Tracy-Widom distribution. For $s$ large, we have $F_{GUE}(-s)\approx \exp(-s^3\slash 12)$ (see e.g. \cite{BBD,RRV,TW} for details). 

The distributional limit in (\ref{Dis}) does not provide control over the tails of $\gamma_T$ for simultaneously growing $s$ and $T$. As a corollary of the estimates for the Airy point process established in Section \ref{Sect.2} of this article and Theorem 1.1 of \cite{Cor}, we establish the following result on the lower tail probability of the KPZ equation when $s$ and $T$ simultaneously grow. This result significantly improves upon previous results (see Remark \ref{rmk1.6} for details). In particular, it shows that the lower tail behavior of $\gamma_T$ as $s,T\rightarrow\infty$ for the regime $s\ll T^{1\slash 6}$ is \emph{exactly the same} as $F_{GUE}$ (here and throughout the rest of the article, for $A,B>0$ simultaneously growing to infinity, we write ``$A\ll B$'' if $\lim\limits_{A,B\rightarrow\infty} A\slash B=0$). 

\begin{theorem}\label{KPZ1}
Let $\gamma_T:=T^{-1\slash 3}(H(2T,0)+T\slash 12)$ be the centered and scaled solution to the KPZ equation with narrow-wedge initial data. Fix $\epsilon \in (0,1\slash 3)$ and $T_0>0$. Then there exist $S=S(\epsilon,T_0)>0$ (that only depends on $\epsilon,T_0$), $C=C(T_0)>0$ (that only depends on $T_0$), and $K=K(\epsilon, T_0)>0$ (that only depends on $\epsilon,T_0$), such that for any $s\geq S$ and $T\geq T_0$,
\begin{equation}
    \mathbb{P}(\gamma_T\leq -s)\leq e^{-\frac{4(1-C\epsilon)}{15\pi}T^{1\slash 3}s^{5\slash 2}}+e^{-K s^3-\epsilon T^{1\slash 3}s}+e^{-\frac{(1-C\epsilon)}{12}s^3},
\end{equation}
\begin{equation}
    \mathbb{P}(\gamma_T\leq -s)\geq e^{-\frac{4(1+C\epsilon)}{15\pi}T^{1\slash 3}s^{5\slash 2}}+e^{-\frac{(1+C\epsilon)}{12}s^3}.
\end{equation}
In particular, when $s,T\rightarrow\infty$ simultaneously such that $s\ll T^{1\slash 6}$, we have the following sharp tail behavior, which is consistent with the GUE Tracy-Widom distribution: 
\begin{equation}
    \lim_{\substack{s,T\rightarrow\infty:\\s\ll T^{1\slash 6}}} \frac{\log{\mathbb{P}(\gamma_{T}\leq -s)}}{s^3}=-\frac{1}{12}.
\end{equation}
\end{theorem}
\begin{remark}\label{rmk1.6}
This result significantly improves upon the main result (Theorem 1.1) of \cite{Cor} for the regime where $s\ll T^{2\slash 3}$. The upper bound in \cite[Theorem 1.1]{Cor} is
\begin{equation*}
    e^{-\frac{4(1-C\epsilon)}{15\pi}T^{1\slash 3}s^{5\slash 2}}+e^{-K_1 s^{3-\delta}-\epsilon T^{1\slash 3}s}+e^{-\frac{(1-C\epsilon)}{12}s^3},
\end{equation*}
and the lower bound is 
\begin{equation*}
    e^{-\frac{4(1+C\epsilon)}{15\pi}T^{1\slash 3}s^{5\slash 2}}+e^{-K_2 s^3}.
\end{equation*}
In particular, for the first time we obtain sharp super-exponential decay with exponent $3$ for the regime where $1\ll  s\ll T^{2\slash 3}$ (with sharp leading 
prefactor $1\slash 12$ when $1\ll s\ll T^{1\slash 6}$). We also note that the main results (Theorem 1.1 and Corollary 1.2) of \cite{CC} are focused on the regime where $s$ is at least of order $T^{2\slash 3}$ and do not cover the regime where $s\ll T^{2\slash 3}$.
\end{remark}

We also have a parallel result for the half-space KPZ equation with Neumann boundary parameter $A=-1\slash 2$ and narrow-wedge initial data, which significantly improves upon previous results (see Remark \ref{rmk.2} for details). In \cite{Par}, the following identity was established: For any $u>0$, \begin{equation}\label{Des}
    \mathbb{E}_{\text{KPZ}}\Big[\exp\Big(-u\exp(H^{hf}(2T,0)+T\slash 12)\Big)\Big]=\mathbb{E}_{\text{GOE}}\left[\prod_{i=1}^{\infty}\frac{1}{\sqrt{1+4u\exp(T^{1\slash 3}a_i)}}\right],
\end{equation}
where $\mathbb{E}_{\text{KPZ}}$ means taking expectation with respect to the half-space KPZ equation with Neumann boundary parameter $A=-1\slash 2$ and narrow-wedge initial data, and $\mathbb{E}_{\text{GOE}}$ means taking expectation with respect to the GOE point process (which is the space reversal of the eigenvalues of the stochastic Airy operator with $\beta=1$).

\begin{theorem}\label{KPZ2}
Let $\gamma^{hf}_T := T^{-1\slash 3}(H^{hf}(2T,0)+T\slash 12)$ be the centered and scaled solution to the half-space KPZ equation with narrow-wedge initial data and Neumann boundary parameter $A=-1\slash 2$. Fix $\epsilon \in (0,1\slash 3)$ and $T_0>0$. Then there exist $S=S(\epsilon,T_0)>0$ (that only depends on $\epsilon,T_0$), $C=C(T_0)>0$ (that only depends on $T_0$), and $K=K(\epsilon, T_0)>0$ (that only depends on $\epsilon,T_0$), such that for any $s\geq S$ and $T\geq T_0$,
\begin{equation}
    \mathbb{P}(\gamma^{hf}_T\leq -s)\leq e^{-\frac{2(1-C\epsilon)}{15\pi}T^{1\slash 3}s^{5\slash 2}}+e^{-K s^3-\epsilon T^{1\slash 3}s}+e^{-\frac{(1-C\epsilon)}{24}s^3},
\end{equation}
\begin{equation}
    \mathbb{P}(\gamma^{hf}_T\leq -s)\geq e^{-\frac{2(1+C\epsilon)}{15\pi}T^{1\slash 3}s^{5\slash 2}}+e^{-\frac{(1+C\epsilon)}{24}s^3}.
\end{equation}
In particular, when $s,T\rightarrow\infty$ simultaneously such that $s\ll T^{1\slash 6}$, we have the following sharp tail behavior, which is consistent with the GOE Tracy-Widom distribution: 
\begin{equation}
    \lim_{\substack{s,T\rightarrow\infty:\\s\ll T^{1\slash 6}}} \frac{\log{\mathbb{P}(\gamma^{hf}_{T}\leq -s)}}{s^3}=-\frac{1}{24}.
\end{equation}
\end{theorem}
\begin{remark}\label{rmk.2}
This result significantly improves upon the main result (Theorem 1.4) of \cite{Kim}. The upper bound in \cite[Theorem 1.4]{Kim} is  
\begin{equation}\label{Up1}
    e^{-\frac{2(1-C\epsilon)}{15\pi} T^{1\slash 3}s^{5\slash 2}}+e^{-\eta s^{3\slash 2}-\epsilon T^{1\slash 3}s}+e^{-\frac{(1-C\epsilon)}{24}s^3},
\end{equation}
and the lower bound is 
\begin{equation*}
    e^{-\frac{2(1+C\epsilon)}{15\pi}T^{1\slash 3}s^{5\slash 2}}+e^{-K_2 s^3},
\end{equation*}
where $\eta>0$ is fixed and $s\geq S(\epsilon,T_0,\eta)$ (depending on $\epsilon,T_0,\eta$). Note that for $s\gg T^{1\slash 6}$, the upper bound (\ref{Up1}) gives super-exponential decay with exponent $3\slash 2$, which does not capture the actual tail decay (super-exponential decay with exponents $5\slash 2$ and $3$ when $s\gg T^{2\slash 3}$ and $1\ll s \ll T^{2\slash 3}$, respectively). In comparison, our result allows us to obtain for the first time the crossover between regimes with exponents $5\slash 2$ and $3$: Our result demonstrates sharp super-exponential decay with exponent $5\slash 2$ when $s\gg T^{2\slash 3}$ (with sharp leading prefactor $2T^{1\slash 3}\slash (15\pi)$) and sharp super-exponential decay with exponent $3$ when $1\ll s\ll T^{2\slash 3}$ (with sharp leading prefactor $1\slash 24$ when $1\ll s\ll T^{1\slash 6}$).
\end{remark}
\begin{remark}
The proof of Theorem \ref{KPZ2} follows from a similar argument as in the proof of Theorem \ref{KPZ1} by replacing $\beta=2$ by $\beta=1$ and using (\ref{Des}).
\end{remark}

Below we outline the main approach for the proofs of Theorems \ref{Main1}-\ref{KPZ2}. This approach is novel in that it allows us to study the Airy point process \textbf{from two perspectives}: the first is the stochastic Airy operator together with its diffusion characterization, and the second is random matrix theory (in particular, Gaussian $\beta$-ensembles). We approach the problem from both sides, obtaining the best estimates from each perspective, and combine them in an intricate way to obtain the desired results.

There are several components in the proofs of Theorems \ref{Main1}-\ref{KPZ2}, as listed in the following. The reader is referred to Section \ref{Sect.n} for relevant background on the Airy point process, the stochastic Airy operator, and the Gaussian $\beta$-ensemble.

\paragraph{Step 1}
\textbf{Precise estimates for the Airy point process.} 
Propositions \ref{PropAiry1}-\ref{PropAiry2} in Section \ref{Sect.2} significantly improve upon previously best-known tail probability estimates for the Airy point process (see Theorems 1.4-1.5 of \cite{Cor}, which are also the technical heart of that paper) and are sharp in many cases. Moreover, it applies to the stochastic Airy operator $H_{\beta}$ with general $\beta>0$, which allows for a broader range of applications (in particular, we obtain Theorems \ref{KPZ1}-\ref{KPZ2} simultaneously). In comparison, the estimates in \cite{Cor} rely on determinantal structures (in particular, Ablowitz-Segur solution of Painlev\'e II), which restricts the application to $\beta=2$. Our estimates for $\beta=1$ also improve upon previously best-known tail bounds (see Theorems 1.10-1.12 of \cite{Kim}).

More specifically, up to exponentially small tail probabilities, Proposition \ref{PropAiry1} upper bounds the number of eigenvalues of $H_{\beta}$ that are less than or equal to $\lambda$, and Proposition \ref{PropAiry2} upper bounds the absolute difference between the number of eigenvalues of $H_{\beta}$ that are less than or equal to $\lambda$ and that of the Airy operator (see Section \ref{Sect.2} for details). The main tool used in the proofs of Propositions \ref{PropAiry1}-\ref{PropAiry2} is the diffusion characterization of the stochastic Airy operator developed in \cite{RRV}. Based on this tool, we develop precise estimates for the tail behavior of those diffusions, and combine such estimates with Tracy-Widom tails (see Proposition \ref{TailBound}) to obtain the proof of Proposition \ref{PropAiry1}. For the proof of Proposition \ref{PropAiry2}, we further develop a dyadic decomposition technique to localize the deviation event, and rely upon an intricate comparison argument with Airy operators (which is the most challenging part for this step). More precisely, we compare the stochastic Airy operator with a ``shifted version'' of the Airy operator (see the proof of Proposition \ref{PropAiry2} for details), and further compare it to the Airy operator.

\paragraph{Step 2}
\textbf{Approximation of the Airy point process (more generally, stochastic Airy spectrum) using the Gaussian unitary ensemble (more generally, the Gaussian $\beta$-ensemble) up to an \emph{exponentially small probability}.} The main result of this step is given in Theorem \ref{Main3} below. This step is challenging in that we need to estimate probabilities of unlikely events with great precision in order to transfer large deviation bounds from GUE to the Airy point process. In \cite{Tsa}, it is remarked that justifying the passage from the Gaussian $\beta$-ensemble to the Airy point process up to \emph{exponentially small probability} remains an open problem. In this part of the proof we resolve this problem.

The proof of Theorem \ref{Main3} relies upon quite a few new ideas. The explanations below are all up to events with exponentially small probabilities. The key is to establish \emph{exponential decay} of the top eigenfunctions of the stochastic Airy operator $H_{\beta}$ and the top eigenvectors of the Gaussian $\beta$-ensemble $H_{\beta,n}$ (see Section \ref{Sect.n2} for the definition of $H_{\beta,n}$) up to an \emph{exponentially small probability}. The decay bounds for $H_{\beta}$ are obtained in Proposition \ref{AiryDecay}. The proof of this proposition is based on the Riccati transform of the top eigenfunctions of $H_{\beta}$ (see Section \ref{Sect.n1} for definition): Suppose that the Riccati transform deviates a bit upward from the curve $y=-\sqrt{x}\slash 2$, then we can actually show (through an inductive argument over various regions) that it will quickly move above the curve $y=4\sqrt{x}\slash 5$ and will be ``trapped'' above the curve $y=\sqrt{x}\slash 2$ forever. This leads to a contradiction, as the eigenfunctions are $L^2$-bounded. From this, we conclude that the Riccati transform can never go above $y=-\sqrt{x}\slash 2$, and the conclusion 
of Proposition \ref{AiryDecay} can be obtained. The decay bounds for $H_{\beta,n}$---presented in Proposition \ref{DiscreteDecay}---can be obtained based on a discrete analogue of the previous argument. The discrete setting causes additional difficulties, which we rely on a more complicated argument to resolve. Having obtained the aforementioned decay bounds, we can couple $H_{\beta}$ and $H_{\beta,n}$ \emph{along their eigenfunctions\slash eigenvectors} to prove Theorem \ref{Main3}. New a priori estimates for the eigenfunctions (eigenvectors) based on the eigenvalue-eigenfunction (eigenvector) equations as well as the Koml\'os-Major-Tusn\'ady theorem are utilized in the proof.

\paragraph{Step 3}
\textbf{Finally, we obtain large deviation bounds for GUE, and apply the approximation result in Step 2 to transfer these bounds to the Airy point process.} The key for this part is an edge rigidity estimate (Proposition \ref{Edge}) and a separate lemma (Lemma \ref{field}) showing that the eigenvalues of GUE that are far from the edge only impose a negligible effect on the result. For the former, we improve upon edge rigidity estimates from \cite{BEY} so that the exponent in the upper bound on the tail probability satisfies our need (previous results give no control over the exponent). For the latter, we combine the edge rigidity estimate with our previous estimates for the Airy point process in Step 1 (transferred to GUE by Step 2), making full use of the two perspectives.

\bigskip

During the process of proving the large deviation bounds for the Airy point process, we also obtain a result (Theorem \ref{Main3} below) on the approximation of the Airy point process (and more generally, the eigenvalues of the stochastic Airy operator $H_{\beta}$ for $\beta\in\mathbb{N}_{+}$) using the Gaussian $\beta$-ensemble (defined in Section \ref{Sect.n2} below) up to an event with \emph{exponentially small probability}, which might be of independent interest. Throughout the article, we denote by $H_{\beta,n}$ the Gaussian $\beta$-ensemble of size $n$. Background materials on the stochastic Airy operator and the Gaussian $\beta$-ensemble will be presented in Section \ref{Sect.n}.

\begin{theorem}\label{Main3}
Assume that $\beta,n,k\in\mathbb{N}_{+}$ and $n^{e_0}\leq k \leq n^{1\slash 10000}$, where $e_0\in (0,1\slash 10000)$ is a fixed constant. Let $\lambda_1<\lambda_2<\cdots$ be the sorted eigenvalues of the stochastic Airy operator $H_{\beta}$ and $\lambda_1^{(n)}> \lambda_2^{(n)}> \cdots > \lambda_n^{(n)}$ be the sorted eigenvalues of $H_{\beta,n}$ (the Gaussian $\beta$-ensemble of size $n$). Let $a_i:=-\lambda_i$ for any $i\in\mathbb{N}_{+}$ and $\tilde{\lambda}_i^{(n)}:=n^{1\slash 6}(\lambda_i^{(n)}-2\sqrt{n})$ for any $i\in \{1,2,\cdots,n\}$. Then there exist positive constants $C_1,C_2,c$ that only depend on $\beta$ and $e_0$, a coupling between $H_{\beta}$ and $H_{\beta,n}$ on the same probability space $(\Omega,\mathcal{F},\mathbb{P})$, and an event $\mathscr{A}\in \mathcal{F}$, such that $\mathbb{P}(\mathscr{A}^c)\leq C_1\exp(-ck^3)$, and when the event $\mathscr{A}$ holds, $|\tilde{\lambda}_i^{(n)}-a_i|\leq C_2n^{-1\slash 24}$ for all $i\in \{1,2,\cdots,k\}$.
\end{theorem}
\begin{remark}
We have not optimized the exponents $1\slash 10000$ and $-1\slash 24$ here, and better bounds could be worked out using the arguments in Section \ref{Sect.3}.
\end{remark}

In the following, we set up some notations and conventions that are used throughout this article. We fix a probability space $(\Omega,\mathcal{F},\mathbb{P})$, on which a standard Brownian motion $(B_t, t\geq 0)$ is defined. We let $\mathcal{F}_t:=\sigma(B_s,s\in [0,t])$ for any $t\geq 0$. Unless otherwise specified, we use $C$ and $c$ to denote positive constants that only depend on $\beta$. The values of these constants may change from line to line. We denote the set of positive integers by $\mathbb{N}_{+}$ and the set of positive real numbers by $\mathbb{R}_{+}$, and let $\mathbb{N}:=\mathbb{N}_{+}  \cup  \{0\}$. We let $[0]:=\emptyset$ and $[n]:=\{1,2,\cdots,n\}$ for any $n\in\mathbb{N}_{+}$. For any vector $\alpha\in\mathbb{R}^m$ and $i\in [m]$, we denote by $\alpha(i)$ the $i$th entry of $\alpha$; for any $m\times m'$ matrix $A$ and $i\in [m],j\in [m']$, we denote by $A(i,j)$ the $(i,j)$ entry of $A$. For any finite set $A$, we denote by $\#A$ the cardinality of $A$. We denote by $\mathcal{B}_{\mathbb{R}}$ the Borel $\sigma$-algebra on $\mathbb{R}$. 

The rest of this article is organized as follows. In Section \ref{Sect.n}, we present background materials on the Airy point process, the stochastic Airy operator, and the Gaussian $\beta$-ensemble. Then in Section \ref{Sect.2}, we obtain some estimates for the Airy point process and establish the proof of parts (b) and (c) of Theorem \ref{Main1}. Based on the estimates in Section \ref{Sect.2}, we give the proof of Theorem \ref{KPZ1} in Section \ref{Sect.3.5}. In Section \ref{Sect.3}, we study the approximation of the Airy point process via the Gaussian $\beta$-ensemble and give the proof of Theorem \ref{Main3}. Then we refine a result from \cite{BEY} to obtain a more precise estimate on edge rigidity in Section \ref{Sect.4}. Finally, in Section \ref{Sect.5}, we finish the proofs of Theorems \ref{Main1} and \ref{Main2} based on the results from previous sections. 

\section{Airy point process, stochastic Airy operator, and Gaussian $\beta$-ensemble}\label{Sect.n}

We present background materials on the Airy point process, the stochastic Airy operator, and the Gaussian $\beta$-ensemble in Sections \ref{Sect.n0}-\ref{Sect.n2}, respectively.

\subsection{Airy point process}\label{Sect.n0}

The Airy point process is a simple determinantal point process on $\mathbb{R}$ that arises as the limit of rescaled eigenvalue configuration of the GUE near the spectral edge. In this subsection, we give the formal definition of the Airy point process following \cite[Section 4.2]{AGZ}.

We start by introducing the notion of \emph{point process}. Recall that $\mathcal{B}_{\mathbb{R}}$ is the Borel $\sigma$-algebra on $\mathbb{R}$. Let $\mu$ be a positive Radon measure on $(\mathbb{R},\mathcal{B}_{\mathbb{R}})$. A \emph{point process} on $\mathbb{R}$ is a random integer-valued positive Radon measure on $(\mathbb{R},\mathcal{B}_{\mathbb{R}})$; equivalently, it is a probability measure on the space of locally finite point configurations on $\mathbb{R}$. A point process $\chi$ on $\mathbb{R}$ is called \emph{simple} if $\mathbb{P}(\exists x\in\mathbb{R}: \chi(\{x\})>1)=0$.

Suppose that $\chi$ is a simple point process on $\mathbb{R}$. Assume the existence of locally integrable functions $\rho_k:\mathbb{R}^k\rightarrow [0,\infty)$ for every $k\in \mathbb{N}_{+}$, such that for any mutually disjoint Borel sets $D_1,\cdots,D_k\subseteq\mathbb{R}$,  
\begin{equation*}
    \mathbb{E}\Big[\prod_{i=1}^k \chi(D_i)\Big]=\int_{\prod_{i=1}^k D_i} \rho_k(x_1,\cdots,x_k)d\mu(x_1)\cdots d\mu(x_k).
\end{equation*}
The function $\rho_k$ is called the \emph{$k$-point correlation function} of the point process $\chi$ with respect to $\mu$. A simple point process on $\mathbb{R}$ is called a \emph{determinantal point process} with correlation kernel $K:\mathbb{R}^2\rightarrow \mathbb{C}$, if for every $k\in \mathbb{N}_{+}$, its $k$-point correlation function $\rho_k$ exists and is given by $\rho_k(x_1,\cdots,x_k)=\det[(K(x_i,x_j)]_{i,j\in [k]}$. The \emph{Airy point process} is a simple determinantal point process on $\mathbb{R}$ (with $\mu$ being the Lebesgue measure) with correlation kernel 
\begin{equation*}
    K^{\text{Ai}}(x,y)=\frac{\text{Ai}(x)\text{Ai}'(y)-\text{Ai}(y)\text{Ai}'(x)}{x-y}.
\end{equation*} 

\subsection{Stochastic Airy operator and associated diffusions}\label{Sect.n1}

The Airy point process is closely related to the \emph{stochastic Airy operator} introduced in \cite{RRV}. In this subsection, we review background materials on the stochastic Airy operator following \cite[Section 4.5]{AGZ}, \cite{Min}, and \cite{RRV}. We refer the reader to these references for further details.

Let $C_0^{\infty}=C_0^{\infty}(\mathbb{R}_{+})$ be the space of compactly supported smooth functions under the topology of uniform convergence of all derivatives on compact sets. Let $D=D(\mathbb{R}_{+})$ be the space of distributions, that is, the continuous dual of the space $C_0^{\infty}$. For any locally integrable function $f$ and any $k\in\mathbb{N}_{+}$, the $k$th formal derivative of $f$, denoted by $f^{(k)}$, is the element of $D$ that acts on $C_0^{\infty}$ via integration by parts:
\begin{equation*}
    \prec \varphi, f^{(k)} \succ:=(-1)^k \int f(x) \varphi^{(k)}(x)dx, \quad \forall \varphi\in C_0^{\infty}. 
\end{equation*}

In the following, we fix an arbitrary $\beta>0$. Define $H^1_{\mathrm{loc}}$ to be the set of absolutely continuous functions $f: [0,\infty)  \rightarrow  \mathbb{R}$ such that for any compact set $I\subseteq \mathbb{R}$, $f^{(1)}\mathbbm{1}_{I}\in L^2$. The stochastic Airy operator, denoted by $H_{\beta}$, is the random linear map from $H^1_{\mathrm{loc}}$ to $D$ that sends $f\in H^1_{\mathrm{loc}}$ to the distribution
\begin{equation*}
    H_{\beta} f = -f^{(2)}+xf+\frac{2}{\sqrt{\beta}} f B',
\end{equation*}
where $B=(B_x,x\geq 0)$ is the standard Brownian motion and $fB'$ is understood as the formal derivative of the continuous function $-\int_{0}^x B_t f^{(1)}(t)dt+f(x)B_x$ (where $x\in [0,\infty)$). 

Let $L^{*}$ be the set of functions $f\in H^1_{\mathrm{loc}}$ such that $f(0)=0$ and
\begin{equation*}
    \int_0^{\infty}(f^{(1)}(x)^2+(1+x)f^2(x))dx<\infty.
\end{equation*}
The eigenvalues and eigenfunctions of $H_{\beta}$ are defined as the pairs $(\lambda,f)\in \mathbb{R}\times L^{*}$ that satisfy $\|f\|_2=1$ and $H_{\beta}f=\lambda f$ in the sense of distributions. Note that the latter condition is equivalent to saying that for any $\phi\in C_0^{\infty}$,
\begin{eqnarray}\label{Eq2.1}
   && \lambda\int_0^{\infty} \phi(x)f(x)dx =
     \int_0^{\infty} (-\phi''(x)f(x)+x\phi(x)f(x))dx\nonumber\\
     &&\quad\quad\quad\quad\quad-\frac{2}{\sqrt{\beta}}\Big(\int_0^{\infty}\phi'(x)f(x)B_xdx+\int_0^{\infty}\phi(x)f^{(1)}(x)B_x dx\Big).
\end{eqnarray}
In \cite{RRV}, it was shown that for any $i\in\mathbb{N}_{+}$, the collection of eigenvalues of $H_{\beta}$ has a well-defined $i$th smallest element, which we denote by $\lambda_i$. We also denote by $f_i$ the eigenfunction that corresponds to $\lambda_i$. By \cite[Proposition 3.5]{RRV}, almost surely, we have $\lambda_1<\lambda_2<\cdots$ and $\lambda_i\rightarrow\infty$ as $i\rightarrow\infty$. From (\ref{Eq2.1}), it can be deduced that (see \cite{RRV} and \cite[Section 4.5]{AGZ}) for any $i\in\mathbb{N}_{+}$, we can choose $f_i$ so that $f_i\in\text{H\"older}(3\slash 2)^{-}$ and 
\begin{equation}\label{eigenv1}
    f_i'(x)-f_i'(0)=\int_{0}^x\Big((t-\lambda_i)f_i(t)-\frac{2}{\sqrt{\beta}} f_i'(t)B_t \Big) dt+\frac{2}{\sqrt{\beta}}f_i(x) B_x, \quad \forall x\geq 0.
\end{equation}

In the following, we introduce a bilinear form $\langle \cdot,\cdot \rangle_{H_{\beta}}$ on $L^{*}$ that is associated with $H_{\beta}$. For any $x\geq 0$, we define $\bar{B}_x:=\int_x^{x+1}B_y dy$ and 
\begin{equation}
  Q_x:=B_{x+1}-B_x, \quad R_x:=B_x-\bar{B}_x. 
\end{equation}
For any $f,g\in L^{*}$, we define
\begin{eqnarray}\label{Eqnn6}
   && \langle f, g \rangle_{H_{\beta}}:= \int_0^{\infty} f^{(1)}(x)g^{(1)}(x)dx+\int_0^{\infty} xf(x)g(x)dx\nonumber\\
   &&+\frac{2}{\sqrt{\beta}}\Big(\int_0^{\infty} Q_x f(x) g(x)dx-\int_0^{\infty} R_x (f(x)g^{(1)}(x)+f^{(1)}(x)g(x))dx \Big).\nonumber\\
   && 
\end{eqnarray}
For any $i\in\mathbb{N}_{+}$, the eigenvalue $\lambda_i$ satisfies the following minimax principle (see equation (2.7) of \cite{Tsa}):
\begin{equation}\label{mini}
    \lambda_i=\min\Big\{\max_{f\in \mathscr{E}:\|f\|_2=1} \{\langle f,f\rangle_{H_{\beta}}\}:\mathscr{E}\text{ is an }i\text{-dimensional subspace of }L^{*}\Big\}.
\end{equation}
Below we list additional properties of the eigenvalues and eigenfunctions of $H_{\beta}$ (see \cite{RRV} and \cite[Section 4.5]{AGZ} for the derivations). For any $i\in \mathbb{N}_{+}$,
\begin{eqnarray}\label{eigenv2}
     \lambda_i &=& \langle f_i,f_i\rangle_{H_{\beta}}\nonumber\\
     &=& \int_0^{\infty}f_i'(x)^2dx+\int_0^{\infty} xf_i^2(x)dx+\frac{2}{\sqrt{\beta}}\int_0^{\infty} f_i^2(x) Q_x dx \nonumber\\
     && -\frac{4}{\sqrt{\beta}}\int_0^{\infty}f_i'(x)f_i(x) R_x dx,
\end{eqnarray}
\begin{equation}\label{eigenv3}
    \lambda_i=\int_0^{\infty}f_i'(x)^2dx+\int_0^{\infty}xf_i^2(x)dx-\frac{4}{\sqrt{\beta}}\int_0^{\infty} f_i'(x)f_i(x)B_x dx.
\end{equation}

The following result from \cite{RRV} establishes the relation between the Airy point process and the eigenvalues of the stochastic Airy operator.

\begin{proposition}[\cite{RRV}]\label{Eigens}
Suppose that $\mathbf{a}=(a_1>a_2>\cdots)$ are the ordered points of the Airy point process, and $\mathbf{\Lambda}=(\lambda_1<\lambda_2<\cdots)$ are the ordered eigenvalues of $H_2$ (the stochastic Airy operator with $\beta=2$). Then the distributions of $\mathbf{a}$ and $-\mathbf{\Lambda}$ are equal.
\end{proposition}

The following result from \cite{DV} provides a tail bound on $\lambda_1$.

\begin{proposition}[\cite{DV}]\label{TailBound}
When $t\rightarrow\infty$, we have
\begin{equation*}
    \mathbb{P}(\lambda_1<-t)=t^{-3\beta\slash 4}\exp\Big( -\frac{2}{3}\beta t^{3\slash 2} +O(\sqrt{\log{t}})\Big).
\end{equation*}
In particular, there exists a positive constant $C$ that only depends on $\beta$, such that for any $t>0$,
\begin{equation*}
    \mathbb{P}(\lambda_1<-t)\leq C\exp\Big(-\frac{2}{3}\beta t^{3\slash 2}\Big).
\end{equation*}
\end{proposition}

As discussed in \cite{RRV}, the eigenvalues of $H_{\beta}$ can be studied via the Riccati transform of the eigenfunctions and certain diffusions. For any $i\in\mathbb{N}_{+}$, the Riccati transform of the $i$th eigenfunction $f_i$ of $H_{\beta}$, denoted by $\bar{p}_i$, is given by $\bar{p}_i(x)=f_i'(x)\slash f_i(x)$ for any $x\in [0,\infty)$. From (\ref{eigenv1}), it can be deduced that $\bar{p}_i(0)=+\infty$ and that for any interval $[a,b]\subseteq [0,\infty)$ (where $a\leq b$) on which $f_i(x)$ does not vanish, 
\begin{equation}\label{pbar}
   \bar{p}_i(b)-\bar{p}_i(a)=\int_a^b (x-\lambda_i-\bar{p}_i(x)^2)dx+\frac{2}{\sqrt{\beta}}(B_b-B_a).
\end{equation}
Note that $\bar{p}_i(x)$ may blow up to $-\infty$ at a finite time $x\in (0,\infty)$ (at which $f_i(x)=0$), and we restart $\bar{p}_i(x)$ at $+\infty$ immediately after the blow-up. We can view $\bar{p}_i(x)$ as taking values in a disjoint union of countable copies of the real line, denoted by $\mathbb{R}_0,\mathbb{R}_{-1},\cdots$. Points $(n,x)\in \{0,-1,\cdots\}\times\mathbb{R}$ in this disjoint union are ordered lexicographically (where $n$ marks which copy of the real line the point lies in). 

For any fixed non-random $\lambda\in\mathbb{R}$, we define the diffusion $p_{\lambda}(x), x\geq 0$ by
\begin{equation}\label{Eqn16}
    dp_{\lambda}(x)=(x-\lambda-p_{\lambda}^2(x))dx+\frac{2}{\sqrt{\beta}}dB_x, \quad p_{\lambda}(0)=\infty.
\end{equation}
The discussion regarding the blow-ups of $\bar{p}_i(x)$ in the previous paragraph also applies to $p_{\lambda}(x)$. We cite the following result from \cite{RRV}.

\begin{proposition}[\cite{RRV}]\label{Diffu}
For any fixed non-random $\lambda\in\mathbb{R}$, the number of blow-ups of $p_{\lambda}(x)$ on $[0,\infty)$ is equal to the number of eigenvalues of $H_{\beta}$ that are less than or equal to $\lambda$.
\end{proposition}

Combining Propositions \ref{TailBound}-\ref{Diffu}, we obtain the following corollary.

\begin{corollary}\label{Tails}
There exists a positive constant $C$ that only depends on $\beta$, such that for any $t>0$, we have
\begin{equation*}
    \mathbb{P}(p_{-t}(x) \text{ blows up in a finite time})\leq C\exp\Big(-\frac{2}{3}\beta t^{3\slash 2}\Big).
\end{equation*}
\end{corollary}

The non-random part of $H_{\beta}$ is called the \emph{Airy opera
ator}, denoted by $\mathcal{A}$ hereafter: $\mathcal{A}f=-f^{(2)}+xf$ for any $f\in H_{\mathrm{loc}}^1$. Throughout the article, we denote by $\gamma_1<\gamma_2<\cdots$ the ordered eigenvalues of the Airy operator $\mathcal{A}$. For any $\lambda\in\mathbb{R}$, we define
\begin{equation}\label{Eq2.14}
    N(\lambda):=\#\{i\in\mathbb{N}_{+}: \lambda_i\leq \lambda\}, \quad N_0(\lambda):=\#\{i\in \mathbb{N}_{+}: \gamma_i\leq \lambda\}. 
\end{equation}

The following result on the eigenvalues of the Airy operator $\mathcal{A}$ follows from classical works (for example, \cite{MT} and \cite{Titch}; see also \cite[Theorem 3.3]{Hua}).

\begin{proposition}\label{Airyop}
For any $i\in\mathbb{N}_{+}$, we have
\begin{equation*}
    \gamma_i=\Big(\frac{3\pi}{2}\Big(i-\frac{1}{4}+R(i)  \Big)\Big)^{2\slash 3},
\end{equation*}
where $|R(i)|\leq C\slash i$ for some absolute constant $C>0$.
\end{proposition}

\subsection{Gaussian $\beta$-ensemble}\label{Sect.n2}

For any $\beta>0$, the \emph{Gaussian $\beta$-ensemble} of size $n$ is the probability distribution on ordered points $\lambda_1^{(n)}> \lambda_2^{(n)} > \cdots > \lambda_n^{(n)}$ with joint density
\begin{equation}\label{Eqn10}
 Z_{\beta,n}^{-1} \exp\Big(-\frac{\beta}{4}\sum\limits_{i=1}^n (\lambda_i^{(n)})^2\Big)\prod_{1\leq i<j\leq n} \big|\lambda_i^{(n)}-\lambda_j^{(n)}\big|^{\beta},
\end{equation}
where $Z_{\beta,n}$ is a normalizing constant. For general $\beta>0$, (\ref{Eqn10}) can be viewed as a Gibbs measure of particles with logarithmic interactions. When $\beta\in\{1,2,4\}$, (\ref{Eqn10}) corresponds to the joint density of the eigenvalues of the Gaussian orthogonal\slash unitary\slash symplectic ensemble. We refer the reader to \cite[Section 4.5]{AGZ} for further background on the Gaussian $\beta$-ensemble.

For general $\beta>0$, Dumitriu and Edelman \cite{DE} introduced a class of tridiagonal random matrix models (which we denote by $H_{\beta,n}$ in this article) and showed that the joint density of the eigenvalues of such random matrices is given by (\ref{Eqn10}). We describe this random matrix $H_{\beta,n}$ in the following. We first recall the definition of $\chi$ random variables.

\begin{definition}
For any $t>0$, the $\chi$ distribution with $t$ degrees of freedom, denoted by $\chi_t$, is defined as the probability distribution on $\mathbb{R}_{+}$ with density 
\begin{equation*}
    f_t(x)= \frac{2^{1-t\slash 2}x^{t-1}e^{-x^2\slash 2}}{\Gamma(t\slash 2)}, \quad \forall x\in\mathbb{R}_{+}.
\end{equation*}
If $t\in\mathbb{N}_{+}$ and $X$ has a $\chi_t$ distribution, then the law of $X$ is the same as that of $\sqrt{\sum_{i=1}^t \xi_i^2 }$, where $\xi_1,\cdots,\xi_t$ are i.i.d. $N(0,1)$ random variables.
\end{definition} 
Let $\xi_1,\cdots,\xi_n$ be i.i.d. $N(0,1)$ random variables. Let $Y_1,\cdots,Y_{n-1}$ be independent random variables that are independent of $\{\xi_i\}_{i=1}^n$, such that $Y_i\sim \chi_{i\beta}$ for every $i\in [n-1]$. Then $H_{\beta,n}$ is an $n\times n$ random matrix whose entries are as follows: For any $i\in [n]$, $H_{\beta,n}(i,i)=\sqrt{2\slash\beta} \xi_i$; for any $i\in [n-1]$, $H_{\beta,n}(i,i+1)=H_{\beta,n}(i+1,i)=Y_{n-i}\slash\sqrt{\beta}$; for any $(i,j)\in [n]^2$ with $|i-j|>1$, $H_{\beta,n}(i,j)=0$.

For each $i\in [n]$, we denote by $g_i\in \mathbb{R}^n$ the normalized eigenvector of $H_{\beta,n}$ that corresponds to the eigenvalue $\lambda_i^{(n)}$, i.e., $H_{\beta,n} g_i =\lambda_i^{(n)} g_i$. We define
\begin{equation*}
    \gamma_0^{(n)}:=2, \quad \gamma_n^{(n)}:=-2,
\end{equation*}
and let $\{\gamma_j^{(n)}\}_{j=1}^{n-1}$ be such that
\begin{equation}\label{classical}
   \frac{1}{2\pi} \int_{\gamma_j^{(n)}}^2 \sqrt{(4-x^2)_{+}} dx=\frac{j}{n}, \quad \forall j\in [n-1].
\end{equation}
For each $i\in [n]$, we define 
\begin{equation}\label{tilde_l}
    \tilde{\lambda}_i^{(n)}:=n^{1\slash 6}(\lambda_i^{(n)}-2\sqrt{n}).
\end{equation}
Note that $\tilde{\lambda}_1^{(n)}> \tilde{\lambda}_2^{(n)}> \cdots> \tilde{\lambda}_n^{(n)}$ are the ordered eigenvalues of the matrix $\hat{H}_{\beta,n}:=n^{1\slash 6}(H_{\beta,n}-2\sqrt{n}I_n)$ (where $I_n$ is the $n\times n$ identity matrix).

\section{Estimates for the Airy point process}\label{Sect.2}

In this section, we present some estimates for the Airy point process (and more generally, the eigenvalues of the stochastic Airy operator $H_{\beta}$). The main tool used in the proofs of these estimates is the diffusion $p_{\lambda}(x)$ defined in (\ref{Eqn16}). Throughout this section, we fix an arbitrary $\beta>0$. We also recall the definitions of $N(\lambda)$ and $N_0(\lambda)$ in (\ref{Eq2.14}).

The main results of this section are the following two propositions.

\begin{proposition}\label{PropAiry1}
There exist positive constants $K,C,c$ that only depend on $\beta$, such that for any $\eta\geq 15$ and $\lambda\geq K$, we have
\begin{equation}
    \mathbb{P}(N(\lambda)\geq\eta \lambda^{3\slash 2})\leq C\exp(-c\eta \lambda^3).
\end{equation}
\end{proposition}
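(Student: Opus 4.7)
The plan is to use the diffusion characterization of the stochastic Airy operator (Ram\'irez--Rider--Vir\'ag, recalled at the start of the section): $N(\lambda)$ equals the number of blowups to $-\infty$ of the diffusion
\begin{equation*}
dp(x) = (x - \lambda - p(x)^2)\,dx + \tfrac{2}{\sqrt{\beta}}\,dB_x, \qquad p(0) = +\infty,
\end{equation*}
on $[0,\infty)$. Writing $\lambda := R k^{2/3}$, the advertised bound $\exp(-c\eta R^3 k^2) = \exp(-c\eta\lambda^3)$ factors as $\exp(-c(\eta\lambda^{3/2}) \cdot \lambda^{3/2})$, heuristically saying that each ``excess'' blowup over the deterministic Airy count $N_0(\lambda)\sim \tfrac{2}{3\pi}\lambda^{3/2}$ carries a Tracy--Widom-style noise price of order $\exp(-c\lambda^{3/2})$; the assumption $\eta \geq 15 \gg 2/(3\pi)$ keeps us firmly in the upper tail regime.

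The main device I would use is the Pr\"ufer angle $\phi(x)$ defined by $p(x) = \cot\phi(x)$, which by a standard It\^o computation satisfies
\begin{equation*}
d\phi = \bigl[\cos^2\phi - (x-\lambda)\sin^2\phi + O(\beta^{-1})\bigr]\,dx - \tfrac{2}{\sqrt{\beta}}\sin^2(\phi)\,dB_x,
\end{equation*}
and counts blowups through $N(\lambda) = \lfloor \phi(\infty)/\pi \rfloor$. The deterministic drift averages to $\sqrt{(\lambda-x)_+}/\pi$ per unit of $x$, so its integral against $dx$ reproduces the Airy asymptotic $\tfrac{2}{3\pi}\lambda^{3/2}$ at $x = \lambda$; the martingale part has quadratic variation $\tfrac{4}{\beta}\int_0^{T}\sin^4\phi\,dx = O(T)$ after a truncation at some $T = \lambda + S$.

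I would split $N(\lambda) = N_{\mathrm{bulk}} + N_{\mathrm{tail}}$ at the cutoff $x = \lambda + S$. For the tail portion, the noiseless dynamics attracts $p$ to the stable branch $+\sqrt{x-\lambda}$, and a Freidlin--Wentzell / Kramers quasi-potential computation (well depth $\tfrac{4}{3}(x-\lambda)^{3/2}$ against noise strength $2/\sqrt{\beta}$) shows that the probability of any blowup in $x > \lambda + S$ decays like $\exp(-c\beta S^{3/2} \cdot \lambda^{3/2}\cdot\text{factor})$, matching the Tracy--Widom right-tail bound on $-a_1$; calibrating $S$ suitably absorbs this contribution into the target rate. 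For the bulk portion, an exponential (Bernstein-type) martingale inequality applied to the Pr\"ufer martingale, with the exponential parameter tuned to the threshold $\eta\pi\lambda^{3/2}$, is expected to yield the required $\exp(-c\eta\lambda^3)$ bound.

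The hardest part will be producing the genuinely linear-in-$\eta$ exponent, rather than the $\eta^2$ that a crude sub-Gaussian estimate on the Pr\"ufer martingale would give. I would overcome this by exploiting the equivalence $\{N(\lambda) \geq n\} = \{-a_n \leq \lambda\}$, which converts the many-excess-blowups event into a macroscopic shift of the spectral parameter by $\sim\eta^{2/3}\lambda$; a Girsanov-type comparison across this shift, combined with the sharp Tracy--Widom left-tail $\mathbb{P}(-a_1 < -s) \leq C\exp(-c\beta s^3)$ applied after translation, is what should promote the naive estimate to the desired rate. A secondary technicality is that the change of variables $p = \cot\phi$ is singular precisely at blowup times ($\phi \in \pi\mathbb{Z}$), so the SDE for $\phi$ must be interpreted via a standard sequence of stopping-time restarts.
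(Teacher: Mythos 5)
Your proposal correctly identifies the target rate $\exp(-c\,\eta\lambda^3)$ (with $\lambda = Rk^{2/3}$) and the governing heuristic — that each of the $\sim\eta\lambda^{3/2}$ ``excess'' blowups should carry a Tracy--Widom-style price of order $\exp(-c\lambda^{3/2})$ — and the Freidlin--Wentzell picture for the part of the diffusion beyond $x \approx \lambda$ is essentially what the paper does (restart the diffusion at each blowup and pay a Tracy--Widom right-tail price per excess blowup, compounded by the strong Markov property). However, the bulk part of your argument has a genuine gap, and your proposed fix does not close it.

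Concretely: the Bernstein / sub-Gaussian estimate on the Pr\"ufer martingale does not give the advertised rate. The quadratic variation of the Pr\"ufer martingale on $[0,T]$ is $\frac{4}{\beta}\int_0^T\sin^4\phi\,dx \lesssim T$, and the deviation required of the martingale is $\sim\eta\pi\lambda^{3/2}$; with $T\sim\lambda$ this yields $\exp\bigl(-c\,\beta\,\eta^2\lambda^3/T\bigr)\approx\exp(-c\,\eta^2\lambda^2)$. The target is $\exp(-c\,\eta\lambda^3)$, which for fixed $\eta\geq 15$ and large $\lambda$ is \emph{stronger} by a factor of $\lambda$ in the exponent. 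So the gap you flag is not just $\eta$ versus $\eta^2$: the power of $\lambda$ is also wrong, and it is wrong precisely in the regime $\eta$ bounded, $\lambda$ large (which the Proposition must cover since $R$ and $k$ both vary). A further complication you do not address: under the rare event, $\phi$ spends an atypical amount of time away from $\pi\mathbb{Z}$, so the drift integral $\int_0^T\bigl[\cos^2\phi-(x-\lambda)\sin^2\phi\bigr]\,dx$ is itself path-dependent and can be of order $\lambda^2$ rather than $\lambda^{3/2}$; separating drift and martingale cleanly is therefore not as innocent as the write-up suggests.

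Your proposed repair — a Girsanov shift of the spectral parameter plus a translated Tracy--Widom left tail — is too vague to check and does not obviously work. A constant drift shift $h\equiv -\tfrac{\sqrt{\beta}}{2}\Delta$ turns the $\lambda$-diffusion into the $(\lambda+\Delta)$-diffusion at a Girsanov cost $\exp(c\beta\Delta^2 T)$; to move the expected count from $\tfrac{2}{3\pi}\lambda^{3/2}$ to $\eta\lambda^{3/2}$ requires $\Delta\sim\eta^{2/3}\lambda$, and the resulting cost $\exp(c\beta\eta^{4/3}\lambda^3)$ enters with the wrong sign (it is a penalty, not a savings). And the Tracy--Widom tail you invoke is a bound on the \emph{extreme} eigenvalue; the event $\{N(\lambda)\geq n\}=\{-a_n\leq\lambda\}$ concerns the $n$-th eigenvalue with $n\sim\eta\lambda^{3/2}$, and no amount of ``applying TW after translation'' gives a rigorous bound on the $n$-th eigenvalue without a new argument. (Also, as a small point, you wrote $\mathbb{P}(-a_1<-s)\leq C\exp(-c\beta s^3)$, i.e.\ $\mathbb{P}(a_1>s)$, which actually decays as $\exp(-c\beta s^{3/2})$; the cubic tail is for $\mathbb{P}(a_1<-s)$.)

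The paper avoids all of this by working locally, per blowup, rather than globally over $[0,T]$: it splits $[0,\infty)$ at $2\lambda$, and on $[0,2\lambda]$ restarts the diffusion at each blowup time $\tau_i$; Lemma~2.5 shows that each inter-blowup duration $\Delta_i$ is $\geq\tfrac{4\pi}{5\sqrt{\lambda}}$ except on an event of conditional probability $\leq\exp(-c\lambda^{3/2})$, and a pigeonhole argument shows that having $\gtrsim\eta\lambda^{3/2}$ blowups in an interval of length $2\lambda$ forces $\gtrsim\eta\lambda^{3/2}$ of the $\Delta_i$ to be ``too fast.'' Iterated conditioning then yields $\exp(-c\lambda^{3/2})^{\eta\lambda^{3/2}} = \exp(-c\eta\lambda^3)$. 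If you want to push a Pr\"ufer-angle version through, you would need to reproduce this per-crossing structure — for example, stopping $\phi$ at each crossing of $\pi\mathbb{Z}$ and bounding the conditional probability of a fast crossing — rather than applying a global exponential martingale estimate on $[0,T]$.
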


\begin{proposition}\label{PropAiry2}
There exist positive constants $K,C,c$ that only depend on $\beta$, such that for any $k,k'\in \mathbb{N}_{+}$ and $\lambda\in\mathbb{R}$ satisfying $k\geq K$, $(\log{k})^3\leq k'\leq k$, and $\gamma_k\leq \lambda<\gamma_{k+1}$, we have
\begin{equation}
    \mathbb{P}(|N(\lambda)-N_0(\lambda)|\geq k')\leq \exp(Ck\log{k})\exp\bigg(-
    \frac{c(k')^2}{\log(2k\slash k')}\bigg).
\end{equation}
\end{proposition}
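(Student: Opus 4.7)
The plan is to work in the Riccati diffusion framework of \cite{RRV}: $N(\lambda)$ equals the number of blowups to $-\infty$ of the SDE $dp = (x - \lambda - p^2)dx + \frac{2}{\sqrt{\beta}}dB_x$ with $p(0)=+\infty$, while $N_0(\lambda) = k$ is the blowup count of its deterministic counterpart $p_0'(x) = x - \lambda - p_0^2(x)$. By monotonicity in $\lambda$, it suffices to bound the two one-sided events $\{N(\lambda) - N_0(\lambda) \geq k'\}$ and $\{N_0(\lambda) - N(\lambda) \geq k'\}$ separately; the two are symmetric after swapping the direction of the shift introduced below, so I focus on the upper-deviation side.

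Next, I would dyadically decompose the spatial axis: partition the ``oscillation region'' $[0, C\lambda]\subseteq[0, Ck^{2/3}]$ into $M\asymp \log(2k/k')$ dyadic blocks $I_1,\dots,I_M$, and write $N(\lambda) - N_0(\lambda) = \sum_j \Delta_j + \text{tail}$, where $\Delta_j := N(\lambda;I_j) - N_0(\lambda;I_j)$ is the excess blowup count on $I_j$ and the contribution from $[Ck^{2/3},\infty)$ is disposed of by Proposition~\ref{PropAiry1}. For any allocation with $\sum_j k'_j = k'$, the event $\{\sum_j \Delta_j \geq k'\}$ is contained in $\bigcup_j\{\Delta_j \geq k'_j\}$; the factor $1/\log(2k/k')$ in the target exponent will fall out of a Lagrangian optimization of the $k'_j$'s.

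The heart of the proof is a per-block pinching argument: on each $I_j$, I would sandwich the stochastic blowup count $N(\lambda;I_j)$ between the deterministic blowup counts of two spatially-shifted Airy Riccati diffusions $dq^{\pm} = (x \pm c_j - \lambda - (q^{\pm})^2)dx$, with the shift $c_j$ chosen so that the induced change in the deterministic blowup count on $I_j$ equals $k'_j$. The substitution $q(x) = p(x) - \tfrac{2}{\sqrt{\beta}}(B_x - B_{\inf I_j})$ converts the SDE into a pathwise ODE perturbed by the Brownian oscillation; on the ``good event'' $\{\sup_{x\in I_j}|B_x - B_{\inf I_j}|\leq s_j\}$ with $s_j$ proportional to $c_j$, a Bohr--Sommerfeld / WKB computation relating $c_j$ to $k'_j$ through the local blowup density $\sqrt{\lambda-x}/\pi$ supplies the pinch. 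Standard Gaussian tails then control the bad event by $2\exp(-cs_j^2/|I_j|)$, and a second comparison passes from the shifted Airy Riccati diffusion back to the unshifted one (the ``further comparison to Airy operators'' noted in the introduction). Summing over the $M$ blocks under the optimized allocation produces the principal decay $\exp(-ck'^2/\log(2k/k'))$.

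The main obstacle is making the per-block pinch rigorous across the many blowups of $p$: each blowup restarts $p$ at $+\infty$, so the comparison inequality must be propagated across these discontinuities. This requires an inductive scheme indexed by the successive crossing times of $p$ through the curves $p = \pm\sqrt{\lambda - x}$, analogous in spirit to the trapping arguments used in the proof of Proposition~\ref{AiryDecay}, combined with a careful accounting of how the perturbation $\tfrac{2}{\sqrt{\beta}}(B_x - B_{\inf I_j})$ re-enters $p^2$ after each restart. The large prefactor $\exp(Ck\log k)$ absorbs polynomial losses coming from the $\Theta(k)$ re-initializations of the pinch inside the oscillation region, from the union bound over the $M$ dyadic blocks, and from crude deterministic bounds (invoking Proposition~\ref{PropAiry1}) that rule out the extreme tail events in which the total blowup count explodes.
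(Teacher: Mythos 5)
Your high-level plan — dyadic blocks in space, per-block comparison with shifted Airy ODEs, a Lagrangian optimization of the per-block budget $k'_j$, Tracy--Widom control of the far region via Proposition~\ref{PropAiry1}, and a crude prefactor absorbing polynomial losses — tracks the paper's argument closely, and you correctly identify the restart-propagation issue as the main obstacle. The genuine gap is the mechanism you propose on each block: the good event $\{\sup_{x\in I_j}|B_x-B_{\inf I_j}|\le s_j\}$ is at the wrong temporal scale and cannot deliver the target rate. A block $I_j=[\lambda-\gamma_{2\alpha_j},\lambda-\gamma_{\alpha_j})$ with $\alpha_j\asymp 2^jk'$ has length $\asymp\alpha_j^{2/3}$ but contains $\asymp\alpha_j$ blowups of period $\asymp\alpha_j^{-1/3}$ each; the Brownian fluctuation over the whole block is typically $\asymp\alpha_j^{1/3}$, far too large to pinch the count to within $k'_j\ll\alpha_j$. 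So either your good event has probability $\Theta(1)$ (no information), or the cost $\exp(-cs_j^2/|I_j|)$ is of order $\exp(-ck'_j/\alpha_j)$, which does not even decay on the dominant block where $\alpha_j\asymp k$. Optimizing your per-block bound over allocations $\sum k'_j=k'$ cannot produce $\exp(-ck'^2/\log(2k/k'))$; it bottoms out at something like $\exp(-ck'/k)$.

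What is actually needed is Brownian control at the per-blowup scale $\asymp\alpha_j^{-1/3}$, where the typical fluctuation is only $\asymp\alpha_j^{-1/6}$ and Lemma~\ref{LemmaAiry1} gives a tail $\exp(-c\,\delta\,\alpha_j)$ for a fractional blowup-time deviation $\delta$. The paper records such a per-blowup deviation $\delta_{i,j}$ on a dyadic grid, propagates it through the block via the recursion $S_j\le(1+C/\alpha_i)S_{j-1}+C(K/\alpha_i+\alpha_i^{-4/3})$, and then applies Cauchy--Schwarz to $\{\delta_{i,j}\}_j$ \emph{inside each block}, which is precisely what produces the $\exp(-c\delta_i^2)$ per-block bound and, after a second Cauchy--Schwarz over blocks, the $k'^2/\log(2k/k')$ exponent. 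Your sketch flags the restart issue but resolves neither half of the paper's remedy: (i) replacing the true per-block counts $M_i$ by \emph{adapted} counts $M_i'$ from a diffusion re-initialized at $+\infty$ at the left endpoint of each block — costing $\pm1$ per block but gaining independence across blocks, indispensable for multiplying the per-block estimates — and (ii) switching from a whole-block supremum to per-blowup increment control. A secondary omission: you also need to split off a $k'$-dependent middle window $[\lambda-\gamma_{\lfloor k'/d\rfloor},\lambda+\gamma_{\lfloor k'/d\rfloor})$ and treat it separately (the paper's $N_2$ and $\tilde N_2$), since the dyadic blocks closest to $\lambda$ have $\alpha_j$ too small for the per-blowup estimates to take hold.
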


The rest of this section is organized as follows. In Section \ref{Sect.2.2}, we state and prove some preliminary lemmas. Based on these lemmas, we give the proofs of Propositions \ref{PropAiry1} and \ref{PropAiry2} in Sections \ref{Sect.2.3} and \ref{Sect.2.4}, respectively. In Section \ref{Sect.3.4}, we give the proof of parts (b) and (c) of Theorem \ref{Main1} based on Propositions \ref{PropAiry1} and \ref{PropAiry2}.

\subsection{Some preliminary lemmas}\label{Sect.2.2}

In this subsection, we state and prove some preliminary lemmas that will be used in the proofs of Propositions \ref{PropAiry1} and \ref{PropAiry2}.

\begin{lemma}\label{LemmaAiry2}
Assume that $a>0$. Let $q(x),x\geq 0$ be the solution to the following ordinary differential equation:
\begin{equation*}
    q'(x)=x-a-q^2(x), \quad q(0)=\infty.
\end{equation*}
Let $\Delta>0$ be the first time when $q(x)$ blows up to $-\infty$. We have $\Delta\geq\pi\slash\sqrt{a}$. Moreover, if $a\geq (4\pi)^{2\slash 3}$, then $\Delta\leq \pi\slash\sqrt{a-2\pi \slash\sqrt{a}}$.
\end{lemma}
\begin{proof}

Let $r(x),x\geq 0$ be the solution to 
\begin{equation}\label{Eqn4}
    r'(x)=-a-r^2(x), \quad r(0)=\infty.
\end{equation}
Solving (\ref{Eqn4}), we obtain that $r(x)=\sqrt{a}\tan(\pi\slash 2-\sqrt{a}x)$. Hence $r(x)$ has first blow-up time $\pi\slash\sqrt{a}$. For $x\in [0,\pi\slash \sqrt{a}]$, $q(x)$ is lower bounded by $r(x)$, hence $\Delta\geq \pi\slash\sqrt{a}$.

Now suppose $a\geq (4\pi)^{2\slash 3}$. Let $w(x),x\geq 0$ be the solution to
\begin{equation}\label{Eqn9}
    w'(x)=2\pi\slash\sqrt{a}-a-w^2(x),\quad w(0)=\infty.
\end{equation}
Note that our assumption $a\geq (4\pi)^{2\slash 3}$ implies $a-2\pi \slash \sqrt{a} \geq a\slash 4>0$. Solving (\ref{Eqn9}), we obtain that
\begin{equation*}
    w(x)=\sqrt{a-2\pi \slash \sqrt{a}}\tan\Big(\frac{\pi}{2}-\sqrt{a-2\pi \slash\sqrt{a}}x\Big).
\end{equation*}
Hence $w(x)$ has first blow-up time $\pi\slash\sqrt{a-2\pi 
\slash \sqrt{a}}\leq 2\pi\slash\sqrt{a}$. Thus for any $x\in [0,\min\{\Delta,\pi\slash\sqrt{a-2\pi 
\slash \sqrt{a}}\}]$, $q(x)$ is upper bounded by $w(x)$. Hence we have $\Delta\leq \pi\slash\sqrt{a-2\pi \slash \sqrt{a}}$.

\end{proof}

\begin{lemma}\label{LemmaAiry1}
For $a>(12\pi)^{2\slash 3}$, consider the diffusion $p(x),x\geq 0$ given by
\begin{equation*}
   dp(x)=(x-a-p^2(x))dx+\frac{2}{\sqrt{\beta}}dB_x, \quad
p(0)=\infty.
\end{equation*}
Assume that $\epsilon,\delta\in(0,1\slash 3)$. Let $\Delta>0$ be the first time when $p(x)$ blows up to $-\infty$. When the event
\begin{equation*}
    \mathcal{C}_{\epsilon,\delta}:=\bigg\{\sup\limits_{x\in [0, 2\pi\slash\sqrt{a}]}|B_x|\leq\sqrt{\frac{\beta a\epsilon\delta}{8}}\bigg\}
\end{equation*}
holds, we have
\begin{equation*}
    \frac{\pi}{\sqrt{(1+\epsilon)(1+\delta)a}}\leq \Delta\leq\frac{\pi}{\sqrt{(1-\epsilon)((1-\delta)a-2\pi\slash\sqrt{a})}}.
\end{equation*}
Moreover,
\begin{equation*}
    \mathbb{P}(\mathcal{C}_{\epsilon,\delta}^c)\leq 4\exp\Big(-\frac{\beta\delta\epsilon a^{3\slash 2}}{32\pi}\Big).
\end{equation*}
\end{lemma}
\begin{proof}

Let $r(x):=p(x)-(2\slash\sqrt{\beta})B_x$ for all $x\geq 0$. Then $r(x)$ satisfies 
\begin{equation*}
    r'(x)=x-a-\Big(r(x)+\frac{2}{\sqrt{\beta}}B_x\Big)^2,\quad r(0)=\infty.
\end{equation*}
By the AM-GM inequality, we have 
\begin{equation}\label{Eqn20}
    \Big(r(x)+\frac{2}{\sqrt{\beta}}B_x\Big)^2\leq (1+\epsilon)r^2(x)+\Big(1+\frac{1}{\epsilon}\Big)\frac{4B_x^2}{\beta}.
\end{equation}
Thus, when the event $\mathcal{C}_{\epsilon,\delta}$ holds, for any $x\in [0,\min\{2\pi\slash\sqrt{a},\Delta\}]$, we have
\begin{equation}\label{Eqn23}
    x-a-\Big(r(x)+\frac{2}{\sqrt{\beta}}B_x\Big)^2\geq -(1+\delta)a-(1+\epsilon)r^2(x).
\end{equation}
We define $w(x),x\geq 0$ by
\begin{equation}\label{Eqn5}
    w'(x)=-(1+\delta)a-(1+\epsilon)w^2(x),\quad w(0)=\infty.
\end{equation}
Solving (\ref{Eqn5}), we obtain that
\begin{equation*}
    w(x)=\sqrt{\frac{(1+\delta)a}{1+\epsilon}}\tan\Big(\frac{\pi}{2}-\sqrt{(1+\epsilon)(1+\delta)a}x\Big).
\end{equation*}
Hence $w(x)$ has first blow-up time $\pi\slash\sqrt{(1+\epsilon)(1+\delta)a}\leq \pi\slash \sqrt{a}$. By (\ref{Eqn23}), when the event $\mathcal{C}_{\epsilon,\delta}$ holds and $x\in [0,\min\{\pi\slash\sqrt{(1+\epsilon)(1+\delta)a},\Delta\}]$, $r(x)$ is lower bounded by $w(x)$. Hence 
\begin{equation*}
    \Delta\geq\frac{\pi}{\sqrt{(1+\epsilon)(1+\delta)a}}.
\end{equation*}

Now we define $\tilde{w}(x), x\geq 0$ by
\begin{equation}\label{Eqn6}
\tilde{w}'(x)=\frac{2\pi}{\sqrt{a}}-(1-\delta)a-(1-\epsilon)\tilde{w}^2(x), \quad  \tilde{w}(0)=\infty.
\end{equation}
From our assumption $a>(12\pi)^{2\slash 3}$, we can deduce that 
\begin{equation*}
    (1-\delta)a-\frac{2\pi}{\sqrt{a}}\geq \frac{2}{3}a-\frac{2\pi}{\sqrt{a}}\geq \frac{a}{2} >0.
\end{equation*}
Solving (\ref{Eqn6}), we obtain that
\begin{equation*}
    \tilde{w}(x)=\sqrt{\frac{(1-\delta)a-2\pi\slash \sqrt{a}}{1-\epsilon}}\tan\Big(\frac{\pi}{2}-\sqrt{(1-\epsilon)((1-\delta)a-2\pi\slash\sqrt{a})}x\Big).
\end{equation*}
Hence $\tilde{w}(x)$ has first blow-up time $\pi\slash\sqrt{(1-\epsilon)\left((1-\delta)a-2\pi\slash\sqrt{a}\right)}\leq 2\pi\slash\sqrt{a}$. Following a similar argument as in the previous paragraph, we can deduce that when the event $\mathcal{C}_{\epsilon,\delta}$ holds and $x\in [0,\min\{\pi\slash\sqrt{(1-\epsilon)\left((1-\delta)a-2\pi\slash\sqrt{a}\right)},\Delta\}]$, $r(x)$ is upper bounded by $\tilde{w}(x)$. Hence we have
\begin{equation*}
    \Delta\leq\frac{\pi}{\sqrt{(1-\epsilon)((1-\delta)a-2\pi\slash\sqrt{a})}}.
\end{equation*}

By standard estimates for Brownian motion, for any $t\geq 0$, 
\begin{equation}\label{Eqn26}
    \mathbb{P}\Big(\sup_{x\in [0,1]}|B_x|>t\Big)\leq  4 e^{-t^2\slash 2}.
\end{equation}
As $\left(c^{-1\slash 2}B_{cx}, x\geq 0\right)$ is a standard Brownian motion for any $c>0$, we have
\begin{eqnarray*}
\mathbb{P}(\mathcal{C}_{\epsilon,\delta}^c) &=& \mathbb{P}\bigg(\sup_{x\in[0,1]}\{(2\pi \slash\sqrt{a})^{-1 \slash 2}|B_{2\pi x \slash\sqrt{a}}|\}>\sqrt{\frac{\beta\delta\epsilon a^{3\slash 2}}{16\pi}}\bigg) \nonumber\\
&\leq& 4\exp\Big(-\frac{\beta\delta\epsilon a^{3\slash 2}}{32\pi}\Big).
\end{eqnarray*}

\end{proof}

\begin{lemma}\label{LemmaAiry3}
For $a>(32\pi)^{2\slash 3}$, consider the diffusion $p(x),x\geq 0$ given by
\begin{equation*}
    dp(x)=(x-a-p^2(x))dx+\frac{2}{\sqrt{\beta}}dB_x, \quad p(0)=\infty.
\end{equation*}
Let $\Delta>0$ be the first time when $p(x)$ blows up to $-\infty$.
Then for any $M\in\mathbb{R}$ satisfying $\pi\sqrt{a}\leq M\leq a^2\slash 100$,
\begin{equation*}
    \mathbb{P}\Big(\Delta>\frac{4\pi}{\sqrt{a}}+\frac{4M}{a}\Big)\leq 4\exp\Big(-\frac{\beta M a}{64}\Big).
\end{equation*}
\end{lemma}
\begin{proof}

Let $t_1:=\pi\slash\sqrt{a}, t_2:=\pi\slash\sqrt{a}+4M\slash a, t_3:=4\pi\slash\sqrt{a}+4M\slash a\leq 8M\slash a$, and let $\mathcal{C}_M$ be the event that $\sup\limits_{t\in [0,8M\slash a]}|B_t|\leq M\sqrt{\beta}\slash 2$. By (\ref{Eqn26}),
\begin{equation}\label{EL1}
\mathbb{P}(\mathcal{C}_M^c)\leq 4\exp(-\beta M a\slash 64).
\end{equation}

Below we assume that $\mathcal{C}_M$ holds. As $M\leq a^2\slash 100$, we have $t_3\leq a\slash 4$. Let $r(x):=p(x)-(2\slash\sqrt{\beta})B_x$ for all $x\geq 0$. Then $r(x)$ satisfies
\begin{equation*}
    r'(x)=x-a-\Big(r(x)+
    \frac{2}{\sqrt{\beta}} B_x\Big)^2, \quad r(0)=\infty.
\end{equation*}
Note that $r(x)$ is monotone decreasing on $[0,t_3]$ until it blows up. We define $w(x),x\geq 0$ by
\begin{equation*}
    w'(x)=-\frac{3}{4}a-\left(w(x)-M\right)^2, \quad w(0)=\infty.
\end{equation*}
Then 
\begin{equation*}
    w(x)=M+\frac{\sqrt{3a}}{2}\tan   \Big(\frac{\pi}{2}-\frac{\sqrt{3a}}{2}x\Big).
\end{equation*}
Note that $w(x)$ does not blow up on $[0,t_1]$ and $w(t_1)<M$.

Suppose that $r(x)$ does not blow up on $[0,t_1]$ and $r(t_1)\geq M$. Then on $[0,t_1]$, $r(x)$ is dominated by $w(x)$. However, $w(t_1)<M\leq r(t_1)$, which leads to a contradiction. Hence either $r(x)$ blows up on $[0,t_1]$, or $r(x)$ does not blow up on $[0,t_1]$ and $r(t_1)<M$. In the former case, we have $\Delta\leq t_1\leq t_3$. Below we assume that the latter case holds. Note that for $x\in [t_1,t_2]$,
\begin{equation*}
    x-a-\Big(r(x)+\frac{2}{\sqrt{\beta}}B_x\Big)^2\leq -\frac{3}{4}a.
\end{equation*}
Hence either $r(x)$ blows up on $[t_1,t_2]$ (which leads to $\Delta\leq t_2\leq t_3$), or $r(x)$ does not blow up on $[t_1,t_2]$ and
\begin{equation*}
    r(t_2)\leq r(t_1)-\frac{3}{4}a(t_2-t_1)<-M.
\end{equation*}
Below we consider the latter case. Let $s(x),x\in [t_2,t_3]$ be defined by 
\begin{equation*}
    s'(x)=-\frac{3}{4}a-(s(x)+M)^2, \quad s(t_2)=r(t_2).
\end{equation*}
Note that for $x\in [t_2,t_3]$, we have
\begin{equation*}
    s(x)=-M+\frac{\sqrt{3a}}{2}\tan\Big(\arctan\Big(\frac{2}{\sqrt{3a}}(r(t_2)+M)\Big)-\frac{\sqrt{3a}}{2}(x-t_2)\Big).
\end{equation*}
On $[t_2,\min\{t_3,\Delta\}]$, $r(x)$ is dominated by $s(x)$. As $s(x)$ blows up on $[t_2,t_3]$, $r(x)$ also blows up on $[t_2,t_3]$. Hence $\Delta\leq t_3$.

We conclude that $\mathcal{C}_M\subseteq\{\Delta\leq t_3\}$. Noting (\ref{EL1}), we obtain the conclusion of the lemma.

\end{proof}

\subsection{Proof of Proposition \ref{PropAiry1}}\label{Sect.2.3}

In this subsection, we give the proof of Proposition \ref{PropAiry1}.

\begin{proof}[Proof of Proposition \ref{PropAiry1}]

We assume that $\lambda\geq 15$. Let $N_1$ and $N_2$ be the number of blow-ups of the diffusion $p_{\lambda}(x)$ (defined in (\ref{Eqn16})) that lie entirely in the intervals $[0,2\lambda]$ and $[2\lambda,\infty)$, respectively. Then by Proposition \ref{Diffu}, $N(\lambda)\leq N_1+N_2+1$. We define the stopping times $\tau_1\leq\tau_2\leq\cdots$ to be the blow-up times of $p_{\lambda}(x)$ in $[0,\infty)$, and define the stopping times $\tau_1'\leq \tau_2'\leq \cdots$ to be the blow-up times of $p_{\lambda}(x)$ in $[2\lambda,\infty)$. Here, if $p_{\lambda}(x)$ has $j$ blow-ups in $[0,\infty)$ in total, we define $\tau_{j+1}=\tau_{j+2}=\cdots=\infty$; a similar rule applies to $\{\tau_i'\}_{i\geq 1}$. We let $\tau_0:=0$. We also let $\Delta_i=\tau_{i+1}-\tau_i$ for every $i\in\{0,1,2,\cdots\}$ such that $\tau_i<\infty$ and $\Delta_i=0$ for every $i\in\{0,1,2,\cdots\}$ such that $\tau_i=\infty$.

We define the following events:
\begin{equation*}
    \mathcal{C}_1:=\Big\{N_1\geq \frac{1}{4}\eta\lambda^{3\slash 2}\Big\}, \quad \mathcal{C}_2:=\Big\{N_2\geq \frac{1}{4}\eta \lambda^{3\slash 2}\Big\}.
\end{equation*}
Then we have
\begin{equation}\label{Eqn32}
    \mathbb{P}(N(\lambda)\geq \eta \lambda^{3\slash 2})\leq \mathbb{P}(\mathcal{C}_1)+\mathbb{P}(\mathcal{C}_2).
\end{equation}

Now we define the following events for every $i\in\mathbb{N}$:
\begin{equation*}
\mathcal{A}_i:=\Big\{\tau_i<\infty,\Delta_i\geq \frac{4\pi}{5\sqrt{\lambda}}\Big\}\cup\{\tau_i=\infty\},
\end{equation*}
\begin{equation*}
\mathcal{A}_i':=\Big\{\tau_i<\infty,\sup_{x\in [0,2\pi\slash \sqrt{\lambda}]}|B_{x+\tau_i}-B_{\tau_i}|\leq \frac{1}{8}\sqrt{\frac{\beta \lambda}{2}}\Big\}\cup\{\tau_i=\infty\}.
\end{equation*}
For each $i\in\mathbb{N}$, if $\tau_i<\infty$, we define $q(x),x\geq 0$ by \begin{equation}
    dq(x)=(x-\lambda-q^2(x))dx+\frac{2}{\sqrt{\beta}}d\tilde{B}_x, \quad q(0)=\infty,
\end{equation}
where $\tilde{B}_x:=B_{x+\tau_i}-B_{\tau_i}$. Note that if $\tau_i<\infty$, for $x\in [0,\tau_{i+1}-\tau_i]$ and before $q(x)$ blows up, $q(x)$ is a lower bound on $p_{\lambda}(x+\tau_i)$ due to the following argument: Taking
\begin{equation*}
    h_1(x):=p_{\lambda}(x+\tau_i)-\frac{2}{\sqrt{\beta}}\tilde{B}_x, \quad h_2(x):=q(x)-\frac{2}{\sqrt{\beta}}\tilde{B}_x
\end{equation*}
for any $x\geq 0$, we can deduce that
\begin{equation*}
    h_1'(x)=x+\tau_i-\lambda-\bigg(h_1(x)+\frac{2}{\sqrt{\beta}}\tilde{B}_x\bigg)^2, \quad h_1(0)=\infty,
\end{equation*}
\begin{equation*}
    h_2'(x)=x-\lambda-\bigg(h_2(x)+\frac{2}{\sqrt{\beta}}\tilde{B}_x\bigg)^2, \quad h_2(0)=\infty,
\end{equation*}
hence before $h_1(x)$ or $h_2(x)$ blows up, $h_2(x)$ is a lower bound on $h_1(x)$, and $q(x)$ is a lower bound on $p_{\lambda}(x+\tau_i)$. Therefore, by Lemma \ref{LemmaAiry1} (applied to $q(x)$, with $\epsilon=\delta=1\slash 4$ and $a=\lambda$) and the strong Markov property, for any $i\in\mathbb{N}$, we have
\begin{equation}
    \mathbb{P}(\mathcal{A}_i^c|\mathcal{F}_{\tau_i})\leq \mathbb{P}((\mathcal{A}_i')^c|\mathcal{F}_{\tau_i})\leq 4\exp\Big(-\frac{\beta \lambda^{3\slash 2}}{512\pi}\Big).
\end{equation}
Below, we let 
\begin{equation*}
    P_0:=4\exp\Big(-\frac{\beta \lambda^{3\slash 2}}{512\pi}\Big).
\end{equation*}

Let $l_0:=\lceil \eta \lambda^{3\slash 2} \slash 4 \rceil$ and $l:=\lfloor (\eta\slash 4-5\pi^{-1})\lambda^{3\slash 2} \rfloor \geq \eta \lambda^{3\slash 2}\slash 8$. We have
\begin{equation}\label{Eqn30}
    \mathcal{C}_1\subseteq \bigcup_{0\leq i_1<\cdots<i_l\leq l_0-1}\mathcal{A}_{i_1}^c\cap\cdots\cap\mathcal{A}_{i_l}^c.
\end{equation}
This is because if the event on the right-hand side of (\ref{Eqn30}) does not hold and the event on the left-hand side of (\ref{Eqn30}) holds, then we have
\begin{equation*}
    \sum_{i=0}^{l_0-1}\Delta_i\geq \frac{5}{\pi}\lambda^{3\slash 2}\frac{4\pi}{5\sqrt{\lambda}}=4\lambda,
\end{equation*}
\begin{equation*}
    \sum_{i=0}^{l_0-1}\Delta_i\leq 2\lambda,
\end{equation*}
which lead to a contradiction. Now by iterative conditioning, for any choice of $(i_1,\cdots,i_l)\in\{0,1,\cdots\}^l$ such that
\begin{equation}\label{Eqn7}
    0\leq i_1<\cdots<i_l\leq l_0-1,
\end{equation}
we have
\begin{eqnarray}\label{Iteratively}
\mathbb{P}(\mathcal{A}_{i_1}^c\cap\cdots\cap\mathcal{A}_{i_l}^c)&=&\mathbb{E}[\mathbb{E}[\mathbbm{1}_{\mathcal{A}_{i_1}^c}\cdots\mathbb{E}[\mathbbm{1}_{\mathcal{A}_{i_l}^c}|\mathcal{F}_{\tau_{i_l}}]\cdots|\mathcal{F}_{\tau_{i_1}}]]\nonumber\\
&\leq& P_0 \mathbb{E}[\mathbb{E}[\mathbbm{1}_{\mathcal{A}_{i_1}^c}\cdots\mathbb{E}[\mathbbm{1}_{\mathcal{A}_{i_{l-1}}^c}|\mathcal{F}_{\tau_{i_{l-1}}}]\cdots|\mathcal{F}_{\tau_{i_1}}]]\nonumber\\
&\leq&\cdots\leq P_0^l.
\end{eqnarray}
Now note that the number of choices of $(i_1,\cdots,i_l)\in\{0,1,\cdots\}^l$ that satisfies (\ref{Eqn7}) is $\binom{l_0}{l}\leq 2^{l_0}$. Hence by the union bound,
\begin{equation}\label{Eq1}
    \mathbb{P}(\mathcal{C}_1)\leq 2^{l_0}P_0^l\leq 8^{\eta \lambda^{3\slash 2}\slash 2}\exp\Big(-\frac{\beta\eta \lambda^3}{4096\pi}\Big).
\end{equation}

Now for every $i\in\mathbb{N}_{+}$, we define
\begin{equation*}
    \mathcal{B}_i:=\{\tau_{i+1}'<\infty\}.
\end{equation*}
For each $i\in\mathbb{N}_{+}$, if $\tau_i'<\infty$, we define $r(x),x\geq 0$ by
\begin{equation}
    dr(x)=(x+\lambda-r^2(x))dx+\frac{2}{\sqrt{\beta}}d\tilde{B}_x, \quad r(0)=\infty,
\end{equation}
where $\tilde{B}_x:=B_{x+\tau_i'}-B_{\tau_i'}$. Note that if $\tau_i'<\infty$, for $x\in [0,\tau_{i+1}'-\tau_i']$ and before $r(x)$ blows up, $r(x)$ is a lower bound on $p_{\lambda}(x+\tau_i')$ due to the following argument: Taking
\begin{equation*}
    \tilde{h}_1(x):=p_{\lambda}(x+\tau_i')-\frac{2}{\sqrt{\beta}}\tilde{B}_x, \quad \tilde{h}_2(x):=r(x)-\frac{2}{\sqrt{\beta}}\tilde{B}_x
\end{equation*}
for any $x\geq 0$, we can deduce that 
\begin{equation*}
    \tilde{h}_1'(x)=x+\tau_i'-\lambda-\bigg(\tilde{h}_1(x)+\frac{2}{\sqrt{\beta}}\tilde{B}_x\bigg)^2,\quad \tilde{h}_1(0)=\infty,
\end{equation*}
\begin{equation*}
    \tilde{h}_2'(x)=x+\lambda-\bigg(\tilde{h}_2(x)+\frac{2}{\sqrt{\beta}}\tilde{B}_x\bigg)^2,\quad \tilde{h}_2(0)=\infty,
\end{equation*}
hence noting that $\tau_i'\geq 2\lambda$, we conclude that before $\tilde{h}_1(x)$ or $\tilde{h}_2(x)$ blows up, $\tilde{h}_2(x)$ is a lower bound on $\tilde{h}_1(x)$, and $r(x)$ is a lower bound on $p_{\lambda}(x+\tau_i')$. Therefore, by Corollary \ref{Tails} and the strong Markov property, for any $i\in\mathbb{N}_{+}$, we have
\begin{eqnarray}
   &&\mathbb{P}(\mathcal{B}_i|\mathcal{F}_{\tau_i'})=\mathbbm{1}_{\tau_i'<\infty}\mathbb{P}(\tau'_{i+1}<\infty|\mathcal{F}_{\tau_i'})\nonumber\\
   &\leq& \mathbbm{1}_{\tau_i'<\infty} \mathbb{P}(r(x)\text{ blows up in a finite time}|\mathcal{F}_{\tau_i'})\nonumber\\
   &\leq& \mathbb{P}(p_{-\lambda}(x)\text{ blows up in a finite time}) \leq C\exp\Big(-\frac{2}{3}\beta \lambda^{3\slash 2}\Big).
\end{eqnarray}
By iterative conditioning, we get
\begin{eqnarray}\label{Eq2}
\mathbb{P}(\mathcal{C}_2)&\leq&\mathbb{P}(\tau_1'<\infty,\cdots,\tau_{l_0+1}'<\infty)\nonumber\\
&\leq&\mathbb{E}[\mathbb{E}[\mathbbm{1}_{\mathcal{B}_{1}}\cdots\mathbb{E}[\mathbbm{1}_{\mathcal{B}_{l_0}}|\mathcal{F}_{\tau_{l_0}'}]\cdots|\mathcal{F}_{\tau_{1}'}]]\nonumber\\
&\leq& C^{\eta \lambda^{3\slash 2} \slash 4}\exp\Big(-\frac{1}{6}\beta \eta  \lambda^3\Big).
\end{eqnarray}

By (\ref{Eqn32}), (\ref{Eq1}), and (\ref{Eq2}), we obtain the conclusion of the proposition.

\end{proof}

\subsection{Proof of Proposition \ref{PropAiry2}}\label{Sect.2.4}

In this subsection, we give the proof of Proposition \ref{PropAiry2}.

\begin{proof}[Proof of Proposition \ref{PropAiry2}]
The proof is based on dyadic decomposition and comparison with the Airy operator. We recall from Section \ref{Sect.n1} that the eigenvalues of the Airy operator are denoted by $\gamma_1<\gamma_2<\cdots$. We also note that the number of eigenvalues of the Airy operator that are less than or equal to $\lambda$ is equal to the number of blow-ups of $q(x),x\geq 0$, where $q(x)$ is defined by
\begin{equation*}
    q'(x)=x-\lambda-q^2(x), \quad q(0)=\infty.
\end{equation*}
In the following, we assume that $K\geq 1000$ and is sufficiently large (depending on $\beta$).

We fix $d=120$, and denote by $N_1,N_2,N_3$ the number of blow-ups of $p_{\lambda}(x)$ (as defined in (\ref{Eqn16})) in the intervals
\begin{equation*}
    [0,\lambda-\gamma_{\lfloor k'\slash d \rfloor}], \quad (\lambda-\gamma_{\lfloor k'\slash d \rfloor},\lambda+\gamma_{\lfloor k'\slash d \rfloor}], \quad (\lambda+\gamma_{\lfloor k'\slash d \rfloor},\infty),
\end{equation*}
respectively. We also denote by $\tilde{N}_1,\tilde{N}_2,\tilde{N}_3$ the number of blow-ups of $q(x)$ in the above three intervals, respectively. By Proposition \ref{Diffu}, $N(\lambda)=N_1+N_2+N_3$ and $N_0(\lambda)=\tilde{N}_1+\tilde{N}_2+\tilde{N}_3$. 

Taking $u(x):=q(x+\lambda-\gamma_{\lfloor k'\slash d\rfloor})$ for $x\geq 0$, we obtain that
\begin{equation*}
    u'(x)=x-\gamma_{\lfloor k'\slash d\rfloor}-u^2(x), \quad u(0)=q(\lambda-\gamma_{\lfloor k'\slash d\rfloor}). 
\end{equation*}
Note that $\tilde{N}_2+\tilde{N}_3$ is equal to the number of blow-ups of $u(x)$ on $[0,\infty)$. Now we define $\tilde{u}(x),x\geq 0$ by
\begin{equation*}
    \tilde{u}'(x)=x-\gamma_{\lfloor k'\slash d\rfloor}-\tilde{u}^2(x), \quad \tilde{u}(0)=\infty,
\end{equation*}
and denote by $\tilde{N}$ the number of blow-ups of $\tilde{u}(x)$. By the above discussion, $\tilde{N}$ is equal to the number of eigenvalues of the Airy operator that are less than or equal to $\gamma_{\lfloor k'\slash d\rfloor}$, hence $\tilde{N}=\lfloor k'\slash d\rfloor$. Thus by monotonicity,
\begin{equation}\label{nnq4}
    \tilde{N}_2+\tilde{N}_3\leq \tilde{N}+1=\lfloor k'\slash d\rfloor+1\leq k'\slash 20.
\end{equation}

Below, we bound $\mathbb{P}(N_2\geq k'\slash 4)$, $\mathbb{P}(N_3\geq k'\slash 4)$, and $\mathbb{P}(|N_1-\tilde{N}_1|\geq k'\slash 4)$ in \textbf{Steps 1-3}, respectively.

\paragraph{Step 1}

In this step, we bound $\mathbb{P}(N_2\geq k'\slash 4)$. We let $\tau_1'<\cdots<\tau_{\lceil k'\slash 4\rceil} '$ be the first $\lceil k'\slash 4\rceil$ blow-up times of the diffusion $p_{\lambda}(x)$ in $(\lambda-\gamma_{\lfloor k'\slash d\rfloor},\lambda+\gamma_{\lfloor k'\slash d\rfloor}]$ (if the total number of blow-ups of $p_{\lambda}(x)$ in $(\lambda-\gamma_{\lfloor k'\slash d\rfloor},\lambda+\gamma_{\lfloor k'\slash d\rfloor}]$ is $j<\lceil k'\slash 4\rceil$, we let $\tau'_{j+1}=\cdots=\tau_{\lceil k'\slash 4 \rceil} '=\infty$). For any $i\in\{1,\cdots,\lceil k'\slash 4\rceil-1\}$ such that $\tau_i'<\infty$, we let $\Delta_i'=\tau'_{i+1}-\tau'_i$; for any $i\in\{1,\cdots,\lceil k'\slash 4\rceil-1\}$ such that $\tau_i'=\infty$, we let $\Delta_i'=0$.

We note that by Proposition \ref{Airyop}, we can take $K$ sufficiently large such that (note that $k'\geq(\log{k})^3\geq (\log{K})^3$)
\begin{equation}\label{Eqn40}
  \frac{3\pi k'}{4d}\leq (\gamma_{\lfloor k'\slash d\rfloor})^{3\slash 2}\leq \frac{3\pi k'}{d}.
\end{equation}

For every $i\in\{1,\cdots,\lceil k'\slash 4\rceil-1\}$, we define 
\begin{equation}
   \mathcal{K}_i:=\{\tau_i'<\infty, \Delta_i'\geq 5(k')^{-1\slash 3}\}\cup \{\tau_i'=\infty\}.
\end{equation}
For each $i\in\{1,\cdots,\lceil k'\slash 4\rceil-1\}$, if $\tau_i'<\infty$, we define $r(x),x\geq 0$ by 
\begin{equation*}
    dr(x)=(x-\gamma_{\lfloor k'\slash d\rfloor}-r^2(x))dx+\frac{2}{\sqrt{\beta}}d\tilde{B}_x, \quad r(0)=\infty,
\end{equation*}
where $\tilde{B}_x:=B_{x+\tau_i'}-B_{\tau_i'}$. Note that if $\tau_i'<\infty$, for $x\geq 0$, before $p_{\lambda}(x+\tau_i')$ or $r(x)$ blows up, $p_{\lambda}(x+\tau'_i)$ is lower bounded by $r(x)$ due to the following argument: Taking
\begin{equation*}
    h_1(x):=p_{\lambda}(x+\tau'_i)-\frac{2}{\sqrt{\beta}}\tilde{B}_x, \quad h_2(x):=r(x)-\frac{2}{\sqrt{\beta}}\tilde{B}_x
\end{equation*}
for any $x\geq 0$, we can deduce that
\begin{equation*}
    h_1'(x)=x+\tau_i'-\lambda-\bigg(h_1(x)+\frac{2}{\sqrt{\beta}}\tilde{B}_x\bigg)^2,\quad h_1(0)=\infty,
\end{equation*}
\begin{equation*}
    h_2'(x)=x-\gamma_{\lfloor k'\slash d\rfloor}-\bigg(h_2(x)+\frac{2}{\sqrt{\beta}}\tilde{B}_x\bigg)^2,\quad h_2(0)=\infty,
\end{equation*}
hence noting that $\tau_i'\geq \lambda-\gamma_{\lfloor k'\slash d \rfloor}$, we conclude that before $h_1(x)$ or $h_2(x)$ blows up, $h_2(x)$ is a lower bound on $h_1(x)$, and $r(x)$ is a lower bound on $p_{\lambda}(x+\tau_i')$. Therefore, by Lemma \ref{LemmaAiry1} (applied to $r(x)$, with $\epsilon=\delta=1\slash 4$ and $a=\gamma_{\lfloor k'\slash d\rfloor}$) and the strong Markov property, we have
\begin{equation*}
    \mathbb{P}\Big(\Delta_i'<\frac{4\pi}{5}(\gamma_{\lfloor  k'\slash d 
 \rfloor})^{-1\slash 2}\Big|\mathcal{F}_{\tau_i'}\Big)\mathbbm{1}_{\tau_i'<\infty} \leq 4\exp\bigg(-\frac{\beta}{512\pi}(\gamma_{\lfloor  k'\slash d \rfloor})^{3\slash 2}\bigg).
\end{equation*}
By (\ref{Eqn40}), we have
\begin{equation*}
    \mathbb{P}(\mathcal{K}_i^c|\mathcal{F}_{\tau_i'})\leq C_1\exp(-c_1 k'),
\end{equation*}
where $C_1,c_1$ are positive constants that only depend on $\beta$. 

Now we let $l_0:=\lceil k'\slash 4 \rceil$ and $l:=\lfloor k'\slash 20 \rfloor$. As $K\geq 1000$, we have that $k'\geq(\log{k})^3\geq (\log{K})^3 \geq 300$. Note that we have
\begin{equation}\label{Eqn42}
    \Big\{N_2\geq \frac{k'}{4}\Big\} \subseteq \bigcup_{1\leq i_1<\cdots<i_l\leq l_0-1} \mathcal{K}_{i_1}^c\cap\cdots \cap \mathcal{K}_{i_l}^c.
\end{equation}
This is because if the event on the right-hand side of (\ref{Eqn42}) does not hold and the event on the left-hand side of (\ref{Eqn42}) holds, then by (\ref{Eqn40}), we have 
\begin{equation*}
    \sum_{i=1}^{l_0-1} \Delta_i'\geq \Big(\frac{k'}{4}-\frac{k'}{20}-1\Big)(5(k')^{-1\slash 3}) \geq \frac{5}{6}(k')^{2\slash 3}.
\end{equation*}
\begin{equation*}
    \sum_{i=1}^{l_0-1}\Delta_i'\leq 2\gamma_{\lfloor k'\slash d \rfloor}\leq 2\Big(\frac{3\pi k'}{d}\Big)^{2\slash 3}\leq \frac{1}{2}(k')^{2\slash 3}.
\end{equation*}
The above two equations lead to a contradiction. By iterative conditioning as in (\ref{Iteratively}), for any $(i_1,\cdots,i_l)\in \{1,2,\cdots\}^l$ such that $1\leq i_1<\cdots<i_l\leq l_0-1$, we have 
\begin{eqnarray*}
   \mathbb{P}(\mathcal{K}_{i_1}^c\cap\cdots \cap    \mathcal{K}_{i_l}^c)\leq C_2\exp(-c_2(k')^2),
\end{eqnarray*}
where $C_2,c_2$ are positive constants that only depend on $\beta$. Hence by the union bound,
\begin{equation}\label{nn1q}
    \mathbb{P}(N_2\geq  k'\slash 4)  \leq 2^{l_0-1} C_2\exp(-c_2(k')^2)\leq C\exp(-c(k')^2),
\end{equation}
where $C,c$ are positive constants that only depend on $\beta$.

\paragraph{Step 2}

In this step, we bound $\mathbb{P}(N_3\geq k'\slash 4)$. We let $\tau''_1<\cdots<\tau''_{\lceil k'\slash 4\rceil}$ be the first $\lceil k'\slash 4\rceil$ blow-up times of $p_{\lambda}(x)$ in $(\lambda+\gamma_{\lfloor k'\slash d\rfloor},\infty)$ (if the total number of blow-ups of $p_{\lambda}(x)$ in $(\lambda+\gamma_{\lfloor k'\slash d\rfloor},\infty)$ is $j<\lceil k'\slash 4\rceil$, we let $\tau''_{j+1}=\cdots=\tau''_{\lceil k'\slash 4\rceil}=\infty$). 

For every $i\in\{1,\cdots,\lceil k'\slash 
 4\rceil-1\}$, we define 
\begin{equation*}
    \mathcal{K}_i':=\{\tau_{i+1}''<\infty\}.
\end{equation*}
For each $i\in\{1,\cdots,\lceil k'\slash 4 \rceil-1\}$, if $\tau_i''<\infty$, we define $w(x), x\geq 0$ by
\begin{equation*}
    dw(x)=(x+\gamma_{\lfloor k'\slash d \rfloor}-w^2(x))dx+\frac{2}{\sqrt{\beta}}d\tilde{B}_x, \quad w(0)=\infty,
\end{equation*}
where $\tilde{B}_x:=B_{x+\tau''_i}-B_{\tau''_i}$. If $\tau_i''<\infty$, arguing similarly as in \textbf{Step 1} and noting that $\tau_i''\geq \lambda+\gamma_{\lfloor k'\slash d  \rfloor}$, we can deduce that for $x\geq 0$, before $p_{\lambda}(x+\tau_i'')$ or $w(x)$ blows up, $p_{\lambda}(x+\tau_i'')$ is lower bounded by $w(x)$. Therefore, by Corollary \ref{Tails}, the strong Markov property, and (\ref{Eqn40}), we have
\begin{eqnarray*}
&& \mathbb{P}(\mathcal{K}_i'|\mathcal{F}_{\tau''_i})=\mathbbm{1}_{\tau_i''<\infty}\mathbb{P}(\tau''_{i+1}<\infty|\mathcal{F}_{\tau_i''})\nonumber\\
&\leq& \mathbbm{1}_{\tau_i''<\infty}\mathbb{P}(w(x)\text{ blows up in a finite time}|\mathcal{F}_{\tau_i''}) \\
&\leq& \mathbb{P}(p_{-\gamma_{\lfloor k'\slash d\rfloor}}(x)\text{ blows up in a finite time})\\
&\leq& C_3\exp\Big(-\frac{2}{3}\beta (\gamma_{\lfloor k'\slash d\rfloor})^{3\slash 2}\Big)\leq C_3\exp(-c_3 k'),
\end{eqnarray*}
where $C_3,c_3$ are positive constants that only depend on $\beta$.
By iterative conditioning as in (\ref{Eq2}), we obtain that
\begin{eqnarray}\label{nnq2}
   && \mathbb{P}(N_3\geq k'\slash 4)\leq \mathbb{P}(\tau''_1<\infty,\cdots,\tau''_{\lceil k'\slash 4 \rceil}<\infty) \nonumber\\
   &\leq& (C_3\exp(-c_3 k'))^{\lceil k'\slash 4\rceil-1} \leq C\exp(-c(k')^2),
\end{eqnarray}
where $C,c$ are positive constants that only depend on $\beta$.

\paragraph{Step 3}

In this step, we bound $\mathbb{P}(|N_1-\tilde{N}_1|\geq k'\slash 4)$. This step consists of the following three sub-steps:
\begin{itemize}
    \item Sub-step 3.1: Decompose the interval $[0,\lambda-\gamma_{\lfloor k'\slash d\rfloor}]$ into disjoint sub-intervals $\{J_i\}_{i=0}^I$, and relate $\mathbb{P}(|N_1-\tilde{N}_1|\geq k'\slash 4)$ to the probability of certain events $\mathcal{C}_{i,\delta_i}$ with $i\in \{0,\cdots,I\}$ using dyadic decomposition (see below for the definition of $\mathcal{C}_{i,\delta_i}$).
    \item Sub-step 3.2: Provide an upper bound on $\mathbb{P}(\mathcal{C}_{i,\delta_i})$ using dyadic decomposition and the preliminary lemmas in Section \ref{Sect.2.2}.
    \item Sub-step 3.3: Use the upper bound obtained in Sub-step 3.2 to provide an upper bound on $\mathbb{P}(|N_1-\tilde{N}_1|\geq k'\slash 4)$.
\end{itemize}

\subparagraph{Sub-step 3.1}

First we decompose the interval $[0,\lambda-\gamma_{\lfloor k'\slash d\rfloor}]$ as follows. We let $I\in\mathbb{N}_{+}$ be such that $\gamma_{\lfloor 2^{I+1}k'\slash d \rfloor}\leq \lambda\leq \gamma_{\lfloor 2^{I+2}k'\slash d\rfloor}$. Note that
\begin{equation*}
    \Big|I+1-\frac{\log(kd\slash k')}{ \log(2)}\Big|\leq 1.
\end{equation*}
We denote $J_i:=(\lambda-\gamma_{\lfloor 2^{i+1}k'\slash d\rfloor},\lambda-\gamma_{\lfloor 2^{i}k'\slash d\rfloor}]$ for each $i\in\{0,1,\cdots,I-1\}$, and denote $J_{I}:=[0,\lambda-\gamma_{{\lfloor 2^{I}k'\slash d\rfloor}}]$. Below we consider any $i\in\{0,1,\cdots,  I\}$. We denote by $a_i$ and $b_i$ the left and right endpoints of $J_i$, respectively. We denote by $M_i$ and $\tilde{M}_i$ the number of blow-ups of $p_{\lambda}(x)$ and $q(x)$, respectively, in $J_i$. We define $u_i(x)$ and $v_i(x)$ for $x\geq a_i$ by   
\begin{equation*}
    du_i(x)=(x-\lambda-u_i^2(x))dx+\frac{2}{\sqrt{\beta}}dB_x, \quad u_i(a_i)=\infty,
\end{equation*}
\begin{equation*}
    v_i'(x)=x-\lambda-v_i^2(x), \quad  v_i(a_i)=\infty,
\end{equation*}
and denote by $M_i'$ and $\tilde{M}_i'$ the number of blow-ups of $u_i(x)$ and $v_i(x)$, respectively, in $J_i$. By monotonicity, we have $|M_i-M_i'|\leq 1$ and $|\tilde{M}_i-\tilde{M}_i'|\leq 1$. We also note that the random variables $\{M_i'\}_{i=0}^{I}$ are mutually independent. For each $i\in\{0,1,\cdots,I\}$, we let $\Delta_i:=|M_i'-\tilde{M}_i'|$, $\alpha_i:=2^i k'\slash d$, and $t_i:=\tilde{M}_i'$. By Lemma \ref{LemmaAiry2}, there exist positive absolute constants $c_1',c_2'$, such that $c_1'\alpha_i\leq t_i\leq c_2'\alpha_i$ for all $i\in\{0,1,\cdots, I\}$. As 
\begin{equation*}
    |M_i-\tilde{M}_i|\leq |M_i-M_i'|+|M_i'-\tilde{M}_i'|+|\tilde{M}_i'-\tilde{M}_i|\leq \Delta_i+2,
\end{equation*}
when the event $\{|N_1-\tilde{N}_1|\geq k'\slash 4\}$ holds, we have
\begin{equation*}
    k'\slash 4\leq |N_1-\tilde{N}_1|\leq \sum_{i=0}^{I}\Delta_i+2(I+1).
\end{equation*}
For $k\geq K$ sufficiently large, this leads to $\sum_{i=0}^{
I}\Delta_i\geq k'\slash 8$.

Now we perform a dyadic decomposition. We take $M$ sufficiently large (depending on $\beta$); note that by Lemma \ref{LemmaAiry1}, for any $i\in\{0,1,\cdots,I\}$, we have
\begin{equation*}
    \mathbb{P}(\Delta_i\geq Mk)\leq \mathbb{P}(M_i'\geq Mk)\leq C\exp(-cMk^2).
\end{equation*}
We let $\mathcal{D}_1$ be the event that $\Delta_i\geq Mk$ for some $i\in\{0,1,\cdots,  I\}$. By the union bound, we have
\begin{equation}\label{EM5}
    \mathbb{P}(\mathcal{D}_1)\leq Ck\exp(-cMk^2)\leq C\exp\bigg(-\frac{c (k')^2}{\log(2k\slash k')}\bigg).
\end{equation}
Let $\Upsilon:=\{2^l:l\in\mathbb{N},l\leq \lceil\log_2(Mk)\rceil-1\}\cup\{0\}$. Consider any $i\in\{0,1,\cdots,I\}$. For any $\delta_i\in\Upsilon\backslash\{0\}$, we define $\mathcal{C}_{i,\delta_i}$ to be the event that $\delta_i\leq \Delta_i\leq 2\delta_i$; for $\delta_i=0$, we define $\mathcal{C}_{i,\delta_i}$ to be the event that $\Delta_i=0$. We also define
\begin{equation*}
 \mathcal{D}_2:=\bigcup_{(\delta_i)_{i=0}^I\in \Upsilon^{I+1}: \sum_{i=0}^I\delta_i\geq k'\slash 16}\bigcap_{i=0}^I \mathcal{C}_{i,\delta_i}.
\end{equation*} 

From the analysis above, we have 
\begin{equation}\label{EM1}
    \{|N_1-\tilde{N}_1|\geq k'\slash 4\}\subseteq \mathcal{D}_1\cup \mathcal{D}_2.
\end{equation}
Note that by independence, for any $\delta_0,\cdots,\delta_I\in \Upsilon$,
\begin{equation}\label{EM2}
    \mathbb{P}\Big(\bigcap_{i=0}^I\mathcal{C}_{i,\delta_i}\Big)=\prod_{i=0}^{I}\mathbb{P}(\mathcal{C}_{i,\delta_i}).
\end{equation}

\subparagraph{Sub-step 3.2}

In the following, we consider any $i\in\{0,1,\cdots,I\}$ and any $\delta_i\in\Upsilon$, and proceed to bound $\mathbb{P}(\mathcal{C}_{i,\delta_i})$. We define
\begin{equation*}
    \mathcal{A}_{i,\delta_i}:=\mathcal{C}_{i,\delta_i}\cap\{M_i'\geq \tilde{M}_i'\}, \quad  \mathcal{B}_{i,\delta_i}:=\mathcal{C}_{i,\delta_i}\cap\{M_i'<\tilde{M}_i'\}.
\end{equation*}
Note that $\mathcal{C}_{i,\delta_i}=\mathcal{A}_{i,\delta_i}\cup\mathcal{B}_{i,\delta_i}$.

We bound $\mathbb{P}(\mathcal{A}_{i,\delta_i})$ as follows. We denote by $\tau_1<\tau_2<\cdots <\tau_{t_i+\delta_i}$ the first $t_i+\delta_i$ blow-up times of $u_i(x)$ in $J_i$ (if the total number of blow-ups of $u_i(x)$ in $J_i$ is $l<t_i+\delta_i$, we let $\tau_{l+1}=\cdots=\tau_{t_i+\delta_i}=\infty$) and $\tilde{\tau}_1<\tilde{\tau}_2<\cdots<\tilde{\tau}_{t_i}$ the $t_i$ blow-up times of $v_i(x)$ in $J_i$. We also let $\tau_0=\tilde{\tau}_0=a_i$ (recall that $a_i$ is the left endpoint of $J_i$). For each $j\in\{0,1,\cdots,t_i-1\}$, if $\tau_j<\infty$, we define $w_{i,j} (x)$ for $x\geq \tau_j$ by $w_{i,j}'(x)=x-\lambda-w_{i,j}^2(x), w_{i,j}(\tau_j)=\infty$, and denote by $\Delta_{j,2}$ the time it takes from $+\infty$ to $-\infty$ for the first blow-up of $w_{i,j}(x)$ after $\tau_j$; we also denote by $\Delta_{j,1}$ and $\Delta_{j,3}$ the time it takes from $+\infty$ to $-\infty$ for the $(j+1)$st blow-up of $u_i(x)$ and $v_i(x)$, respectively, in $[a_i,\infty)$.

By Lemma \ref{LemmaAiry1} and the strong Markov property, there exist positive constants $C,c$ that only depend on $\beta$, such that for any $j\in\{0,1,\cdots,t_i-1\}, \epsilon\in (0,1\slash 3)$, we have 
\begin{equation}
    \mathbb{P}\bigg(\bigg|\Delta_{j,1}-\frac{\pi}{\sqrt{\lambda-\tau_j}}\bigg|>\frac{C\epsilon}{\alpha_i^{1\slash 3}}+\frac{C}{\alpha_i^{4\slash 3}}\bigg|\mathcal{F}_{\tau_j}\bigg)\mathbbm{1}_{\tau_j<\infty}\leq C\exp(-c\epsilon^2\alpha_i).
\end{equation}

Now we perform a dyadic decomposition again. Below we consider any $j\in\{0,1,\cdots,t_i-1\}$, and take $c_0,C_0,c_1,C_1>0$ to be uniformly bounded and bounded away from $0$ (for fixed $\beta$), such that $c_0,c_1$ are sufficiently small and $C_0,C_1$ are sufficiently large so that the estimates in this paragraph hold. For any $\delta_{i,j}\in\{C_0\slash \alpha_i,2C_0\slash \alpha_i,\cdots,c_0\alpha_i\}$ (the ratio between two consecutive numbers is $2$), we let $\mathcal{E}_{i,j,\delta_{i,j}}$ be the event that
\begin{equation*}
   \tau_j<\infty,\quad \frac{\delta_{i,j}}{\alpha_i^{1\slash 3}}\leq \bigg|\Delta_{j,1}-\frac{\pi}{\sqrt{\lambda-\tau_j}}\bigg|\leq \frac{2\delta_{i,j}}{\alpha_i^{1\slash 3}};
\end{equation*}
for $\delta_{i,j}=0$, we let $\mathcal{E}_{i,j,\delta_{i,j}}$ be the event that 
\begin{equation*}
   \tau_j<\infty, 
 \quad \bigg|\Delta_{j,1}-\frac{\pi}{\sqrt{\lambda-\tau_j}}\bigg|<\frac{C_0}{\alpha_i^{4\slash 3}}.
\end{equation*}
We also let $\tilde{\mathcal{A}}_{i,1}$ be the event that there exists some $j\in\{0,1,\cdots,t_i-1\}$ such that
\begin{equation*}
  \tau_j<\infty,  \quad  \bigg|\Delta_{j,1}-\frac{\pi}{\sqrt{\lambda-\tau_j}}\bigg|>2c_0\alpha_i^{2\slash 3}.
\end{equation*}
By Lemma \ref{LemmaAiry3} and the strong Markov property,
\begin{equation*}
    \mathbb{P}(\tilde{\mathcal{A}}_{i,1})\leq Ct_i\exp(-c \alpha_i^2)\leq C\exp(-c (k')^2).
\end{equation*}
Denote $\Theta_i:=\{0,C_0\slash\alpha_i,2C_0\slash \alpha_i,\cdots,c_0\alpha_i\}$, and consider any $\delta_{i,j}\in \Theta_i$. If $0<\delta_{i,j}\leq c_1$, by Lemma \ref{LemmaAiry1} and the strong Markov property,
\begin{equation*}
    \mathbb{P}(\mathcal{E}_{i,j,\delta_{i,j}}|\mathcal{F}_{\tau_j})\leq \mathbb{P}\bigg(\bigg|\Delta_{j,1}-\frac{\pi}{\sqrt{\lambda-\tau_j}}\bigg|\geq \frac{\delta_{i,j}}{\alpha_i^{1\slash 3}}\bigg|\mathcal{F}_{\tau_j}\bigg)\mathbbm{1}_{\tau_j<\infty}\leq C\exp(-c\delta_{i,j}^2\alpha_i).
\end{equation*}
If $c_1\leq \delta_{i,j}\leq C_1$, 
\begin{equation*}
    \mathbb{P}(\mathcal{E}_{i,j,\delta_{i,j}}|\mathcal{F}_{\tau_j})\leq \mathbb{P}\bigg(\bigg|\Delta_{j,1}-\frac{\pi}{\sqrt{\lambda-\tau_j}}\bigg|\geq \frac{c_1}{\alpha_i^{1\slash 3}}\bigg|\mathcal{F}_{\tau_j}\bigg)\mathbbm{1}_{\tau_j<\infty}\leq C\exp(-c\delta_{i,j}^2\alpha_i).
\end{equation*}
If $C_1\leq \delta_{i,j}\leq c_0\alpha_i$, by Lemma \ref{LemmaAiry3} and the strong Markov property,
\begin{equation*}
    \mathbb{P}(\mathcal{E}_{i,j,\delta_{i,j}}|\mathcal{F}_{\tau_j})\leq \mathbb{P}\bigg(\bigg|\Delta_{j,1}-\frac{\pi}{\sqrt{\lambda-\tau_j}}\bigg|\geq\frac{\delta_{i,j}}{\alpha_i^{1\slash 3}}\bigg|\mathcal{F}_{\tau_j}\bigg)\mathbbm{1}_{\tau_j<\infty}\leq C\exp(-c\delta_{i,j}\alpha_i).
\end{equation*}
In summary, if we denote $\phi(\delta_{i,j}):=\delta_{i,j}^2 \mathbbm{1}_{\delta_{i,j}\leq C_1}+\delta_{i,j} \mathbbm{1}_{\delta_{i,j}>C_1}$, then
\begin{equation*}
    \mathbb{P}(\mathcal{E}_{i,j,\delta_{i,j}}|\mathcal{F}_{\tau_j})\leq C\exp(-c\phi(\delta_{i,j})\alpha_i).
\end{equation*}
Note that when the event $\mathcal{E}_{i,j,\delta_{i,j}}$ holds,
\begin{equation*}
    \bigg|\Delta_{j,1}-\frac{\pi}{\sqrt{\lambda-\tau_j}}\bigg|\leq \frac{C\delta_{i,j}}{\alpha_i^{1\slash 3}}+\frac{C}{\alpha_i^{4\slash 3}}.
\end{equation*}
By Lemma \ref{LemmaAiry2}, if $\tau_j<\infty$, we have 
\begin{equation*}
    \frac{\pi}{\sqrt{\lambda-\tau_j}}\leq \Delta_{j,2}\leq \frac{\pi}{\sqrt{\lambda-\tau_j-2\pi\slash\sqrt{\lambda-\tau_j}}}.
\end{equation*}
Thus when the event $\mathcal{E}_{i,j,\delta_{i,j}}$ holds, we have $|\Delta_{j,1}-\Delta_{j,2}|\leq C\delta_{i,j}\alpha_i^{-1\slash 3}+C\alpha_i^{-4\slash 3}$.

Consider any $(\delta_{i,j})_{j=0}^{t_i-1}\in\Theta_i^{t_i}$. We let $\tilde{\mathcal{A}}_{i,2,(\delta_{i,j})_{j=0}^{t_i-1}}:=\bigcap_{j=0}^{t_i-1}\mathcal{E}_{i,j,\delta_{i,j}}
$, and assume that $\mathcal{A}_{i,\delta_i}\cap \tilde{\mathcal{A}}_{i,2,(\delta_{i,j})_{j=0}^{t_i-1}}$ holds. We also set $K_0:=\sum_{j=0}^{t_i-1}\delta_{i,j}\alpha_i^{-1\slash 3}$. Now we consider any $j\in\{0,1,\cdots,t_i-1\}$. Note that 
\begin{equation*}
  |\tau_j-\tilde{\tau}_j|\leq  \sum_{l=0}^{j-1}|\Delta_{l,1}-\Delta_{l,3}|\leq \sum_{l=0}^{j-1}|\Delta_{l,2}-\Delta_{l,3}|+\frac{C\sum_{l=0}^{j-1}\delta_{i,l}}{\alpha_i^{1\slash 3}}+\frac{C}{\alpha_i^{1\slash 3}}.
\end{equation*}
Hence by Lemma \ref{LemmaAiry2}, recalling the definitions of $v_i(x)$ and $w_{i,j}(x)$, we have 
\begin{equation*}
    |\Delta_{j,2}-\Delta_{j,3}|\leq \frac{C}{\alpha_i}\bigg(\sum_{l=0}^{j-1}|\Delta_{l,2}-\Delta_{l,3}|+\frac{\sum_{l=0}^{j-1}\delta_{i,l}}{\alpha_i^{1\slash 3}}+\frac{1}{\alpha_i^{1\slash 3}}\bigg),
\end{equation*}
Hence denoting $S_j:=\sum_{l=0}^j|\Delta_{l,2}-\Delta_{l,3}|$, we have 
\begin{equation*}
    S_j\leq \Big(1+\frac{C}{\alpha_i}\Big)S_{j-1}+C\Big(\frac{K_0}{\alpha_i}+\frac{1}{\alpha_i^{4\slash 3}}\Big),
\end{equation*}
which leads to $S_j+K_0+\alpha_i^{-1\slash 3}\leq (1+C\slash\alpha_i)(S_{j-1}+K_0+\alpha_i^{-1\slash 3})$. Thus
\begin{equation*}
  S_{t_i-1}\leq (1+C\slash \alpha_i)^{t_i}(K_0+\alpha_i^{-1\slash 3})\leq C(K_0+\alpha_i^{-1\slash 3}),
\end{equation*}
\begin{equation}\label{EM4}
     \sum_{j=0}^{t_i-1}|\Delta_{j,1}-\Delta_{j,3}|\leq C(K_0+\alpha_i^{-1\slash 3})=C \Big(\sum_{j=0}^{t_i-1}\delta_{i,j}+1\Big) \alpha_i^{-1\slash 3}.
\end{equation}

Now let $\tilde{\mathcal{A}}_{i,3}$ be the event that there exist at least $\lceil\delta_i\slash 4\rceil$ elements $l\in[t_i+\delta_i]$ such that $\tau_l<\infty$ and $\tau_{l}-\tau_{l-1}<c_2 \alpha_i^{-1\slash 3}$ (where $c_2>0$ is a small enough constant that only depends on $\beta$). Similar to the proof of Proposition \ref{PropAiry1}, we can deduce that
\begin{equation*}
    \mathbb{P}(\tilde{\mathcal{A}}_{i,3}) \leq \exp(-c \alpha_i\delta_i)\leq \exp(-ck'\delta_i).
\end{equation*}
For any $(\delta_{i,j})_{j=0}^{t_i-1}\in\Theta_i^{t_i}$, when the event $\mathcal{A}_{i,\delta_i}\cap (\tilde{\mathcal{A}}_{i,3})^c\cap\tilde{\mathcal{A}}_{i,2,(\delta_{i,j})_{j=0}^{t_i-1}}$ holds, we have $\tau_{t_i+\delta_i}\in J_i$, and there exist at least $\delta_i-(\lceil \delta_i\slash 4\rceil-1)\geq 3\delta_i\slash 4$ elements $l\in [t_i+1,t_i+\delta_i]\cap\mathbb{Z}$ such that $\tau_{l}-\tau_{l-1}\geq c_2\alpha_i^{-1\slash 3}$. Thus
\begin{equation*}
  b_i-\tau_{t_i}\geq  \tau_{t_i+\delta_i}-\tau_{t_i}\geq c_2\alpha_i^{-1\slash 3}\cdot \frac{3\delta_i}{4}\geq c\delta_i\alpha_i^{-1\slash 3}. 
\end{equation*}
By Lemma \ref{LemmaAiry2}, we have $b_i-\tilde{\tau}_{t_i}\leq C\alpha_i^{-1\slash 3}$ (recall that $b_i$ is the right endpoint of $J_i$). Hence $\tilde{\tau}_{t_i}-\tau_{t_i}\geq (c\delta_i-C)\alpha_i^{-1\slash 3}$. Thus by (\ref{EM4}),
\begin{equation*}
    (c\delta_i-C)\alpha_i^{-1\slash 3}\leq \tilde{\tau}_{t_i}-\tau_{t_i}\leq  \sum_{j=0}^{t_i-1}|\Delta_{j,1}-\Delta_{j,3}|\leq C \Big(\sum_{j=0}^{t_i-1}\delta_{i,j}+1\Big) \alpha_i^{-1\slash 3},
\end{equation*}
which leads to $\sum_{j=0}^{t_i-1}\delta_{i,j} \geq c\delta_i-C$. Hence
\begin{equation*}
    \mathcal{A}_{i,\delta_i}\subseteq \tilde{\mathcal{A}}_{i,1}\bigcup \tilde{\mathcal{A}}_{i,3}\bigcup\bigg(\bigcup_{(\delta_{i,j})_{j=0}^{t_i-1}\in \Theta_i^{t_i}:\sum_{j=0}^{t_i-1}\delta_{i,j}\geq c\delta_i-C}\tilde{\mathcal{A}}_{i,2,(\delta_{i,j})_{j=0}^{t_i-1}}\bigg).
\end{equation*}

Now consider any $(\delta_{i,j})_{j=0}^{t_i-1}\in \Theta_i^{t_i}$ such that $\sum_{j=0}^{t_i-1}\delta_{i,j}\geq c\delta_i-C$. We let $T_i:=\{j\in \{0,1,\cdots,t_i-1\}:\delta_{i,j} \leq C_1\}$. Note that when $\sum_{j=0}^{t_i-1}\delta_{i,j}\geq c\delta_i-C$, either $\sum_{j\in T_i}\delta_{i,j}\geq (c\delta_i-C)\slash 2$ or $\sum_{j\in\{0,1,\cdots,t_i-1\}\backslash T_i}\delta_{i,j}\geq (c\delta_i-C)\slash 2$. For the former case, by the Cauchy-Schwarz inequality, we have
\begin{equation*}
    \sum_{j=0}^{t_i-1} \phi(\delta_{i,j})\geq  \sum_{j\in T_i}\delta_{i,j}^2\geq \frac{1}{t_i}\Big(\sum_{j\in T_i}\delta_{i,j}\Big)^2\geq \frac{1}{4 t_i}(c\delta_i-C)_{+}^2;
\end{equation*}
for the latter case, we have 
\begin{equation*}
     \sum_{j=0}^{t_i-1} \phi(\delta_{i,j})\geq \sum_{j\in \{0,1,\cdots,t_i-1\}\backslash  T_i}\delta_{i,j} \geq  \frac{1}{2}(c\delta_{i}-C).
\end{equation*}
Hence similar to the proof of Proposition \ref{PropAiry1} (using the strong Markov property), we obtain that
\begin{eqnarray}\label{neeeeq}
    \mathbb{P}(\mathcal{A}_{i,\delta_i}) &\leq& 
    \mathbb{P}(\tilde{\mathcal{A}}_{i,1})+\mathbb{P}(\tilde{\mathcal{A}}_{i,3})+\sum_{(\delta_{i,j})_{j=0}^{t_i-1}\in \Theta_i^{t_i}:\sum_{j=0}^{t_i-1}\delta_{i,j}\geq c\delta_i-C}\mathbb{P}\big(\tilde{\mathcal{A}}_{i,2,(\delta_{i,j})_{j=0}^{t_i-1}}\big)\nonumber\\
    &\leq& C\exp(-c (k')^2)+C\exp(-ck'\delta_i)\nonumber\\
    && + C^{t_i}\sum_{(\delta_{i,j})_{j=0}^{t_i-1}\in \Theta_i^{t_i}:\sum_{j=0}^{t_i-1}\delta_{i,j}\geq c\delta_i-C}\exp\Big(-c\sum_{j=0}^{t_i-1}\phi(\delta_{i,j})\alpha_i\Big)\nonumber\\
    &\leq& \exp(Ct_i\log\log{k})(\exp(-ck'\min\{\delta_i,k'\})+\exp(-c\delta_i^2)).
\end{eqnarray}

To bound $\mathbb{P}(\mathcal{B}_{i,\delta_i})$, we replace $t_i$ by $t_i-\delta_i$ in the previous argument and obtain the same upper bound as in (\ref{neeeeq}). Thus by the union bound, we have
\begin{equation}\label{EM3}
    \mathbb{P}(\mathcal{C}_{i,\delta_i})\leq
    \exp(Ct_i\log\log{k})(\exp(-ck'\min\{\delta_i,k'\})+\exp(-c\delta_i^2)).
\end{equation}

\subparagraph{Sub-step 3.3}

By (\ref{EM1}) and the union bound, we have 
\begin{equation}\label{EM7}
    \mathbb{P}(|N_1-\tilde{N}_1|\geq k'\slash 4)\leq \mathbb{P}(\mathcal{D}_1)+\sum_{(\delta_i)_{i=0}^I\in \Upsilon^{I+1}:\sum_{i=0}^I \delta_i\geq k'\slash 16}\mathbb{P}\Big(\bigcap_{i=0}^I \mathcal{C}_{i,\delta_i}\Big).
\end{equation}

Now consider any $(\delta_i)_{i=0}^I\in \Upsilon^{I+1}$ such that $\sum_{i=0}^I\delta_i\geq k'\slash 16$.
For any $S\subseteq\{0,1,\cdots,I\}$, at least one of $\sum_{i\in S}\delta_i$ and $\sum_{i\in \{0,1,\cdots,I\}\backslash S}\delta_i$ is greater than or equal to $k'\slash 32$. Hence by the Cauchy-Schwarz inequality and the fact that $\sum_{i=0}^I t_i\leq Ck$,
\begin{eqnarray*}
&&ck'\sum_{i\in S}\min\{\delta_i,k'\}+c\sum_{i\in \{0,1,\cdots,I\}\backslash S}\delta_i^2-C\sum_{i=0}^I t_i\log\log{k}\\
&\geq& ck'\sum_{i\in S}\min\{\delta_i,k'\}+\frac{c}{I+1}\Big(\sum_{i\in\{0,1,\cdots,I\}\backslash S}\delta_i\Big)^2-Ck\log\log{k}\\
&\geq& c\frac{(k')^2}{\log(2k\slash k')}-Ck\log\log{k}.
\end{eqnarray*}
Hence by (\ref{EM2}) and (\ref{EM3}), we have
\begin{eqnarray}\label{EM6}  \mathbb{P}\Big(\bigcap_{i=0}^I\mathcal{C}_{i,\delta_i}\Big)
&\leq& \sum_{S\subseteq \{0,1,\cdots,I\}}\exp\Big(-ck'\sum_{i\in S}\min\{\delta_i,k'\}-c\sum_{i\in\{0,1,\cdots,I\}\backslash S}\delta_i^2\Big)  \nonumber \\
&&\quad\quad\quad\quad \times\exp\Big(C\sum_{i = 0}^I t_i \log\log{k}\Big)\nonumber\\
&\leq& \exp(Ck\log\log{k})\exp\Big(-\frac{c (k')^2}{ \log(2k\slash k')}\Big).
\end{eqnarray}

Now note that the number of choices of $(\delta_i)_{i=0}^I \in \Upsilon^{I+1}$ is at most
\begin{equation*}
    (C\log{k})^{C\log(2k\slash k')}\leq \exp(C(\log{k})^2).
\end{equation*}
Hence by (\ref{EM7}), (\ref{EM5}), and (\ref{EM6}),
\begin{equation}\label{nnq3}
    \mathbb{P}(|N_1-\tilde{N}_1|\geq k'\slash 4)\leq \exp(Ck\log{k})\exp\bigg(-
    \frac{c(k')^2}{\log(2k\slash k')}\bigg).
\end{equation}

\bigskip

By (\ref{nnq4}), (\ref{nn1q}), (\ref{nnq2}), and (\ref{nnq3}), we conclude that
\begin{eqnarray}
   && \mathbb{P}(|N(\lambda)-N_0(\lambda)|\geq k')\nonumber\\
   &\leq& \mathbb{P}(N_2\geq  k'\slash 4)+\mathbb{P}(N_3\geq  k'\slash 4)+ \mathbb{P}(|N_1-\tilde{N}_1|\geq k'\slash 4)\nonumber\\
    &\leq& \exp(Ck\log{k})\exp\bigg(-
    \frac{c(k')^2}{\log(2k\slash k')}\bigg).
\end{eqnarray}

\end{proof}

\subsection{Proof of Theorem \ref{Main1}, parts (b) and (c)}\label{Sect.3.4}

In this subsection, we present the proof of parts (b) and (c) of Theorem \ref{Main1} based on Propositions \ref{PropAiry1} and \ref{PropAiry2}. We first give the definition of the function $I_2$ appearing in the statement of Theorem \ref{Main1} as follows.

\begin{definition}\label{I2}
Recall the definition of $\mathcal{X}$ in (\ref{Def:X}). For any $\mu\in\mathcal{X}$ and any $x\in [-R_0,R_0]$, we define $\phi_{\mu}(x):=\mu([-R_0,x])$ and
\begin{equation*}
\psi_{\mu}(x):=
\begin{cases}
    \min\Big\{\frac{2}{3}R_0^{3\slash 2}|\phi_{\mu}(x)|,\frac{c\min\{\phi_{\mu}(x)^2,x^3\slash 6\}}{8\log(\max\{2,x^{3\slash 2}\slash |\phi_{\mu}(x)|\})}\Big\} & \text{ if }  x\in [0,R_0],\phi_{\mu}(x)\neq 0  \\
    0 & \text{ if } x\in [0,R_0],\phi_{\mu}(x)= 0\\
    \frac{4}{3}|x|^{3\slash 2}|\phi_{\mu}(x)| & \text{ if } x\in[-R_0,0)
\end{cases},
\end{equation*}
where $c>0$ is the constant appearing in Proposition \ref{PropAiry2}.
We further define $I_2(\mu):=\sup_{x\in [-R_0,R_0]}\psi_{\mu}(x)$ for any $\mu\in \mathcal{X}$.
\end{definition}
\begin{remark}
The definition of $\psi_{\mu}(x)$ stems from Propositions \ref{PropAiry1} and \ref{PropAiry2}.
\end{remark}

\begin{proof}[Proof of Theorem \ref{Main1}, part (c)]
As $[-R_0,R_0]$ is a compact set, $K_{\eta}$ is uniformly tight. Note that for any $\mu\in K_{\eta}$, $|\mu|([-R_0,R_0])\leq \eta R_0^{3\slash 2}$. Hence by \cite[Theorems 8.4.7 and 8.6.2]{Bog}, $K_{\eta}$ is a compact subset of $\mathcal{X}$. 

Below we consider any $\eta\geq 20$. For each $i\in\mathbb{N}_{+}$, we take $\lambda_i:=-a_i$. Note that by Proposition \ref{Eigens}, $(\lambda_i)_{i=1}^{\infty}$ has the same distribution as the ordered eigenvalues of $H_2$. Recalling (\ref{nukr}) and (\ref{Eq2.14}), we obtain that
\begin{eqnarray*}
    |\nu_{k;R_0}|([-R_0,R_0])&\leq& \frac{1}{k}\#\{i\in\mathbb{N}_{+}:k^{-2\slash 3}\lambda_i\in[-R_0,R_0]\}+\nu_{0;R_0}([-R_0,R_0])\nonumber\\
    &\leq& \frac{1}{k}N(k^{2\slash 3}R_0)+\frac{2}{3\pi}R_0^{3\slash 2}\leq \frac{1}{k}N(k^{2\slash 3}R_0)+R_0^{3\slash 2}.
\end{eqnarray*}
Hence by Proposition \ref{PropAiry1}, for $k$ sufficiently large, we have 
\begin{eqnarray*}
 && \mathbb{P}(\nu_{k;R_0}\notin K_{\eta}) = \mathbb{P}(|\nu_{k;R_0}|([-R_0,R_0])>\eta R_0^{3\slash 2}) \nonumber\\
 &\leq& \mathbb{P}(N(k^{2\slash 3}R_0)\geq (\eta-1)kR_0^{3\slash 2})\leq C\exp(-c\eta R_0^3 k^2),
\end{eqnarray*}
from which (\ref{exponential.tightness}) follows.

\end{proof}

\begin{proof}[Proof of Theorem \ref{Main1}, part (b)]

Consider a fixed $\mu\in\mathcal{X}$ and any $x_0\in [-R_0,R_0]$ such that $\phi_{\mu}(x_0)\neq 0$. Without loss of generality, we assume that $\phi_{\mu}(x_0)>0$. For each $i\in\mathbb{N}_{+}$, we take $\lambda_i:=-a_i$. By Proposition \ref{Eigens}, $(\lambda_i)_{i=1}^{\infty}$ has the same distribution as the ordered eigenvalues of $H_2$.

In this paragraph, we assume that $d_{R_0}(\nu_{k;R_0},\mu)\leq \delta$, where $\delta\in (0,1)$. Recall (\ref{nukr}) and (\ref{Eq2.14}). If $x_0=R_0$, we take $h_1(x):=1$ for any $x\in [-R_0,R_0]$ (note that $\|h_1\|_{BL}\leq 1$), and obtain that
\begin{eqnarray}\label{nnee1}
  && \frac{1}{k}(N(R_0 k^{2\slash 3})-N(-R_0 k^{2\slash 3})+1)-\frac{2}{3\pi} R_0^{3\slash 2} \geq
    \nu_{k;R_0}([-R_0,R_0])\nonumber\\
    &=&\int_{[-R_0,R_0]} h_1 
 d\nu_{k;R_0} \geq \int_{[-R_0,R_0]} h_1 d\mu -\delta =\phi_{\mu}(R_0)-\delta. 
\end{eqnarray}
Now we assume that $x_0<R_0$. It can be checked that $\phi_{\mu}(\cdot)$ is right-continuous on $[-R_0,R_0]$. Hence for any $\epsilon\in (0,\phi_{\mu}(x_0)\slash 2)$, there exists $t\in (0,\min\{1,R_0-x_0\})$ such that $\phi_{\mu}(x)>\phi_{\mu}(x_0)-\epsilon$ for all $x\in [x_0,x_0+t]$. We take $\epsilon':=\delta^{1\slash 2}R_0^{-1\slash 4}$, and let $h_2(x):=\max\{1-(x-x_0-t+\epsilon')_{+}\slash\epsilon',0\}$ for any $x\in [-R_0,R_0]$. Below we assume that $\delta$ is sufficiently small, so that $\epsilon'<\min\{1,t\}$. As $d_{R_0}(\nu_{k;R_0},\mu)\leq \delta$, $\|h_2\|_{BL}\leq (\epsilon')^{-1}$, and $\mu+\nu_{0;R_0}$ is a positive measure on $[-R_0,R_0]$, we have 
\begin{eqnarray}
&& \nu_{k;R_0}([-R_0,x_0+t])+\nu_{0;R_0}([-R_0,x_0+t])\nonumber\\
&\geq&\int_{[-R_0,R_0]} h_2 d(\nu_{k;R_0}+\nu_{0;R_0}) \geq\int_{[-R_0,R_0]} h_2 d(\mu+\nu_{0;R_0})-\frac{\delta}{\epsilon'}\nonumber\\
&\geq&\mu([-R_0,x_0+t-\epsilon'])+\nu_{0;R_0}([-R_0,x_0+t-\epsilon'])-\frac{\delta}{\epsilon'}.
\end{eqnarray}
Hence noting that $\mu([-R_0,x_0+t-\epsilon'])=\phi_{\mu}(x_0+t-\epsilon')> \phi_{\mu}(x_0)-\epsilon$, we have
\begin{eqnarray}\label{neee2}
 && \frac{1}{k}(N((x_0+t)k^{2\slash 3})-N(-R_0 k^{2\slash 3})+1)-\frac{2}{3\pi}(x_0+t)_{+}^{3\slash 2}\nonumber\\
 &\geq&
  \nu_{k;R_0}([-R_0,x_0+t])
\geq\phi_{\mu}(x_0)-\epsilon-\frac{\delta}{\epsilon'}-\nu_{0;R_0}((x_0+t-\epsilon',x_0+t])\nonumber\\
    &\geq& \phi_{\mu}(x_0)-\epsilon-\frac{\delta}{\epsilon'}-\sqrt{R_0}\epsilon' = \phi_{\mu}(x_0)-\epsilon-2\sqrt{\delta}R_0^{1\slash 4}.
\end{eqnarray}
Let $\alpha(x_0):=\phi_{\mu}(x_0)-\epsilon-2\sqrt{\delta}R_0^{1\slash 4}$. Below we assume that $\epsilon,\delta$ are sufficiently small, so that $\alpha(x_0)>0$.

If $x_0<0$, for $t>0$ sufficiently small, we have $x_0+t<0$. Thus following the proof of Proposition \ref{PropAiry1} (for bounding $\mathbb{P}(\mathcal{C}_2)$), by Proposition \ref{Diffu} and Corollary \ref{Tails} (with $\beta=2$), we obtain that
\begin{eqnarray}\label{nnee4}
   && \mathbb{P}\left(N((x_0+t)k^{2\slash 3})\geq \alpha(x_0) k-1\right)\nonumber\\
   &\leq& \mathbb{P}(\text{the number of blow-ups of }p_{(x_0+t)k^{2\slash 3}}(x)\text{ on }[0,\infty)\text{ is at least }\alpha(x_0) k-1)\nonumber\\
   &\leq& C^{\alpha(x_0)k}\exp\left(-\frac{4}{3} |x_0+t|^{3\slash 2}k(\alpha(x_0)k-1)\right),
\end{eqnarray}
where we recall (\ref{Eqn16}). By (\ref{neee2}) and (\ref{nnee4}),
\begin{eqnarray}
   && \mathbb{P}(d_{R_0}(\nu_{k;R_0},\mu)\leq \delta)\leq\mathbb{P}\left(N((x_0+t)k^{2\slash 3})-N(-R_0 k^{2\slash 3})\geq \alpha(x_0)k-1\right)\nonumber\\
   &\leq& \mathbb{P}\left(N((x_0+t)k^{2\slash 3})\geq \alpha(x_0)k-1\right)\nonumber\\
   &\leq& C^{\alpha(x_0)k}\exp\left(-\frac{4}{3} |x_0+t|^{3\slash 2}k(\alpha(x_0)k-1)\right).
\end{eqnarray}
First taking $k\rightarrow\infty$, then taking $\delta \rightarrow 0^{+}$, then taking $t\rightarrow 0^{+}$, and finally taking $\epsilon\rightarrow 0^{+}$, we obtain that
\begin{equation}\label{x0}
\limsup\limits_{\delta\rightarrow 0^{+}}\limsup\limits_{k\rightarrow\infty}\frac{1}{k^2}\log{\mathbb{P}(d_{R_0}(\nu_{k;R_0},\mu)\leq\delta)}\leq   -\psi_{\mu}(x_0).
\end{equation}

Now if $x_0\in (0,R_0)$, similar to the proof of Proposition \ref{PropAiry1} (for bounding $\mathbb{P}(\mathcal{C}_2)$), by Proposition \ref{Diffu} and Corollary \ref{Tails} (with $\beta=2$), we have  
\begin{equation}\label{TW}
\mathbb{P}\left(N(-R_0k^{2\slash 3})\geq \alpha(x_0) k\slash 2-2\right)\leq C^{\alpha(x_0)k}\exp\left(-\frac{4}{3} R_0^{3\slash 2}k\left(\frac{\alpha(x_0)k}{2}-2\right)\right).
\end{equation}
By Proposition \ref{Airyop}, for sufficiently large $k$,
\begin{equation}
    \Big|N_0((x_0+t)k^{2\slash 3})-\frac{2}{3\pi}(x_0+t)^{3\slash 2}k\Big|\leq  1.
\end{equation}
By Proposition \ref{PropAiry2}, for sufficiently large $k$, we have
\begin{eqnarray}\label{NNNE1}
&&\mathbb{P}\bigg(|N((x_0+t)k^{2\slash 3})-N_0((x_0+t)k^{2\slash 3})|\geq \alpha(x_0) k\slash 2\bigg)\nonumber\\
&\leq& C\exp\bigg(-\frac{c}{8}\frac{\min\{\alpha(x_0)^2,(x_0+t)^3\slash 6\}}{\log(\max\{2,(x_0+t)^{3\slash 2}\slash \alpha(x_0)\})}k^2\bigg).
\end{eqnarray}
By (\ref{neee2}), (\ref{TW})-(\ref{NNNE1}), and the union bound, for sufficiently large $k$,  
\begin{eqnarray*}
&&\mathbb{P}(d_{R_0}(\nu_{k;R_0},\mu)
\leq\delta)\nonumber\\
&\leq&\mathbb{P}\left(N((x_0+t)k^{2\slash 3})-\frac{2}{3\pi}(x_0+t)^{3\slash 2}k-N(-R_0k^{2\slash 3})\geq \alpha(x_0)k-1\right)\nonumber\\
&\leq& C^{\alpha(x_0)k}\exp\left(-\frac{4}{3} R_0^{3\slash 2} k\left(\frac{\alpha(x_0)k}{2}-2\right)\right)\nonumber\\
&&+C\exp\bigg(-\frac{c}{8}\frac{\min\{\alpha(x_0)^2,(x_0+t)^3\slash 6\}}{\log(\max\{2,(x_0+t)^{3\slash 2}\slash \alpha(x_0)\})}k^2\bigg).
\end{eqnarray*}
Hence similarly as above, (\ref{x0}) also holds for $x_0\in (0,R_0)$. If $x_0=R_0$, replacing $x_0+t$ by $R_0$ in the above argument, noting (\ref{nnee1}) and $\phi_{\mu}(R_0)-\delta\geq \alpha(R_0)$, we conclude that (\ref{x0}) holds. Taking the infimum over $x_0\in [-R_0,R_0]$, we obtain
\begin{equation}\label{localLDP}
    \limsup\limits_{\delta\rightarrow 0^{+}}\limsup\limits_{k\rightarrow\infty}\frac{1}{k^2}\log{\mathbb{P}(d_{R_0}(\nu_{k;R_0},\mu)\leq\delta)}\leq   -I_2(\mu).
\end{equation}

We note that (\ref{localLDP}) establishes a local LDP upper bound for $(\nu_{k;R_0})_{k=1}^{\infty}$. From (\ref{localLDP}), following the proof of \cite[Theorem 4.1.11]{DZ}, we can deduce that (\ref{LDP_UBD}) holds for all compact sets $F\subseteq\mathcal{X}$. Combining this with the exponential tightness of $(\nu_{k;R_0})_{k=1}^{\infty}$ (Theorem \ref{Main1}, part (c)) established above, by \cite[Lemma 1.2.18]{DZ}, we conclude that (\ref{LDP_UBD}) holds for all closed sets $F\subseteq \mathcal{X}$.

\end{proof}

\section{Proof of Theorem \ref{KPZ1}}\label{Sect.3.5}

In this section, we provide the proof of Theorem \ref{KPZ1} based on Propositions \ref{PropAiry1} and \ref{PropAiry2}. The main structure of the proof is parallel to \cite[Section 5]{Cor}, so we only indicate the main changes in the proof. First we note the following corollary of an identity obtained in \cite{BG}, which is also the starting point of the study of the lower tail of the KPZ equation using the Airy point process in \cite{Cor}, \cite{CGKLT}, and \cite{Tsa}.

\begin{proposition}[Corollary of Theorem 2.1 of \cite{BG}]\label{M}
Let $a_1>a_2>\cdots$ denote the ordered points of the Airy point process. Then for any $T>0$,
\begin{equation}
    \mathbb{E}_{\mathrm{SHE}}\left[\exp\left(-\exp\Big(T^{1\slash 3}(\gamma_T+s)\Big)\right)\right]=\mathbb{E}_{\mathrm{Airy}}\left[\prod_{k=1}^{\infty}\frac{1}{1+\exp(T^{1\slash 3}(s+a_k))}\right],
\end{equation}
where $\mathbb{E}_{\mathrm{SHE}}$ means taking expectation with respect to the stochastic heat equation (\ref{stochastic_heat_eq}) with $Z(0,X)=\delta_{X=0}$, and $\mathbb{E}_{\mathrm{Airy}}$ means taking expectation with respect to the Airy point process.
\end{proposition}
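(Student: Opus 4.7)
The plan is to invoke Theorem 2.1 of \cite{BG} directly and then perform a single substitution. That theorem provides the Laplace transform identity
\begin{equation*}
\mathbb{E}_{SHE}[\exp(-u\, e^{H(2T,0) + T/12})] = \mathbb{E}_{Airy}\Bigl[\prod_{k=1}^{\infty} \frac{1}{1 + u\exp(T^{1/3}\, a_k)}\Bigr],
\end{equation*}
valid for every $u > 0$. The strategy is simply to specialize the free parameter $u$ and rewrite both sides in terms of $\gamma_T$.

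First I would set $u = \exp(T^{1/3}\, s)$. On the left-hand side, this choice, combined with the definitions $H(2T,0) = \log Z(2T,0)$ and $\gamma_T = (H(2T,0) + T/12)/T^{1/3}$, collapses the two exponentials into $\exp(T^{1/3}(\gamma_T + s))$. On the right-hand side, the factor $u\exp(T^{1/3}\, a_k)$ becomes $\exp(T^{1/3}(s + a_k))$. Reading off the resulting equality produces exactly the identity claimed in the proposition.

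The only verification required is that the BG identity is legitimately applied at this value of $u$ for arbitrary real $s$. For any $u > 0$, the left-hand side is the Laplace transform of the strictly positive random variable $e^{H(2T,0) + T/12}$, so it is well-defined and takes values in $(0,1]$; on the right-hand side, each factor $(1 + u\exp(T^{1/3}\, a_k))^{-1}$ lies in $(0,1]$, and since the Airy points $a_k \to -\infty$ rapidly (Tracy--Widom tails), the infinite product is almost surely convergent and bounded by $1$. Both sides are thus finite and depend continuously (in fact analytically) on $u \in (0,\infty)$, so the identity applies pointwise at $u = e^{T^{1/3}\, s}$ for every $s \in \mathbb{R}$.

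There is no substantive obstacle here: the proposition is stated as a corollary, and the argument amounts to a bookkeeping substitution together with the standard fact that the Cole--Hopf transform identifies $Z$ with $e^{H}$. If one wanted to be completely self-contained one would have to reproduce (or cite in detail) the derivation in \cite{BG} — which itself comes from Macdonald process asymptotics — but for the purposes of the present paper it suffices to treat that identity as a black box and perform the parameter specialization above.
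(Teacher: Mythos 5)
Your proposal is correct and matches the (implicit) approach in the paper: the paper states Proposition \ref{M} without proof, simply labeling it a corollary of Theorem 2.1 of \cite{BG}, and your specialization $u = e^{T^{1/3}s}$ combined with the definition $\gamma_T = (H(2T,0)+T/12)/T^{1/3}$ is exactly the intended derivation. The short well-definedness check for $u>0$ is a reasonable bit of extra care but not something the paper spells out.
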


By Proposition \ref{M}, in order to prove Theorem \ref{KPZ1}, it suffices to establish the following:

\begin{proposition}\label{KPZP}
Let $a_1>a_2>\cdots$ denote the ordered points of the Airy point process. For any $s,x\in\mathbb{R}$, we let 
\begin{equation*}
    \mathcal{I}_s(x):=(1+\exp(T^{1\slash 3}(s+x)))^{-1}.
\end{equation*}
Fix $\epsilon\in (0,1\slash 3)$ and $T_0>0$. Then there exist $S=S(\epsilon,T_0)>0,C=C(T_0)>0$, and $K=K(\epsilon, T_0)>0$, such that for any $s\geq S$ and $T\geq T_0$,
\begin{equation}
    \mathbb{E}\Big[\prod_{k=1}^{\infty}\mathcal{I}_s(a_k)\Big]
    \leq e^{-\frac{4(1-C\epsilon)}{15\pi}T^{1\slash 3}s^{5\slash 2}}+e^{-K s^3-\epsilon T^{1\slash 3}s}+e^{-\frac{(1-C\epsilon)}{12}s^3},
\end{equation}
\begin{equation}
    \mathbb{E}\Big[\prod_{k=1}^{\infty}\mathcal{I}_s(a_k)\Big]\geq e^{-\frac{4(1+C\epsilon)}{15\pi}T^{1\slash 3}s^{5\slash 2}}+e^{-\frac{(1+C\epsilon)}{12}s^3}.
\end{equation}
\end{proposition}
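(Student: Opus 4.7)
The strategy parallels \cite[Section 5]{Cor}, but with the sharper Propositions \ref{PropAiry1}--\ref{PropAiry2} substituting for the cruder Theorems 1.4--1.5 of \cite{Cor}; this is exactly what is needed to upgrade the ``middle'' tail exponent from $s^{3-\delta}$ (loc.\ cit.) to the sharp $s^3$.

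\textbf{Step 1 (integral representation).} Set $h(\lambda):=\log(1+e^{T^{1/3}(s-\lambda)})$, so $h(-a_k)=-\log\mathcal{I}_s(a_k)$, and let $N(\lambda):=\#\{k:-a_k\leq\lambda\}$ be the counting function of the negated Airy spectrum. Integration by parts (the boundary terms vanish since $h$ decays exponentially while $N$ grows only polynomially as $\lambda\to+\infty$, and $N\equiv 0$ near $-\infty$) gives
\[
-\log\prod_{k=1}^{\infty}\mathcal{I}_s(a_k)=T^{1/3}\int_{\mathbb{R}}\sigma_s(\lambda)N(\lambda)\,d\lambda,\qquad \sigma_s(\lambda):=\frac{1}{1+e^{T^{1/3}(\lambda-s)}},
\]
where $\sigma_s$ is a Fermi factor approximating $1_{\lambda<s}$ with transition width $T^{-1/3}$. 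Substituting $N$ by $N_0(\lambda)=\tfrac{2}{3\pi}\lambda^{3/2}1_{\lambda\geq 0}$ and integrating yields the leading value $\tfrac{4}{15\pi}T^{1/3}s^{5/2}+O(T^{1/3}s^{3/2})$, which matches both bounds of the proposition.

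\textbf{Step 2 (upper bound).} Decompose the expectation according to three events. On the ``typical'' event $G:=\{N(\lambda)\geq(1-C'\epsilon)N_0(\lambda)\text{ for all }\lambda\in[\epsilon s,(1-\epsilon)s]\}$, the identity of Step 1 forces $-\log\prod_k\mathcal{I}_s(a_k)\geq \frac{4(1-C\epsilon)}{15\pi}T^{1/3}s^{5/2}$ deterministically, so $\mathbb{E}[\prod_k\mathcal{I}_s\cdot 1_G]$ is bounded by the first summand. On the ``extreme'' event $E_{\mathrm{ext}}:=\{a_1\leq -(1-C'\epsilon)s\}$, the standard Tracy--Widom left-tail bound (recoverable from Proposition \ref{PropAiry1} applied to $R$ of order $1$, or cited directly) yields $\mathbb{P}(E_{\mathrm{ext}})\leq e^{-(1-C\epsilon)s^3/12}$, which is the third summand. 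On the complementary ``moderate'' event $E_{\mathrm{mod}}:=G^c\cap E_{\mathrm{ext}}^c$, we have $a_1\geq -(1-C'\epsilon)s$, hence $\mathcal{I}_s(a_1)\leq e^{-C'\epsilon T^{1/3}s}$; simultaneously, the failure of $G$ means $|N(\lambda)-N_0(\lambda)|\gtrsim \epsilon s^{3/2}$ at some $\lambda$ with $\gamma_{\lfloor k\rfloor}\leq \lambda<\gamma_{\lfloor k\rfloor+1}$ for $k\asymp s^{3/2}$, so Proposition \ref{PropAiry2} with $k'\asymp\epsilon s^{3/2}$ (which makes $\log(2k/k')=O(\log(1/\epsilon))$, absorbable into $K$) gives $\mathbb{P}(E_{\mathrm{mod}})\leq e^{-Ks^3}$. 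The product bound and probability bound combine multiplicatively to produce the second summand $e^{-Ks^3-\epsilon T^{1/3}s}$.

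\textbf{Step 3 (lower bound).} For the first summand, let $\tilde G:=\{N(\lambda)\leq(1+C\epsilon)N_0(\lambda)\text{ for all }\lambda\in[0,2s]\}\cap\{a_1\leq s\}$. Propositions \ref{PropAiry1}--\ref{PropAiry2} (the former handling the range $\lambda\geq s$ and a one-sided version of the latter the range $\lambda\in[0,s]$) show $\mathbb{P}(\tilde G)\geq \tfrac12$ for all sufficiently large $s$; on $\tilde G$, Step 1 yields $\prod_k\mathcal{I}_s(a_k)\geq e^{-\frac{4(1+C\epsilon)}{15\pi}T^{1/3}s^{5/2}}$ deterministically, so the expectation is at least half of this quantity. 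For the second summand, consider the event $\{a_1\leq -(1+\epsilon)s\}$: then every $\mathcal{I}_s(a_k)\geq 1-e^{-\epsilon T^{1/3}s}$, so $\prod_k\mathcal{I}_s(a_k)\geq \tfrac{1}{2}$ once $T^{1/3}s$ is sufficiently large (ensured by $T\geq T_0$ and $s\geq S$); the probability of this event is at least $e^{-(1+C\epsilon)s^3/12}$ by the matching Tracy--Widom left-tail lower bound for $\beta=2$.

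\textbf{Main obstacle.} The delicate part is the middle upper-bound summand $e^{-Ks^3-\epsilon T^{1/3}s}$, where we must \emph{simultaneously} extract a super-exponential $e^{-Ks^3}$ probability cost \emph{and} an additive deterministic $e^{-\epsilon T^{1/3}s}$ factor; any overlap between the two bookkeepings would reintroduce the $s^{3-\delta}$ loss of \cite{Cor}. The excision of $E_{\mathrm{ext}}$ before invoking Proposition \ref{PropAiry2} is what keeps the two exponents genuinely additive, and the denominator $\log(2k/k')$ in Proposition \ref{PropAiry2} being $O(1)$ (rather than $O(\log s)$) is precisely what enables the constant $K$ to be independent of $s$.
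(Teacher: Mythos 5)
Your structural outline (integrate by parts to get $-\log\prod_k\mathcal{I}_s(a_k)=T^{1/3}\int\sigma_s(\lambda)N(\lambda)\,d\lambda$, then a three-way split for the upper bound and a two-way split for the lower bound) mirrors the structure of \cite[Section 5]{Cor} to which the paper defers, and you have correctly identified that Propositions \ref{PropAiry1}--\ref{PropAiry2} are the ingredients that remove the $s^{3-\delta}$ loss in the middle summand. The upper bound is sound in spirit, though two points need care: the gap at a violating $\lambda\asymp\epsilon s$ is $\gtrsim\epsilon N_0(\lambda)\asymp\epsilon^{5/2}s^{3/2}$, not $\epsilon s^{3/2}$; and since $G^c$ is a statement about a continuum of $\lambda$, you must discretize into $O(\epsilon^{-1})$ scales and union-bound over them using monotonicity of $N$ and $N_0$ (this is exactly the role of the segments $\mathcal{Q}_j$ in the paper's Lemma \ref{KL}). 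Both are repairable since $K$ is allowed to depend on $\epsilon$.

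There is a genuine gap in the lower bound, second summand. You take $\{a_1\leq-(1+\epsilon)s\}$, observe that each factor satisfies $\mathcal{I}_s(a_k)\geq 1-e^{-\epsilon T^{1/3}s}$, and conclude $\prod_k\mathcal{I}_s(a_k)\geq\tfrac12$ once $T^{1/3}s$ is large. That step fails: the product runs over infinitely many $k$, and $\{a_1\leq-(1+\epsilon)s\}$ carries no control on the density of $a_2,a_3,\dots$. On a positive-probability sub-event of $\{a_1\leq-(1+\epsilon)s\}$ the points cluster so that $\sum_k\log\bigl(1+e^{T^{1/3}(s+a_k)}\bigr)$ is arbitrarily large and the product arbitrarily small, so the restricted expectation is not comparable to the bare probability of the event. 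One must intersect with a good density event $\mathcal{A}$ (controlling $N(2s)$, $N(s^{3+\delta})$, and the far-tail contribution) on which $\sum_k\log\bigl(1+e^{T^{1/3}(s+a_k)}\bigr)=O(s^{5/2})$, and then bound $\mathbb{P}(\{a_1<-s\}\cap\mathcal{A})\geq\mathbb{P}(a_1<-s)+\mathbb{P}(\mathcal{A})-1$; this is precisely the content of the paper's Lemma \ref{KL2}. A milder version of the same oversight affects your first lower-bound summand: $\tilde G$ as written constrains $N$ only on $[0,2s]$, so the contribution of $T^{1/3}\int_{2s}^\infty\sigma_s N$ is not controlled deterministically on $\tilde G$; you must add a far-tail constraint (easy via Proposition \ref{PropAiry1} and a union bound) before calling the bound deterministic.
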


In order to prove Proposition \ref{KPZP}, we state and prove several lemmas. We recall from Section \ref{Sect.n1} that $\gamma_1<\gamma_2<\cdots$ are the eigenvalues of the Airy operator. For any $k\in\mathbb{N}_{+}$, we denote $\mathcal{D}_k:=\max\{-\gamma_k-a_k,0\}$, and take $\lambda_k:=-a_k$. Note that by Proposition \ref{Eigens}, $(\lambda_k)_{k=1}^{\infty}$ has the same distribution as the ordered eigenvalues of $H_2$.

We first establish the following refinement of \cite[Lemma 5.3]{Cor}. 

\begin{lemma}\label{KL}
Let $\theta_0:=\lceil 2s^{3\slash 2}\slash (3\pi) \rceil$, and fix $\epsilon\in (0,1\slash 3)$. There exist positive constants $S_0=S_0(\epsilon)$, $K_1=K_1(\epsilon)$ that only depend on $\epsilon$, such that the following holds for all $s\geq S_0$. Divide the interval $[-s,0)$ into $\lceil 2\epsilon^{-1} \rceil$ segments 
\begin{equation*}
    \mathcal{Q}_j:=\left[-\frac{j s}{\lceil 2\epsilon^{-1}\rceil},-\frac{(j-1)s}{\lceil 2\epsilon^{-1}\rceil}\right)
\end{equation*}
for $j=1,\cdots,\lceil 2\epsilon^{-1} \rceil$. Denote the left and right endpoints of $\mathcal{Q}_j$ by $p_j$ and $q_j$. Define $k_j:=\sup\{k\in\mathbb{N}_{+}: -\gamma_k\geq q_j\}$. Then
\begin{equation}
    \mathbb{P}(a_{k_j}\leq p_j)\leq \exp(-K_1 s^3), \quad \forall j=2,\cdots,\lceil 2\epsilon^{-1} \rceil,
\end{equation}
\begin{equation}
    \mathbb{P}\Big(\bigcup_{k=1}^{\theta_0} \{\mathcal{D}_k\geq 2\epsilon s\}\Big)\leq \exp(-K_1 s^3).
\end{equation}
\end{lemma}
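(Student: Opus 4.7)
The plan is to deduce both bounds from Proposition \ref{PropAiry2} by converting each event into a deterministic deficit of the random counting function $N(\lambda)$ relative to its deterministic counterpart $N_0(\lambda)$. The key preliminary observation is that, since the Airy points $a_i$ are the negated eigenvalues of $H_\beta$ in decreasing order, $\{a_m \leq \mu\}$ is almost surely equal to $\{N(-\mu) \leq m - 1\}$.

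For the first bound, note that $k_j = \sup\{k : \gamma_k \leq -q_j\} = N_0(-q_j)$, and hence the event $\{a_{k_j} \leq p_j\} = \{N(-p_j) \leq N_0(-q_j) - 1\}$ forces
\[ N_0(-p_j) - N(-p_j) \geq N_0(-p_j) - N_0(-q_j) + 1. \]
The asymptotics $\gamma_k = (3\pi(k-1/4)/2)^{2/3}(1 + O(k^{-2}))$ recalled in Section \ref{sec:2}, combined with $|p_j| - |q_j| \geq \epsilon s/2$ and a case split on whether $|q_j| \leq \epsilon s$ or $|q_j| \geq \epsilon s$, yield $N_0(-p_j) - N_0(-q_j) \geq c_0 \epsilon^{3/2} s^{3/2}$ uniformly in $j \in \{1,\dots,\lceil 2\epsilon^{-1}\rceil\}$ for $s \geq S_0(\epsilon)$. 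Picking the integer $m$ with $\gamma_m \leq -p_j < \gamma_{m+1}$ (so $m = O(s^{3/2})$) and $k' = \lceil c_0 \epsilon^{3/2} s^{3/2}\rceil$, Proposition \ref{PropAiry2} yields
\[ \mathbb{P}(a_{k_j} \leq p_j) \leq \exp(C m \log m)\exp\!\left(-c\,\frac{(k')^2}{\log(2m/k')}\right) \leq \exp(-K_1(\epsilon)\, s^3), \]
because $2m/k' = O(\epsilon^{-3/2})$, so the denominator is $O(\log(1/\epsilon))$, and the prefactor $\exp(O(s^{3/2}\log s))$ is absorbed into the final exponential for $s$ large.

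For the second bound, $\{\mathcal{D}_k \geq 2\epsilon s\} = \{a_k \leq -\gamma_k - 2\epsilon s\} = \{N(\gamma_k + 2\epsilon s) \leq k - 1\}$ gives
\[ N_0(\gamma_k + 2\epsilon s) - N(\gamma_k + 2\epsilon s) \geq N_0(\gamma_k + 2\epsilon s) - k. \]
Since $k \leq \theta_0$ implies $\gamma_k \leq Cs$, splitting into $\gamma_k \leq \epsilon s$ (where $(\gamma_k + 2\epsilon s)^{3/2} - \gamma_k^{3/2} \geq (2\epsilon s)^{3/2}$) and $\gamma_k \geq \epsilon s$ (where $(\gamma_k + 2\epsilon s)^{3/2} - \gamma_k^{3/2} \geq 3\epsilon s\, \gamma_k^{1/2} \geq 3\epsilon^{3/2} s^{3/2}$) yields $N_0(\gamma_k + 2\epsilon s) - k \geq c_2 \epsilon^{3/2} s^{3/2}$ uniformly in $k \leq \theta_0$. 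Applying Proposition \ref{PropAiry2} with $\lambda = \gamma_k + 2\epsilon s \leq 2s$ and $k' = \lceil c_2 \epsilon^{3/2} s^{3/2}\rceil$ produces $\mathbb{P}(\mathcal{D}_k \geq 2\epsilon s) \leq \exp(-2 K_1(\epsilon)\, s^3)$ exactly as above, and a union bound over the $\theta_0 = O(s^{3/2})$ values of $k$ gives the stated estimate after a final adjustment of $K_1$.

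The main point, rather than a genuine obstacle, is securing uniformity in $j$ (respectively $k$) of the deterministic increment $N_0(\mu + 2\epsilon s) - N_0(\mu)$: the small-$\mu$ regime must be handled separately, since a naive Taylor expansion in $\mu^{1/2}$ degenerates there. Given this, the strength of Proposition \ref{PropAiry2}---the denominator $\log(2m/k')$ remaining $O(\log(1/\epsilon))$ when $k'$ is a constant-times-$\epsilon^{3/2}$ fraction of $m$---is precisely what produces the sharp $\exp(-K_1 s^3)$ decay; the weaker Airy-process deviation estimates from \cite{Cor} would only yield an $\exp(-K s^{3-\delta})$-type rate.
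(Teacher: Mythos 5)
Your proof is correct and takes essentially the same approach as the paper: convert the events $\{a_{k_j}\leq p_j\}$ and $\{\mathcal{D}_k\geq 2\epsilon s\}$ into lower bounds on the deficit $N_0(\lambda)-N(\lambda)$ of order $\epsilon^{3/2}s^{3/2}$, then invoke Proposition~\ref{PropAiry2} and absorb the $\exp(Cm\log m)=\exp(O(s^{3/2}\log s))$ prefactor into the dominant $\exp(-c\epsilon^3 s^3/\log(1/\epsilon))$ term. (Minor remark: the case split on $|q_j|$ versus $\epsilon s$ is not needed, since $a\mapsto(a+b)^{3/2}-a^{3/2}$ is increasing in $a\geq 0$ and hence bounded below by $b^{3/2}$; and the paper defers the second conclusion to the argument of \cite[Lemma 5.3]{Cor}, which you carry out explicitly, in the same way.)
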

\begin{proof}
For any $j\in\{2,\cdots,\lceil 2\epsilon^{-1} \rceil\}$, we let $\lambda_j:=-p_j$ and $\lambda_j':=-q_j$. We recall (\ref{Eq2.14}), and note that the event $a_{k_j}\leq p_j$ implies
\begin{equation*}
    N(\lambda_j)-N_0(\lambda_j)\leq N_0(\lambda'_j)-N_0(\lambda_j).
\end{equation*}
Note that by Proposition \ref{Airyop}, for any $i\in\mathbb{N}_{+}$,
\begin{equation*}
    \gamma_i=\left(\frac{3\pi}{2}\left(i-\frac{1}{4}+R(i)\right)\right)^{2\slash 3},
\end{equation*}
where $|R(i)|\leq C\slash i$ for some positive absolute constant $C$. Hence there exist $S_1, c>0$ (which only depend on $\epsilon$), such that when $s\geq S_1$, we have
\begin{equation*}
    N_0(\lambda_j')-N_0(\lambda_j)\leq  -cs^{3\slash 2}
\end{equation*}
for all $j=2,\cdots,\lceil 2\epsilon^{-1} \rceil$. Therefore, when $s\geq S_1$, 
\begin{equation}\label{KL1}
    \mathbb{P}(a_{k_j}\leq  p_j )\leq \mathbb{P}(|N(\lambda_j)-N_0(\lambda_j)|\geq cs^{3\slash 2}).
\end{equation}
By Proposition \ref{PropAiry2}, there exist $S_0\geq S_1 $ and $K_1>0$ (depending only on $\epsilon$), such that when $s\geq S_0$,
\begin{equation}\label{KE2}
    \mathbb{P}(|N(\lambda_j)-N_0(\lambda_j)|\geq cs^{3\slash 2})\leq \exp(-K_1 s^3)
\end{equation}
for all $j\in \{2,\cdots,\lceil 2\epsilon^{-1} \rceil\}$. The first conclusion of the lemma follows from (\ref{KL1}) and (\ref{KE2}). The second conclusion follows by the same argument as in the proof of \cite[Lemma 5.3]{Cor}.
\end{proof}

We then establish the following lemma, which improves upon \cite[Equation (5.28)]{Cor}.

\begin{lemma}\label{KL2}
Fix $\epsilon\in (0,1\slash 3)$ and $T_0>0$. Then there exist $C=C(T_0)>0$ and $S_0=S_0(\epsilon,T_0)>0$, such that for all $T\geq T_0$ and $s\geq S_0$,
\begin{equation}
    \mathbb{E}\bigg[\mathbbm{1}_{a_1< -s}\prod_{k=1}^{\infty}\mathcal{I}_s(a_k)\bigg]\geq \exp\Big(-\frac{1+C\epsilon}{12}s^3\Big).
\end{equation}
\end{lemma}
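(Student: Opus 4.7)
The plan is to fix a small $\alpha=\alpha(\epsilon)>0$, set $\tilde s:=(1+\alpha)s$, and restrict the expectation to a good event $G\subseteq\{a_1<-\tilde s\}$ on which the product $\prod_{k}\mathcal{I}_s(a_k)$ admits a deterministic lower bound bounded away from zero. Two ingredients are needed: a lower bound on $\mathbb{P}(G)$ matching the target up to a multiplicative constant, and a deterministic lower bound on the product on $G$.

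For the first ingredient, I would invoke the standard left-tail lower bound for the GUE Tracy--Widom distribution (equivalent to the left tail of $a_1$, see \cite{RRV} or \cite{DV}): for $s$ sufficiently large,
\[
\mathbb{P}(a_1<-\tilde s)\ge \exp\!\Bigl(-\tfrac{1+\epsilon/3}{12}\,\tilde s^{\,3}\Bigr).
\]
Choosing $\alpha$ small in terms of $\epsilon$ yields $(1+\epsilon/3)(1+\alpha)^3\le 1+C_0\epsilon$ for an absolute $C_0$, so the bound reads $\ge\exp(-(1+C_0\epsilon)s^3/12)$.

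Next, define
\[
G:=\{a_1<-\tilde s\}\cap\bigcap_{j\ge 0}\bigl\{N(\tilde s+2^j)\le\eta(\tilde s+2^j)^{3/2}\bigr\},
\]
where $\eta=\eta(\beta)$ is a large constant chosen so that the exponent constant $c\eta$ in Proposition \ref{PropAiry1} strictly exceeds $(1+C_0\epsilon)/12$ (admissible since $c>0$ is fixed and $\eta$ can be taken arbitrarily large subject to $\eta\ge 15$). By Proposition \ref{PropAiry1} applied with $R=1$ and $k=(\tilde s+2^j)^{3/2}$, together with a union bound, the failure probability of the second conjunct defining $G$ is at most $C\sum_{j\ge 0}\exp(-c\eta(\tilde s+2^j)^3)$, which is dominated by its $j=0$ term and is $\le\tfrac12\exp(-(1+C_0\epsilon)s^3/12)$ for $s$ large. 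Hence $\mathbb{P}(G)\ge\tfrac12\exp(-(1+C_0\epsilon)s^3/12)$.

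Finally, on $G$ I bound the product via $\log(1+y)\le y$ and Abel summation. Since $N(\tilde s)=0$ on $G$,
\[
-\log\prod_k\mathcal{I}_s(a_k)\le\sum_k e^{T^{1/3}(s+a_k)}=T^{1/3}\!\int_{\tilde s}^\infty N(\lambda)e^{T^{1/3}(s-\lambda)}\,d\lambda\le C\eta\,\tilde s^{\,3/2}e^{-T^{1/3}\alpha s},
\]
where the dyadic $N$-bound is extended to all $\lambda\ge\tilde s$ by monotonicity and the integral is evaluated by its behaviour near $\lambda=\tilde s$. For $T\ge T_0$ and $s\ge S_0(\epsilon,T_0)$ the right-hand side is $\le\log 2$, so $\prod_k\mathcal{I}_s(a_k)\ge\tfrac12$ on $G$. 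Combining,
\[
\mathbb{E}\bigl[1_{a_1<-s}\textstyle\prod_k\mathcal{I}_s(a_k)\bigr]\ge\tfrac14\exp\!\Bigl(-\tfrac{1+C_0\epsilon}{12}s^3\Bigr),
\]
and absorbing $1/4$ into the exponent for $s$ large (with a harmless inflation of $C_0$) yields the claim. The main obstacle is the tight calibration of constants: the prefactor $1/12$ in the target coincides with the Tracy--Widom leading constant, so any loss when removing the bad event must be of strictly smaller order. This is possible precisely because Proposition \ref{PropAiry1} delivers super-exponential decay $e^{-c\eta\lambda^3}$ in which $\eta$ is a free parameter, allowing the bad-event mass to be driven below half the target probability simply by taking $\eta$ large.
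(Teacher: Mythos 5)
Your proof is correct but takes a genuinely different route from the paper. The paper partitions $(-\infty,-s)$ into three intervals $I_1=(-\infty,-s^{10/3})$, $I_2=[-s^{10/3},-2s)$, $I_3=[-2s,-s)$, bounds $\sum_{a_k\in I_2\cup I_3}\mathcal{J}_s(a_k)=O(s^2)$ using Propositions \ref{PropAiry1}--\ref{PropAiry2} to control $N(2s)$ and $N(s^{10/3})$, imports the argument from \cite[Section 5.2]{Cor} to bound the deepest-tail contribution $\sum_{a_k\in I_1}\mathcal{J}_s(a_k)=O(s^{5/2})$, and works directly with $\{a_1<-s\}$; the bad-event mass there is $O(\exp(-cs^{7/2}))$, super-exponentially smaller than the target, so the $\exp(-O(s^{5/2}))$ loss in the product is harmless since it is $\exp(-o(s^3))$. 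You instead shift the left endpoint to $\tilde s=(1+\alpha)s$, impose a single dyadic control on $N$ via Proposition \ref{PropAiry1} alone, and push the log-product bound all the way down to a constant $\log 2$ rather than $O(s^{5/2})$. The price is the perturbation $s\mapsto\tilde s$, which requires calibrating $\alpha$ against $\epsilon$ and calibrating $\eta$ so that $c\eta\tilde s^3$ strictly exceeds $\frac{(1+C_0\epsilon)s^3}{12}$; since $\eta\ge 15$ is a free parameter in Proposition \ref{PropAiry1} entering the exponent linearly, this is feasible. The gain is that your proof avoids both the three-interval split and the argument imported from \cite{Cor}, making it more self-contained within this paper's toolkit, while the paper's version requires no endpoint shift and no calibration of a free parameter in the tail estimate. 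One small bookkeeping point worth making explicit: extending the dyadic bound $N(\tilde s+2^j)\le\eta(\tilde s+2^j)^{3/2}$ to all $\lambda\ge\tilde s$ by monotonicity inflates the constant (to roughly $6\eta\lambda^{3/2}$ for $\lambda$ in the $j$-th block), and the integral $T^{1/3}\int_{\tilde s}^\infty\lambda^{3/2}e^{T^{1/3}(s-\lambda)}d\lambda$ is of order $(s^{3/2}+T_0^{-1/2})e^{-T^{1/3}\alpha s}$; both are absorbed with $S_0=S_0(\epsilon,T_0)$ large, exactly as you intend.
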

\begin{proof}
Below we fix $\delta=1\slash 3$. We divide the interval $(-\infty,-s)$ into three parts: $I_1=(-\infty,-s^{3+\delta}), I_2=[-s^{3+\delta},-2s), I_3=[-2s,-s)$. By Propositions \ref{PropAiry1}-\ref{PropAiry2}, there exist positive absolute constants $C,c_1,c_2,S_1$, such that for any $s\geq S_1$,
\begin{equation}
    \mathbb{P}(|N(2s)-N_0(2s)|\geq c_1 s^2)\leq C\exp(-c_2 s^{7\slash 2}),
\end{equation}
\begin{equation}
    \mathbb{P}(|N(s^{3+\delta})-N_0(s^{3+\delta})|\geq c_1 s^{3(3+\delta)\slash 2})\leq C\exp(-c_2   s^{6+2\delta}).
\end{equation}
Therefore, there exists an absolute constant $C'>0$, such that if we denote  
\begin{equation}
    \mathcal{A}_1:=\{N(2s)\leq C's^2, N(s^{3+\delta})\leq C' s^{3(3+\delta)\slash 2}\},
\end{equation}
then
\begin{equation}
    \mathbb{P}(\mathcal{A}_1^c)\leq 2C\exp(-c_2 s^{7\slash 2}).
\end{equation}

Below, we denote $\mathcal{J}_s(x):=\log(1+\exp(T^{1\slash 3}(s+x)))$ for any $s,x\in\mathbb{R}$. Note that when $x\in I_3$, $\mathcal{J}_s(x)\leq \log(2)$; when $x\in I_2$, $\mathcal{J}_s(x)\leq \log(1+\exp(-T^{1\slash 3}s))$. Hence there exist $S_2=S_2(T_0)>0$ and $C''=C''(T_0)>0$, such that when the event $\{a_1<-s\}\cap\mathcal{A}_1$ holds, we have for any $T\geq T_0, s\geq S_2$,
\begin{eqnarray*}
\sum_{k\in\mathbb{N}_{+}: a_k\in I_2\cup I_3} \mathcal{J}_s(a_k)&\leq& C'\log(2)s^2+C' s^{3(3+\delta)\slash 2}\log(1+\exp(-T^{1\slash 3}s))\\
&\leq& C'' s^2.
\end{eqnarray*}
By the same argument as in \cite[Section 5.2]{Cor}, there exists $C_2=C_2(T_0)>0$, such that if we denote
\begin{equation}
    \mathcal{A}_2:=\Big\{\sum_{k\in\mathbb{N}_{+}: a_k\in I_1}\mathcal{J}_s(a_k)\leq C_2 s^{5\slash 2}\Big\},
\end{equation}
then there exist $C_3=C_3(T_0)>0, c_3=c_3(T_0)>0, S_3=S_3(T_0)>0$, such that for all $s\geq S_3$,
\begin{equation}
    \mathbb{P}(\mathcal{A}_2^c)\leq  C_3\exp(-c_3 s^{3+\delta\slash 4}).
\end{equation}

Take $\mathcal{A}:=\mathcal{A}_1 \cap \mathcal{A}_2$. By the tail behavior of $a_1$ (see e.g. \cite[Theorem1.3]{RRV}) and the union bound, there exist $\tilde{C}=\tilde{C}(T_0)>0, \tilde{S}_0=\tilde{S}_0(\epsilon,T_0)>0$, such that for any $s\geq \tilde{S}_0$,
\begin{equation*}
    \mathbb{P}(\{a_1< -s\}\cap \mathcal{A})\geq \mathbb{P}(a_1< -s)+\mathbb{P}(\mathcal{A})-1\geq \exp\Big(-\frac{1+\tilde{C}\epsilon}{12}s^3\Big).
\end{equation*}
Note that when the event $\{a_1< -s\}\cap\mathcal{A}$ holds, $\sum_{k=1}^{\infty}\mathcal{J}_s(a_k)\leq C''s^2+C_2s^{5\slash 2}$. Hence there exist $C=C(T_0)>0$ and $S_0=S_0(\epsilon,T_0)>0$, such that when $T\geq T_0, s\geq S_0$,
\begin{eqnarray*}
    && \mathbb{E}\Big[\mathbbm{1}_{a_1<-s}\prod_{k=1}^{\infty}\mathcal{I}_s(a_k)\Big] \geq \mathbb{E}\Big[\mathbbm{1}_{\{a_1<-s\}\cap\mathcal{A}}\exp\Big(-\sum_{k=1}^{\infty}\mathcal{J}_s(a_k)\Big)\Big]\\
    &\geq& \exp(-(C''s^2+C_2s^{5\slash 2}))\mathbb{P}(\{a_1<-s\}\cap\mathcal{A}) \geq  \exp\Big(-\frac{1+C\epsilon}{12}s^3\Big).
\end{eqnarray*}
\end{proof}

Now we finish the proof of Proposition \ref{KPZP}.

\begin{proof}[Proof of Proposition \ref{KPZP}]
The proof of the upper bound part follows by combining Lemma \ref{KL} and the arguments in \cite[Section 5.1]{Cor} except Lemma 5.3. The proof of the lower bound part follows by combining Lemma \ref{KL2} and the arguments in \cite[Section 5.2]{Cor} before equation (5.28).
\end{proof}

\section{Approximation of the Airy point process by the Gaussian $\beta$-ensemble}\label{Sect.3}

In this section, we establish an approximation of the Airy point process (and more generally, the eigenvalues of the stochastic Airy operator $H_{\beta}$) using the Gaussian $\beta$-ensemble up to an exponentially small probability, thereby proving Theorem \ref{Main3}. We refer the reader to Section \ref{Sect.n} for relevant background materials, and assume the notations there throughout this section.

We briefly describe the proof strategy of Theorem \ref{Main3} as follows. We fix an arbitrary $e_0\in (0,1\slash 10000)$, and consider any $k,n\in  \mathbb{N}_{+}$ with $n^{e_0}\leq k\leq n^{1\slash 10000}$. We first show exponential decay of the top $k$ eigenfunctions (eigenvectors, respectively) of the stochastic Airy operator $H_{\beta}$ (the Gaussian $\beta$-ensemble $H_{\beta,n}$, respectively) up to an exponentially small probability. This is achieved by analyzing the Riccati transform $\bar{p}_j$ (defined in Section \ref{Sect.n1}) of the $j$th eigenfunction of the stochastic Airy operator for every $j\in [k]$ together with a discrete analogue. Using this information, we can couple $H_{\beta}$ and $H_{\beta,n}$ along their eigenfunctions\slash eigenvectors so that their eigenvalues are close up to an exponentially small probability. To achieve this, we further rely on certain a priori estimates that are based on the eigenvalue-eigenfunction (eigenvector) equations for $H_{\beta}$ and $H_{\beta,n}$. 

The rest of this section is devoted to the proof of Theorem \ref{Main3}.

\subsection{Exponential decay of the eigenfunctions of $H_{\beta}$}
\label{sec:3.1}

In this subsection, we show exponential decay of the top eigenfunctions of the stochastic Airy operator $H_{\beta}$. Recall from Section \ref{Sect.n1} that for each $j\in \mathbb{N}_{+}$, the $j$th eigenfunction of $H_{\beta}$ is denoted by $f_j$. The main result of this subsection is the following proposition.

\begin{proposition}\label{AiryDecay}
Assume that $k\in\mathbb{N}_{+}$. There exist positive constants $C,c,T_0$ that only depend on $\beta$, such that for any $T\geq T_0$, there exists an event $\mathcal{A}(T)\in \mathcal{F}$, such that $\mathbb{P}(\mathcal{A}(T)^c)\leq C\exp(-c(kT)^{1  \slash 4})$, and when the event $\mathcal{A}(T)$ holds, the following properties hold for every $j\in [k]$:
\begin{itemize}
    \item [(a)] For any $(x_1,x_2)\in\mathbb{R}^2$ such that $kT\leq x_1\leq x_2$, we have $f_j(x_1)f_j(x_2)>0$ and $|f_j(x_2)|\leq |f_j(x_1)|\exp(-(x_2^{3\slash 2}-x_1^{3 \slash 2 })\slash 3)$.
    \item [(b)] For any $x\geq kT$, we have $|f_j'(x)|\leq 4\sqrt{x}|f_j(x)|$.
\end{itemize}
\end{proposition}

Throughout the rest of this subsection, we fix an arbitrary $k\in\mathbb{N}_{+}$. We briefly describe the proof strategy of Proposition \ref{AiryDecay} as follows. Consider an arbitrary $j\in [k] $, and let $\bar{p}_j$ be the Riccati transform (defined in Section \ref{Sect.n1}) of $f_j$; then, we argue that up to an exponentially small probability, once $\bar{p}_j(x)$ deviates a bit upward from the curve $y=-\sqrt{x}\slash 2$, it will quickly move above the curve $y=4\sqrt{x}\slash 5$ and will be ``trapped'' above the curve $y=\sqrt{x}\slash 2$ forever. This leads to a contradiction, as $f_j$ has unit $L^2$ norm. Therefore, $\bar{p}_j(x)$ has to be ``trapped'' below the curve $y=-\sqrt{x}\slash 2$, which leads to the exponential decay result in Proposition \ref{AiryDecay}. The main challenge is to establish this result up to an \emph{exponentially small probability}, which requires a delicate analysis of the behavior of $\bar{p}_j(x)$. 

Throughout this subsection, we fix three positive numbers $K_1, K_2, L_1$ that are at least $10$. We define the following sets:
\begin{itemize}
    \item $\mathcal{D}_1:=\{(x,y)\in [0,\infty)\times (\mathbb{R}\cup \{-\infty\}):y\geq 4\sqrt{x}\slash 5\}$;
    \item $\mathcal{D}_2:=\{(x,y)\in [0,\infty)\times (\mathbb{R}\cup \{-\infty\}):y\geq \sqrt{x}\slash 2\}$;
    \item $\mathcal{D}_3:=\{(x,y)\in [0,\infty)\times (\mathbb{R}\cup \{-\infty\}):y\leq -\sqrt{x}\slash 2\}$;
    \item $\mathcal{D}_4:=\{(x,y)\in [0,\infty)\times (\mathbb{R}\cup \{-\infty\}):y\leq -4\sqrt{x}\slash 5\}$.
\end{itemize}
For every $j\in [k]$, we define the following events (recall from Section \ref{Sect.n1} that $\lambda_j$ is the $j$th eigenvalue of $H_{\beta}$):
\begin{itemize}
    \item $\mathcal{A}_1(j)$: the event that $-K_1\leq\lambda_j\leq K_2$;
    \item $\mathcal{A}_2(j)$: the event that there exists $x_1\geq L_1$, such that for all $x\geq x_1$, $(x,\bar{p}_j(x))\in\mathcal{D}_2$;
    \item $\mathcal{A}_3(j)$: the event that there exists $x_1\geq L_1$ such that $(x_1,\bar{p}_j(x_1))\in \mathcal{D}_1$;
    \item $\mathcal{A}_4(j)$: the event that there exists $x_1\geq L_1$ such that $(x_1,\bar{p}_j(x_1))\in \mathcal{D}_3^c$;
    \item $\mathcal{A}_5(j)$: the event that there exists $x_1\geq L_1$ such that $\bar{p}_j(x_1)\leq -4\sqrt{x_1}$.
\end{itemize}

Roughly speaking, the following lemma shows that for any $j\in [k]$, once $\bar{p}_j(x)$ hits the region $\mathcal{D}_1$, up to an event with exponentially small probability, it will never leave the region $\mathcal{D}_2$.

\begin{lemma}\label{AiryDecayL1}
Assume that $L_1>32K_2$. There exist an event $\mathcal{N}_1\in \mathcal{F}$ and positive constants $C,c$ that only depend on $\beta$, such that $\mathbb{P}(\mathcal{N}_1) \leq C\exp(-c L_1^{1\slash 4})$, and for every $j\in [k]$,
\begin{equation*}
\mathcal{A}_1(j)\cap\mathcal{A}_2(j)^c\cap\mathcal{A}_3(j)\subseteq \mathcal{N}_1, \text{ hence } \mathcal{A}_1(j)\cap\mathcal{A}_3(j)\subseteq \mathcal{N}_1.
\end{equation*}
\end{lemma}

\begin{proof}

We let $\Delta:=L_1^{1\slash 4}$.
The strategy of the proof is to break the interval $[L_1,\infty)$ into sub-intervals $I_i:=[L_1+i\Delta,L_1+(i+1)\Delta)$ for $i\in \mathbb{N}$ and control the behavior of $\bar{p}_j(x)$ (where $j\in [k]$) on these sub-intervals inductively.

For every $i\in \mathbb{N}$, we let
\begin{equation*}
    \psi_1(i):=\frac{1}{3}-\frac{1}{16}(i+1)^{-1\slash 4}, \quad \psi_2(i):=\frac{1}{3}-\frac{1}{8}(i+1)^{-1\slash 4};
\end{equation*}
\begin{equation*}
    M_i:=\frac{1}{16}(i+2)^{-1\slash 4}\sqrt{L_1+i\Delta}.
\end{equation*}
Note that $M_i\leq (\psi_1(i+1)-\psi_2(i))\sqrt{L_1+i\Delta}$. We also define the following events for every $i\in \mathbb{N}$: 
\begin{equation*}
    \mathcal{E}_{1,i}:=\Big\{\sup_{t\in [0,\Delta]}\left\{\left|B_{L_1+i\Delta+t}-B_{L_1+i\Delta}\right|\right\}>\frac{\sqrt{\beta}}{8}M_i\Big\},
\end{equation*}
\begin{eqnarray*}
    \mathcal{E}_{2,i}:=&&\Big\{\inf_{t\geq 0}\Big\{\frac{1}{32}(L_1+i\Delta)t+\frac{2}{\sqrt{\beta}}\left(B_{L_1+i\Delta+t}-B_{L_1+i\Delta}\right)\Big\}\\
    &&\quad \leq -\frac{1}{64}(i+2)^{-5\slash 4}\sqrt{L_1+i\Delta}\Big\}.
\end{eqnarray*}
Let $\mathcal{N}_1:=\big(\bigcup_{i=0}^{\infty}\mathcal{E}_{1,i} \big)\bigcup\big(\bigcup_{i=0}^{\infty} \mathcal{E}_{2,i}\big) $. Note that $\mathcal{N}_1\in\mathcal{F}$.

By the estimate (\ref{Eqn26}) for Brownian motion, for any $i\in\mathbb{N}$,
\begin{eqnarray*}
    \mathbb{P}(\mathcal{E}_{1,i})&\leq& 4\exp\Big(-\frac{\beta M_i^2}{128\Delta}\Big)\leq 4\exp(-c\Delta^{-1}\sqrt{L_1+i\Delta})\\
    &\leq & 4\exp(-c(L_1+i\Delta)^{1\slash 4}).
\end{eqnarray*}
Applying the Girsanov theorem to Brownian motion with drift, we obtain that for any $i\in\mathbb{N}$,
\begin{equation*}
    \mathbb{P}(\mathcal{E}_{2,i})\leq \exp(-c(i+2)^{-5\slash 4}(L_1+i\Delta)^{3\slash 2})\leq \exp(-c(L_1+i\Delta)^{1\slash 4}).
\end{equation*}
Hence by the union bound,
\begin{equation}\label{NE11}
\mathbb{P}(\mathcal{N}_1)\leq \sum_{i=0}^{\infty}\mathbb{P}(\mathcal{E}_{1,i})+\sum_{i=0}^{\infty} \mathbb{P}(\mathcal{E}_{2,i})
\leq C\exp(-cL_1^{1\slash 4}).
\end{equation}

In the following, we fix an arbitrary $j\in [k]$, and assume that the event $\mathcal{N}_1^c\cap\mathcal{A}_1(j)\cap\mathcal{A}_3(j)$ holds. As the event $\mathcal{A}_3(j)$ holds, there exist some $i_0\in \mathbb{N}$ and some $x_1\in I_{i_0}$ such that $\bar{p}_j(x_1)\geq 4\sqrt{x_1}\slash 5$. Below we show by induction that for any $i\geq i_0$ and any $x\in [\max\{x_1,L_1+i\Delta\},L_1+(i+1)\Delta]$, we have $\bar{p}_j(x)\geq (1-\psi_1(i+1))\sqrt{x}$.

\textbf{First consider the case where $i=i_0$.} Suppose that there exists some $x_2\in [x_1,L_1+(i_0+1)\Delta]$, such that $\bar{p}_j(x_2)\leq(1-\psi_1(i_0+1))\sqrt{x_2}$, and for any $x\in [x_1,x_2]$, $\bar{p}_j(x)\geq (1-\psi_1(i_0+1))\sqrt{x}$. Let
\begin{equation*}
    x_3:=\inf \{x\in[x_1,x_2]:\bar{p}_j(y)\leq (1-\psi_2(i_0))\sqrt{y}\text{ for all }y\in [x,x_2] \}.
\end{equation*}
As $\psi_1(i_0+1)\geq \psi_2(i_0)$, we have $\bar{p}_j(x_2)\leq (1-\psi_1(i_0+1))\sqrt{x_2}\leq (1-\psi_2(i_0))\sqrt{x_2}$, hence $x_3\in [x_1,x_2]$. As $\bar{p}_j(x_1)\geq 4\sqrt{x_1}\slash 5 >(1-\psi_2(i_0))\sqrt{x_1}$, by the continuity of $\bar{p}_j$ on $[x_1,x_2]$, we have $\bar{p}_j(x_3)=(1-\psi_2(i_0))\sqrt{x_3}$. For any $x\in \bar{I}_{i_0}$, let $r(x):=\bar{p}_j(x)-(1-\psi_1(i_0+1))\sqrt{x}$. By (\ref{pbar}), for any $x\in [x_3,x_2]$, we have 
\begin{equation}
    r(x)-r(x_3)=\int_{x_3}^x\Big(t-\lambda_j-\bar{p}_j(t)^2-\frac{1-\psi_1(i_0+1)}{2\sqrt{t}}\Big)dt +\frac{2}{\sqrt{\beta}}(B_x-B_{x_3}).
\end{equation}
As the event $\mathcal{A}_1(j)$ holds, we have $\lambda_j\in [-K_1,K_2]$. For any $x\in [x_3,x_2]$, by the definition of $x_3$, $(1-\psi_1(i_0+1))\sqrt{x}\leq \bar{p}_j(x)\leq (1-\psi_2(i_0))\sqrt{x}$, hence $x-\bar{p}_j(x)^2\geq 0.36 x$; as $L_1\geq 32K_2\geq 32$, we have
\begin{equation*}
    (1-\psi_1(i_0+1))\slash(2\sqrt{x})\leq 1\slash (2\sqrt{L_1})\leq L_1\slash 300\leq x\slash 300.
\end{equation*}
Hence for any $x\in [x_3,x_2]$, using $K_2\leq L_1\slash 32\leq (L_1+i_0\Delta)\slash 32$, we have
\begin{eqnarray*}
    &&x-\lambda_j-\bar{p}_j(x)^2-(1-\psi_1(i_0+1))\slash(2\sqrt{x})
    \geq 0.35 x-\lambda_j\\
    &\geq& 0.35(L_1+i_0\Delta)-K_2
    \geq (0.35-1\slash 32)(L_1+i_0\Delta)\geq \frac{1}{16}(L_1+i_0\Delta).
\end{eqnarray*}
Hence for $x\in [x_3,x_2]$, $r(x)$ is lower bounded by $w(x)$, where $w(x)$ is defined by
\begin{equation*}
    w(x)=r(x_3)+\frac{1}{16}(L_1+i_0\Delta)(x-x_3)+\frac{2}{\sqrt{\beta}}(B_x-B_{x_3})
\end{equation*}
for any $x\geq x_3$. As the event $(\mathcal{E}_{1,i_0})^c$ holds, we have
\begin{eqnarray}\label{NEQ1.1}
 r(x_2)-r(x_3)&\geq& w(x_2)-r(x_3)
\geq  \frac{2}{\sqrt{\beta}}(B_{x_2}-B_{x_3})\nonumber\\
&\geq& -\frac{2}{\sqrt{\beta}}(|B_{x_2}-B_{L_1+i_0\Delta}|+|B_{x_3}-B_{L_1+i_0\Delta}|)\nonumber\\
&\geq& -\frac{4}{\sqrt{\beta}}\sup_{t\in [0,\Delta]}|B_{L_1+i_0\Delta+t}-B_{L_1+i_0\Delta}|\geq -\frac{1}{2}M_{i_0}\nonumber\\
&\geq& -\frac{1}{2}(\psi_1(i_0+1)-\psi_2(i_0))\sqrt{L_1+i_0\Delta}.
\end{eqnarray}
As $\bar{p}_j(x_2)\leq (1-\psi_1(i_0+1))\sqrt{x_2}$ and $\bar{p}_j(x_3)=(1-\psi_2(i_0))\sqrt{x_3}$, we have
\begin{equation}\label{NEQ2.2}
r(x_2)-r(x_3)\leq -(\psi_1(i_0+1)-\psi_2(i_0))\sqrt{x_3}\leq -(\psi_1(i_0+1)-\psi_2(i_0))\sqrt{L_1+i_0\Delta}.
\end{equation}
By (\ref{NEQ1.1}) and (\ref{NEQ2.2}), we have $\psi_1(i_0+1)-\psi_2(i_0)\leq 0$, which leads to a contradiction. Therefore, for any $x\in [x_1,L_1+(i_0+1)\Delta]$, we have $\bar{p}_j(x)\geq (1-\psi_1(i_0+1))\sqrt{x}$.

\textbf{Now consider the case where $i\geq i_0+1$.} By the induction hypothesis,
\begin{equation}\label{NEQ6}
    \bar{p}_j(L_1+i\Delta)\geq (1-\psi_1(i))\sqrt{L_1+i\Delta}.
\end{equation}
If there exists $x_2\in [L_1+i\Delta, L_1+(i+1)\Delta]$ such that $\bar{p}_j(x_2)\geq (1-\psi_2(i))\sqrt{x_2}$ and $\bar{p}_j(x)\geq (1-\psi_1(i+1))\sqrt{x}$ for all $x\in[L_1+i\Delta,x_2]$,
then since the event $(\mathcal{E}_{1,i})^c$ holds, arguing as in the $i=i_0$ case and using $\bar{p}_j(x_2)\geq (1-\psi_2(i))\sqrt{x_2}$, we obtain that $\bar{p}_j(x)\geq (1-\psi_1(i+1))\sqrt{x}$ for all $x\in [x_2,L_1+(i+1)\Delta]$ (hence for all $x\in \bar{I}_i$). 

Now suppose that there exists $x_2\in \bar{I}_i$ such that $\bar{p}_j(x_2)\leq (1-\psi_1(i+1))\sqrt{x_2}$ and $(1-\psi_1(i+1))\sqrt{x}\leq \bar{p}_j(x)\leq (1-\psi_2(i))\sqrt{x}$ for all $x\in [L_1+i\Delta,x_2]$. For any $x\in \bar{I}_i$, let $r(x):=\bar{p}_j(x)-(1-\psi_1(i+1))\sqrt{x}$. Arguing as in the $i=i_0$ case, we obtain that for any $x\in [L_1+i\Delta,x_2]$, 
\begin{equation}\label{NEQ3}
    r(x)\geq r(L_1+i\Delta)+\frac{1}{32}(L_1+i\Delta)(x-(L_1+i\Delta))+\frac{2}{\sqrt{\beta}}(B_x-B_{L_1+i\Delta}).
\end{equation}
For any $t\geq 0$, let $\tilde{B}_t:=(L_1+i\Delta)t\slash 32+(2\slash \sqrt{\beta})(B_{L_1+i\Delta+t}-B_{L_1+i\Delta})$. By (\ref{NEQ3}),
\begin{equation}\label{NEQ4}
    r(x_2)-r(L_1+i\Delta)\geq \tilde{B}_{x_2-(L_1+i\Delta)}. 
\end{equation}
By (\ref{NEQ6}), as $\bar{p}_j(x_2)\leq (1-\psi_1(i+1))\sqrt{x_2}$, we have
\begin{equation*}
    r(x_2)-r(L_1+i\Delta)\leq -(\psi_1(i+1)-\psi_1(i))\sqrt{L_1+i\Delta},
\end{equation*}
which combined with (\ref{NEQ4}) gives
\begin{equation*}
    \inf_{t\in [0,\Delta]}\tilde{B}_t\leq -(\psi_1(i+1)-\psi_1(i))\sqrt{L_1+i\Delta}\leq -\frac{1}{64}(i+2)^{-5\slash 4}\sqrt{L_1+i\Delta}.
\end{equation*}
This contradicts the fact that the event $(\mathcal{E}_{2,i})^c$ holds. Therefore, for all $x\in \bar{I}_i$, we have $\bar{p}_j(x)\geq (1-\psi_1(i+1))\sqrt{x}$.

By induction, we conclude that for any $i\geq i_0$ and any $x\in \bar{I}_i\cap [x_1,\infty)$, $\bar{p}_j(x)\geq (1-\psi_1(i+1))\sqrt{x}$. In particular, $\bar{p}_j(x)\geq \sqrt{x}\slash 2$ for any $x\geq x_1$. Hence $\mathcal{N}_1^c\cap\mathcal{A}_1(j)\cap\mathcal{A}_3(j)\subseteq \mathcal{A}_2(j)$, which leads to 
\begin{equation}\label{NNEQ1.1}
\mathcal{A}_1(j)\cap\mathcal{A}_2(j)^c\cap\mathcal{A}_3(j)\subseteq \mathcal{N}_1.
\end{equation}

Now suppose that the event $\mathcal{A}_2(j)$ holds. Then there exists $x_1'\geq L_1$, such that for any $x\geq x_1'$, $f_j'(x)\slash f_j(x)=\bar{p}_j(x)\geq \sqrt{x}\slash 2$. Without loss of generality, we assume that $f_j(x_1')>0$. Hence for any $x\geq x_1'$, we have
\begin{equation*}
    f_j(x)\geq f_j(x_1')\exp((x^{3\slash 2}-(x_1')^{3\slash 2})\slash 3).
\end{equation*}
This contradicts the fact that $\int_{0}^{\infty}f_j(x)^2 dx=1$. Hence $\mathcal{A}_2(j)=\emptyset$. By (\ref{NNEQ1.1}), we conclude that $\mathcal{A}_1(j)\cap\mathcal{A}_3(j)\subseteq \mathcal{N}_1$.

\end{proof}

The next lemma roughly says that if $\bar{p}_j(x)$ deviates a bit upward from the curve $y=-\sqrt{x}\slash 2$, it will hit the region $\mathcal{D}_1$ up to an event with exponentially small probability.

\begin{lemma}\label{AiryDecayL2}
Assume that $L_1\geq \max\{10K_1,100K_2,10^8   \beta^{-1}\}$. There exist an event $\mathcal{N}_2 \in \mathcal{F}$ and positive constants $C,c$ that only depend on $\beta$, such that $\mathbb{P}(\mathcal{N}_2)\leq C\exp(-c L_1^{1\slash 2})$, and for every $j\in [k]$,
\begin{equation}\label{Step12}
    \mathcal{A}_1(j)\cap \mathcal{A}_3(j)^c\cap \mathcal{A}_4(j)\subseteq \mathcal{N}_2, \quad \mathcal{A}_1(j)\cap\mathcal{A}_3(j)^c\cap\mathcal{A}_5(j)\subseteq \mathcal{N}_2.
\end{equation}
\end{lemma}
\begin{proof}

We take $\Delta:=L_1^{1\slash 4}$. For each $i\in \mathbb{N}$, we let $I_i:=[L_1+i\Delta,L_1+(i+1)\Delta)$ and $n_i:=\lceil \Delta\sqrt{L_1+i\Delta}\rceil$; as $L_1\geq 10$, we have $n_i\geq 5$. For any $i\in \mathbb{N}$ and $l\in [n_i]$, we construct $I_{i,l}$ as follows: 
\begin{itemize}
    \item [(a)] If $\Delta\sqrt{L_1+i\Delta}\in\mathbb{N}_{+}$, for every $l\in [n_i]$, we let
    \begin{equation*}
        I_{i,l}:=\Big[L_1+i\Delta+\frac{l-1}{\sqrt{L_1+i\Delta}}, L_1+i\Delta+\frac{l}{\sqrt{L_1+i\Delta}}\Big).
    \end{equation*}
    \item [(b)] If $\Delta\sqrt{L_1+i\Delta}\notin\mathbb{N}_{+}$, for every $l\in [1,n_i-2]\cap\mathbb{N}_{+}$, we let
    \begin{equation*}
        I_{i,l}:=\Big[L_1+i\Delta+\frac{l-1}{\sqrt{L_1+i\Delta}}, L_1+i\Delta+\frac{l}{\sqrt{L_1+i\Delta}}\Big);
    \end{equation*}
    we also let
    \begin{equation*}
        I_{i,n_i-1}:=\Big[L_1+i\Delta+\frac{n_i-2}{\sqrt{L_1+i\Delta}},L_1+\Big(i+\frac{1}{2}\Big)\Delta+\frac{n_i-2}{2\sqrt{L_1+i\Delta}}\Big),
    \end{equation*}
    \begin{equation*}
        I_{i,n_i}:=\Big[L_1+\Big(i+\frac{1}{2}\Big)\Delta+\frac{n_i-2}{2\sqrt{L_1+i\Delta}},L_1+(i+1)\Delta\Big).
    \end{equation*}
    Note that from $|I_{i,n_i-1}|=|I_{i,n_i}|=\Delta\slash 2-(n_i-2)\slash(2\sqrt{L_1+i\Delta})$ and $\Delta\sqrt{L_1+i\Delta}\leq n_i\leq \Delta\sqrt{L_1+i\Delta}+1$, we can deduce that\\
    $(L_1+i\Delta)^{-1\slash 2}\slash 2\leq |I_{i,n_i-1}|=|I_{i,n_i}|\leq (L_1+i\Delta)^{-1\slash 2}$.
\end{itemize}
Hereafter, we denote the left and right endpoints of $I_{i,l}$ by $a_{i,l}$ and $b_{i,l}$, respectively.

Note that for any $i\in \mathbb{N}$, $I_{i,1},\cdots,I_{i,n_i}$ are pairwise disjoint, and 
\begin{equation*}
    I_i=\bigcup_{l=1}^{n_i}I_{i,l}.
\end{equation*}
Moreover, for any $i\in\mathbb{N}$ and $l\in [n_i]$,
\begin{equation}\label{iil}
\frac{1}{2} (L_1+i\Delta)^{-1\slash 2} \leq |I_{i,l}|\leq (L_1+i\Delta)^{-1\slash 2}.
\end{equation}

For any $i\in\mathbb{N}$ and $l\in [n_i]$, we denote by $\mathcal{C}_{i,l}$ the event that
\begin{equation*}
    \sup_{t\in \bar{I}_{i,l}}|B_t-B_{a_{i,l}}|\geq 1.
\end{equation*}
By (\ref{Eqn26}), we have 
\begin{equation*}
    \mathbb{P}(\mathcal{C}_{i,l})\leq 4\exp(-\sqrt{L_1+i\Delta}\big\slash 2).
\end{equation*}
Letting $\mathcal{N}_2:=\bigcup_{i=0}^{\infty}\bigcup_{l=1}^{n_i}\mathcal{C}_{i,l}\in\mathcal{F}$, by the union bound, we have
\begin{equation}\label{NEQ18}
    \mathbb{P}(\mathcal{N}_2)\leq \sum_{i=0}^{\infty}\sum_{l=1}^{n_i}\mathbb{P}(\mathcal{C}_{i,l})\leq C\exp(-c L_1^{1\slash 2}).
\end{equation}

In the following, we fix an arbitrary $j\in [k]$, and show (\ref{Step12}) in \textbf{Steps 1 and 2}.

\paragraph{Step 1}

In the following, we show that $\mathcal{N}_2^c\cap \mathcal{A}_1(j)\cap \mathcal{A}_3(j)^c\cap\mathcal{A}_4(j)=\emptyset$ by contradiction. Suppose that the event $\mathcal{N}_2^c\cap \mathcal{A}_1(j)\cap \mathcal{A}_3(j)^c\cap\mathcal{A}_4(j)$ holds. Then there exists $x_1\geq L_1$ such that $\bar{p}_j(x_1)\geq -\sqrt{x_1}\slash 2$. Suppose that $x_1\in I_{i,l}$, where $i\in\mathbb{N}$ and $l\in [n_i]$.

Below we show that $(x,\bar{p}_j(x))\notin \mathcal{D}_4$ for any $x\in [x_1,x_1+16\slash\sqrt{L_1+i\Delta}]$. Suppose there exists $x_2\in [x_1,x_1+16\slash\sqrt{L_1+i\Delta}]$ such that $\bar{p}_j(x_2)=-4\sqrt{x_2}\slash 5$ and $\bar{p}_j(x)\geq -4\sqrt{x}\slash 5$ for all $x\in [x_1,x_2]$. Then there exists $x_3\in [x_1,x_2]$ such that $\bar{p}_j(x_3)\geq -\sqrt{x_3}\slash 2$ and $\bar{p}_j(x)\in\left[-4\sqrt{x}\slash 5,-\sqrt{x}\slash 2\right]$ for all $x\in [x_3,x_2]$. As the event $\mathcal{A}_1(j)$ holds, we have $\lambda_j\in [-K_1,K_2]$. By the assumptions of the lemma, $K_2\leq L_1\slash 100$. Hence when $x\geq L_1+i\Delta$ and $\bar{p}_j(x)\in [-4\sqrt{x}\slash 5,4\sqrt{x}\slash 5]$, we have
\begin{equation}\label{Eqn47}
    x-\lambda_j-\bar{p}_j(x)^2\geq x-K_2-\frac{16}{25}x\geq \frac{9}{25}(L_1+i\Delta)-\frac{1}{100}L_1\geq \frac{1}{4}(L_1+i\Delta).
\end{equation}
By (\ref{pbar}) and (\ref{Eqn47}), we have
\begin{eqnarray*}
    \bar{p}_j(x_2)-\bar{p}_j(x_3)&=&\int_{x_3}^{x_2}(t-\lambda_j-\bar{p}_j(t)^2)dt+\frac{2}{\sqrt{\beta}}(B_{x_2}-B_{x_3})\nonumber \\
    &\geq&  \frac{1}{4}(L_1+i\Delta)(x_2-x_3)+\frac{2}{\sqrt{\beta}}(B_{x_2}-B_{x_3}).
\end{eqnarray*}
By the definitions of $x_2$ and $x_3$, we have
\begin{eqnarray*}
&& \bar{p}_j(x_2)-\bar{p}_j(x_3)\leq \frac{1}{2}\sqrt{x_3}-\frac{4}{5}\sqrt{x_2}\leq -\frac{3}{10}\sqrt{x_3}\leq -\frac{3}{10}\sqrt{L_1+i\Delta}.
\end{eqnarray*}
By the above two inequalities and the assumption that $L_1\geq 10^8\beta^{-1}$, we have
\begin{equation}\label{NEQ15}
    B_{x_3}-B_{x_2}\geq \frac{\sqrt{\beta}}{10}\sqrt{L_1+i\Delta}\geq  1000.
\end{equation}
Suppose that $x_2\in I_{i',l'}$, where $i'\in\mathbb{N}$ and $l'\in [n_{i'}]$. As $L_1\geq 100K_2\geq 1000$, we have $16\slash \sqrt{L_1+i\Delta}\leq 16L_1^{-1\slash 2}\leq \Delta\slash 2$, hence $i'\in \{i,i+1\}$. As
\begin{equation}\label{NEQ17}
    \sqrt{L_1+(i+1)\Delta}\slash \sqrt{L_1+i\Delta}\leq \sqrt{1+\Delta L_1^{-1}}\leq 101\slash 100,
\end{equation}
by (\ref{iil}) and the fact that $0\leq x_2-x_1\leq 16\slash\sqrt{L_1+i\Delta}$, we have
\begin{eqnarray*}
  &&  (i'-i)n_i+l'-l-1 \leq \frac{x_2-x_1}{(L_1+(i+1)\Delta)^{-1\slash 2}\slash 2}\\
  &\leq & \frac{16\slash\sqrt{L_1+i\Delta}}{(L_1+(i+1)\Delta)^{-1\slash 2}\slash 2}=\frac{32\sqrt{L_1+(i+1)\Delta}}{\sqrt{L_1+i\Delta}}\leq 33.
\end{eqnarray*}
As the event $\mathcal{N}_2^c$ holds, we have $|B_{x_2}-B_{a_{i,l}}|\leq (i'-i)n_i+l'-l+1\leq 35$. Similarly, we have $|B_{x_3}-B_{a_{i,l}}|\leq 35$. Hence $|B_{x_2}-B_{x_3}|\leq 70$, which contradicts (\ref{NEQ15}). Therefore, for any $x\in [x_1,x_1+16\slash \sqrt{L_1+i\Delta}]$, $(x,\bar{p}_j(x))\notin \mathcal{D}_4$.

As the event $\mathcal{A}_3(j)^c$ holds, for any $x\in [x_1,x_1+16\slash \sqrt{L_1+i\Delta}]$, we have $(x,\bar{p}_j(x))\notin \mathcal{D}_1$. Let $x_4:=x_1+16\slash\sqrt{L_1+i\Delta}$. Arguing as in the previous paragraph, we can deduce that $x_4\in I_{i}\cup I_{i+1}$ and $|B_{x_4}-B_{x_1}|\leq 70$. By (\ref{pbar}) and (\ref{Eqn47}), noting that $\bar{p}_j(x)\in [-4\sqrt{x}\slash 5,4\sqrt{x}\slash 5]$ for any $x\in [x_1,x_4]$, we have
\begin{eqnarray*}
 \bar{p}_j(x_4)&\geq& \bar{p}_j(x_1)+\frac{1}{4}(L_1+i\Delta)(x_4-x_1)+\frac{2}{\sqrt{\beta}}(B_{x_4}-B_{x_1})\\
&\geq& -\frac{1}{2}\sqrt{L_1+(i+1)\Delta}+4\sqrt{L_1+i\Delta}-\frac{140}{\sqrt{\beta}}\nonumber\\
&\geq& 2 \sqrt{L_1+(i+1)\Delta}\geq \sqrt{L_1+(i+2)\Delta}\geq \sqrt{x_4},
\end{eqnarray*}
where we use (\ref{NEQ17}), $L_1\geq 10^8\beta^{-1}$, and
\begin{equation*}
    \sqrt{L_1+(i+2)\Delta}\slash \sqrt{L_1+(i+1)\Delta}\leq \sqrt{1+\Delta L_1^{-1}}\leq 101\slash 100
\end{equation*}
in the last line. This contradicts the fact that $(x,\bar{p}_j(x))\notin\mathcal{D}_1$ for any $x\in [x_1,x_4]$. Hence we have
\begin{equation*}
    \mathcal{N}_2^c\cap \mathcal{A}_1(j)\cap \mathcal{A}_3(j)^c\cap\mathcal{A}_4(j)=\emptyset,\text{ thus } \mathcal{A}_1(j)\cap \mathcal{A}_3(j)^c\cap\mathcal{A}_4(j)\subseteq \mathcal{N}_2.
\end{equation*}

\paragraph{Step 2}

In the following, we show that $\mathcal{A}_1(j)\cap\mathcal{A}_5(j)\cap \mathcal{N}_2^c\subseteq \mathcal{A}_3(j)$. 

Below we assume that the event $\mathcal{A}_1(j)\cap\mathcal{A}_5(j)\cap \mathcal{N}_2^c$ holds. Then there exists $x_1\geq L_1$, such that $\bar{p}_j(x_1)\leq -4\sqrt{x_1}$. If $\bar{p}_j(x_1)=-\infty$, then $\bar{p}_j(x)$ restarts at $+\infty$ immediately after $x_1$, hence the event $\mathcal{A}_3(j)$ holds. Below we assume that $\bar{p}_j(x_1)\neq -\infty$ and $x_1\in I_i=[L_1+i\Delta,L_1+(i+1)\Delta)$ (where $i\in\mathbb{N}$). For any $x\geq x_1$, we let 
\begin{equation*}
    r(x):=\bar{p}_j(x)+2\sqrt{x}-\frac{2}{\sqrt{\beta}}(B_x-B_{x_1}).
\end{equation*}
Note that $r(x_1)\leq -2\sqrt{x_1}$. Let $\tau:=\inf\{x\geq x_1: \bar{p}_j(x)=-\infty\}$. Note that $\tau=\inf\{x\geq x_1: r(x)=-\infty\}$. By (\ref{pbar}), for any $x\in [x_1,\tau)$,
\begin{equation}\label{NEQ3.1}
    r'(x)= x-\lambda_j-\bigg(r(x)-2\sqrt{x}+\frac{2}{\sqrt{\beta}}(B_x-B_{x_1})\bigg)^2 +x^{-1\slash 2}.
\end{equation}
If $\tau<\infty$, then $\bar{p}_j(x)$ blows up to $-\infty$ at $x=\tau$ and restarts immediately at $+\infty$; as $\tau\geq x_1\geq L_1$, the event $\mathcal{A}_3(j)$ holds. In the following, we assume that $\tau=\infty$. 

For any $x\geq x_1$, we let $q(x):=\bar{p}_j(x)+2\sqrt{x}=r(x)+(2\slash\sqrt{\beta})(B_x-B_{x_1})$. Let $I':=[x_1,x_1+16\slash \sqrt{L_1+i\Delta}]$. If there exists some $x\in I'$ such that $q(x)=0$, we let $x_2:=\inf\{x\in I': q(x)=0\}$; otherwise we let $x_2:=x_1+16\slash \sqrt{L_1+i\Delta}$. For any $x\in I'$, as the event $\mathcal{N}_2^c$ holds, arguing as in \textbf{Step 1}, we obtain that $|B_x-B_{x_1}|\leq 70$. As the event $\mathcal{A}_1(j)$ holds, $\lambda_j\geq -K_1$. For any $x\in [x_1,x_2]$, $q(x)\leq 0$. By the inequality $(a+b)^2\geq a^2\slash 2-b^2,\forall a,b\in\mathbb{R}$, for any $x\in [x_1,x_2]$, we have
\begin{equation*}
    q(x)^2\geq r(x)^2\slash 2-4\beta^{-1}(B_x-B_{x_1})^2 \geq r(x)^2\slash 2-2\cdot 10^4 \beta^{-1},
\end{equation*}
which leads to
\begin{eqnarray}\label{NEQ3.2}
 &&   x-\lambda_j-\bigg(r(x)-2\sqrt{x}+\frac{2}{\sqrt{\beta}}(B_x-B_{x_1})\bigg)^2 +x^{-1\slash 2} \nonumber\\
 &=& x-\lambda_j-(q(x)-2\sqrt{x})^2+x^{-1\slash 2}\nonumber\\
 &=&-\lambda_j-q(x)^2+4\sqrt{x}q(x)-3x+x^{-1\slash 2}\nonumber\\
 &\leq& K_1-r(x)^2\slash 2+2\cdot 10^4\beta^{-1}-3x+x^{-1\slash 2}\nonumber\\
 &\leq&-r(x)^2\slash 2-2x  < -(r(x)^2+L_1+i\Delta)\slash 2,
\end{eqnarray}
where we use the fact that $x\geq L_1\geq \max\{10 K_1,10^8\beta^{-1}\}$ (note that $L_1\geq 10$) in the fourth line and the fact that $x\geq L_1+i\Delta$ in the fifth line. Let $\tilde{r}(x), x\geq x_1$ be defined by $\tilde{r}(x_1)=r(x_1)$ and 
\begin{equation}\label{NEQ3.3}
    \tilde{r}'(x)=-(\tilde{r}(x)^2+L_1+i\Delta)\slash 2, \quad  \text{for any } x\geq x_1.
\end{equation}
By (\ref{NEQ3.1}) and (\ref{NEQ3.2}), $r(x)$ is dominated by $\tilde{r}(x)$ for $x\in [x_1,x_2]$.  

Suppose that $q(x_2)=0$ and $\tilde{r}(x)$ does not blow up to $-\infty$ for $x\in [x_1,x_2]$. Then $\tilde{r}(x_2)\leq \tilde{r}(x_1)=r(x_1)\leq -2\sqrt{L_1+i\Delta}$. Note that $r(x)$ is dominated by 
$\tilde{r}(x)$ on $[x_1,x_2]$, $q(x_2)=0$, and $|B_{x_2}-B_{x_1}|\leq 70$. Combining these with the fact that $L_1\geq 10^8\beta^{-1}$, we obtain that  
\begin{equation*}
    \tilde{r}(x_2)\geq r(x_2)= -\frac{2}{\sqrt{\beta}}(B_{x_2}-B_{x_1})\geq -\frac{140}{\sqrt{\beta}}> -2\sqrt{L_1+i\Delta},
\end{equation*}
which leads to a contradiction. Therefore, either $\tilde{r}(x)$ blows up to $-\infty$ for some $x\in [x_1,x_2]$ or $q(x_2)<0$. For the former case, $r(x)$ also blows up to $-\infty$ for some $x\in [x_1,x_2]$. For the latter case, $x_2=x_1+16\slash\sqrt{L_1+i\Delta}$. Solving the Riccati equation (\ref{NEQ3.3}), we obtain that for $x\geq x_1$,
\begin{equation*}
    \tilde{r}(x)=\sqrt{L_1+i\Delta}\tan\left(\arctan\left(\frac{r(x_1)}{\sqrt{L_1+i\Delta}}\right)-\frac{1}{2}\sqrt{L_1+i\Delta}(x-x_1) \right),
\end{equation*}
which blows up for some $x\in [x_1,x_2]$. Hence $r(x)$ also blows up for some $x\in [x_1,x_2]$. Therefore, we conclude that $r(x)$ blows up to $-\infty$ for some $x\in [x_1,x_2]$, which implies that $\bar{p}_j(x)$ blows up to $-\infty$ for some $x\in [x_1,x_2]$. This contradicts our assumption that $\tau=\infty$.

Thus we conclude that $\tau<\infty$ and the event $\mathcal{A}_3(j)$ holds. Therefore,
\begin{equation*}
    \mathcal{A}_1(j)\cap\mathcal{A}_5(j)\cap \mathcal{N}_2^c\subseteq \mathcal{A}_3(j), \text{ thus } \mathcal{A}_1(j)\cap \mathcal{A}_3(j)^c\cap\mathcal{A}_5(j)\subseteq \mathcal{N}_2.
\end{equation*}

\end{proof}

\begin{lemma}\label{AiryDecayL3}
There exist positive constants $T_0,C,c$ that only depend on $\beta$, such that when $T\geq T_0$, for any $j\in [k]$,
\begin{equation}
    \mathbb{P}(\lambda_j>kT)\leq C\exp(-c(kT)^3).
\end{equation}
Moreover, there exist positive constants $C,c$ that only depend on $\beta$, such that for any $j\in [k]$ and $x\geq 1$,
\begin{equation}
    \mathbb{P}(\lambda_j\leq -x)\leq C\exp(-c x^{3\slash 2}).
\end{equation}
\end{lemma}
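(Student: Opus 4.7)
The plan is to reduce both bounds to statements about the Riccati diffusion $dp(x)=(x-\lambda-p^2(x))dx+\tfrac{2}{\sqrt{\beta}}dB_x$, $p(0)=\infty$, using the identity $N(\lambda)=\#\{\text{blowups of }p\}$ from \cite[Proposition 3.4]{RRV}.

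The second inequality is immediate: since $\lambda_k\ge\lambda_1=-a_1$, one has $\{\lambda_k\le -K_1\}\subseteq\{a_1\ge K_1\}$, and the Tracy--Widom $\beta$ upper tail of \cite{DV}, already invoked in the proof of Proposition \ref{PropAiry1}, gives $\mathbb{P}(a_1\ge K_1)\le C\exp(-cK_1^{3/2})$.

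For the first inequality, rewrite $\{\lambda_k>kT\}=\{\tau_k>kT\}$ where $\tau_k$ is the $k$-th blowup time of the diffusion at $\lambda=kT$. I will actually prove the stronger bound $\mathbb{P}(\tau_k>kT/2)\le C\exp(-ck^3T^3)$, which implies the claimed estimate once $T\ge T_0\ge 1$. Set the stopping index $i^{*}=\min\{i:\tau_i\ge kT/2\}$; it suffices to control $\mathbb{P}(i^{*}<k)=\mathbb{P}(\tau_{k-1}\ge kT/2)$. For every $i\le i^{*}$ we have $\tau_{i-1}<kT/2$, hence $a_i:=kT-\tau_{i-1}\ge kT/2$, and by the strong Markov property and Lemma \ref{LemmaAiry3} (applied after shifting time by $\tau_{i-1}$), the blowup increment $\Delta_i=\tau_i-\tau_{i-1}$ satisfies $\mathbb{P}(\Delta_i>s\mid\mathcal F_{\tau_{i-1}})\le 4\exp(-c(kT)^2 s)$ for $s$ above a threshold of order $(kT)^{-1/2}$. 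Thus each $\Delta_i$ is conditionally stochastically dominated by an $\mathrm{Exp}(c(kT)^2)$ variable plus a small deterministic shift. Applying a Chernoff/Laplace-transform bound to the resulting Gamma$(k-1,c(kT)^2)$-type tail of $\sum_i\Delta_i$ and optimizing the parameter $\theta$, via iterative conditioning in the style of the proof of Proposition \ref{PropAiry1}, yields the desired $C\exp(-ck^3T^3)$.

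The principal obstacle is combining the Chernoff step with the random stopping index $i^{*}\le k-1$ uniformly; the Lemma \ref{LemmaAiry3} range conditions also require $s$ not to be too large, so a separate (Tracy--Widom-type) bound must absorb atypically large individual increments. Since the target exponent $k^3T^2$ is strictly weaker than the $k^3T^3$ furnished by the Chernoff estimate, this calibration admits considerable slack and is more a technical bookkeeping step than a conceptual difficulty.
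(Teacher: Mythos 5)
Your treatment of the second inequality coincides exactly with the paper's: both monotonicity $\lambda_k\ge\lambda_1=-a_1$ and the Tracy--Widom upper tail of \cite{DV} are used in the same way. For the first inequality, however, you take a genuinely different route. The paper's proof is a one-liner: on $\{\lambda_k>kT\}$ one has $N(kT)<k$, while $N_0(kT)\approx\frac{2}{3\pi}(kT)^{3/2}$, so $|N(kT)-N_0(kT)|\ge c(kT)^{3/2}$ for $T\ge T_0$; then Proposition \ref{PropAiry2} (invoked as a black box, with its ``$k$'' and ``$k'$'' both of order $(kT)^{3/2}$) gives the bound. You instead unwrap the diffusion machinery directly: by the strong Markov property the increments $\Delta_i=\tau_i-\tau_{i-1}$ before time $kT/2$ correspond to Riccati diffusions with shifted parameter $a_i=kT-\tau_{i-1}\ge kT/2$, Lemma \ref{LemmaAiry3} gives an essentially exponential conditional tail at rate $\sim(kT)^2$, and a Gamma-type Chernoff bound (together with a separate bound for atypically large individual increments, needed because of the range constraint $M\le a^2/100$ in Lemma \ref{LemmaAiry3}) yields $\exp(-ck^3T^3)$. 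Your bound is in fact slightly stronger than the $\exp(-ck^3T^2)$ the paper states (though the paper could also extract $T^3$ from Proposition \ref{PropAiry2} with care). Each has its virtues: the paper's argument is shorter because it recycles the heavy lifting already done in Proposition \ref{PropAiry2}; yours is more self-contained and avoids the dyadic decomposition entirely. A small imprecision in your write-up: $\{\lambda_k>kT\}$ equals $\{\tau_k=\infty\}$ (the diffusion has fewer than $k$ blowups on $[0,\infty)$), not $\{\tau_k>kT\}$; and the phrase ``$\mathbb{P}(i^*<k)=\mathbb{P}(\tau_{k-1}\ge kT/2)$'' misstates the event to control. Neither affects the validity of the plan, since $\{\tau_k=\infty\}\subseteq\{\tau_k>kT/2\}$ and your subsequent conditioning argument addresses the correct event, but the event bookkeeping should be tidied before this could serve as a drop-in replacement.
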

\begin{proof}
Recall the definitions of $N(\lambda)$ and $N_0(\lambda)$ in (\ref{Eq2.14}). From Proposition \ref{Airyop}, we can deduce that for $T$ sufficiently large,
\begin{equation*}
    \Big|N_0(k T)-\frac{2}{3\pi}(kT)^{3\slash 2}\Big|\leq 2.
\end{equation*}
Hence by Proposition \ref{PropAiry2}, there exists a positive constant $T_0$ that only depends on $\beta$, such that when $T\geq T_0$, for any $j\in [k]$,
\begin{eqnarray*}
&& \mathbb{P}(\lambda_j>kT)\leq \mathbb{P}(\lambda_k>kT)=\mathbb{P}(N(kT)<k) \\
& \leq & \mathbb{P}(|N(kT)-N_0(kT)|\geq (3\pi)^{-1}(kT)^{3\slash 2})
\leq C\exp(-c (kT)^3).
\end{eqnarray*}
Moreover, by Proposition \ref{TailBound}, for any $j\in [k]$ and $x\geq 1$, 
\begin{equation}
    \mathbb{P}(\lambda_j\leq -x)\leq \mathbb{P}(\lambda_1\leq -x)\leq C\exp(-c x^{3\slash 2}).
\end{equation}
\end{proof}

Now we finish the proof of Proposition \ref{AiryDecay}.

\begin{proof}[Proof of Proposition \ref{AiryDecay}]

We take $K_1=K_2=kT\slash 100$ and $L_1=kT$. By Lemma \ref{AiryDecayL3}, there exists a positive constant $T_1$ that only depends on $\beta$, such that when $T\geq T_1$, for any $j\in [k]$,
\begin{equation}\label{NEQ5.1}
    \mathbb{P}(\mathcal{A}_1(j)^c)=\mathbb{P}(\lambda_j<-kT\slash 100)+\mathbb{P}(\lambda_j>kT\slash 100)\leq C\exp(-c(kT)^{3\slash 2}).
\end{equation}

Let $\mathcal{N}_1,\mathcal{N}_2\in\mathcal{F}$ be defined as in Lemmas \ref{AiryDecayL1} and \ref{AiryDecayL2}, respectively. For any $j\in [k]$, we define $\mathcal{A}_0(j):=\mathcal{A}_1(j)\cap \mathcal{A}_4(j)^c\cap \mathcal{A}_5(j)^c$. We also define
\begin{equation*}
    \mathcal{A}_0:=\bigcap_{j=1}^k \mathcal{A}_0(j), \quad \mathcal{N}_0:=\Big( 
 \bigcup_{j=1}^k\mathcal{A}_1(j)^c\Big)  \bigcup\mathcal{N}_1 \bigcup \mathcal{N}_2\in\mathcal{F}.
\end{equation*}
In the rest of this proof, we assume that $T$ is sufficiently large (depending on $\beta$).

Note that for any $j\in [k]$, by Lemmas \ref{AiryDecayL1}-\ref{AiryDecayL2},
\begin{eqnarray*}
   \mathcal{A}_0(j)^c &=& \mathcal{A}_1(j)^c\cup \mathcal{A}_4(j)\cup\mathcal{A}_5(j)\nonumber\\
  &\subseteq& \mathcal{A}_1(j)^c\cup (\mathcal{A}_1(j)\cap\mathcal{A}_3(j)) \cup (\mathcal{A}_1(j)\cap \mathcal{A}_3(j)^c\cap\mathcal{A}_4(j))\nonumber\\
  &&\cup (\mathcal{A}_1(j)\cap \mathcal{A}_3(j)^c\cap\mathcal{A}_5(j))\nonumber\\
  & \subseteq & \mathcal{A}_1(j)^c\cup \mathcal{N}_1\cup \mathcal{N}_2 \subseteq \mathcal{N}_0.
\end{eqnarray*}
Hence taking $\mathcal{A}(T):=\mathcal{N}_0^c\in\mathcal{F}$, we have 
\begin{equation}\label{eee1}
    \mathcal{A}_0^c\subseteq \mathcal{N}_0 = \mathcal{A}(T)^c, \text{ thus } \mathcal{A}(T)\subseteq \mathcal{A}_0.
\end{equation}
By Lemmas \ref{AiryDecayL1}-\ref{AiryDecayL2}, (\ref{NEQ5.1}), and the union bound, 
\begin{eqnarray}
 \mathbb{P}(\mathcal{A}(T)^c)  &= &
    \mathbb{P}(\mathcal{N}_0)\leq\sum_{j=1}^k \mathbb{P}(\mathcal{A}_1(j)^c)+\mathbb{P}(\mathcal{N}_1)+\mathbb{P}(\mathcal{N}_2)\nonumber\\
    &\leq& C\exp\big(-c(kT)^{1\slash 4}\big).
\end{eqnarray}

Below we assume that the event $\mathcal{A}(T)$ holds. By (\ref{eee1}), the event $\mathcal{A}_0$ holds. Consider an arbitrary $j\in [k]$. Recall from Section \ref{Sect.n1} that $\bar{p}_j(x)=f_j'(x)\slash f_j(x)$ for any $x\geq 0$. As the event $\mathcal{A}_5(j)^c$ holds, $\bar{p}_j(x)$ never blows up to $-\infty$ for $x\geq kT$. Hence on $[kT,\infty)$, $f_j(x)\neq 0$ and $f_j(x)$ does not change sign. As the event $\mathcal{A}_4(j)^c$ holds, we have $\bar{p}_j(x)\leq -\sqrt{x}\slash 2$
for any $x\geq kT$, from which we can deduce that for any $(x_1,x_2)\in\mathbb{R}^2$ such that $x_2\geq x_1\geq kT$,
\begin{equation}
    |f_j(x_2)|\leq |f_j(x_1)|\exp\Big(-\frac{1}{3}\big(x_2^{3\slash 2}-x_1^{3\slash 2}\big)\Big).
\end{equation}
Moreover, as the events $\mathcal{A}_4(j)^c$ and $\mathcal{A}_5(j)^c$ hold, for any $x\geq kT$, we have $|f_j'(x)\slash f_j(x)|=|\bar{p}_j(x)|\leq 4\sqrt{x}$, which leads to $|f_j'(x)|\leq 4\sqrt{x}|f_j(x)|$.

\end{proof}

\subsection{Exponential decay of the eigenvectors of $H_{\beta,n}$}\label{sec:3.2}

In this subsection, we show exponential decay of the top eigenvectors of $H_{\beta,n}$, the Gaussian $\beta$-ensemble of size $n$ (recall Section \ref{Sect.n2}). The main result is the following proposition, the proof of which uses a discrete analogue of the argument in Section \ref{sec:3.1} (but with a more delicate analysis). Recall from Section \ref{Sect.n2} that for each $j\in[n]$, $g_j\in\mathbb{R}^n$ is the normalized eigenvector of $H_{\beta,n}$ that corresponds to the $j$th largest eigenvalue $\lambda_j^{(n)}$. 

\begin{proposition}\label{DiscreteDecay}
Assume that $\beta>0$, $n, k\in\mathbb{N}_{+}$, and $n^{e_0}\leq k\leq n^{1\slash 2000}$ for a fixed constant $e_0\in (0,1\slash 2000)$. There exist positive constants $C,c,K$ that only depend on $\beta, e_0$ and a positive absolute constant $C'$, such that the following holds. Let $\mathcal{B}$ be the event that for any $i_1,i_2\in\mathbb{N}_{+}$ with $n^{1\slash 3}k^{12}\leq i_1\leq i_2\leq n$ and any $j\in [k]$, the following properties hold: 
\begin{itemize}
    \item[(a)] $g_j(i_1)$ and $g_j(i_2)$ are either both positive or both negative;
    \item[(b)] $|g_j(i_2)|\leq C'|g_j(i_1)| \exp\big(-\big(i_2^{3\slash 2} -i_1^{3\slash 2}
    \big)n^{-1\slash 2}\big\slash 12 \big)$.
\end{itemize}
When $k\geq K$, we have $\mathbb{P}\left(\mathcal{B}^c\right)\leq C\exp(-c k^3)$.
\end{proposition}

Throughout this subsection, we fix $\beta>0$, $n, k\in\mathbb{N}_{+}$, and $e_0\in (0,1\slash 2000)$ such that 
\begin{equation}\label{Connk}
    n^{e_0}\leq k\leq n^{1\slash 2000}.
\end{equation}
We also assume that $n\geq 2$ (note that Proposition \ref{DiscreteDecay} holds trivially for $n=1$). We recall (\ref{tilde_l}) and the definition of $\{Y_j\}_{j=1}^{n-1}$ from Section \ref{Sect.n2}. Note that for any $j\in [n]$, $\hat{H}_{\beta,n}g_j=\tilde{\lambda}_j^{(n)}g_j$. Hence for every $i\in [2,n-1]\cap\mathbb{Z}$,
\begin{equation}\label{Eigen}
   n^{1\slash 6}\bigg(\sqrt{\frac{2}{\beta}}\xi_i g_j(i)+\frac{Y_{n-i}}{\sqrt{\beta}}g_j(i+1)+\frac{Y_{n-i+1}}{\sqrt{\beta}} g_j(i-1)-2\sqrt{n} g_j(i)\bigg)=\tilde{\lambda}_j^{(n)} g_j(i).
\end{equation}
Moreover,
\begin{equation}\label{EigenN}
      n^{1\slash 6}\bigg(\sqrt{\frac{2}{\beta}}\xi_n g_j(n)+\frac{Y_{1}}{\sqrt{\beta}} g_j(n-1)-2\sqrt{n} g_j(n)\bigg)=\tilde{\lambda}_j^{(n)} g_j(n).
\end{equation}

Now for any $i\in [n-1]$, we define
\begin{equation}\label{psum}
    Z_i:=Y_{n-i}\slash\sqrt{\beta}-\sqrt{n-i},\quad \mu_i:=\mathbb{E}[Y_{n-i}]\slash\sqrt{\beta}, \quad \kappa_i:=Y_{n-i}\slash \sqrt{\beta}-\mu_i.
\end{equation}
By the Cauchy-Schwarz inequality, $\mu_i\leq \sqrt{\mathbb{E}[Y_{n-i}^2]}\slash \sqrt{\beta}=\sqrt{n-i}$, hence
\begin{equation}\label{Muine}
    n^{1\slash 6}(\sqrt{n}-\mu_i)=n^{1\slash 6}\frac{n-\mu_i^2}{\sqrt{n}+  \mu_i}\geq \frac{1}{2}(n-\mu_i^2)n^{-1\slash 3} \geq \frac{1}{2}  i n^{-1\slash 3}.
\end{equation}

\subsubsection{Preliminary bounds on partial sums of $\{Z_i\}_{i=1}^{n-1}$}

In this part, we establish some preliminary bounds on certain partial sums of $\{Z_i\}_{i=1}^{n-1}$ as defined in (\ref{psum}). These bounds will be used in the proof of Proposition \ref{DiscreteDecay}. For any $i\in [n-1]$ and $M\geq 0$, we define 
\begin{equation}\label{npsum}
    \tilde{Z}_{i,M}:=\min\{\max\{Z_i,-M\},M\}.
\end{equation}

The following lemma bounds $Z_i$ and $\tilde{Z}_{i,M}$ for all $i\in [n-1]$ and $M\geq 1$.

\begin{lemma}\label{ZL1}
For any $i\in [n-1]$ and $M\geq 1$, we have
\begin{equation}\label{ZL1E1}
    \mathbb{P}(|Z_i|\geq M)\leq 2\exp\big(-\beta M \min\{M,\sqrt{n-i}\}\slash 8\big),
\end{equation}
\begin{equation}\label{ZL1E2}
|\mathbb{E}[\tilde{Z}_{i,M}]-\mathbb{E}[Z_i]|\leq 8\beta^{-1}\exp(-\beta M\slash 8),
\end{equation}
\begin{equation}\label{ZL1E3}
    \mathbb{E}[(\tilde{Z}_{i,M})^2]\leq \mathbb{E}[Z_i^2]\leq 2\beta^{-1}.
\end{equation}
\end{lemma}
\begin{proof}

We note that 
\begin{equation}\label{EEq1.4}
   \sqrt{\beta}Z_i = Y_{n-i}-\sqrt{\beta(n-i)}=\frac{Y_{n-i}^2-\beta(n-i)}{Y_{n-i}+\sqrt{\beta(n-i)}}.
\end{equation}
Hence 
\begin{equation}\label{EEq1.1}
    \mathbb{P}(Z_i\geq M)\leq\mathbb{P}(Y_{n-i}^2\geq \beta(n-i)+\beta M\sqrt{n-i}),
\end{equation}
\begin{equation}
    \mathbb{P}(Z_i\leq -M)\leq \mathbb{P}(Y_{n-i}^2\leq \beta(n-i)-\beta M\sqrt{n-i}).
\end{equation}
As $Y_{n-i}\sim \chi_{(n-i)\beta}$, by Markov's inequality, for any $\theta\in [0,1\slash 2)$, we have
\begin{eqnarray}\label{EEq1.2}
  && \mathbb{P}(Y_{n-i}^2\geq \beta(n-i)+\beta M\sqrt{n-i})\nonumber\\
  &\leq& \exp(-\theta\beta(n-i)-\theta\beta M\sqrt{n-i})\mathbb{E}[\exp(\theta Y_{n-i}^2)]\nonumber\\
  &=& \exp(-\theta\beta(n-i)-\theta\beta M\sqrt{n-i}-\beta(n-i)\log(1-2\theta)\slash 2),
\end{eqnarray}
\begin{eqnarray}\label{EEq1.3}
 && \mathbb{P}(Y_{n-i}^2\leq \beta(n-i)-\beta M\sqrt{n-i})\nonumber\\
  &\leq& \exp(\theta\beta(n-i)-\theta\beta M\sqrt{n-i})\mathbb{E}[\exp(-\theta Y_{n-i}^2)]\nonumber\\
  &=& \exp(\theta\beta(n-i)-\theta\beta M\sqrt{n-i}-\beta(n-i)\log(1+2\theta)\slash 2).
\end{eqnarray}
Take $\theta=\min\{M\slash (4\sqrt{n-i}),1\slash 4\}\in [0, 1\slash 4]$. As $\log(1-x)\geq -x-x^2$ for any $x\in [0,1\slash 2]$ and $\log(1+x)\geq x-x^2$ for any $x\geq 0$ (which can be verified by taking the derivative), by (\ref{EEq1.1})-(\ref{EEq1.3}), we have
\begin{eqnarray}\label{estimate1.1}
   &&  \max\{\mathbb{P}(Z_i\geq M),\mathbb{P}(Z_i\leq -M)\}\leq \exp(-\theta\beta M\sqrt{n-i}+2\theta^2(n-i)\beta) \nonumber\\
   &\leq& \exp(-\theta\beta M\sqrt{n-i}\slash 2) = \exp\big(-\beta M \min\{M,\sqrt{n-i}\}\slash 8\big),
\end{eqnarray}
from which (\ref{ZL1E1}) follows.

By (\ref{estimate1.1}) (replacing $M$ by $M+y$), we obtain that 
\begin{eqnarray*}
  &&  \mathbb{E}[Z_i \mathbbm{1}_{Z_i\geq M}]-M\mathbb{P}(Z_i\geq M)=\mathbb{E}[(Z_i-M)\mathbbm{1}_{Z_i\geq M}]\\
  &=& \int_0^{\infty} \mathbb{P}((Z_i-M)\mathbbm{1}_{Z_i\geq M}> y) dy=\int_0^{\infty}  \mathbb{P}(Z_i>M+y) dy\\
  &\leq& \int_{0}^{\infty}\exp(-\beta (M+y)\slash 8)dy=8\beta^{-1}\exp(-\beta M\slash 8).
\end{eqnarray*}
As $\tilde{Z}_{i,M}\geq Z_i$ when $Z_i<M$, we have
\begin{eqnarray}\label{estimate1.3}
   \mathbb{E}[Z_i]&= &\mathbb{E}[Z_i   \mathbbm{1}_{Z_i<M}]+\mathbb{E}[Z_i \mathbbm{1}_{Z_i\geq M}]\leq \mathbb{E}[\tilde{Z}_{i,M}\mathbbm{1}_{Z_i<M}]+\mathbb{E}[Z_i \mathbbm{1}_{Z_i\geq M}]\nonumber\\
    &=&\mathbb{E}[\tilde{Z}_{i,M}]-\mathbb{E}[\tilde{Z}_{i,M}\mathbbm{1}_{Z_i\geq M}]+ \mathbb{E}[Z_i \mathbbm{1}_{Z_i\geq M}]\nonumber\\
    &=&\mathbb{E}[\tilde{Z}_{i,M}]-M\mathbb{P}(Z_i\geq M)+ \mathbb{E}[Z_i \mathbbm{1}_{Z_i\geq M}]\nonumber\\
    &\leq& \mathbb{E}[\tilde{Z}_{i,M}]+8\beta^{-1}\exp(-\beta M\slash 8),
\end{eqnarray}
Similarly, we can deduce that
\begin{equation}\label{estimate1.4}
  \mathbb{E}[Z_i]\geq\mathbb{E}[\tilde{Z}_{i,M}]-8\beta^{-1}\exp(-\beta M\slash 8).
\end{equation}
Combining (\ref{estimate1.3}) and (\ref{estimate1.4}), we obtain (\ref{ZL1E2}).

By (\ref{npsum}), $(\tilde{Z}_{i,M})^2\leq Z_i^2$. Hence by (\ref{EEq1.4}), noting $Y_{n-i}\sim\chi_{(n-i)\beta}$, we have 
\begin{equation*}
    \mathbb{E}[(\tilde{Z}_{i,M})^2]\leq \mathbb{E}[Z_i^2] \leq \frac{\mathbb{E}\big[\big(Y_{n-i}^2-\beta(n-i)\big)^2\big]}{\beta^2(n-i)} = 2\beta^{-1}. 
\end{equation*}

\end{proof}

Based on Lemma \ref{ZL1}, we establish the following lemma, which bounds partial sums of $\tilde{Z}_{i,M}-\mathbb{E}[\tilde{Z}_{i,M}]$.

\begin{lemma}\label{ZL2}
Assume that $m_1,m_2\in \{0,1,\cdots,n-1\}$, $m_1<m_2$, and $M\geq 1$. Let
\begin{equation*}
    \tilde{S}_{m_1, m_2,M}:=\sum\limits_{i=m_1+1}^{m_2}\big(\tilde{Z}_{i,M}-\mathbb{E}[\tilde{Z}_{i,M}]\big).
\end{equation*}
For any $t\geq 0$, we have
\begin{equation}
    \mathbb{P}(|\tilde{S}_{m_1,m_2,M}|\geq t)\leq 2\exp\left(-\frac{\beta t^2}{4(m_2-m_1)}\psi_{Benn}\left(\frac{\beta Mt}{2(m_2-m_1)}\right)\right),
\end{equation}
where
\begin{equation*}
    \psi_{Benn}(t):=
    \begin{cases}
         \frac{2(1+t) \log(1+t)-2t}{t^2} & \text{ if } t>0\\
         1 & \text{ if } t=0
    \end{cases}.
\end{equation*}
In particular, when $0\leq t \leq 4\beta^{-1}M^{-1}(m_2-m_1)$, we have
\begin{equation}
    \mathbb{P}(|\tilde{S}_{m_1,m_2,M}|\geq t)\leq 2\exp\left(-\frac{\beta t^2}{8(m_2-m_1)}\right);
\end{equation}
when $t\geq 4\beta^{-1}M^{-1}(m_2-m_1)$, we have
\begin{equation}
    \mathbb{P}(|\tilde{S}_{m_1,m_2,M}|\geq t)\leq 2\exp(-t\slash (2M)).
\end{equation}
\end{lemma}
\begin{proof}
Note that $|\tilde{Z}_{i,M}|\leq M$ for any $i\in [n-1]$. Denote 
\begin{equation*}
    v:=\sum_{i=m_1+1}^{m_2}\mathbb{E}[(\tilde{Z}_{i,M})^2].
\end{equation*}
Applying Bennett's inequality (see e.g. \cite[Theorem 2.9]{BLM}) to the two sets of independent random variables $\{\tilde{Z}_{i,M}\}_{i=m_1+1}^{m_2}$ and $\{-\tilde{Z}_{i,M}\}_{i=m_1+1}^{m_2}$, we obtain that for any $t\geq 0$,
\begin{equation}\label{Es5.2}
    \mathbb{P}(|\tilde{S}_{m_1,m_2,M}|\geq t)\leq 2\exp\left(-\frac{t^2}{2v}\psi_{Benn}\left(\frac{Mt}{v}\right)\right).
\end{equation}
Now for any $t>0$, we define
\begin{equation*}
    h(t):=t\psi_{Benn}(t)=2(1+t^{-1})\log(1+t)-2.
\end{equation*}
Note that for any $t>0$, $h'(t)=2t^{-2}(t-\log(1+t))\geq 0$. Hence $h(t)$ is non-decreasing on $(0,\infty)$. By Lemma \ref{ZL1}, $v\leq 2(m_2-m_1)\beta^{-1}$. Hence for any $t>0$,
\begin{eqnarray}\label{Es5.1}
    \frac{t^2}{2v}\psi_{Benn}\left(\frac{Mt}{v}\right) &=&\frac{t}{2M}h\left(\frac{Mt}{v}\right)\geq \frac{t}{2M}h\left(\frac{\beta M t}{2(m_2-m_1)}\right)\nonumber\\
    &=& \frac{\beta t^2}{4(m_2-m_1)}\psi_{Benn}\left(\frac{\beta M t}{2(m_2-m_1)}\right).
\end{eqnarray}
Plugging (\ref{Es5.1}) into (\ref{Es5.2}), we obtain that for any $t>0$,
\begin{equation}\label{Es5.3}
    \mathbb{P}(|\tilde{S}_{m_1,m_2,M}|\geq t)\leq 2\exp\left(-\frac{\beta t^2}{4(m_2-m_1)}\psi_{Benn}\left(\frac{\beta M t}{2(m_2-m_1)}\right)\right).
\end{equation}
Note that (\ref{Es5.3}) also holds when $t=0$.

Now note that for any $t>0$, 
\begin{equation*}
    \psi'_{Benn}(t)=2t^{-3}(t+2)\left(-\log(1+t)+\frac{2t}{t+2}\right).
\end{equation*}
For any $t\geq 0$, let $g(t):=\log(1+t)-2t\slash (t+2)$. Note that for any $t\geq 0$,
\begin{equation*}
    g'(t)=\frac{t^2}{(t+1)(t+2)^2}\geq 0.
\end{equation*}
Hence $g(t)\geq g(0)=0$. Therefore, $\psi'_{Benn}(t)\leq 0$ for any $t>0$, and $\psi_{Benn}(t)$ is non-increasing on $(0,\infty)$.

When $0\leq t\leq 4\beta^{-1}M^{-1}(m_2-m_1)$, we have $0\leq \beta M t\slash(2(m_2-m_1))\leq 2$, which leads to
\begin{equation*}
    \psi_{Benn}\left( \frac{\beta M t}{2(m_2-m_1)}\right)\geq \min\{\psi_{Benn}(0),\psi_{Benn}(2)\}\geq \frac{1}{2}.
\end{equation*}
Hence when $0\leq t\leq 4\beta^{-1}M^{-1}(m_2-m_1)$, 
\begin{equation}
    \mathbb{P}(|\tilde{S}_{m_1,m_2,M}|\geq t)\leq 2\exp\left(-\frac{\beta t^2}{8(m_2-m_1)}\right).
\end{equation}

When $t\geq 4\beta^{-1}M^{-1}(m_2-m_1)$, we have $\beta M t\slash (2(m_2-m_1))\geq 2$. Recalling that $h(t)=t\psi_{Benn}(t)$ is non-decreasing on $(0,\infty)$, we obtain
\begin{equation}
   \frac{\beta M t}{2(m_2-m_1)}\psi_{Benn} \left( \frac{\beta M t}{2(m_2-m_1)}\right)\geq 2\psi_{Benn}(2)\geq 1.
\end{equation}
Hence when $t \geq 4\beta^{-1}M^{-1}(m_2-m_1)$,
\begin{eqnarray}
    \mathbb{P}(|\tilde{S}_{m_1,m_2,M}|\geq t)&\leq &2\exp\left( -\frac{\beta t^2}{4(m_2-m_1)}\cdot\frac{2(m_2-m_1)}{\beta M t}\right)\nonumber\\
    &=& 2\exp\left(-t\slash(2M)\right).
\end{eqnarray}

\end{proof}

Based on Lemmas \ref{ZL1} and \ref{ZL2}, we obtain the following result, which bounds partial sums of $Z_i-\mathbb{E}[Z_i]$.

\begin{lemma}\label{ZL3}
Assume that $m_1,m_2\in \{0,1,\cdots,n-1\}$ satisfy $m_1<m_2$. Let
\begin{equation*}
    S_{m_1,m_2}:=\sum\limits_{i=m_1+1}^{m_2}(Z_i-\mathbb{E}[Z_i]).
\end{equation*}
For any $0\leq t\leq 4\beta^{-1}n^{-1\slash 72}(m_2-m_1)$, we have
\begin{eqnarray}
    &&\mathbb{P}\Big(|S_{m_1,m_2}|\geq t+8\beta^{-1}(m_2-m_1)\exp\big(-\beta n^{1\slash 72}\slash 8\big)\Big) \nonumber\\
    &\leq& 2(m_2-m_1)\exp\big(-\beta n^{1\slash 72}\slash 8\big)+2\exp\left(-\frac{\beta t^2}{8(m_2-m_1)}\right).
\end{eqnarray}
For any $t\geq 4\beta^{-1}n^{-1\slash 72}(m_2-m_1)$, we have
\begin{eqnarray}
    &&\mathbb{P}\Big(|S_{m_1,m_2}|\geq t+8\beta^{-1}(m_2-m_1)\exp\big(-\beta n^{1\slash 72}\slash 8\big)\Big) \nonumber\\
    &\leq& 2(m_2-m_1)\exp\big(-\beta n^{1\slash 72}\slash 8\big)+2\exp\left(-\frac{t}{2 n^{1\slash 72}}\right).
\end{eqnarray}
\end{lemma}
\begin{proof}
Let $M:=n^{1\slash 72}\geq 1$. By Lemma \ref{ZL1}, we have
\begin{eqnarray*}
  &&  \bigg|\sum_{i=m_1+1}^{m_2} \big(\mathbb{E}[Z_{i}]-\mathbb{E}[\tilde{Z}_{i,M}]\big)\bigg|\leq \sum_{i=m_1+1}^{m_2}\big|\mathbb{E}[Z_{i}]-\mathbb{E}[\tilde{Z}_{i,M}]\big|
    \\
    &\leq& 8\beta^{-1}\exp(-\beta M\slash 8)(m_2-m_1)=8\beta^{-1}(m_2-m_1)\exp\big(-\beta n^{1\slash 72}\slash 8\big).
\end{eqnarray*}

Let $\mathcal{H}$ be the event that there exists some $i\in [m_1+1,m_2]\cap\mathbb{Z}$ such that $Z_i\neq \tilde{Z}_{i,M}$. By Lemma \ref{ZL1} and the union bound, we have
\begin{eqnarray}\label{Es6.2}
 && \mathbb{P}(\mathcal{H})\leq \sum_{i=m_1+1}^{m_2}\mathbb{P}(Z_i\neq \tilde{Z}_{i,M}) = \sum_{i=m_1+1}^{m_2}\mathbb{P}(|Z_i|>M)\nonumber\\
 &  \leq & 2(m_2-m_1)\exp(-\beta M\slash 8) =2(m_2-m_1)\exp\big(-\beta n^{1\slash 72}\slash 8\big).
\end{eqnarray}
When the event $\mathcal{H}^c$ holds, $Z_i=\tilde{Z}_{i,M}$ for every $i\in [m_1+1,m_2]\cap\mathbb{Z}$, hence recalling the notations in Lemma \ref{ZL2}, we have
\begin{eqnarray}\label{Es6.1}
   |S_{m_1,m_2}|&\leq& |\tilde{S}_{m_1,m_2,M}|+\bigg|\sum_{i=m_1+1}^{m_2}\big( \mathbb{E}[Z_{i}]-\mathbb{E}[\tilde{Z}_{i,M}]\big)\bigg|\nonumber\\
    &\leq& |\tilde{S}_{m_1,m_2,M}|+8\beta^{-1}(m_2-m_1)\exp\big(-\beta n^{1\slash 72}\slash 8\big).
\end{eqnarray}

Therefore, for any $t\geq 0$, we have
\begin{eqnarray}\label{Es6.3}
&& \mathbb{P}\Big(|S_{m_1,m_2}|\geq t+8\beta^{-1}(m_2-m_1)\exp\big(-\beta n^{1\slash 72}\slash 8\big)\Big)  \nonumber\\
&\leq& \mathbb{P}(\mathcal{H})+\mathbb{P}\Big(\Big\{|S_{m_1,m_2}|\geq t+8\beta^{-1}(m_2-m_1)\exp\big(-\beta n^{1\slash 72}\slash 8\big)\Big\}\cap\mathcal{H}^c\Big)\nonumber\\
&\leq& 2(m_2-m_1)\exp\big(-\beta n^{1\slash 72}\slash 8\big)+\mathbb{P}(|\tilde{S}_{m_1,m_2,M}|\geq t),
\end{eqnarray}
where we use the union bound in the first inequality and (\ref{Es6.2})-(\ref{Es6.1}) in the second inequality. The conclusion of the lemma follows from (\ref{Es6.3}) and Lemma \ref{ZL2}.

\end{proof}

\subsubsection{Proof of Proposition \ref{DiscreteDecay}}

In this part, we present the proof of Proposition \ref{DiscreteDecay}. We start by setting up some notations and definitions that will be used throughout the rest of this subsection. We denote $\widehat{\mathbb{R}}:=\mathbb{R}\cup \{\infty\}$. For each $j\in [n]$, we define $P_j\in\widehat{\mathbb{R}}^n$ by setting $P_j(1):=0$ and 
\begin{equation}\label{Pji}
     P_j(i):=
     \begin{cases}
          \frac{n^{1\slash 3}(g_j(i)-g_j(i-1))}{g_j(i-1)} & \text{ if } g_j(i-1)\neq 0\\
          \infty & \text{ if } g_j(i-1)=0
     \end{cases}
\end{equation}
for each $i\in [n]\backslash\{1\}$.

Throughout the rest of this subsection, we fix $K_3,L_2,\Delta>0$ such that $n^{1\slash 3}L_2, n^{1\slash 3}\Delta\in\mathbb{Z}$ and $L_2^{1\slash 4}\slash 4\leq \Delta\leq  L_2^{1\slash 4}$. We define the following events:
\begin{itemize}
    \item $\mathcal{U}_1$: the event that $|\kappa_i|\leq n^{1\slash 72}\slash 24$ for every $i\in[n-1]$ (recall the definition of $\kappa_i$ from (\ref{psum}));
    \item $\mathcal{U}_2$: the event that $|\xi_i|< n^{1\slash 144}\slash 24$ for every $i\in [n]$ and $|Z_i|<n^{1\slash 72}\slash 24$ for every $i\in [n-1]$ (recall the definition of $\xi_i$ from Section \ref{Sect.n2} and the definition of $Z_i$ from (\ref{psum})).
\end{itemize}
For every $j\in [k]$, we define the following events:
\begin{itemize}
    \item $\mathcal{B}_1(j)$: the event that $\tilde{\lambda}_j^{(n)}\geq -K_3$.
    \item $\mathcal{B}_2(j)$: the event that the following properties hold:
    \begin{itemize}
        \item[(a)] for any $i\in \left[\max\left\{2,n-\sqrt{n}\right\},n-1\right]\cap\mathbb{Z}$ that satisfies
    \begin{equation*}
        g_j(i-1)>0, \quad g_j(i)>0, \quad g_j(i-1)\leq 2g_j(i),
    \end{equation*}
    we have $g_j(i+1)>0$;
        \item[(b)] for any $i\in \left[\max\left\{2,n-\sqrt{n}\right\},n-1\right]\cap\mathbb{Z}$ that satisfies
    \begin{equation*}
        g_j(i-1)<0, \quad g_j(i)<0, \quad g_j(i-1)\geq 2g_j(i),
    \end{equation*}
    we have $g_j(i+1)<0$;
    \item[(c)] if $g_j(n-1)>0$ and $g_j(n)>0$, then $g_j(n-1)>2g_j(n)$; if $g_j(n-1)<0$ and $g_j(n)< 0$, then $g_j(n-1)<2g_j(n)$.
    \end{itemize}
    \item $\mathcal{B}_3(j)$: the event that there exists some $i\in [n^{1\slash 3}L_2,n]\cap\mathbb{Z}$, such that $P_j(i)\neq \infty$ and $P_j(i)\geq \sqrt{i n^{-1\slash 3}}\slash 8$.
    \item $\mathcal{B}_4(j)$: the event that there exists some $i\in [n^{1\slash 3}L_2,n-50]\cap\mathbb{Z}$, such that $P_j(i)\neq \infty$ and $P_j(i)\geq -\sqrt{i n^{-1\slash 3}}  \slash  8$.
\end{itemize}

\begin{lemma}\label{PL1}
There exist positive constants $C,c$ that only depend on $\beta$, such that
\begin{equation}
    \mathbb{P}(\mathcal{U}_1^c)\leq C\exp\big(-cn^{1\slash 72}\big), \quad \mathbb{P}(\mathcal{U}_2^c)\leq C\exp\big(-cn^{1\slash 72}\big).
\end{equation}
\end{lemma}
\begin{proof}
For any $i\in [n-1]$, by (\ref{psum}), $\kappa_i=Z_i-\mathbb{E}[Z_i]$, hence $|\kappa_i|\leq |Z_i|+|\mathbb{E}[Z_i]|$. Setting $M=1$ in Lemma \ref{ZL1}, noting that $|\tilde{Z}_{i,1}|\leq 1$, we obtain that
\begin{equation*}
|\mathbb{E}[Z_i]|\leq 1+8\beta^{-1}\exp(-\beta\slash 8).
\end{equation*}
Hence there exists $N\geq 1$ that only depends on $\beta$, such that for any $n\geq N$ and $i\in [n-1]$, $|\mathbb{E}[Z_i]|\leq n^{1\slash 72}\slash 48$. Hence by Lemma \ref{ZL1} and the union bound, for any $n\geq N$,
\begin{equation}\label{Esti1.1}
\mathbb{P}(\mathcal{U}_1^c)\leq \sum_{i=1}^{n-1}\mathbb{P}\big(|\kappa_i|> n^{1\slash 72}\slash 24\big)\leq  \sum_{i=1}^{n-1}\mathbb{P}\big(|Z_i| \geq n^{1\slash 72}\slash 48\big)\leq C\exp\big(-c n^{1\slash 72}\big).
\end{equation}
By enlarging $C$, we conclude that (\ref{Esti1.1}) holds for all $n\in\mathbb{N}_{+}$. 

By Lemma \ref{ZL1} and the union bound, we have
\begin{eqnarray*}
    \mathbb{P}(\mathcal{U}_2^c)&\leq& \sum_{i=1}^n\mathbb{P}\big(|\xi_i|\geq n^{1\slash 144}\slash 24\big)+\sum_{i=1}^{n-1}\mathbb{P}\big(|Z_i|\geq n^{1\slash 72}\slash 24\big)\\
    &\leq & C\exp\big(-c n^{1\slash 72}\big).
\end{eqnarray*}

\end{proof}

\begin{lemma}\label{PL2}
Assume that $K_3\leq n^{2\slash 3}\slash 3$. Then there exist positive constants $C,c$ that only depend on $\beta$, such that for any $j\in [k]$,
\begin{equation}
\mathbb{P}(\mathcal{B}_1(j)\cap\mathcal{B}_2(j)^c)\leq C\exp\big(-c n^{1\slash 2}\big).
\end{equation}
\end{lemma}
\begin{proof}
Let
\begin{equation*}
    \mathcal{C}_1:=\Big\{\xi_i\leq \sqrt{\beta n \slash 2}\text{ for any }i\in [\max\{2,n-\sqrt{n}\},n]\cap\mathbb{Z}\Big\},
\end{equation*}
\begin{equation*}
    \mathcal{C}_2:=\Big\{Y_{n-i+1}\leq\sqrt{\beta n}\slash 4\text{ for any }i\in [\max\{2,n-\sqrt{n}\},n]\cap \mathbb{Z}\Big\}.
\end{equation*}
Below we consider any $j\in [k]$, and assume that the event $\mathcal{C}_1\cap \mathcal{C}_2\cap\mathcal{B}_1(j)$ holds. 

Suppose that there exists $i_0\in \left[\max\left\{2,n-\sqrt{n}\right\},n-1\right]\cap\mathbb{Z}$, such that 
\begin{equation*}
    g_j(i_0-1)>0,\quad g_j(i_0)>0, \quad g_j(i_0-1)\leq 2g_j(i_0), \quad g_j(i_0+1)\leq 0.
\end{equation*}
By (\ref{Eigen}), we have
\begin{eqnarray}\label{Esti1.2}
  \sqrt{\frac{2}{\beta}  
}\xi_{i_0} g_j(i_0)+\frac{Y_{n-i_0+1}}{\sqrt{\beta}}g_j(i_0-1)  &\geq&  2\sqrt{n}g_j(i_0)+\tilde{\lambda}_j^{(n)} n^{-1\slash 6}g_j(i_0)\nonumber\\
&\geq & \frac{5}{3}\sqrt{n}g_j(i_0),
\end{eqnarray}
where in the second inequality we use $\tilde{\lambda}_j^{(n)}\geq -K_3\geq -n^{2\slash 3}\slash 3$. As $\xi_{i_0}\leq \sqrt{\beta n \slash 2}$ and $Y_{n-i_0+1} \leq \sqrt{\beta n}\slash 4$, by (\ref{Esti1.2}), we have
\begin{equation}\label{Esti1.3}
    \frac{2}{3}\sqrt{n}g_j(i_0)\leq \frac{1}{4}\sqrt{n}g_j(i_0-1)\leq \frac{1}{2}\sqrt{n}g_j(i_0), 
\end{equation}
where we use $g_j(i_0-1)\leq 2g_j(i_0)$ in the second inequality. As $g_j(i_0)>0$, (\ref{Esti1.3}) leads to a contradiction. Similarly, if there exists $i_0\in [\max\{2,n-\sqrt{n}\},n-1]\cap\mathbb{Z}$ such that
\begin{equation*}
    g_j(i_0-1)<0,\quad g_j(i_0)<0, \quad g_j(i_0-1)\geq 2g_j(i_0), \quad g_j(i_0+1)\geq 0,
\end{equation*}
we are led to a contradiction.

Suppose that $g_j(n-1)>0$, $g_j(n)>0$, and $g_j(n-1)\leq 2g_j(n)$. By (\ref{EigenN}), we can deduce that
\begin{equation*}
    \sqrt{\frac{2}{\beta}}\xi_n g_j(n)+\frac{Y_1}{\sqrt{\beta}}g_j(n-1)=2\sqrt{n}g_j(n)+\tilde{\lambda}_j^{(n)}n^{-1\slash 6}g_j(n)\geq \frac{5}{3}\sqrt{n}g_j(n).
\end{equation*}
As $\xi_n\leq \sqrt{\beta n\slash 2}$, $Y_1\leq \sqrt{\beta n}\slash 4$, and $g_j(n-1)\leq 2g_j(n)$, we have 
\begin{equation*}
    \frac{2}{3}\sqrt{n}g_j(n)\leq \frac{1}{4}\sqrt{n} g_j(n-1)\leq \frac{1}{2}\sqrt{n} g_j(n),
\end{equation*}
which leads to a contradiction as $g_j(n)>0$. Similarly, if $g_j(n-1)<0$, $g_j(n)<0$, and $g_j(n-1)\geq 2 g_j(n)$, we are led to a contradiction.

Hence the event $\mathcal{B}_2(j)$ holds. Therefore, we have $\mathcal{C}_1\cap\mathcal{C}_2\cap\mathcal{B}_1(j)\subseteq \mathcal{B}_2(j)$, which implies
\begin{equation}\label{Esti1.4}
    \mathcal{B}_1(j)\cap 
 \mathcal{B}_2(j)^c\subseteq \mathcal{C}_1^c\cup\mathcal{C}_2^c.
\end{equation}

Now by (\ref{psum}), Lemma \ref{ZL1}, and the union bound, we have
\begin{equation*}
    \mathbb{P}(\mathcal{C}_1^c)\leq C\exp(-cn),
\end{equation*}
\begin{equation*}
\mathbb{P}(\mathcal{C}_2^c)\leq \sum_{i\in [n-\sqrt{n}-1, n-1]\cap\mathbb{N}_{+}}\mathbb{P}\left(Z_i>\frac{1}{4}\sqrt{n}-\sqrt{n-i}\right)\leq C\exp\big(-c n^{1\slash 2}\big).
\end{equation*}
Hence by (\ref{Esti1.4}) and the union bound, we have
\begin{equation}
    \mathbb{P}(\mathcal{B}_1(j)\cap \mathcal{B}_2(j)^c)\leq \mathbb{P}(\mathcal{C}_1^c)+\mathbb{P}(\mathcal{C}_2^c)\leq C\exp\big(-c n^{1\slash 2}\big).
\end{equation}

\end{proof}

\begin{lemma}\label{PL3}
There exist positive constants $C,c,C_1,C_2$ (with $C_2\geq 1$) that only depend on $\beta$, such that when $L_2\geq \max\big\{C_1,C_2 K_3,(\log{n})^8\big\}$ and $K_3\leq n^{2\slash 3}\slash 3$, for any $j\in [k]$, we have
\begin{eqnarray}\label{Esti5.1}
   && \mathbb{P}\left(\mathcal{B}_3(j)\cap\mathcal{B}_1(j)\cap\mathcal{B}_2(j)\cap\mathcal{U}_1\cap\mathcal{U}_2\right)\nonumber\\
   &  \leq & C\exp\big(-c n^{1\slash 72}\big)+C\exp\big(-c L_2^{1\slash 4}\big).
\end{eqnarray}
\end{lemma}
\begin{proof}

We fix an arbitrary $j\in [k]$ throughout the proof. When $L_2>n^{2\slash 3}$, we have $\mathcal{B}_3(j)=\emptyset$ and (\ref{Esti5.1}) automatically holds. Throughout the rest of the proof, we assume that $L_2\leq n^{2\slash 3}$. We also assume that 
\begin{equation}\label{Assu}
    L_2\geq \max\big\{80^4,10^4 K_3,(\log{n})^8\big\}, \quad  n\geq \max\{10^4,(2\beta)^{-72}\}.
\end{equation}
The conclusion (\ref{Esti5.1}) for $n\in [1,\max\{10^4,(2\beta)^{-72}\})\cap\mathbb{Z}$ can be obtained by enlarging the constant $C$. The proof proceeds through \textbf{Steps 1-4} as detailed below.

\paragraph{Step 1}

In this step, we set up some notations and definitions that will be used in later parts of the proof. 

For any $i\in [2,n-1]\cap\mathbb{Z}$, we define
\begin{equation}\label{Qd}
    Q_i:=\sqrt{\frac{2}{\beta}}\xi_i+\kappa_i+\kappa_{i-1}, \quad \tilde{Q}_i:=\sqrt{\frac{2}{\beta}}\xi_i+Z_i+Z_{i-1},
\end{equation}
where we recall the definitions of $\xi_i,\kappa_i,Z_i$ from Section \ref{Sect.n2} and (\ref{psum}). Let $L_0:=\max\big\{\lfloor (n^{2\slash 3}-L_2)\Delta^{-1}-1\rfloor,0\big\}$. For any $l\in [0,L_0-1]\cap\mathbb{Z}$, we let
\begin{equation*}
    I_l:=\big[(L_2+l\Delta)n^{1\slash 3},(L_2+(l+1)\Delta)n^{1\slash 3}\big]\cap\mathbb{Z}.
\end{equation*}
We also let
\begin{equation*}
    I_{L_0}:=\big[(L_2+L_0\Delta)n^{1\slash 3},n\big]\cap\mathbb{Z}.
\end{equation*}
For every $l\in\mathbb{N}$, we let
\begin{equation*}
    \phi_1(l):=\frac{1}{16}+\frac{1}{16}(l+1)^{-1\slash 4},\quad \phi_2(l):=\frac{1}{16}+\frac{1}{32}(l+1)^{-1\slash 4}.
\end{equation*}
Note that for any $l\in\mathbb{N}$, we have $\phi_2(l+1)<\phi_2(l)<\phi_1(l)\leq 1\slash 8$. For every $l\in\mathbb{N}$, we also define
\begin{equation}\label{dh}
    h_l:=(\phi_2(l)-\phi_2(l+1))\sqrt{L_2+l\Delta}.
\end{equation}
For any $l\in [0,L_0]\cap\mathbb{Z}$, we let $\mathcal{V}_l$ be the event that for any $m_1,m_2\in [2,n-1]\cap\mathbb{Z}$ such that $0\leq m_2-m_1\leq 400 h_ln^{1\slash 3}(L_2+l\Delta)^{-1}$, we have \begin{equation}\label{Hl}
    \left|n^{-1\slash 6}\sum_{i=m_1}^{m_2}Q_i\right|\leq \frac{1}{2}h_l.
\end{equation}
We also define the event $\mathcal{V}$ as
\begin{equation}\label{VV}
    \mathcal{V}:=\bigcap_{l\in [0,L_0]\cap\mathbb{Z}: L_2+l\Delta<10 n^{13\slash 72}}\mathcal{V}_l.
\end{equation}

For every $l\in [0,L_0]\cap\mathbb{Z}$, we define the events $\mathcal{E}_{0,l},\mathcal{E}_{1,l},\mathcal{E}_{2,l}$ as follows:
\begin{itemize}
    \item $\mathcal{E}_{0,l}$: the event that for any $i_1\in I_l$ such that $P_j(i_1)\geq \phi_2(l)\sqrt{i_1 n^{-1\slash 3}}$ and $P_{j}(i_1)\neq \infty$, we have $P_j(i)\geq \phi_2(l+1)\sqrt{i n^{-1\slash 3}}$ and $P_j(i)\neq \infty$ for all $i\in I_l$ with $i\geq i_1$.
    \item $\mathcal{E}_{1,l}$: the event that there exist $i_1,i_2\in I_l$ such that $i_1<i_2$ and the following properties hold:
    \begin{itemize}
        \item[(a)] $P_j(i)\neq \infty$ for any $i\in [i_1,i_2-1]\cap\mathbb{Z}$;
        \item[(b)] $P_j(i_1)\geq \phi_2(l)\sqrt{i_1 n^{-1\slash 3}}$;
        \item [(c)] $P_j(i_2)=\infty$ or $P_j(i_2)<\phi_2(l+1)\sqrt{i_2 n^{-1\slash 3}}$;
        \item [(d)] $\phi_2(l+1)\sqrt{i n^{-1\slash 3}}\leq P_j(i) < \phi_1(l)\sqrt{i n^{-1\slash 3}}$ for any $i\in [i_1,i_2-1]\cap\mathbb{Z}$.
    \end{itemize}
    \item $\mathcal{E}_{2,l}$: the event that there exist $i_1,i_2\in I_l$ such that $i_1<i_2$ and the following properties hold:
    \begin{itemize}
        \item[(a)] $P_j(i)\neq \infty$ for any $i\in [i_1,i_2-1]\cap\mathbb{Z}$;
        \item[(b)] $P_j(i_1)\geq \phi_1(l)\sqrt{i_1 n^{-1\slash 3}}$;
        \item[(c)] $P_j(i_2)=\infty$ or $P_j(i_2)< \phi_2(l+1)\sqrt{i_2 n^{-1\slash 3}}$;
        \item[(d)] $\phi_2(l+1)\sqrt{i n^{-1\slash 3}}\leq P_j(i)<\phi_1(l)\sqrt{i n^{-1\slash 3}}$ for any $i\in [i_1+1,i_2-1]\cap\mathbb{Z}$.
    \end{itemize}
\end{itemize}
We note that by definition, for any $l\in [0,L_0]\cap\mathbb{Z}$,
\begin{equation}\label{E10.1}
    (\mathcal{E}_{0,l})^c\subseteq \mathcal{E}_{1,l}\cup\mathcal{E}_{2,l}. 
\end{equation}
Let
\begin{equation}\label{Ed}
    \mathcal{E}_0:=\bigcap_{l=0}^{L_0}\mathcal{E}_{0,l}, \quad\mathcal{E}_1:=\bigcup_{l=0}^{L_0}\mathcal{E}_{1,l}, \quad \mathcal{E}_2:=\bigcup_{l=0}^{L_0}\mathcal{E}_{2,l}. 
\end{equation}
By (\ref{E10.1}), we have
\begin{equation}\label{E10.2}
    \left(\mathcal{E}_{0}\right)^c\subseteq \mathcal{E}_{1}\cup\mathcal{E}_{2}. 
\end{equation}

\bigskip

\paragraph{Step 2}

In this step, we give an upper bound on
$\mathbb{P}(\mathcal{B}_1(j)\cap\mathcal{B}_2(j)\cap\mathcal{U}_1\cap\mathcal{U}_2\cap\mathcal{E}_1)$. We proceed through the following two sub-steps:
\begin{itemize}
    \item \textbf{Sub-step 2.1}: Show that $\mathcal{B}_1(j)\cap\mathcal{B}_2(j)\cap\mathcal{U}_1\cap\mathcal{U}_2\cap\mathcal{E}_1\subseteq \mathcal{V}^c$.
    \item \textbf{Sub-step 2.2}: Provide an upper bound on $\mathbb{P}(\mathcal{V}^c)$, which yields an upper bound on $\mathbb{P}(\mathcal{B}_1(j)\cap\mathcal{B}_2(j)\cap\mathcal{U}_1\cap\mathcal{U}_2\cap\mathcal{E}_1)$ by \textbf{Sub-step 2.1}.
\end{itemize}

\subparagraph{Sub-step 2.1}

As mentioned above, in this sub-step, we show that
\begin{equation*}
    \mathcal{B}_1(j)\cap\mathcal{B}_2(j)\cap\mathcal{U}_1\cap\mathcal{U}_2\cap\mathcal{E}_1\subseteq \mathcal{V}^c.
\end{equation*}
The proof consists of \textbf{Parts 2.1.1-2.1.4} as detailed below.

\smallskip

\textbf{Part 2.1.1}

In this part, we derive some preliminary estimates on $g_j$ and $P_j$.

We first consider any $i\in [2,n-1]\cap\mathbb{Z}$ such that $g_j(i)\neq 0, g_j(i-1)\neq 0$. By (\ref{Pji}), we have
\begin{eqnarray}\label{PP1}
    P_j(i+1)-P_j(i)&=&-n^{1\slash 3}\frac{\left(g_j(i)-g_j(i-1)\right)^2}{g_j(i)g_j(i-1)}-2n^{1\slash 3}\nonumber\\
    && +n^{1\slash 3}\frac{g_j(i-1)+g_j(i+1)}{g_j(i)}.
\end{eqnarray}
By (\ref{Eigen}), we have
\begin{eqnarray}\label{PP2}
    2n^{1\slash 3}&=&n^{-1\slash 6}\sqrt{\frac{2}{\beta}}\xi_i-n^{-1\slash 3}\tilde{\lambda}_j^{(n)}+n^{-1\slash 6}\frac{Y_{n-i}}{\sqrt{\beta}}\frac{g_j(i+1)}{g_j(i)}\nonumber\\
    &&+n^{-1\slash 6}\frac{Y_{n-i+1}}{\sqrt{\beta}}\frac{g_j(i-1)}{g_j(i)}.
\end{eqnarray}
Plugging (\ref{PP2}) into (\ref{PP1}) and using the definitions of $\mu_i,\kappa_i$ (see (\ref{psum})) and $Q_i$ (see (\ref{Qd})), we obtain that 
\begin{eqnarray}\label{PP3}
    && P_j(i+1)-P_j(i)=-n^{1\slash 3}\frac{\left(g_j(i)-g_j(i-1)\right)^2}{g_j(i)g_j(i-1)}+n^{-1\slash 3}\tilde{\lambda}_j^{(n)}-n^{-1\slash 6} Q_i\nonumber\\
    &&\quad\quad+n^{-1\slash 6}(\sqrt{n}-\mu_i)\frac{g_j(i+1)}{g_j(i)}+n^{-1\slash 6}(\sqrt{n}-\mu_{i-1})\frac{g_j(i-1)}{g_j(i)}\nonumber\\
    &&\quad \quad  -n^{-1\slash 6}\kappa_i\frac{g_j(i+1)-g_j(i)}{g_j(i)}-n^{-1\slash 6}\kappa_{i-1}\frac{g_j(i-1)-g_j(i)}{g_j(i)}.
\end{eqnarray}

Throughout the rest of this sub-step, we assume that the events $\mathcal{B}_1(j)$, $\mathcal{B}_2(j)$, $\mathcal{U}_1$, and $\mathcal{U}_2$ hold. We also fix an arbitrary $l\in [0,L_0]\cap\mathbb{Z}$ and assume that the event $\mathcal{E}_{1,l}$ holds.

For any $i\in [2,n-1]\cap\mathbb{Z}$ such that either $g_j(i)>0,g_j(i-1)>0$ or $g_j(i)<0,g_j(i-1)<0$, by (\ref{PP3}), (\ref{Muine}), and the definition of $\mathcal{B}_1(j)$, we can deduce that
\begin{eqnarray}\label{PP5}
    && P_j(i+1)-P_j(i)\geq -n^{1\slash 3}\frac{\left(g_j(i)-g_j(i-1)\right)^2}{g_j(i)g_j(i-1)}-n^{-1\slash 3}K_3-n^{-1\slash 6} Q_i\nonumber\\
    &&\quad\quad+n^{-1\slash 6}(\sqrt{n}-\mu_i)\frac{g_j(i+1)}{g_j(i)}+\frac{1}{2}(i-1)n^{-2\slash 3}\frac{g_j(i-1)}{g_j(i)}\nonumber\\
    &&\quad \quad  -n^{-1\slash 6}\kappa_i\frac{g_j(i+1)-g_j(i)}{g_j(i)}-n^{-1\slash 6}\kappa_{i-1}\frac{g_j(i-1)-g_j(i)}{g_j(i)}.
\end{eqnarray}

As the event $\mathcal{E}_{1,l}$ holds, there exist $i_1,i_2\in I_l$, such that $i_1<i_2$ and the properties (a)-(d) in the definition of $\mathcal{E}_{1,l}$ are satisfied. We fix a choice of such $i_1,i_2$ for the remainder of this sub-step. Note that for any $i\in [i_1,i_2-1]\cap\mathbb{Z}$, we have $g_j(i-1)\neq 0$ and 
\begin{equation}\label{PP6}
    \left|\frac{g_j(i)-g_j(i-1)}{g_j(i-1)}\right|=n^{-1\slash 3}|P_j(i)|\leq \frac{1}{8}n^{-1\slash 3}\sqrt{i n^{-1\slash 3}}\leq \frac{1}{2},
\end{equation}
hence $1\slash 2\leq g_j(i)\slash g_j(i-1)\leq 3\slash 2$. Without loss of generality, we assume that $g_j(i_1-1)>0$ in the following (the case $g_j(i_1-1)<0$ can be handled similarly). Thus for any $i\in [i_1,i_2-1]\cap\mathbb{Z}$, we have \begin{equation}\label{Es7.1}
    g_j(i)>0,\quad g_j(i-1)\leq 2 g_j(i), \quad  g_j(i)\leq \frac{3}{2}g_j(i-1).
\end{equation}

\smallskip

\textbf{Part 2.1.2}

In this part, we show that $g_j(i_2)>0$. Note that $i_2\geq L_2 \geq 10$. Thus if $n-\sqrt{n}+1\leq i_2\leq n$, as the event $\mathcal{B}_2(j)$ holds, we can deduce from (\ref{Es7.1}) that $g_j(i_2)>0$. Now consider the case $i_2< n-\sqrt{n}+1$. Plugging (\ref{PP2}) into (\ref{PP1}) and using the definitions of $Z_i$ (see (\ref{psum})) and $\tilde{Q}_i$ (see (\ref{Qd})), we obtain that for any $i\in [2,n-1]\cap\mathbb{Z}$ such that $g_j(i)\neq 0,  g_j(i-1)\neq 0$,
\begin{eqnarray}\label{P10.1}
  &&  n^{1\slash 3}\frac{g_j(i+1)}{g_j(i)}-n^{1\slash 3}\frac{g_j(i)}{g_j(i-1)}= P_j(i+1)-P_j(i) \nonumber\\
  &=& -n^{1\slash 3}\frac{\left(g_j(i)-g_j(i-1)\right)^2}{g_j(i)g_j(i-1)}+n^{-1\slash 3}\tilde{\lambda}_j^{(n)}-n^{-1\slash 6} \tilde{Q}_i\nonumber\\
    && +n^{-1\slash 6}(\sqrt{n}-\sqrt{n-i})\frac{g_j(i+1)}{g_j(i)}+n^{-1\slash 6}(\sqrt{n}-\sqrt{n-i+1})\frac{g_j(i-1)}{g_j(i)}\nonumber\\
    && -n^{-1\slash 6} Z_i\frac{g_j(i+1)-g_j(i)}{g_j(i)}-n^{-1\slash 6} Z_{i-1}\frac{g_j(i-1)-g_j(i)}{g_j(i)}.
\end{eqnarray}
As the event $\mathcal{B}_1(j)$ holds, we have $n^{-1\slash 3}\tilde{\lambda}_j^{(n)}\geq -n^{-1\slash 3}K_3$. By (\ref{PP6}) and (\ref{Es7.1}), we have for any $i\in [i_1,i_2-1]\cap\mathbb{Z}$,
\begin{equation}\label{PP8}
    \left|\frac{g_j(i-1)-g_j(i)}{g_j(i)}\right|=\left|\frac{g_j(i-1)-g_j(i)}{g_j(i-1)}\right|\left|\frac{g_j(i-1)}{g_j(i)}\right|\leq \sqrt{\frac{i}{n}}.
\end{equation}
As the event $\mathcal{U}_2$ holds, we have $|Z_{i_2-2}|\leq n^{1\slash 72}\slash 24$, hence
\begin{equation*}
    \left| n^{-1\slash 6} Z_{i_2-2}\frac{g_j(i_2-2)-g_j(i_2-1)}{g_j(i_2-1)}\right|\leq  i_2^{1\slash 2}n^{-47\slash 72}\slash 24. 
\end{equation*}
Now by (\ref{PP6}) and (\ref{Es7.1}), we have for any $i\in [i_1,i_2-1]\cap\mathbb{Z}$,
\begin{eqnarray}\label{P10.11}
    n^{1\slash 3}\frac{\left(g_j(i)-g_j(i-1)\right)^2}{g_j(i)g_j(i-1)}&=&n^{1\slash 3}\left(\frac{g_j(i)-g_j(i-1)}{g_j(i-1)}\right)^2\frac{g_j(i-1)}{g_j(i)}\nonumber\\
    &\leq& \frac{1}{4} \frac{g_j(i-1)}{g_j(i)}i n^{-2\slash 3}.
\end{eqnarray}
As $i_2-2\geq 3i_2\slash 4$ (as $i_2\geq 10$), we have 
\begin{eqnarray*}
  &&  n^{-1\slash 6}\left(\sqrt{n}-\sqrt{n-i_2+2}\right)\frac{g_j(i_2-2)}{g_j(i_2-1)}\nonumber\\
  &=& n^{-1\slash 6}\frac{i_2-2}{\sqrt{n}+\sqrt{n-i_2+2}}\frac{g_j(i_2-2)}{g_j(i_2-1)}\geq \frac{3}{8}\frac{g_j(i_2-2)}{g_j(i_2-1)}i_2 n^{-2\slash 3}
\end{eqnarray*}
By (\ref{Assu}), we have $i_2\geq L_2 n^{1\slash 3}\geq 10^4\max\{1,K_3\}n^{1\slash 3}$, hence
\begin{equation*}
    i_2 n^{-2\slash 3}\slash 1000\geq  n^{-1\slash 3}K_3, 
\end{equation*}
\begin{equation*}
    i_2 n^{-2\slash 3}\slash 1000 \geq i_2^{1\slash 2}\cdot\sqrt{10^4 n^{1\slash 3}}\cdot n^{-2\slash 3}\slash 1000 \geq     
    i_2^{1\slash 2}n^{-47\slash 72}\slash 24.
\end{equation*}
Combining (\ref{Es7.1})-(\ref{P10.1}) with the above estimates, we obtain that
\begin{eqnarray}\label{P10.4}
    n^{1\slash 3}\frac{g_j(i_2)}{g_j(i_2-1)}-n^{1\slash 3}\frac{g_j(i_2-1)}{g_j(i_2-2)}\geq n^{-1\slash 6}(\sqrt{n}-\sqrt{n-i_2+1})\frac{g_j(i_2)}{g_j(i_2-1)}\nonumber\\
    +\frac{1}{20}i_2 n^{-2\slash 3}-n^{-1\slash 6}\tilde{Q}_{i_2-1}-n^{-1\slash 6}\left|Z_{i_2-1} \right| \left| \frac{g_j(i_2)-g_j(i_2-1)}{g_j(i_2-1)}\right|.
\end{eqnarray}

Suppose that $g_j(i_2)\leq 0$. As the event $\mathcal{U}_2$ holds, we have $|Z_{i_2-1}|\leq n^{1\slash 72}$. Using this and (\ref{Es7.1}), we can deduce from (\ref{P10.4}) that 
\begin{equation}\label{P10.5}
    \tilde{Q}_{i_2-1}\geq \frac{1}{2}\sqrt{n}-n^{1\slash 72}+\big(n^{1\slash 72}-\sqrt{n-i_2+1}\big)\frac{g_j(i_2)}{g_j(i_2-1)}.
\end{equation}
As $i_2<n-\sqrt{n}+1$, we have $\sqrt{n-i_2+1}>n^{1\slash 4}\geq n^{1\slash 72}$. Combining this with (\ref{P10.5}) and noting (\ref{Assu}), we obtain that
\begin{equation}\label{P10.6}
    \tilde{Q}_{i_2-1}\geq \frac{1}{2}\sqrt{n}-n^{1\slash 72}> \frac{1}{4}\sqrt{n}.
\end{equation}
As the event $\mathcal{U}_2$ holds, noting (\ref{Assu}), we have 
\begin{equation}\label{P10.7}
    \tilde{Q}_{i_2-1}=\sqrt{\frac{2}{\beta}}\xi_{i_2-1}+Z_{i_2-1}+Z_{i_2-2}\leq \frac{1}{6} n^{1\slash 72}\leq \frac{1}{6}\sqrt{n}.
\end{equation}
Note that (\ref{P10.6}) and (\ref{P10.7}) lead to a contradiction. Hence we conclude that
\begin{equation}\label{P10.10}
    g_j(i_2)>0.
\end{equation}

\smallskip

\textbf{Part 2.1.3}

For any $i\in I_l$, we define
\begin{equation}\label{Defin}
    R_j(i):=\begin{cases}
     P_j(i)-\phi_2(l+1)\sqrt{i n^{-1\slash 3}} & \text{ if } P_j(i)\neq \infty \\
    \infty & \text{ if } P_j(i)= \infty
    \end{cases}.
\end{equation}
In this part, we derive certain inequalities on $R_j$ (see (\ref{P10.17}) and (\ref{P10.18}) below), which will be used in \textbf{Part 2.1.4}. Now consider any $i\in [i_1,i_2-1]\cap\mathbb{Z}$. By (\ref{Muine}) and (\ref{P10.10}), we have
\begin{equation*}
    n^{-1\slash 6}(\sqrt{n}-\mu_i)\frac{g_j(i+1)}{g_j(i)}\geq 0,   \quad \frac{1}{2}(i-1)n^{-2\slash 3}\frac{g_j(i-1)}{g_j(i)}\geq \frac{3}{8}\frac{g_j(i-1)}{g_j(i)}in^{-2\slash 3}.
\end{equation*}
As the event $\mathcal{U}_1$ holds, we have $|\kappa_{i'}|\leq n^{1\slash 8}$ for every $i'\in [n-1]$. Hence by (\ref{PP8}), we have 
\begin{equation*}
   \left| n^{-1\slash 6}\kappa_{i-1}\frac{g_j(i-1)-g_j(i)}{g_j(i)}\right|\leq n^{-1\slash 6} n^{1\slash 8}\sqrt{i n^{-1}}=i^{1\slash 2}n^{-13\slash 24}.
\end{equation*}
If $i\in [i_1,i_2-2]\cap\mathbb{Z}$, by (\ref{PP6}), noting that $i_1\geq L_2 n^{1\slash 3}\geq 10$, we have
\begin{equation*}
    \left| n^{-1\slash 6}\kappa_i\frac{g_j(i+1)-g_j(i)}{g_j(i)}    \right|\leq n^{-1\slash 6} n^{1\slash 8} \cdot \frac{1}{8}\sqrt{(i+1) n^{-1}}\leq \frac{1}{2}i^{1\slash 2}n^{-13\slash 24};
\end{equation*}
if $i=i_2-1$, we have
\begin{equation*}
    \left| n^{-1\slash 6}\kappa_i\frac{g_j(i+1)-g_j(i)}{g_j(i)}    \right|\leq n^{-1\slash 6} n^{1\slash 8} ( n^{-1\slash 3}|P_j(i+1)|)=n^{-3\slash 8}|P_j(i+1)|.
\end{equation*}
From $i\geq L_2 n^{1\slash 3}$ and (\ref{Assu}), we can deduce that
\begin{equation*}
 i^{1\slash 2}n^{-13\slash 24}\leq in^{-2\slash 3}\slash 100, \quad  n^{-1\slash 3}K_3\leq n^{-1\slash 3}L_2\slash 100\leq in^{-2\slash 3}\slash 100. 
\end{equation*}
Plugging the above estimates, (\ref{Es7.1}), and (\ref{P10.11}) into (\ref{PP5}) and using the fact that $i\geq (L_2+l\Delta)n^{1\slash 3}$, we obtain that for any $i\in [i_1,i_2-2]\cap\mathbb{Z}$,
\begin{eqnarray}\label{P10.13}
  &&  P_j(i+1)-P_j(i)\nonumber\\
  &\geq& 
  \frac{1}{8}\frac{g_j(i-1)}{g_j(i)}i n^{-2\slash 3}-n^{-1\slash 3}K_3-2 i^{1\slash 2} n^{-13\slash 24}-n^{-1\slash 6} Q_i
  \nonumber\\
  &\geq& \Big(\frac{1}{12}-\frac{3}{100}\Big)in^{-2\slash 3}-n^{-1\slash 6}Q_i\geq \frac{1}{20}(L_2+l\Delta)n^{-1\slash 3}-n^{-1\slash 6}Q_i.  \nonumber\\
  &&
\end{eqnarray}
Similarly, we have
\begin{eqnarray}\label{P10.14}
  &&  n^{-3\slash 8}|P_j(i_2)|+P_j(i_2)-P_j(i_2-1)\geq  \frac{1}{20} (i_2-1)n^{-2\slash 3}-n^{-1\slash 6}Q_{i_2-1}\nonumber\\
  &&\quad  \geq \frac{1}{20}(L_2+l\Delta)n^{-1\slash 3}-n^{-1\slash 6} Q_{i_2-1}.
\end{eqnarray}
Note that for any $i,i'\in I_l$ such that $i<i'$, we have $i,i'\geq (L_2+l\Delta)n^{1\slash 3}$, hence
\begin{eqnarray*}
  && \phi_2(l+1)(\sqrt{i'}-\sqrt{i})n^{-1\slash 6}=\phi_2(l+1)\frac{i'-i}{\sqrt{i'}+\sqrt{i}}n^{-1\slash 6}\\
  &\leq& \frac{1}{16}(L_2+l\Delta)^{-1\slash 2}n^{-1\slash 3}(i'-i)\leq \frac{1}{100}(L_2+l\Delta)n^{-1\slash 3}(i'-i),
\end{eqnarray*}
where we use (\ref{Assu}) in the last inequality. Therefore, for any $i,i'\in I_l$ such that $i<i'$ and $P_j(i),P_j(i')\neq \infty$, we have
\begin{equation}\label{P10.15}
    R_j(i')-R_j(i)\geq P_j(i')-P_j(i)-\frac{1}{100}(L_2+l\Delta)n^{-1\slash 3}(i'-i).
\end{equation}
Now note that for any $i\in I_l$, by (\ref{Assu}), 
\begin{equation*}
    \phi_2(l+1)i^{1\slash 2}n^{-1\slash 6}\leq \frac{1}{8}\sqrt{L_2+(l+2)\Delta}\leq \frac{1}{4}\sqrt{L_2+l\Delta}.
\end{equation*}
Hence by (\ref{Assu}) again,
\begin{eqnarray}\label{P10.16}
   && n^{-3\slash 8}|R_j(i_2)|-n^{-3\slash 8}|P_j(i_2)|\geq -n^{-3\slash 8}\phi_2(l+1)i_2^{1\slash 2}n^{-1\slash 6} \nonumber\\
  &\geq& -\frac{1}{4}(L_2+l\Delta)^{1\slash 2}n^{-3\slash 8} \geq -\frac{1}{100}(L_2+l\Delta)n^{-1\slash 3}. 
\end{eqnarray}
Combining (\ref{P10.15})-(\ref{P10.16}) with (\ref{P10.13})-(\ref{P10.14}), we conclude that for any\\ $i\in [i_1,i_2-2]\cap\mathbb{Z}$,
\begin{equation}\label{P10.17}
    R_j(i+1)-R_j(i)\geq \frac{1}{40}(L_2+l\Delta)n^{-1\slash 3}-n^{-1\slash 6}Q_i.
\end{equation}
Moreover,
\begin{equation}\label{P10.18}
    n^{-3\slash 8}|R_j(i_2)|+R_j(i_2)-R_j(i_2-1)\geq \frac{1}{40}(L_2+l\Delta)n^{-1\slash 3}-n^{-1\slash 6}Q_{i_2-1}.
\end{equation}

\smallskip

\textbf{Part 2.1.4}

As the events $\mathcal{U}_1$ and $\mathcal{U}_2$ hold, by (\ref{Assu}), for any $i\in [i_1,i_2-1]\cap\mathbb{Z}$, we have
\begin{equation}\label{neqeq1}
     |Q_i|\leq \sqrt{\frac{2}{\beta}}\cdot \frac{n^{1\slash 144}}{24}+\frac{n^{1\slash 72}}{12} \leq \frac{n^{1\slash 72}}{6}.
\end{equation}
By property (b) in the definition of $\mathcal{E}_{1,l}$,
\begin{equation}\label{P10.19}
    R_j(i_1)=P_j(i_1)-\phi_2(l+1)\sqrt{i_1 n^{-1\slash 3}}\geq (\phi_2(l)-\phi_2(l+1))\sqrt{i_1 n^{-1\slash 3}}>0.
\end{equation}
By (\ref{Es7.1}), we have $P_j(i_2)\neq \infty$, which combined with property (b) in the definition of $\mathcal{E}_{1,l}$ implies
\begin{equation}\label{P10.20}
    R_j(i_2)=P_j(i_2)-\phi_2(l+1)\sqrt{i_2 n^{-1\slash 3}}<0.
\end{equation}

First consider the case where $L_2+l\Delta\geq 10 n^{13\slash 72}$. By (\ref{P10.17}) and (\ref{neqeq1}), we have $R_j(i+1)>R_j(i)$ for all $i\in [i_1,i_2-2]\cap\mathbb{Z}$. By (\ref{P10.18}) and (\ref{neqeq1}), we have $n^{-3\slash 8}|R_j(i_2)|+R_j(i_2)-R_j(i_2-1)>0$. Combining these with (\ref{P10.19}) gives $R_j(i_2)>0$. Noting (\ref{P10.20}), we are led to a contradiction. 

Now consider the case where $L_2+l\Delta<10 n^{13\slash 72}$. From (\ref{Assu}), we can deduce that $L_2+l\Delta\geq 40+20l$, hence $l+2\leq (L_2+l\Delta)\slash 20$. Thus by (\ref{dh}), 
\begin{eqnarray}\label{Hliq}
  &&  h_l=\frac{1}{32}\big((l+1)^{-1\slash 4}-(l+2)^{-1\slash 4}\big)\sqrt{L_2+l\Delta}\nonumber\\
  &\geq& \frac{1}{128}(l+2)^{-5\slash 4}\sqrt{L_2+l\Delta}\geq \frac{1}{4}(L_2+l\Delta)^{-3\slash 4},
\end{eqnarray}
which leads to
\begin{eqnarray}\label{P11.2}
   \frac{400 h_l n^{1\slash 3}}{L_2+l\Delta}\geq 100(L_2+l\Delta)^{-7\slash 4}n^{1\slash 3} \geq n^{1\slash 100},
\end{eqnarray}
where we use $L_2+l\Delta<10 n^{13\slash 72}$ in the last inequality. Now suppose that the event $\mathcal{V}_l$ holds. Let 
\begin{equation}\label{HL}
    H_l:=\lfloor 400 h_l n^{1\slash 3}(L_2+l\Delta)^{-1} \rfloor.
\end{equation}
Note that 
\begin{equation}\label{P11.5}
    H_l\geq 1,\quad H_l\geq \frac{200 h_l n^{1\slash 3}}{L_2+l\Delta}
\end{equation}
due to (\ref{P11.2}) and the following argument:
\begin{itemize}
    \item If $H_l=1$, then $400 h_l n^{1\slash 3} (L_2+l\Delta)^{-1}<2=2H_l$.
    \item If $H_l\geq 2$, then $400 h_l n^{1\slash 3}(L_2+l\Delta)^{-1}\geq 2$ and\\ $H_l\geq 400 h_l n^{1\slash 3}(L_2+l\Delta)^{-1}-1\geq 200 h_l n^{1\slash 3}(L_2+l\Delta)^{-1}$.
\end{itemize}
Let $t_0:=\max\{t\in \mathbb{N}:i_1+t H_l\leq i_2-1\}\geq 0$. For any $t\in [0,t_0]\cap\mathbb{Z}$ and any $i\in [i_1+t H_l,\min\{i_1+(t+1)H_l,i_2-1\}]\cap\mathbb{Z}$ (note that $i\in I_l$), by (\ref{P10.17}), 
\begin{eqnarray}\label{P11.1}
  &&  R_j(i)-R_j(i_1+t H_l)\nonumber\\
  &\geq& \frac{1}{40}(L_2+l\Delta)n^{-1\slash 3}\left(i-(i_1+t H_l)\right)-n^{-1\slash 6}\sum_{s=i_1+t H_l}^{i-1} Q_s \nonumber  \\
    &\geq& \frac{1}{40}(L_2+l\Delta) n^{-1\slash 3}(i-(i_1+t H_l))-\frac{1}{2}h_l,
\end{eqnarray}
where we use (\ref{Hl}) in the second inequality. Using (\ref{P11.5}) and (\ref{P11.1}), we obtain that for any $t\in [0,t_0-1]\cap\mathbb{Z}$, 
\begin{equation}\label{P11.4}
    R_j(i_1+(t+1)H_l)-R_j(i_1+t H_l)\geq \frac{1}{40}(L_2+l\Delta) n^{-1\slash 3}H_l-\frac{1}{2}h_l\geq h_l.
\end{equation}
By (\ref{P10.19}), we have $R_j(i_1)\geq h_l$, which combined with (\ref{P11.4}) leads to
\begin{equation}\label{P11.6}
    R_j(i_1+ t_0 H_l)\geq h_l.
\end{equation}
By (\ref{P10.18}), (\ref{P11.1}), and (\ref{Hl}), we have
\begin{equation*}
    n^{-3\slash 8}|R_j(i_2)|+R_j(i_2)-R_j(i_1+t_0 H_l)\geq -n^{-1\slash 6} \sum_{s=i_1+t_0 H_l}^{i_2-1}Q_{s}\geq -\frac{1}{2}h_l,
\end{equation*}
which combined with (\ref{P11.6}) leads to $R_j(i_2)>0$. Noting (\ref{P10.20}), we are led to a contradiction. Hence we conclude that $\mathcal{V}_l$ does not hold if $L_2+l\Delta<10 n^{13\slash 72}$.

We conclude that for any $l\in [0,L_0]\cap\mathbb{Z}$ such that $L_2+l\Delta\geq 10n^{13\slash 72}$,
\begin{equation}
    \mathcal{B}_1(j)\cap\mathcal{B}_2(j)\cap\mathcal{U}_1\cap\mathcal{U}_2\cap\mathcal{E}_{1,l}=\emptyset;
\end{equation}
for any $l\in [0,L_0]\cap\mathbb{Z}$ such that $L_2+l\Delta< 10n^{13\slash 72}$,
\begin{equation}
    \mathcal{B}_1(j)\cap\mathcal{B}_2(j)\cap\mathcal{U}_1\cap\mathcal{U}_2\cap\mathcal{E}_{1,l}\subseteq \mathcal{V}_l^c.
\end{equation}
Noting (\ref{VV}) and (\ref{Ed}), we obtain that
\begin{equation}\label{Con}
    \mathcal{B}_1(j)\cap\mathcal{B}_2(j)\cap\mathcal{U}_1\cap\mathcal{U}_2\cap\mathcal{E}_1\subseteq \mathcal{V}^c.
\end{equation}

\medskip

\subparagraph{Sub-step 2.2}

In this sub-step, we give an upper bound on $\mathbb{P}(\mathcal{V}^c)$, which by (\ref{Con}) yields an upper bound on $\mathbb{P}(\mathcal{B}_1(j)\cap\mathcal{B}_2(j)\cap\mathcal{U}_1\cap\mathcal{U}_2\cap\mathcal{E}_1)$. We fix an arbitrary $l\in [0,L_0]\cap\mathbb{Z}$ such that $L_2+l\Delta<10 n^{13\slash 72}$. Consider any $m_1,m_2\in [2,n-1]\cap\mathbb{Z}$ such that $0\leq m_2-m_1\leq 400 h_l n^{1\slash 3}(L_2+l\Delta)^{-1}$. By (\ref{Qd}), we have
\begin{equation}\label{Qform}
    \sum_{i=m_1}^{m_2}Q_i=\sqrt{\frac{2}{\beta}}\left(\sum_{i=m_1}^{m_2}\xi_i\right)+\sum_{i=m_1}^{m_2}\kappa_i+\sum_{i=m_1-1}^{m_2-1}\kappa_i.
\end{equation}
We also note that by (\ref{Hliq}),
\begin{equation*}
    h_{l}\geq \frac{1}{4}(L_2+l\Delta)^{-3\slash 4}\geq \frac{1}{40}n^{-13\slash 96},
\end{equation*}
where we use $L_2+l\Delta<10 n^{13\slash 72}$ in the second inequality. Hence
\begin{equation}\label{neqq2}
    \frac{1}{10}h_l n^{1\slash 6}\geq \frac{1}{400}n^{1\slash 32}\geq \frac{1}{400}n^{1\slash 36}.
\end{equation}

Now note that for any $i\in [n-1]$, by (\ref{psum}), $\kappa_i=Z_i-\mathbb{E}[Z_i]$. Hence by Lemma \ref{ZL3}, for any $0\leq t\leq 4\beta^{-1}(m_2-m_1+1)n^{-1\slash 72}$, 
\begin{eqnarray}\label{P12.1}
  && \mathbb{P}\left(\left|\sum_{i=m_1}^{m_2}\kappa_i\right|\geq t+8\beta^{-1}(m_2-m_1+1)\exp\big(-\beta n^{1\slash 72}\slash 8\big)\right)\nonumber\\
  &\leq& 2(m_2-m_1+1)\exp\big(-\beta n^{1\slash 72}\slash 8\big)+2\exp\left(-\frac{\beta t^2}{8(m_2-m_1+1)}\right);\nonumber\\
  &&
\end{eqnarray}
for any $t\geq 4\beta^{-1}(m_2-m_1+1)n^{-1\slash 72}$,
\begin{eqnarray}\label{P12.2}
    && \mathbb{P}\left(\left|\sum_{i=m_1}^{m_2}\kappa_i\right|\geq t+8\beta^{-1}(m_2-m_1+1)\exp\big(-\beta n^{1\slash 72}\slash 8\big)\right)\nonumber\\
  &\leq& 2(m_2-m_1+1)\exp\big(-\beta n^{1\slash 72}\slash 8\big)+2\exp\left(-\frac{t}{2n^{1\slash 72}}\right).
\end{eqnarray}
Take $t=h_l n^{1\slash 6}\slash 10$. First consider the case where $t\leq 4\beta^{-1}(m_2-m_1+1)n^{-1\slash 72}$. Noting (\ref{P11.5}), we have 
\begin{equation*}
    \frac{\beta t^2}{8(m_2-m_1+1)}\geq \frac{\beta (h_l n^{1\slash 6}\slash 10)^2 }{8\cdot 800 h_l n^{1\slash 3}(L_2+l\Delta)^{-1}}\geq 10^{-6}\beta h_l(L_2+l\Delta).
\end{equation*}
Hence by (\ref{P12.1}) and (\ref{neqq2}), we can deduce that
\begin{equation*}
     \mathbb{P}\left(n^{-1\slash 6}\left|\sum_{i=m_1}^{m_2}\kappa_i\right|\geq \frac{1}{5} h_l\right)\leq C\exp\big(-c n^{1\slash 72}\big)+C\exp(-c h_l (L_2+l\Delta)).
\end{equation*}
Now consider the case where $t> 4\beta^{-1}(m_2-m_1+1)n^{-1\slash 72}$. By (\ref{P12.2}) and (\ref{neqq2}), we can deduce that
\begin{equation*}
     \mathbb{P}\left(n^{-1\slash 6}\left|\sum_{i=m_1}^{m_2}\kappa_i\right|\geq \frac{1}{5} h_l\right)\leq C\exp\big(-c n^{1\slash 72}\big).
\end{equation*}

The above argument also applies to $\sum_{i=m_1-1}^{m_2-1}\kappa_i$. Hence we have
\begin{equation}\label{P12.3}
    \mathbb{P}\left(n^{-1\slash 6}\left|\sum_{i=m_1}^{m_2}\kappa_i\right|\geq \frac{1}{5} h_l\right)\leq C\exp\big(-c n^{1\slash 72}\big)+C\exp(-c h_l (L_2+l\Delta)),
\end{equation}
\begin{equation}\label{P12.4}
    \mathbb{P}\left(n^{-1\slash 6}\left|\sum_{i=m_1-1}^{m_2-1}\kappa_i\right|\geq \frac{1}{5} h_l\right)\leq C\exp\big(-c n^{1\slash 72}\big)+C\exp(-c h_l (L_2+l\Delta)).
\end{equation}

As $\{\xi_i\}_{i=1}^n$ are i.i.d. $N(0,1)$ random variables (see Section \ref{Sect.n2}), we have $\sum_{i=m_1}^{m_2}\xi_i\sim N(0,m_2-m_1+1)$. As $m_2-m_1\leq 400 h_l n^{1\slash 3}(L_2+l\Delta)^{-1}$, by (\ref{P11.2}), we have
\begin{equation*}
    \frac{1}{10}h_l n^{1\slash 6}\sqrt{\frac{\beta}{2(m_2-m_1+1)}}\geq \frac{1}{400}\beta^{1\slash 2}h_l^{1\slash 2} (L_2+l\Delta)^{1\slash 2}.
\end{equation*}
Hence letting $W\sim N(0,1)$, we have
\begin{eqnarray}\label{P12.5}
   \mathbb{P}\left(n^{-1\slash 6}\sqrt{\frac{2}{\beta}}\left|\sum_{i=m_1}^{m_2}\xi_i\right|\geq \frac{h_l}{10}\right)&=&\mathbb{P}\left(|W|\geq \frac{1}{10}h_l n^{1\slash 6}\sqrt{\frac{\beta}{2(m_2-m_1+1)}}\right)\nonumber\\
 &\leq& C\exp(-c h_l(L_2+l\Delta)).
\end{eqnarray}

Combining (\ref{P12.3})-(\ref{P12.5}), we conclude that there exist positive constants $C_0,c_0$ that only depend on $\beta$, such that
\begin{equation*}
    \mathbb{P}\left(n^{-1\slash 6}\left|\sum_{i=m_1}^{m_2}Q_i\right|\geq  \frac{1}{2}h_l\right)\leq C_0\exp\big(-c_0 n^{1\slash 72}\big)+C_0\exp(-c_0 h_l(L_2+l\Delta)). 
\end{equation*}
By the union bound, for any $l\in [0,L_0]\cap\mathbb{Z}$ such that $L_2+l\Delta<10 n^{13\slash 72}$,
\begin{eqnarray}\label{V0}
    &&   \mathbb{P}(\mathcal{V}_l^c)  \leq C_0 n^2\exp\big(-c_0 n^{1\slash 72}\big)+C_0 n^2\exp\big(-c_0 h_l(L_2+l\Delta)\big)\nonumber\\
    &\leq&    C_0 n^2\exp\big(-c_0 n^{1\slash 72}\big)+C_0 n^2\exp\left(-\frac{1}{4}c_0 (L_2+l\Delta)^{1\slash 4}\right),
\end{eqnarray}
where we use (\ref{Hliq}) in the last inequality.

By (\ref{Assu}), (\ref{VV}), (\ref{V0}), and the union bound, we have
\begin{eqnarray}
&& \mathbb{P}(\mathcal{V}^c)\leq \sum_{l\in [0,L_0]\cap\mathbb{Z}: L_2+l\Delta<10 n^{13\slash 72}}\mathbb{P}(\mathcal{V}_l^c) \nonumber\\
&\leq& C_0 n^2(L_0+1)\exp\big(-c_0 n^{1\slash 72}\big)+C_0 n^2\sum_{l=0}^{\infty}\exp\left(-\frac{1}{4}c_0 (L_2+l\Delta)^{1\slash 4}\right) \nonumber\\
&\leq& C\exp\big(-c n^{1\slash 72}\big)+C\exp\big(-c L_2^{1\slash 4}\big).
\end{eqnarray}
Therefore, by (\ref{Con}), we conclude that
\begin{equation}\label{Conc1}
    \mathbb{P}( \mathcal{B}_1(j)\cap\mathcal{B}_2(j)\cap\mathcal{U}_1\cap\mathcal{U}_2\cap\mathcal{E}_1 )\leq C\exp\big(-c n^{1\slash 72}\big)+C\exp\big(-c L_2^{1\slash 4}\big).
\end{equation}

\bigskip

\paragraph{Step 3}

In this step, we give an upper bound on 
$\mathbb{P}(\mathcal{B}_1(j)\cap\mathcal{B}_2(j)\cap\mathcal{U}_1\cap\mathcal{U}_2\cap\mathcal{E}_2)$. Throughout the rest of this step, we assume that the events $\mathcal{B}_1(j)$, $\mathcal{B}_2(j)$, $\mathcal{U}_1$, and $\mathcal{U}_2$ hold. 

\subparagraph{Sub-step 3.1}

In the following, we assume that the event $\mathcal{E}_{2,l}$ holds, where $l\in [0,L_0]\cap\mathbb{Z}$. Let $i_1,i_2\in I_l$ be such that $i_1<i_2$ and the properties (a)-(d) in the definition of $\mathcal{E}_{2,l}$ hold. 

\textbf{Part 3.1.1}

We first consider the case where $P_j(i_1)\geq \sqrt{i_1 n^{-1\slash 3}}\slash 2$. Note that either $g_j(i_1-1)>0,g_j(i_1)>0$ or $g_j(i_1-1)<0,g_j(i_1)<0$. Without loss of generality, we assume that $g_j(i_1-1)>0,g_j(i_1)>0$. 
In this case we also have 
\begin{equation}\label{geq}
    g_j(i_1-1)< g_j(i_1).
\end{equation}
Note that by (\ref{Eigen}), we have 
\begin{equation}\label{M11}
    \frac{Y_{n-i_1}}{\sqrt{\beta}}\frac{g_j(i_1+1)}{g_j(i_1)}= 2\sqrt{n}+n^{-1\slash 6}\tilde{\lambda}_j^{(n)}-\sqrt{\frac{2}{\beta}}\xi_{i_1}-\frac{Y_{n-i_1+1}}{\sqrt{\beta}}\frac{g_j(i_1-1)}{g_j(i_1)}.
\end{equation}
As the event $\mathcal{B}_1(j)$ holds and $K_3\leq n^{2\slash 3}\slash 3$, we have 
\begin{equation*}
    n^{-1\slash 6}\tilde{\lambda}_j^{(n)}\geq -n^{-1\slash 6}K_3\geq -\frac{1}{3}\sqrt{n}.
\end{equation*}
As the event $\mathcal{U}_2$ holds, by (\ref{psum}) and (\ref{Assu}), we have 
\begin{equation*}
    \sqrt{\frac{2}{\beta}}\left|\xi_{i_1}\right|\leq n^{1\slash 72}, \quad 0\leq \frac{Y_{n-i_1+1}}{\sqrt{\beta}}=\sqrt{n-i_1+1}+Z_{i_1-1}\leq \sqrt{n-i_1+1}+n^{1\slash 72}.
\end{equation*}
Hence by (\ref{geq}), we have
\begin{equation*}
   0\leq \frac{Y_{n-i_1+1}}{\sqrt{\beta}}\frac{g_j(i_1-1)}{g_j(i_1)}\leq \frac{Y_{n-i_1+1}}{\sqrt{\beta}}\leq \sqrt{n-i_1+1}+n^{1\slash 72}.
\end{equation*}
Plugging the above estimates into (\ref{M11}) and noting (\ref{Assu}), we have
\begin{equation}
    \frac{Y_{n-i_1}}{\sqrt{\beta}}\frac{g_j(i_1+1)}{g_j(i_1)}\geq \frac{5}{3}\sqrt{n}-2n^{1\slash 72}-\sqrt{n-i_1+1}\geq \frac{1}{2}\sqrt{n}>0,
\end{equation}
hence $g_j(i_1+1)>0$. 

Note that
\begin{equation*}
    P_j(i_1)-n^{1\slash 3} \frac{(g_j(i_1)-g_j(i_1-1))^2}{g_j(i_1)g_j(i_1-1)}=n^{1\slash 3}\frac{g_j(i_1)-g_j(i_1-1)}{g_j(i_1)}.
\end{equation*}
As the event $\mathcal{B}_1(j)$ holds and $g_j(i_1),g_j(i_1-1),g_j(i_1+1)>0$, by (\ref{PP3}), we have
\begin{eqnarray}\label{L10.1}
    P_j(i_1+1)&\geq& (n^{1\slash 3}+n^{-1\slash 6}\kappa_{i_1-1})\cdot \frac{g_j(i_1)-g_j(i_1-1)}{g_j(i_1)}-n^{-1\slash 3}K_3\nonumber\\
    && -n^{-1\slash 6}Q_{i_1}-n^{-1\slash 6}\kappa_{i_1}\frac{g_j(i_1+1)-g_j(i_1)}{g_j(i_1)}.
\end{eqnarray}
As
\begin{equation*}
    n^{1\slash 3}\frac{g_j(i_1)-g_j(i_1-1)}{g_j(i_1-1)}=P_j(i_1)\geq \frac{1}{2}\sqrt{i_1 n^{-1\slash 3}},
\end{equation*}
we have $g_j(i_1)\slash g_j(i_1-1)\geq 1+(i_1\slash n)^{1\slash 2}\slash 2$. Hence
\begin{equation*}
    \frac{g_j(i_1)-g_j(i_1-1)}{g_j(i_1)}=1-\frac{g_j(i_1-1)}{g_j(i_1)}\geq 1-\left(1+\frac{1}{2}\sqrt{\frac{i_1}{n}}\right)^{-1}\geq \frac{1}{3}\sqrt{\frac{i_1}{n}}.
\end{equation*}
As the event $\mathcal{U}_1$ holds, we have $|\kappa_{i_1-1}|,|\kappa_{i_1}|\leq n^{1\slash 8}$. By (\ref{Assu}) and the fact that $K_3\leq n^{2\slash 3}\slash 3$, we have $K_3^{1\slash 2}\leq L_2^{1  \slash 2}\slash 100$ and $K_3^{1\slash 2}\leq n^{1\slash 3}$, hence
\begin{equation*}
    K_3 n^{-1\slash 3}\leq \frac{1}{100}L_2^{1\slash 2}\leq \frac{1}{100}i_1^{1\slash 2}n^{-1\slash 6}.
\end{equation*}
As the events $\mathcal{U}_1$ and $\mathcal{U}_2$ hold, noting (\ref{Assu}), we have
\begin{equation*}
    |Q_{i_1}|\leq \sqrt{\frac{2}{\beta}}|\xi_{i_1}|+|\kappa_{i_1}|+|\kappa_{i_1-1}|\leq 3n^{1\slash 8},
\end{equation*}
hence
\begin{equation*}
    n^{-1\slash 6}|Q_{i_1}|\leq 3n^{-1\slash 24}\leq L_2^{1\slash 2}\slash 20\leq i_1^{1\slash 2}n^{-1\slash 6}\slash 20.
\end{equation*}
Plugging the above estimates into (\ref{L10.1}) and using (\ref{Assu}), we obtain that
\begin{eqnarray*}
  &&  P_j(i_1+1)+n^{-3\slash 8}|P_j(i_1+1)|\nonumber\\
  &\geq & \frac{1}{3}(1-n^{-3\slash 8} )i_1^{1\slash 2} n^{-1\slash 6} -n^{-1\slash 3}K_3-n^{-1\slash 6}Q_{i_1}\geq\frac{1}{4}\sqrt{i_1 n^{-1\slash 3}},
\end{eqnarray*}
which leads to
\begin{equation}
    P_j(i_1+1)\geq \frac{1}{6}\sqrt{(i_1+1) n^{-1\slash 3}}.
\end{equation}
We also note that $P_j(i_1+1)\neq \infty$ (as $g_j(i_1)>0$). However, by the property (c) in the definition of $\mathcal{E}_{2,l}$, either $P_j(i_1+1)=\infty$ or
\begin{equation*}
    P_j(i_1+1)<\phi_1(l)\sqrt{(i_1+1)n^{-1\slash 3}}\leq \frac{1}{8}\sqrt{(i_1+1)n^{-1\slash 3}}.
\end{equation*}
Hence we are led to a contradiction.

\textbf{Part 3.1.2}

By the argument in \textbf{Part 3.1.1}, we have $ P_j(i_1)<\sqrt{i_1 n^{-1\slash 3}}\slash 2$. Combining this with the definition of $\mathcal{E}_{l,2}$, we conclude that for any $i\in [i_1,i_2-1]\cap\mathbb{Z}$, $g_j(i-1)\neq 0$ and
\begin{equation}
    |P_j(i)|\leq \frac{1}{2}\sqrt{i n^{-1\slash 3}}. 
\end{equation}
Following the same argument in \textbf{Parts 2.1.2-2.1.3} and recalling the definition of $R_j$ from (\ref{Defin}), we obtain that $g_j(i_2)>0$ and 
\begin{equation}\label{P16.1}
    R_j(i+1)-R_j(i)\geq \frac{1}{40}(L_2+l\Delta)n^{-1\slash 3}-n^{-1\slash 6}Q_i,\quad\forall i\in [i_1,i_2-2]\cap\mathbb{Z},
\end{equation}
\begin{equation}\label{P16.2}
    n^{-3\slash 8}|R_j(i_2)|+R_j(i_2)-R_j(i_2-1)\geq \frac{1}{40}(L_2+l\Delta)n^{-1\slash 3}-n^{-1\slash 6}Q_{i_2-1}.
\end{equation}
As $P_j(i_1)\geq \phi_1(l)\sqrt{i_1 n^{-1\slash 3}}$, we have $R_j(i_1)\geq (\phi_1(l)-\phi_2(l+1))\sqrt{i_1 n^{-1\slash 3}}$. Hence by (\ref{P16.1}) and (\ref{P16.2}),
\begin{eqnarray}\label{P16.3}
   && n^{-3\slash 8}|R_j(i_2)|+R_j(i_2) \geq R_j(i_1)-n^{-1\slash 6}\sum_{i=i_1}^{i_2-1}Q_i\nonumber\\
   &\geq  & (\phi_1(l)-\phi_2(l+1))\sqrt{i_1 n^{-1\slash 3}}-n^{-1\slash 6}\sum_{i=i_1}^{i_2-1}Q_i.
\end{eqnarray}
As $P_j(i_2)<\phi_2(l+1)\sqrt{i_2n^{-1\slash 3}}$, we have $R_j(i_2)<0$. Hence by (\ref{P16.3}),
\begin{eqnarray}
    n^{-1\slash 6}\sum_{i=i_1}^{i_2-1}Q_i&\geq& (\phi_1(l)-\phi_2(l+1))\sqrt{i_1 n^{-1\slash 3}}\nonumber\\
    &\geq& (\phi_1(l)-\phi_2(l+1))\sqrt{L_2+l\Delta}.
\end{eqnarray}

For any $l\in [0,L_0]\cap\mathbb{Z}$, we let $\mathcal{V}'_l$ be the event that there exist $i_1,i_2\in I_l$ such that $i_1<i_2$ and 
\begin{equation*}
    n^{-1\slash 6}\sum_{i=i_1}^{i_2-1}Q_i\geq (\phi_1(l)-\phi_2(l+1))\sqrt{L_2+l\Delta}.
\end{equation*}
We conclude by the preceding argument that
\begin{equation}
    \mathcal{B}_1(j)\cap\mathcal{B}_2(j)\cap\mathcal{U}_1\cap\mathcal{U}_2\cap \mathcal{E}_{2,l}\subseteq \mathcal{V}'_l. 
\end{equation}
Letting $\mathcal{V}':=\bigcup_{l=0}^{L_0}\mathcal{V}_l'$, we have
\begin{equation}\label{VVV}
    \mathcal{B}_1(j)\cap\mathcal{B}_2(j)\cap\mathcal{U}_1\cap\mathcal{U}_2\cap \mathcal{E}_2\subseteq \mathcal{V}'.
\end{equation}

\subparagraph{Sub-step 3.2}

In this sub-step, we give an upper bound on $\mathbb{P}(\mathcal{V}')$, which via (\ref{VVV}) yields an upper bound on $\mathbb{P}(\mathcal{B}_1(j)\cap\mathcal{B}_2(j)\cap\mathcal{U}_1\cap\mathcal{U}_2\cap \mathcal{E}_2)$.

Consider any $l\in [0,L_0]\cap\mathbb{Z}$ and any $m_1,m_2\in\mathbb{N}_{+}$ such that $m_1\leq m_2$ and $m_1,m_2+1\in I_l$. Arguing similarly as in \textbf{Sub-step 2.2}, we obtain (\ref{P12.1}) and (\ref{P12.2}) for the current choice of $m_1,m_2$. In (\ref{P12.1}) and (\ref{P12.2}), we take
\begin{equation*}
    t=\frac{1}{8}(\phi_1(l)-\phi_2(l+1))\sqrt{L_2+l\Delta} n^{1\slash 6}\geq \frac{1}{256}(L_2+l\Delta)^{1 \slash 4} n^{1\slash 6}. 
\end{equation*}
Considering both cases in (\ref{P12.1}) and (\ref{P12.2}) and noting that $\Delta\leq L_2^{1\slash 4}$ and $m_2-m_1+1\leq 2\Delta n^{1\slash 3}$, we obtain that
\begin{eqnarray*}
  &&  \mathbb{P}\left(n^{-1\slash 6}\left|\sum_{i=m_1}^{m_2}\kappa_i\right|\geq \frac{1}{4}(\phi_1(l)-\phi_2(l+1))\sqrt{L_2+l\Delta}\right)\nonumber\\
  &\leq& C\exp\big(-c n^{1\slash 72}\big)+ C\exp\big(-c(L_2+l\Delta)^{1\slash 4}\big),
\end{eqnarray*}
Similarly, we have
\begin{eqnarray*}
  &&  \mathbb{P}\left(n^{-1\slash 6}\left|\sum_{i=m_1-1}^{m_2-1}\kappa_i\right|\geq \frac{1}{4}(\phi_1(l)-\phi_2(l+1))\sqrt{L_2+l\Delta}\right)\nonumber\\
  &\leq& C\exp\big(-c n^{1\slash 72}\big)+ C\exp\big(-c(L_2+l\Delta)^{1\slash 4}\big).
\end{eqnarray*}
Arguing similarly as in \textbf{Sub-step 2.2}, we obtain that
\begin{eqnarray*}
  &&  \mathbb{P}\left(n^{-1\slash 6}\sqrt{\frac{2}{\beta}}\left|\sum_{i=m_1}^{m_2}\xi_i\right|\geq \frac{1}{4}(\phi_1(l)-\phi_2(l+1))\sqrt{L_2+l\Delta}\right)\\
  &\leq& C\exp\big(-c(L_2+l\Delta)^{1\slash 4}\big).
\end{eqnarray*}
Noting (\ref{Qform}), we have
\begin{eqnarray}\label{P20.1}
  &&  \mathbb{P}\left(n^{-1\slash 6}\left|\sum_{i=m_1}^{m_2} Q_i\right|\geq (\phi_1(l)-\phi_2(l+1))\sqrt{L_2+l\Delta}\right)\nonumber\\
  &\leq& C_0\exp\big(-c_0 n^{1\slash 72}\big)+C_0\exp\big(-c_0 (L_2+l\Delta)^{1\slash 4}\big),
\end{eqnarray}
where $C_0,c_0$ are positive constants that only depend on $\beta$. 

By (\ref{P20.1}) and the union bound, for any $l\in [0,L_0]\cap\mathbb{Z}$, we have
\begin{equation}
    \mathbb{P}\left(\mathcal{V}'_l\right)\leq C_0 n^2\exp\big(-c_0 n^{1\slash 72}\big)+C_0 n^2\exp\big(-c_0 (L_2+l\Delta)^{1\slash 4}\big).
\end{equation}
By the union bound, noting (\ref{Assu}), we have
\begin{eqnarray}
    \mathbb{P}\left(\mathcal{V}'\right)
    &\leq &  C_0 n^2(L_0+1)\exp\big(-c_0 n^{1\slash 72}\big)+C_0 n^2\sum_{l=0}^{\infty}\exp\big(-c_0 (L_2+l\Delta)^{1\slash 4}\big)\nonumber\\
    &\leq& C\exp\big(-c n^{1\slash 72}\big)+C\exp\big(-c L_2^{1\slash 4}\big).
\end{eqnarray}
Hence by (\ref{VVV}), we conclude that
\begin{equation}\label{Conc2}
  \mathbb{P}\left(\mathcal{B}_1(j)\cap\mathcal{B}_2(j)\cap\mathcal{U}_1\cap\mathcal{U}_2\cap\mathcal{E}_2\right)\leq  C\exp\big(-c n^{1\slash 72}\big)+C\exp\big(-c L_2^{1\slash 4}\big).
\end{equation}

\paragraph{Step 4}

In this step, we finish the proof of the lemma. 

Let $\mathcal{T}$ be the event that there exists some $i_0\in [L_2 n^{1\slash 3},n]\cap\mathbb{Z}$, such that for any $i\in [i_0,n]\cap\mathbb{Z}$, $P_j(i)\neq\infty$ and $P_j(i)\geq \sqrt{i n^{-1\slash 3}}\slash 16$. We show $\mathcal{E}_0\cap\mathcal{B}_3(j)\subseteq \mathcal{T}$ as follows. Suppose that the events $\mathcal{E}_0$ and $\mathcal{B}_3(j)$ hold. Then there exists $i_0\in [L_2 n^{1\slash 3},n]\cap\mathbb{Z}$, such that $P_j(i_0)\neq \infty$ and $P_j(i_0)\geq\sqrt{i_0 n^{-1\slash 3}}\slash 8$. Note that there exists $l_0\in [0,L_0]\cap\mathbb{Z}$, such that $i_0\in I_{l_0}$. As $\phi_2(l_0)\leq 1\slash 8$, we have that $P_j(i_0)\geq \phi_2(l_0)\sqrt{i_0 n^{-1\slash 3}}$. By the definition of $\mathcal{E}_0$ and induction, we can deduce that for any $i\in I_{l_0}$ such that $i\geq i_0$, $P_j(i)\neq\infty$ and $P_j(i)\geq \phi_2(l_0+1)\sqrt{i n^{-1\slash 3}}$; for any $l\in [l_0+1,L_0]\cap\mathbb{Z}$ and any $i\in I_l$, $P_j(i)\neq \infty$ and $P_j(i)\geq \phi_2(l+1)\sqrt{i n^{-1\slash 3}}$. As $\phi_2(l)\geq 1\slash 16$ for any $l\in\mathbb{N}$, we have $P_j(i)\geq \sqrt{i n^{-1\slash 3}}\slash 16$ for any $i\in [i_0,n]\cap\mathbb{Z}$. Hence the event $\mathcal{T}$ holds. We conclude that
\begin{equation}\label{P14.1}
    \mathcal{E}_0\cap\mathcal{B}_3(j)\subseteq \mathcal{T}.
\end{equation}

Suppose that the event $\mathcal{T}$ holds. Then we have 
\begin{equation*}
    P_j(n)\neq \infty, \quad P_j(n)\geq \frac{1}{16}n^{1\slash 3},
\end{equation*}
which leads to $g_j(n-1)\neq 0$ and $(g_j(n)-g_j(n-1))\slash g_j(n-1)\geq 1\slash 16$. Hence if $g_j(n-1)>0$, then $g_j(n)>0$ and $g_j(n-1)\leq g_j(n)$; if $g_j(n-1)<0$, then $g_j(n)<0$ and $g_j(n-1)\geq g_j(n)$. Noting the property (c) in the definition of $\mathcal{B}_2(j)$, we conclude that the event $\mathcal{B}_2(j)$ does not hold. Hence
\begin{equation}\label{P14.2}
    \mathcal{T}\subseteq \mathcal{B}_2(j)^c.
\end{equation}
Combining (\ref{P14.1}) and (\ref{P14.2}) gives
\begin{equation}
    \mathcal{E}_0\cap\mathcal{B}_3(j)\subseteq \mathcal{B}_2(j)^c.
\end{equation}
Hence
\begin{equation}\label{Pa1}
    \mathcal{B}_3(j)\cap\mathcal{B}_1(j)\cap\mathcal{B}_2(j)\cap\mathcal{U}_1\cap\mathcal{U}_2\cap\mathcal{E}_0=\emptyset.
\end{equation}

Now by (\ref{E10.2}), we have
\begin{eqnarray*}
 &&   \mathcal{B}_3(j)\cap\mathcal{B}_1(j)\cap\mathcal{B}_2(j)\cap\mathcal{U}_1\cap\mathcal{U}_2\cap \mathcal{E}_0^c \nonumber \\
&\subseteq & \left(\mathcal{B}_1(j)\cap\mathcal{B}_2(j)\cap\mathcal{U}_1\cap\mathcal{U}_2\cap\mathcal{E}_1\right) \cup \left(\mathcal{B}_1(j)\cap\mathcal{B}_2(j)\cap\mathcal{U}_1\cap\mathcal{U}_2\cap\mathcal{E}_2\right).
\end{eqnarray*}
By (\ref{Conc1}), (\ref{Conc2}), and the union bound, we have
\begin{eqnarray}\label{Pa2}
  &&  \mathbb{P}\left(\mathcal{B}_3(j)\cap\mathcal{B}_1(j)\cap\mathcal{B}_2(j)\cap\mathcal{U}_1\cap\mathcal{U}_2\cap\mathcal{E}_0^c\right)\nonumber\\
  &\leq& C\exp\big(-c n^{1\slash 72}\big)+C\exp\big(-c L_2^{1\slash 4}\big).
\end{eqnarray}

Combining (\ref{Pa1}) and (\ref{Pa2}), we conclude that
\begin{eqnarray}
   && \mathbb{P}\left(\mathcal{B}_3(j)\cap\mathcal{B}_1(j)\cap\mathcal{B}_2(j)\cap\mathcal{U}_1\cap\mathcal{U}_2\right)\nonumber\\
   &  \leq & C\exp\big(-c n^{1\slash 72}\big)+C\exp\big(-c L_2^{1\slash 4}\big).
\end{eqnarray}

\end{proof}

\begin{lemma}\label{PL4}
The exist positive constants $C,c,C_1,C_2$ (with $C_2\geq 1$) that only depend on $\beta$, such that when $L_2\geq \max\big\{C_1,C_2K_3,(\log{n})^8\big\}$, for any $j\in [k]$,
\begin{eqnarray}\label{ConCPL4}
 &&   \mathbb{P}(\mathcal{B}_3(j)^c\cap\mathcal{B}_4(j)\cap\mathcal{B}_1(j)\cap\mathcal{B}_2(j)\cap\mathcal{U}_1\cap\mathcal{U}_2)\nonumber\\
 &\leq & C\exp\big(-cn^{1\slash 72}\big)+C\exp\big(-c L_2^{3\slash 2}\big).
\end{eqnarray}
\end{lemma}
\begin{proof}

We fix an arbitrary $j\in [k]$ throughout the proof. We adopt the notations used in the proof of Lemma \ref{PL3}, and assume that $L_2\leq n^{2\slash 3}$ and (\ref{Assu}) holds (if $L_2>n^{2\slash 3}$, then $\mathcal{B}_4(j)=\emptyset$ and (\ref{ConCPL4}) automatically holds).

Recall the definitions of $L_0$ and $\{I_l\}_{l\in [0,L_0]\cap\mathbb{Z}}$ from the proof of Lemma \ref{PL3}. For any $l\in [0,L_0]\cap\mathbb{Z}$, we define the following events:
\begin{itemize}
    \item $\mathcal{G}_{1,l}$: the event that there exists $(i_1,i_2)\in [n]^2$, such that the following properties hold:
\begin{itemize}
        \item[(a)] $i_1\in I_l$, $1\leq i_2-i_1\leq 40 n^{1\slash 3}(L_2+l \Delta)^{-1\slash 2}$;
        \item[(b)] $P_j(i_1)\in \big[-\sqrt{i_1 n^{-1\slash 3}}\slash 8,\sqrt{i_1 n^{-1\slash 3}}\slash 8\big]$;
        \item[(c)] $P_j(i)\in \big[-\sqrt{i n^{-1\slash 3}}\slash 2,\sqrt{i n^{-1\slash 3}}\slash 8\big]$ for any $i\in [i_1+1,i_2-1]\cap\mathbb{Z}$;
        \item[(d)] $P_j(i_2)=\infty$ or $P_j(i_2)<-\sqrt{i_2n^{-1\slash 3}}\slash 2$.
\end{itemize}
\item $\mathcal{G}_{2,l}$: the event that there exists $(i_1,i_2)\in [n]^2$, such that the following properties hold:
\begin{itemize}
         \item[(a)] $i_1\in I_l$, $1\leq i_2-i_1\leq 40 n^{1\slash 3}(L_2+l\Delta)^{-1\slash 2}$;
        \item[(b)] $n^{-1\slash 6}\sum_{i=i_1}^{i_2-1}Q_i\geq \sqrt{L_2+l\Delta}\slash 4$ (see (\ref{Qd}) for the definition of $Q_i$). 
      \end{itemize} 
\end{itemize}
We also let $\mathcal{G}_1:=\bigcup_{l=0}^{L_0}\mathcal{G}_{1,l}$ and $\mathcal{G}_2:=\bigcup_{l=0}^{L_0}\mathcal{G}_{2,l}$.

Throughout the rest of the proof, we assume that the events $\mathcal{B}_1(j)$, $\mathcal{B}_2(j)$, $\mathcal{U}_1$, and $\mathcal{U}_2$ hold. 

Consider an arbitrary $l\in [0,L_0]\cap\mathbb{Z}$. Suppose that the event $\mathcal{G}_{1,l}$ holds and $i_1,i_2$ satisfy the properties in the definition of $\mathcal{G}_{1,l}$. Arguing similarly as in \textbf{Parts 2.1.2 and 2.1.3} in the proof of Lemma \ref{PL3}, we obtain that $P_j(i_2)\neq \infty$ and  
\begin{equation*}
    n^{-3\slash 8}|P_j(i_2)|+P_j(i_2)-P_j(i_1)\geq \frac{1}{20}(L_2+l\Delta)n^{-1\slash 3}(i_2-i_1)-n^{-1\slash 6}\sum_{i=i_1}^{i_2-1} Q_i. 
\end{equation*}
By (\ref{Assu}) and the definition of $\mathcal{G}_{1,l}$, we have
\begin{equation*}
    n^{-3\slash 8}|P_j(i_2)|+P_j(i_2)-P_j(i_1)\leq -\frac{1}{4}\sqrt{i_2 n^{-1\slash 3}}\leq -\frac{1}{4}\sqrt{L_2+l\Delta}.
\end{equation*}
Hence we have
\begin{equation*}
    n^{-1\slash 6}\sum_{i=i_1}^{i_2-1}Q_i\geq \frac{1}{4}\sqrt{L_2+l\Delta}.
\end{equation*}
Therefore, for any $l\in [0,L_0]\cap\mathbb{Z}$, $\mathcal{G}_{1,l}\cap\mathcal{B}_1(j)\cap\mathcal{B}_2(j)\cap\mathcal{U}_1\cap\mathcal{U}_2\subseteq \mathcal{G}_{2,l}$, which leads to
\begin{equation}\label{E.21.6}
    \mathcal{G}_1\cap \mathcal{B}_1(j)\cap\mathcal{B}_2(j)\cap\mathcal{U}_1\cap\mathcal{U}_2\subseteq \mathcal{G}_2.
\end{equation}

Consider any $l\in [0,L_0]\cap\mathbb{Z}$ and any $(m_1,m_2)\in [n]^2$ such that
\begin{equation*}
    m_1\in I_l, \quad 1\leq m_2-m_1\leq 40 n^{1\slash 3}(L_2+l\Delta)^{-1\slash 2}.
\end{equation*}
By Lemma \ref{ZL3} (considering both cases), noting (\ref{psum}), we obtain that \begin{eqnarray}\label{E21.1}
  &&  \mathbb{P}\left(\left|n^{-1\slash 6} \sum_{i=m_1}^{m_2-1}\kappa_i\right|\geq \frac{1}{12}\sqrt{L_2+l\Delta}  \right)\nonumber\\
   & \leq &  C\exp\big(-c n^{1\slash 72}\big)+C\exp\big(-c(L_2+l\Delta)^{3\slash 2}\big).
\end{eqnarray}
Similarly, (\ref{E21.1}) holds with $m_1,m_2-1$ replaced by $m_1-1,m_2-2$. As $\{\xi_i\}_{i=1}^n$ are i.i.d. $N(0,1)$ random variables, we have $\sum_{i=m_1}^{m_2-1}\xi_i\sim N(0,m_2-m_1)$. Hence letting $W$ be a $N(0,1)$ random variable, we have
\begin{eqnarray*}
&& \mathbb{P}\left(\left|n^{-1\slash 6}\sum_{i=m_1}^{m_2-1}\sqrt{\frac{2}{\beta}}\xi_i\right|\geq\frac{1}{24}\sqrt{L_2+l\Delta}\right)\nonumber\\
&=&\mathbb{P}\left(|W|\geq \frac{1}{24} n^{1\slash 6}\sqrt{\frac{\beta(L_2+l\Delta)}{2(m_2-m_1)}}\right)
   \leq C\exp\big(-c (L_2+l\Delta)^{3\slash 2}\big).
\end{eqnarray*}
Hence by (\ref{Qform}),
\begin{eqnarray*}
  && \mathbb{P}\left(\left|n^{-1\slash 6} \sum_{i=m_1}^{m_2-1}Q_i\right|\geq \frac{1}{4}\sqrt{L_2+l\Delta}\right)\nonumber\\
  &\leq &  C_0\exp\big(-c_0 n^{1\slash 72}\big)+ C_0\exp\big(-c_0 (L_2+l\Delta)^{3\slash 2}\big),
\end{eqnarray*}
where $C_0,c_0$ are positive constants that only depend on $\beta$. By the union bound, we have
\begin{eqnarray*}
  &&  \mathbb{P}(\mathcal{G}_{2,l}\cap\mathcal{B}_1(j)\cap\mathcal{B}_2(j)\cap\mathcal{U}_1\cap\mathcal{U}_2)\\
  &\leq &  C_0 n^2\exp\big(-c_0 n^{1\slash 72}\big)+ C_0 n^2\exp\big(-c_0 (L_2+l\Delta)^{3\slash 2}\big),
\end{eqnarray*}
\begin{eqnarray}\label{E22.8}
 &&   \mathbb{P}(\mathcal{G}_2\cap\mathcal{B}_1(j)\cap\mathcal{B}_2(j)\cap\mathcal{U}_1\cap\mathcal{U}_2)\nonumber\\
 &\leq & C_0 n^3 \exp\big(-c_0 n^{1\slash 72}\big)+\sum_{l=0}^{\infty}C_0 n^2 \exp\big(-c_0 (L_2+l\Delta)^{3\slash 2}\big)\nonumber\\
 &\leq& C\exp\big(-c n^{1\slash 72}\big)+C\exp\big(-c L_2^{3\slash 2}\big).
\end{eqnarray}

Below we assume that the events $\mathcal{G}_2^c$, $\mathcal{B}_3(j)^c$, and $\mathcal{B}_4(j)$ hold. By (\ref{E.21.6}), the event $\mathcal{G}_1^c$ holds. From the definition of $\mathcal{B}_4(j)$, there exist $l\in [0,L_0]\cap\mathbb{Z}$ and $i_1\in [L_2 n^{1\slash 3},n-50]\cap I_l$, such that
\begin{equation*}
    P_j(i_1)\neq\infty, \quad P_j(i_1)\geq -\sqrt{i_1 n^{-1\slash 3}}\slash 8.
\end{equation*}
As the event $\mathcal{B}_3(j)^c$ holds, for any $i\in [i_1,n]\cap\mathbb{Z}$,
\begin{equation*}
    \text{ either }P_j(i)=\infty\text{ or }P_j(i)\leq \sqrt{i n^{-1\slash 3}}\slash 8.
\end{equation*}
As the event $\mathcal{G}_1^c$ holds, for any $i\in [i_1,i_1+40n^{1\slash 3}(L_2+l\Delta)^{-1\slash 2}]\cap [n]$, we have
\begin{equation}\label{neqq3}
    -\sqrt{i n^{-1\slash 3}}\slash 2\leq P_j(i)\leq \sqrt{i n^{-1\slash 3}}\slash 8.
\end{equation}
Following the argument as given in \textbf{Parts 2.1.2 and 2.1.3} in the proof of Lemma \ref{PL3}, for any $i_2\in [n]$ such that $i_2\in [i_1+1,i_1+40n^{1\slash 3}(L_2+l\Delta)^{-1\slash 2}]$, we have $P_j(i_2)\neq \infty$ and 
\begin{equation}\label{E.22.3}
   P_j(i_2)-P_j(i_1)\geq \frac{1}{20}(L_2+l\Delta) n^{-1\slash 3}(i_2-i_1)-n^{-1\slash 6} \sum_{i=i_1}^{i_2-1}Q_i.
\end{equation}

First consider the case where $i_1+40n^{1\slash 3}(L_2+l\Delta)^{-1\slash 2}>n$. As $i_1\in I_l$, we have $i_1\leq (L_2+(l+2)\Delta)n^{1\slash 3}$. Hence
\begin{equation*}\label{E.20.2}
    (L_2+(l+2)\Delta) n^{1\slash 3}+40n^{1\slash 3}(L_2+l\Delta)^{-1\slash 2}>n.
\end{equation*}
If $L_2+(l+2)\Delta\geq (9\slash 10)n^{2\slash 3}$, as $\Delta\leq L_2^{1\slash 4}\leq L_2\slash 1000$ (by (\ref{Assu})), we have
\begin{equation*}
    L_2+l\Delta\geq \frac{499}{500}(L_2+(l+2)\Delta)\geq \frac{499}{500}\cdot\frac{9}{10}n^{2\slash 3}\geq \frac{4}{5} n^{2\slash 3},
\end{equation*}
hence $40n^{1\slash 3} (L_2+l\Delta)^{-1\slash 2}\leq 48$ and $i_1>n-48$. This results in a contradiction, as $i_1\leq n-50$. If $L_2+(l+2)\Delta<(9\slash 10)n^{2\slash 3}$, by (\ref{Assu}), we have
\begin{eqnarray*}
    i_1+40n^{1\slash 3}(L_2+l\Delta)^{-1\slash 2}&\leq& (L_2+(l+2)\Delta)n^{1\slash 3} + 40n^{1\slash 3}(L_2+l\Delta)^{-1\slash 2}\nonumber\\
    &\leq&   \frac{9}{10} n+\frac{1}{10} n^{1\slash 3}\leq n,
\end{eqnarray*}
which again leads to a contradiction. Therefore, we conclude that
\begin{equation}\label{E.21.5}
    i_1+40n^{1\slash 3}(L_2+l\Delta)^{-1\slash 2}\leq n.
\end{equation}

If $20 n^{1  \slash 3}(L_2+l\Delta)^{-1\slash 2}\geq 1$, by (\ref{E.21.5}), we have
\begin{equation}\label{E.22.2}
    [i_1+20n^{1\slash 3}(L_2+l\Delta)^{-1\slash 2},i_1+40 n^{1\slash 3}(L_2+l\Delta)^{-1\slash 2}]\cap [n]\neq \emptyset.
\end{equation}
Take any $i_2\in [i_1+20n^{1\slash 3}(L_2+l\Delta)^{-1\slash 2},i_1+40 n^{1\slash 3}(L_2+l\Delta)^{-1\slash 2}]\cap [n]$. Note that by (\ref{Assu}),
\begin{equation}\label{E.22.1}
    i_2\leq i_1+40 n^{1\slash 3}(L_2+l\Delta)^{-1\slash 2}\leq i_1+\Delta n^{1\slash 3}\leq (L_2+(l+3)\Delta) n^{1\slash 3}.
\end{equation}
By (\ref{Assu}) again, we have
\begin{equation}\label{E.22.7}
    L_2+(l+3)\Delta\leq L_2+l\Delta+\frac{3}{1000}L_2\leq \frac{1003}{1000}(L_2+l\Delta).
\end{equation}
By (\ref{neqq3}), we have $P_j(i_2)\leq \sqrt{i_2 n^{-1\slash 3}}\slash 8$. By (\ref{E.22.3}), (\ref{E.22.1}), (\ref{E.22.7}), and the fact that $P_j(i_1)\geq -\sqrt{i_1 n^{-1\slash 3}}\slash 8$, we have 
\begin{eqnarray*}
  &&  n^{-1\slash 6}\sum_{i=i_1}^{i_2-1}Q_i\geq \sqrt{L_2+l\Delta}+P_j(i_1)-P_j(i_2)\\
  &\geq& \sqrt{L_2+l\Delta}-\frac{1}{4}\sqrt{L_2+(l+3)\Delta}>\frac{1}{4}\sqrt{L_2+l\Delta}.
\end{eqnarray*}
As the event $\mathcal{G}_2^c$ holds, we are led to a contradiction.

Now consider the case where $20 n^{1\slash 3}(L_2+l\Delta)^{-1\slash 2}<1$. Note that
\begin{equation}\label{neqq4}
    \frac{1}{20}(L_2+l\Delta)n^{-1\slash 3}>\sqrt{L_2+l\Delta}.
\end{equation}
Following the argument in \textbf{Parts 2.1.2 and 2.1.3} in the proof of Lemma \ref{PL3}, we have $P_j(i_1+1)\neq\infty$ and 
\begin{equation}\label{E.22.10}
   n^{-3\slash 8}|P_j(i_1+1)|+P_j(i_1+1)-P_j(i_1)\geq \frac{1}{20}(L_2+l\Delta) n^{-1\slash 3}-n^{-1\slash 6} Q_{i_1}.
\end{equation}
As the events $\mathcal{U}_1$ and $\mathcal{U}_2$ hold, we have $|Q_{i_1}|\leq 3n^{1\slash 8}$. Hence by (\ref{Assu}) and (\ref{E.22.7})-(\ref{E.22.10}), we have
\begin{eqnarray*}
 && n^{-3\slash 8}|P_j(i_1+1)|+P_j(i_1+1)\\
 &\geq& -\frac{1}{8}\sqrt{i_1 n^{-1\slash 3}}+\sqrt{L_2+l\Delta}-3\geq \frac{1}{4}\sqrt{(i_1+1) n^{-1\slash 3}},
\end{eqnarray*}
which via (\ref{Assu}) leads to $P_j(i_1+1)\geq \sqrt{(i_1+1)n^{-1\slash 3}}\slash 8$. As the event $\mathcal{B}_3(j)^c$ holds, we are led to a contradiction.

We conclude that
\begin{equation}
    \mathcal{B}_3(j)^c\cap\mathcal{B}_4(j)\cap\mathcal{B}_1(j)\cap\mathcal{B}_2(j)\cap\mathcal{U}_1\cap\mathcal{U}_2\subseteq \mathcal{G}_2\cap\mathcal{B}_1(j)\cap\mathcal{B}_2(j)\cap\mathcal{U}_1\cap\mathcal{U}_2.
\end{equation}
Hence by (\ref{E22.8}), we obtain the conclusion of the lemma. 

\end{proof}

\begin{lemma}\label{PL5}
Recall the definitions of $\{\tilde{\lambda}_{j}^{(n)}\}_{j=1}^n$ and $\{\gamma_j^{(n)}\}_{j=1}^n$ from Section \ref{Sect.n2} and the condition (\ref{Connk}) on $n$ and $k$. There exist positive constants $N_0,C,c$ that only depend on $\beta$ and $e_0$, such that the following holds. If $n\geq N_0$, then for any $j\in [k]$ and any $T\geq 10$, we have
\begin{equation}
    \mathbb{P}\big(\tilde{\lambda}_j^{(n)}\leq -k^{11\slash 3} T\big)\leq C\exp\big(-c \min\{n^{1\slash 4},k T^{1\slash 4}\}\big).
\end{equation}
\end{lemma}
\begin{proof}

We apply Proposition \ref{Edge} and Lemma \ref{gamma} in Sections \ref{Sect.4} and \ref{Sect.5}. Note that the proofs of these results are independent of the rest part of this article. We assume that $n\geq 4$ and $T\geq 10$ in the following. By (\ref{Connk}), $n^{e_0}\leq k\leq n\slash 2$. By Lemma \ref{gamma}, we have $|\gamma_k^{(n)}-2|\leq 4(k\slash n)^{2\slash 3}$. In Proposition \ref{Edge}, we take $a_0=e_0\in (0,1)$, $a=\min\{1,\log(k^{4}T\slash 2)\slash \log{n}\}\in [a_0,1]$, $\epsilon=1\slash 2$, and conclude that there exist positive constants $N_1,C,c$ that only depend on $\beta$ and $e_0$, such that for any $n\geq \max\{N_1,4\}$ and $j\in [k]$,
\begin{eqnarray*}
  &&  \mathbb{P}\big(\tilde{\lambda}_j^{(n)}\leq -k^{11\slash 3}T\big)\leq \mathbb{P}\big(\tilde{\lambda}_k^{(n)}\leq -k^{11\slash 3}T\big)\\
  &\leq&  \mathbb{P}\left(\big|n^{-1\slash 2}\lambda_k^{(n)}-\gamma_k^{(n)}\big|\geq \frac{1}{2}k^{11\slash 3}T n^{-2\slash 3}\right)\nonumber\\
  &\leq& C\exp\big(-c \min\{n^{1\slash 4},k T^{1\slash 4}\}\big).
\end{eqnarray*}

\end{proof}

Now we finish the proof of Proposition \ref{DiscreteDecay}.

\begin{proof}[Proof of Proposition \ref{DiscreteDecay}]

Recall the condition (\ref{Connk}) on $n$ and $k$. Below we assume that $k\geq 10$. For any $j\in [k]$, let
\begin{equation*}
    \mathcal{B}_0(j):=\mathcal{B}_1(j)\cap\mathcal{B}_3(j)^c\cap\mathcal{B}_4(j)^c\cap\mathcal{U}_1\cap\mathcal{U}_2.
\end{equation*}
We take
\begin{equation*}
    L_2=n^{-1\slash 3}\lfloor n^{1\slash 3} k^{12}\rfloor, \quad \Delta=n^{-1\slash 3} (\lfloor n^{1\slash 3} k^3 \rfloor-1), \quad K_3=k^{12}\slash (2C_2),
\end{equation*}
where $C_2\geq 1$ is taken to be the maximum of the constant $C_2$ appearing in Lemma \ref{PL3} and the constant $C_2$ appearing in Lemma \ref{PL4}. Note that $L_2,\Delta,K_3>0$ and $n^{1\slash 3}L_2,n^{1\slash 3} \Delta\in\mathbb{Z}$. As   
\begin{eqnarray*}
    \Delta^4 &\leq& \big(k^3-n^{-1\slash 3}\big)^4 \leq k^{12}\big(1-n^{-1\slash 3}k^{-3}\big)\leq k^{12}-n^{-1\slash 3}\leq L_2,
\end{eqnarray*}
\begin{eqnarray*}
    \Delta^4\geq \big(k^3-2n^{-1\slash 3}\big)^4\geq \big(  2^{-1\slash 2} k^3 \big)^4 = \frac{1}{4}k^{12}\geq \frac{1}{4}L_2,
\end{eqnarray*}
we have $L_2^{1\slash 4}\slash 2\leq \Delta\leq L_2^{1\slash 4}$. 

By Lemma \ref{PL5}, there exist positive constants $N_1,C_1,c_1$ that only depend on $\beta$ and $e_0$, such that the following holds: If $k\geq N_1$, then for any $j\in [k]$, 
\begin{equation}\label{Bound1}
    \mathbb{P}(\mathcal{B}_1(j)^c)\leq C_1\exp\big(-c_1 k^3\big).
\end{equation}
Now note that
\begin{eqnarray*}
   \mathcal{B}_0(j)^c&=&\mathcal{B}_1(j)^c\cup\mathcal{B}_3(j)\cup\mathcal{B}_4(j)\cup(\mathcal{U}_1\cap\mathcal{U}_2)^c\\
    &\subseteq & \left(\mathcal{U}_1\cap\mathcal{U}_2\right)^c\cup \mathcal{B}_1(j)^c\cup
    \left(\mathcal{B}_1(j)\cap\mathcal{B}_2(j)^c\right)\\
    && \cup \left(\mathcal{B}_3(j)\cap\mathcal{B}_1(j)\cap\mathcal{B}_2(j)\cap\mathcal{U}_1\cap\mathcal{U}_2\right) \\
    &&  \cup \left(\mathcal{B}_3(j)^c\cap\mathcal{B}_4(j)\cap\mathcal{B}_1(j)\cap\mathcal{B}_2(j)\cap\mathcal{U}_1\cap\mathcal{U}_2\right).
\end{eqnarray*}

Hence by Lemmas \ref{PL1}-\ref{PL4}, (\ref{Connk}), (\ref{Bound1}), and the union bound, there exist positive constants $N_0,C_0,c_0$ that only depend on $\beta$ and $e_0$, such that the following holds: If $k\geq N_0$, then for any $j\in [k]$,
\begin{equation*}
    \mathbb{P}(\mathcal{B}_0(j)^c)\leq C_0\exp(-c_0 k^3).
\end{equation*}
Let $\mathcal{B}_0:=\bigcap_{j=1}^k \mathcal{B}_0(j)$. By the union bound, if $k\geq N_0$, then
\begin{equation}\label{Neq}
    \mathbb{P}(\mathcal{B}_0^c)\leq C\exp(-c k^3),
\end{equation}
where $C,c$ are positive constants that only depend on $\beta,e_0$.

We take $K=\max\{N_0,10,(2\beta)^{-1}\}$. In the following, we assume that $k\geq K$ and the event $\mathcal{B}_0$ holds. Note that this combined with (\ref{Connk}) implies
\begin{equation}\label{Conn}
    k\geq \max\{10,(2\beta)^{-1}\}, \quad n\geq 10^8.
\end{equation}

Suppose that there exist some $j\in [k]$ and $i_0\in [n^{1\slash 3}k^{12}-1,n]\cap\mathbb{Z}$, such that $g_j(i_0)=0$. If $g_j(n)=0$, then by (\ref{EigenN}), $g_j(n-1)=0$, which via (\ref{Eigen}) leads to $g_j(i)=0$ for any $i\in [n]$. We are led to a contradiction, as $g_j$ is an eigenvector. Hence $g_j(n)\neq 0$. If $g_j(n-1)=0$, then by (\ref{Connk}), (\ref{EigenN}), (\ref{Conn}), and the fact that the events $\mathcal{B}_1(j)$ and $\mathcal{U}_2$ hold, we have
\begin{equation*}
   2\sqrt{n}= \sqrt{\frac{2}{\beta}}\xi_n-\tilde{\lambda}_j^{(n)} n^{-1\slash 6}\leq  \frac{1}{10}n^{1\slash 72}+\frac{1}{2}n^{-1\slash 6}k^{12}\leq \sqrt{n},
\end{equation*}
which leads to a contradiction. Hence $i_0\leq n-2$. If $g_j(i_0+1)=0$, then by (\ref{Eigen}), $g_j(i)=0$ for any $i\in [n]$, which contradicts the fact that $g_j$ is an eigenvector. If $g_j(i_0+1)\neq 0$, by (\ref{Eigen}), we have
\begin{equation*}
    \sqrt{\frac{2}{\beta}}\xi_{i_0+1}g_j(i_0+1)+\frac{Y_{n-i_0-1}}{\sqrt{\beta}}g_j(i_0+2)-2\sqrt{n}g_j(i_0+1)=n^{-1\slash 6}\tilde{\lambda}_j^{(n)}g_j(i_0+1). 
\end{equation*}
As the events $\mathcal{B}_1(j)$ and $\mathcal{U}_2$ hold, by (\ref{Conn}), we can deduce the following:
\begin{itemize}
    \item[(a)] $\sqrt{2\slash\beta}\left|\xi_{i_0+1}\right|\leq \sqrt{n}\slash 100$;
    \item[(b)] $Y_{n-i_0-1}\slash \sqrt{\beta}=Z_{i_0+1}+\sqrt{n-i_0-1}\leq (25\slash 24)\sqrt{n}$;
    \item[(c)] $n^{-1\slash 6}\tilde{\lambda}_j^{(n)}\geq -n^{-1\slash 6}k^{12}\slash 2\geq -\sqrt{n}\slash 100$.
\end{itemize}
Hence $g_j(i_0+2)\slash g_j(i_0+1)\geq 3\slash 2$, which leads to 
\begin{equation*}
    P_j(i_0+2)\geq  \frac{1}{2}n^{1\slash 3}\geq \frac{1}{2}\sqrt{(i_0+2)n^{-1\slash 3}}.
\end{equation*}
As the event $\mathcal{B}_3(j)^c$ holds, we are led to a contradiction. We conclude that $g_j(i)\neq 0$ for any $j\in [k]$ and $i\in [n^{1\slash 3}k^{12}-1,n]\cap\mathbb{Z}$.

Now suppose that there exist some $j\in [k]$ and $i_0\in [n^{1\slash 3}k^{12}-1,n-1]\cap\mathbb{Z}$, such that either $g_j(i_0)<0$, $g_j(i_0+1)>0$ or $g_j(i_0)>0$, $g_j(i_0+1)<0$. Without loss of generality, we assume that $g_j(i_0)<0$, $g_j(i_0+1)>0$. If $i_0\leq n-2$, by (\ref{Eigen}), we have
\begin{equation*}
    \sqrt{\frac{2}{\beta}}\xi_{i_0+1}g_j(i_0+1)+\frac{Y_{n-i_0-1}}{\sqrt{\beta}}g_j(i_0+2)-2\sqrt{n}g_j(i_0+1)\geq n^{-1\slash 6}\tilde{\lambda}_j^{(n)}g_j(i_0+1). 
\end{equation*}
Arguing as in the previous paragraph, we can deduce that
\begin{equation*}
    P_j(i_0+2)\geq \frac{1}{2}\sqrt{(i_0+2)n^{-1\slash 3}},
\end{equation*}
which leads to a contradiction (as the event $\mathcal{B}_3(j)^c$ holds). If $i_0=n-1$, by (\ref{EigenN}), arguing as in the previous paragraph, we obtain that 
\begin{eqnarray*}
    0\geq \frac{Y_1}{\sqrt{\beta}}g_j(n-1) &= &\left(2\sqrt{n}-\sqrt{\frac{2}{\beta}}\xi_n+n^{-1\slash 6}\tilde{\lambda}_j^{(n)}\right)g_j(n)\\
    &\geq & \left(2-\frac{1}{50}\right)\sqrt{n} g_j(n)>0,
\end{eqnarray*}
which leads to a contradiction. Hence we conclude that the property (a) in the definition of the event $\mathcal{B}$ (in the statement of the proposition) holds.

Below we fix an arbitrary $j\in [k]$. For any $i\in [n^{1\slash 3}k^{12},n]\cap\mathbb{Z}$, as $g_j(i-1)\neq 0$, we have $P_j(i)\neq \infty$. Without loss of generality, we assume that $g_j(i)>0$ for any $i\in [n^{1\slash 3}k^{12}-1,n]\cap\mathbb{Z}$ in the following. As the event $\mathcal{B}_4(j)^c$ holds, for any $i\in [n^{1\slash 3}k^{12},n-50]\cap\mathbb{Z}$, we have $P_j(i)\leq -\sqrt{i n^{-1\slash 3}}\slash 8$, which leads to
\begin{equation*}
    g_j(i)   \leq \left(1-\frac{1}{8}\sqrt{\frac{i}{n}}\right) g_j(i-1).
\end{equation*}
For any $i\in [n-49,n]\cap\mathbb{Z}$, as the event $\mathcal{B}_3(j)^c$ holds, we can deduce that $P_j(i)\leq \sqrt{i n^{-1\slash 3}}\slash 8$, which leads to
\begin{equation*}
    g_j(i)\leq \left(1+\frac{1}{8}\sqrt{\frac{i}{n}}\right) g_j(i-1)\leq \frac{9}{8}g_j(i-1).
\end{equation*}
Hence for any $(i_1,i_2)\in\mathbb{N}_{+}^2$ such that $n^{1\slash 3} k^{12}\leq i_1\leq i_2\leq n$, we have 
\begin{eqnarray}
    |g_j(i_2)|&\leq& \left(\frac{9}{8}\right)^{50} \prod_{i=i_1+1}^{i_2-50}\left(1-\frac{1}{8}\sqrt{\frac{i}{n}}\right) |g_j(i_1)|\nonumber\\
    &\leq & 3\cdot 10^5\prod_{i=i_1+1}^{i_2}\left(1-\frac{1}{8}\sqrt{\frac{i}{n}}\right) |g_j(i_1)|\nonumber\\
    &\leq& 3\cdot 10^5 \exp\left(-\frac{1}{12}\left(i_2^{3\slash 2}-i_1^{3\slash 2}\right) n^{-1\slash 2}\right)|g_j(i_1)|.
\end{eqnarray}
Hence taking $C'=3\cdot 10^5$, we conclude that the property (b) in the definition of the event $\mathcal{B}$ holds.

Therefore, we conclude that $\mathcal{B}_0\subseteq \mathcal{B}$. Hence by (\ref{Neq}), when $k\geq K$,
\begin{equation}
    \mathbb{P}\left(\mathcal{B}^c\right)\leq C\exp \left(-c k^3 \right). 
\end{equation}

\end{proof}

\subsection{Proof of Theorem \ref{Main3}}
\label{sec:3.3}
In this subsection, we finish the proof of Theorem \ref{Main3} based on the results established in Sections \ref{sec:3.1}-\ref{sec:3.2}. We first cite a result from \cite{Cor}.

\begin{lemma}[\cite{Cor}, Lemma 4.7]\label{LemmaBM}
Recall that $(B_x,x\geq 0)$ is the standard Brownian motion. Define
\begin{equation*}
    Z:=\sup_{x>0}\sup_{y\in[0,1)}\frac{|B_{x+y}-B_x|}{6\sqrt{\log(3+x)}}.
\end{equation*}
For any $x\geq 0$, let $Q_x:=B_{x+1}-B_x$ and $R_x:=B_x-\int_{x}^{x+1}B_ydy$. Then for any $x\geq 0$,
\begin{equation}
    \max\{|R_x|,|Q_x|\}\leq 6Z\sqrt{\log(3+x)}.
\end{equation}
Moreover, there exist positive absolute constants $C,c,s_0$, such that for all $s\geq s_0$,
\begin{equation}
    \mathbb{P}(Z\geq s)\leq C\exp(-c s^2).
\end{equation}
\end{lemma}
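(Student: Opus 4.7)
The plan is to handle the two assertions separately: the deterministic bound on $|Q_x|, |R_x|$ by $Z$, and the Gaussian tail of $Z$.

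For the deterministic bound, I would first argue that by continuity of Brownian paths, the supremum over $y \in [0,1)$ in the definition of $Z$ dominates the value at $y=1$: indeed, $\sup_{y\in[0,1)}|B_{x+y}-B_x| \geq \limsup_{y\to 1^-}|B_{x+y}-B_x| = |B_{x+1}-B_x| = |Q_x|$. This immediately gives $|Q_x| \le 6Z\sqrt{\log(3+x)}$. For $R_x$, write $R_x = \int_x^{x+1}(B_x - B_y)\,dy$ so that $|R_x| \le \int_x^{x+1}|B_y - B_x|\,dy \le \sup_{y\in[0,1)}|B_{x+y}-B_x|$, which again is bounded by $6Z\sqrt{\log(3+x)}$ by definition of $Z$.

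For the tail bound on $Z$, I would discretize the supremum dyadically (here integer-shift discretization suffices): write $Z \le \sup_{n \ge 0} \frac{M_n}{6\sqrt{\log(3+n)}}$, where
\begin{equation*}
M_n := \sup_{x \in [n,n+1]}\sup_{y \in [0,1)}|B_{x+y}-B_x| \le 2\sup_{s,t\in[n,n+2]}|B_s - B_t|.
\end{equation*}
By translation invariance of Brownian increments, $M_n$ is equal in distribution to the oscillation of a Brownian motion over an interval of length $2$, so the reflection principle (or a standard maximal inequality) gives $\mathbb{P}(M_n \ge u) \le C\exp(-cu^2)$ for absolute constants $C,c>0$. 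Applying this with $u = 6s\sqrt{\log(3+n)}$,
\begin{equation*}
\mathbb{P}\!\left(\frac{M_n}{6\sqrt{\log(3+n)}}\ge s\right) \le C\exp(-c'\, s^2 \log(3+n)) = C(3+n)^{-c' s^2}.
\end{equation*}

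A union bound then yields $\mathbb{P}(Z\ge s) \le C\sum_{n\ge 0}(3+n)^{-c' s^2}$, which for $s \ge s_0$ (so that $c' s^2 \ge 2$, say) is summable and bounded by $C_1 \exp(-C_2 s^2)$ with $C_2 = \tfrac{1}{2}c' \log 3$, finishing the proof. The only mildly delicate step is the comparison of $\sup_{y\in[0,1)}$ in the definition of $Z$ with the closed-interval supremum, which, as noted, follows from path continuity; everything else is a direct application of the Gaussian maximal inequality combined with a logarithmic union bound.
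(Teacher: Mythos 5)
The paper cites this lemma from \cite{Cor} without reproducing its proof, so there is no in-paper argument to compare against; your proof must be judged on its own.

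Your proof is correct. The deterministic bound is a direct consequence of path continuity (for $|Q_x|$) and a trivial integral bound plus continuity (for $|R_x|$, using $R_x = \int_x^{x+1}(B_x-B_y)\,dy$). The tail bound follows by the natural scheme: discretize the outer supremum over integer windows $[n,n+1]$, bound the oscillation $M_n$ by the Gaussian maximal inequality for Brownian motion on a length-$2$ interval (via the reflection principle), substitute $u = 6s\sqrt{\log(3+n)}$ to turn the Gaussian tail into $(3+n)^{-c's^2}$, and sum over $n$ once $c's^2$ exceeds $1$. Two cosmetic remarks: the factor $2$ in $M_n \le 2\sup_{s,t\in[n,n+2]}|B_s-B_t|$ is unnecessary, since both $x$ and $x+y$ already lie in $[n,n+2)$ so that $M_n$ is directly bounded by the oscillation of $B$ on that interval; and the statement's use of $\sup_{x>0}$ versus the need for the bound at arbitrary $x\ge 0$ is a notational slip in the lemma, not in your argument. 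Neither affects validity.
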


\begin{proof}[Proof of Theorem \ref{Main3}]

We proceed by approximating $H_{\beta}$ and $H_{\beta,n}$ along their eigenfunctions\slash eigenvectors. Recall from Section \ref{Sect.n} that $f_1,\cdots,f_k$ are the top $k$ eigenfunctions of $H_{\beta}$ and $g_1,\cdots,g_k$ are the top $k$ normalized eigenvectors of $H_{\beta,n}$. We let $L:=k^{20}$ and $L':=\lfloor L n^{1\slash 3}\rfloor n^{-1\slash 3}$. Throughout the proof, we assume that $k$ is sufficiently large, so that $k\geq 100$ and $L n^{1\slash 3}\leq n\slash 100$ (note that $n\geq k^{10000}$; for smaller $k$, the conclusion of the theorem can be obtained by enlarging the constant $C_1$ in the statement of the theorem). Throughout this proof, we use $C,c$ to denote positive constants that only depend on $\beta$ and $e_0$; the values of such constants may change from line to line.

The proof consists of two steps. In \textbf{Step 1}, for each $j\in [k]$, we construct a discrete approximation $\hat{f}_j$---which is a vector in $\mathbb{R}^n$---of the eigenfunction $f_j$, based on which we show that with probability at least $1-C\exp(-c k^3)$, $\tilde{\lambda}_j^{(n)}\geq a_j-Cn^{-1\slash 24}$ for every $j\in [k]$. In \textbf{Step 2}, for each $j\in [k]$, we construct an approximation $\hat{g}_j$---which is a continuous function on $[0,\infty)$---of the eigenvector $g_j$, based on which we show that with probability at least $1-C\exp(-c k^3)$, $\tilde{\lambda}_j^{(n)}\leq a_j+Cn^{-1\slash 24}$ for every $j\in [k]$. We obtain the conclusion of the theorem by combining the results from \textbf{Steps 1-2}. 

\paragraph{Step 1}

For every $j\in [k]$, we construct a discrete approximation $\hat{f}_j$---which is a vector in $\mathbb{R}^n$---of the eigenfunction $f_j$ as follows. For any $i\in [1,L n^{1\slash 3}]\cap\mathbb{Z}$, we let $\hat{f}_j(i)=n^{-1\slash 6}f_j(in^{-1\slash 3})$; for any $i\in (L n^{1\slash 3},n]\cap\mathbb{Z}$, we let $\hat{f}_j(i)=0$.

Recall from Section \ref{Sect.n2} that $\hat{H}_{\beta,n}=n^{1\slash 6}(H_{\beta,n}-2\sqrt{n} I_n)$ and that $\{\tilde{\lambda}_j^{(n)}\}_{j=1}^n$ are the eigenvalues of $\hat{H}_{\beta,n}$. We also recall the definitions of $\{\xi_i\}_{i=1}^n$ and $\{Y_i\}_{i=1}^{n-1}$ from Section \ref{Sect.n2}. For any $j\in [k]$,
\begin{equation}\label{NEQ20.7}
  (\hat{f}_j)^{\intercal}\hat{H}_{\beta,n}\hat{f}_j  =n^{1\slash 6}\sum_{i=1}^n\Big(\sqrt{\frac{2}{\beta}}\xi_i \hat{f}_j(i)^2-2\sqrt{n}\hat{f}_j(i)^2\Big)+2n^{1\slash 6}\sum_{i=1}^{n-1} \frac{Y_{n-i}}{\sqrt{\beta}}\hat{f}_j(i)\hat{f}_j(i+1).
\end{equation}
For each $i\in [n-1]$, we couple $Y_{n-i}$ with $W_{i,l}\stackrel{i.i.d.}{\sim}N(0,1)$ for $l\in [\beta(n-i)]$ such that $Y_{n-i}=\sqrt{\sum_{l=1}^{\beta(n-i)}W_{i,l}^2}$. By the Koml\'os-Major-Tusn\'ady theorem (see \cite{Cha, KMT}), we can couple $\{W_{i,l}\}_{l=1}^{\beta(n-i)}$ with $\zeta_i \stackrel{i.i.d.}{\sim}N(0,1)$ for all $i\in [1,L n^{1\slash 3}]\cap\mathbb{Z}$, such that with probability at least $1-C\exp(-c n^{1\slash 4})$, for any $i\in [1,Ln^{1\slash 3}]\cap\mathbb{Z}$,
\begin{equation}\label{NEQ20.25}
    \bigg|\frac{\sum_{l=1}^{\beta(n-i)}(W_{i,l}^2-1)}{\sqrt{2\beta(n-i)}}-\zeta_i\bigg|\leq n^{-1\slash 4}.
\end{equation}
Now we recall that $(B_x,x\geq 0)$ is the standard Brownian motion. We couple $S_i:=2^{-1\slash 2}n^{-1\slash 6}\sum_{l=1}^i (\xi_l+\zeta_l)$ for all $i\in [1,L n^{1\slash 3}]\cap\mathbb{Z}$ with $(B_x,x\geq 0)$ such that
\begin{equation}\label{NEQ21.5}
    S_i=-B_{i n^{-1\slash 3}}, \quad \text{for all }  i\in [1,L n^{1\slash 3}]\cap\mathbb{Z}.
\end{equation}

Now we consider an arbitrary $i\in [n-1]$. Let
\begin{equation}\label{NEQ20.9}
    \Delta_i:=\frac{Y_{n-i}}{\sqrt{\beta}}-\sqrt{n-i}-\frac{1}{\sqrt{2\beta}}\frac{\sum_{l=1}^{\beta(n-i)}(W_{i,l}^2-1)}{\sqrt{2\beta(n-i)}}
\end{equation}
and $\bar{Z}_i:=\big(\sum_{l=1}^{\beta(n-i)}(W_{i,l}^2-1)\big)\slash \big(\beta\sqrt{n-i}\big)$. Note that 
\begin{eqnarray}\label{NEQ20.1}
    |\Delta_i| &=& \bigg|\frac{1}{\sqrt{\beta}}\bigg(\frac{\sum_{l=1}^{\beta(n-i)}(W_{i,l}^2-1)}{\sqrt{\beta(n-i)}+\sqrt{\sum_{l=1}^{\beta(n-i)}W_{i,l}^2}}-\frac{\sum_{l=1}^{\beta(n-i)}(W_{i,l}^2-1)}{2\sqrt{\beta(n-i)}}\bigg)\bigg|\nonumber\\
    &=& \frac{\big(\sum_{l=1}^{\beta(n-i)}(W_{i,l}^2-1)\big)^2}{2\beta\sqrt{n-i}\Big(\sqrt{\beta(n-i)}+\sqrt{\sum_{l=1}^{\beta(n-i)}W_{i,l}^2}\Big)^2}\leq \frac{\big(\bar{Z}_i\big)^2}{2\sqrt{n-i}}.
\end{eqnarray}
Following the proof of Lemma \ref{ZL1}, we can deduce that for any $x\geq 1$,
\begin{equation}\label{NEQ37.11}
    \mathbb{P}(|\bar{Z}_i|\geq x)\leq 2\exp(-\beta x\slash 8). 
\end{equation}
Hence for any $i\in [n-1]$,
\begin{equation}\label{NEQ37.9}
    \mathbb{P}(|\Delta_i|\geq n)\leq C \exp(-c n^{1\slash 2});
\end{equation}
for any $i\in [1,L n^{1\slash 3}]\cap\mathbb{Z}$, as $n-i\geq 99 n\slash 100$, we have
\begin{equation}\label{NEQ22.44}
    \mathbb{P}(|\Delta_i|\geq n^{-1\slash 3})\leq \mathbb{P}(|\bar{Z}_i|\geq \sqrt{2}(n-i)^{1\slash 4}n^{-1\slash 6})\leq C\exp(-c n^{1\slash 12}).
\end{equation}

\subparagraph{Sub-step 1.1}

In this sub-step, we derive some preliminary estimates on $f_j$, where $j\in [k]$. These estimates will be used in later parts of the proof. 

For any $x\geq 0$, let $Q_x$ and $R_x$ be defined as in Lemma \ref{LemmaBM}. By Lemma \ref{LemmaBM}, with probability at least $1-C\exp(-c k^4)$, $\max\{|R_x|,|Q_x|\}\leq 6k^2\sqrt{\log(3+x)}$ for any $x\geq 0$. 

In Proposition \ref{AiryDecay}, we let $T=k^{11}\slash 8$, and obtain that with probability at least $1-C\exp(-c k^3)$, the following properties hold: (a) for any $(x_1,x_2)\in\mathbb{R}^2$ such that $x_2\geq x_1\geq k^{12}\slash 8$, $|f_j(x_2)|\leq |f_j(x_1)|\exp\big(-(x_2^{3\slash 2}-x_1^{3\slash 2})\slash 3\big)$; (b) for any $x\geq k^{12}\slash 8$, $|f_j'(x)|\leq 4\sqrt{x}|f_j(x)|$. As $\int_{k^{12}\slash 8}^{k^{12} \slash 8+1}f_j(x)^2 dx\leq \int_{0}^{\infty} f_j(x)^2 dx=1$, there exists some $x_0\in [k^{12}\slash 8,k^{12}\slash 8+1]$ such that $|f_j(x_0)|\leq 1$. Therefore, with probability at least $1-C\exp(-c k^3)$, for any $x\geq k^{12}\slash 2$, the following holds:
\begin{equation}\label{NEQ20.29}
    |f_j(x)|\leq |f_j(x_0)|\exp\big(-(x^{3\slash 2}-x_0^{3\slash 2})\slash 3\big)\leq \exp(-x^{3\slash 2}\slash 6),
\end{equation}
\begin{equation}\label{NEQ20.28}
    |f_j'(x)|\leq 4\sqrt{x}|f_j(x)|\leq 4\sqrt{x}\exp(-x^{3\slash 2}\slash 6).
\end{equation}

Therefore, with probability at least $1-C\exp(-c k^3)$, the following holds:
\begin{eqnarray}\label{NEQ20.10}
 &&  \Big|\int_{0}^{\infty} Q_x f_j(x)^2 dx\Big|\leq 6k^2\int_{0}^{\infty}\sqrt{\log(3+x)}f_j(x)^2 dx\nonumber\\
 &\leq & 6k^2\sqrt{\log(3+k^{12})}\int_{0}^{k^{12}}f_j(x)^2dx \nonumber\\
 && +6k^2\int_{k^{12}}^{\infty}\sqrt{\log(3+x)}\exp(-x^{3\slash 2}\slash 3)dx\leq C k^2\log{k},
\end{eqnarray}
\begin{eqnarray}\label{NEQ20.12}
    && \Big|\int_{0}^{\infty} R_x f_j(x) f_j'(x) dx\Big|\leq \int_0^{\infty}\Big(\frac{2}{\sqrt{\beta}}R_x^2f_j(x)^2+\frac{\sqrt{\beta}}{8}f_j'(x)^2\Big)dx\nonumber\\
    &\leq& Ck^4\int_{0}^{\infty}\log(3+x)f_j(x)^2 dx+\frac{\sqrt{\beta}}{8}\int_{0}^{\infty}f_j'(x)^2dx\nonumber\\
    &\leq& Ck^4\log{k}+\frac{\sqrt{\beta}}{8}\int_{0}^{\infty}f_j'(x)^2dx,
\end{eqnarray}
where we use the AM-GM inequality in the first inequality in (\ref{NEQ20.12}) and 
\begin{eqnarray*}
    \int_{0}^{\infty}\log(3+x)f_j(x)^2 dx  &\leq& \log(3+k^{12})\int_{0}^{k^{12}} f_j(x)^2 dx\\
   &&+\int_{k^{12}}^{\infty}\log(3+x)\exp(-x^{3\slash 2}\slash 3)dx\nonumber\\
   &\leq& C\log{k}
\end{eqnarray*}
in the third inequality in (\ref{NEQ20.12}).

With probability at least $1-C\exp(-ck^3)$, the statements below in this paragraph hold. By Lemma \ref{AiryDecayL3}, $|\lambda_j|\leq k^{2}$. Hence by (\ref{eigenv2}), (\ref{NEQ20.10}), and (\ref{NEQ20.12}), 
\begin{eqnarray*}
   &&  \int_0^{\infty} f_j'(x)^2dx =  \lambda_j-\int_0^{\infty}xf_j(x)^2dx-\frac{2}{\sqrt{\beta}}\int_0^{\infty}Q_x f_j(x)^2dx\\
    && \quad+\frac{4}{\sqrt{\beta}}\int_0^{\infty} R_x f_j(x)f_j'(x) dx\leq Ck^{4}\log{k}+\frac{1}{2}\int_{0}^{\infty}f_j'(x)^2dx,
\end{eqnarray*}
which leads to 
\begin{equation}\label{NEQ20.14}
    \int_0^{\infty}f_j'(x)^2 dx\leq C k^4\log{k}.
\end{equation}
By (\ref{NEQ20.14}) and the Cauchy-Schwarz inequality, for any $x\in [0,L+1]$,
\begin{equation}\label{NEQ20.19}
    |f_j(x)|=\Big|\int_{0}^x f_j'(y)dy\Big| \leq \Big(\int_0^xf_j'(y)^2 dy\Big)^{1\slash 2} x^{1\slash 2}\leq C k^4 (L+1)^{1\slash 2}\leq Ck^{14};
\end{equation}
for any $x_1,x_2\in [0,\infty)$ such that $x_1\leq x_2$ and $|x_1-x_2|\leq n^{-1\slash 3}$,
\begin{eqnarray}\label{NEQ20.17}
   && |f_j(x_1)-f_j(x_2)|=\Big|\int_{x_1}^{x_2}f_j'(x)dx\Big|\nonumber\\
    &\leq & \Big(\int_{x_1}^{x_2}f_j'(x)^2dx\Big)^{1\slash 2}|x_1-x_2|^{1\slash 2}
    \leq C k^4 n^{-1\slash 6}.
\end{eqnarray}

Now let $\mathcal{S}:=\{in^{-1\slash 3}\}_{i=0}^{\lfloor L n^{1\slash 3}\rfloor}$. By the standard estimate (\ref{Eqn26}) for Brownian motion and the union bound, we can deduce that with probability at least $1-C\exp(-c k^3)$,
\begin{equation}\label{NEQ20.15}
    \sup_{\substack{x_1\in [0,\infty),x_2\in\mathcal{S}:\\|x_1-x_2|\leq n^{-1\slash 3}}}|B_{x_1}-B_{x_2}|\leq Ck^2 n^{-1\slash 6}, \quad \sup_{x\in [0,L+1]}|B_x|\leq C k^{12};
\end{equation}
\begin{equation}\label{NEQ20.30}
    \sup_{x\in [0,1]}|B_{t+x}-B_t|\leq t ,\quad\forall t\in [L,\infty)\cap\mathbb{Z};\text{  hence } |B_x|\leq Cx^2,\quad\forall x\geq L.
\end{equation}
With probability at least $1-C\exp(-c k^3)$, the rest of the statements in this paragraph hold. Below we consider any $x_1\in [0,\infty)$ and any $x_2\in \mathcal{S}$ such that $|x_1-x_2|\leq n^{-1\slash 3}$. By (\ref{eigenv1}), 
\begin{eqnarray}\label{NEQ20.21}
    f_j'(x_1)-f_j'(x_2)&=& \int_{x_2}^{x_1}(y-\lambda_j)f_j(y)dy-\frac{2}{\sqrt{\beta}}\int_{x_2}^{x_1}B_yf_j'(y)dy\nonumber\\
    &&+\frac{2}{\sqrt{\beta}}(f_j(x_1)B_{x_1}-f_j(x_2)B_{x_2}).
\end{eqnarray}
Without loss of generality, we assume that $x_1\geq x_2$ below. As $|\lambda_j|\leq k^2$ (see the previous paragraph), by the AM-GM inequality and the Cauchy-Schwarz inequality, we have
\begin{eqnarray}\label{NEQ20.22}
  &&  \Big|\int_{x_2}^{x_1} (y-\lambda_j)f_j(y)dy  \Big|\leq \Big(\int_{x_2}^{x_1}f_j(y)^2dy\Big)^{1\slash 2}\Big(\int_{x_2}^{x_1}(y-\lambda_j)^2dy\Big)^{1\slash 2} \nonumber\\
  &\leq& \Big( 2\int_{x_2}^{x_1}(y^2+\lambda_j^2)dy \Big)^{1\slash 2}\leq C k^{20} n^{-1\slash 6}.
\end{eqnarray}
By (\ref{NEQ20.14}), (\ref{NEQ20.15}), and the Cauchy-Schwarz inequality,
\begin{equation}\label{NEQ20.23}
    \Big|\int_{x_2}^{x_1}B_y f'_j(y) dy\Big|\leq \Big(\int_{x_2}^{x_1}B_y^2dy\Big)^{1\slash 2}\Big(\int_{x_2}^{x_1}f'_j(y)^2dy\Big)^{1\slash 2}\leq C k^{16}n^{-1\slash 6}.
\end{equation}
By (\ref{NEQ20.19})-(\ref{NEQ20.15}), 
\begin{eqnarray}\label{NEQ20.24}
    |f_j(x_1)B_{x_1}-f_j(x_2)B_{x_2}|&\leq & |B_{x_1}||f_j(x_1)-f_j(x_2)|+|B_{x_1}-B_{x_2}||f_j(x_2)|\nonumber\\
    &\leq& C k^{16} n^{-1\slash 6}.
\end{eqnarray}
Combining (\ref{NEQ20.21})-(\ref{NEQ20.24}), we obtain that
\begin{equation}\label{NEQ22.5}
    |f_j'(x_1)-f_j'(x_2)|\leq Ck^{20} n^{-1\slash 6}. 
\end{equation}

\subparagraph{Sub-step 1.2}

In this sub-step, we bound $(\hat{f}_j)^{\intercal}\hat{H}_{\beta,n}\hat{f}_{j'}$ for any $j,j'\in [k]$. 

In the following, we fix an arbitrary $j\in [k]$. Recall the definitions of $\{Z_i\}_{i=1}^{n-1}$ and $\{\Delta_i\}_{i=1}^{n-1}$ from (\ref{psum}) and (\ref{NEQ20.9}). We define the following quantities:
\begin{equation*}
    \Pi_{1,1,j}=n^{1\slash 6}\sum_{i=1}^n\sqrt{\frac{2}{\beta}}\xi_i\hat{f}_j(i)^2+n^{1\slash 6}\sum_{i=1}^{n-1}\sqrt{\frac{2}{\beta}}\frac{\sum_{l=1}^{\beta(n-i)}(W_{i,l}^2-1)}{\sqrt{2\beta(n-i)}}\hat{f}_j(i)^2,
\end{equation*}
\begin{equation*}
    \Pi_{1,2,j}=2n^{1\slash 6}\sum_{i=1}^{n-1}\sqrt{n-i}\hat{f}_j(i)\hat{f}_j(i+1)-2n^{2\slash 3}\sum_{i=1}^n\hat{f}_j(i)^2;
\end{equation*}
\begin{equation*}
   \Pi_{2,1,j}=2n^{1\slash 6}\sum_{i=1}^{n-1} Z_i\hat{f}_j(i)(\hat{f}_j(i+1)-\hat{f}_j(i)), \quad  \Pi_{2,2,j}=2n^{1\slash 6}\sum_{i=1}^{n-1}\Delta_i\hat{f}_j(i)^2.
\end{equation*}
Note that by (\ref{NEQ20.7}) and (\ref{NEQ20.9}), 
\begin{equation}\label{NEQ22.50}
    (\hat{f}_j)^{\intercal}\hat{H}_{\beta,n}\hat{f}_j=\Pi_{1,1,j}+\Pi_{1,2,j}+\Pi_{2,1,j}+\Pi_{2,2,j}.
\end{equation}

In the following, we bound $\Pi_{1,1,j}$, $\Pi_{1,2,j}$, $\Pi_{2,1,j}$, $\Pi_{2,2,j}$ in \textbf{Parts 1.2.1-1.2.4}, respectively. These bounds are then used in \textbf{Part 1.2.5} to bound $(\hat{f}_j)^{\intercal}\hat{H}_{\beta,n}\hat{f}_j$.  

\textbf{Part 1.2.1}

In this part, we bound $\Pi_{1,1,j}$. Let
\begin{equation}
    \Upsilon_{1,j}:=n^{1\slash 6}\sqrt{\frac{2}{\beta}}\sum_{i=1}^{\lfloor L n^{1\slash 3} \rfloor}(\xi_i+\zeta_i)\hat{f}_j(i)^2.
\end{equation}
By (\ref{NEQ20.25}) and the fact that $\hat{f}_j(i)=0$ for any $i\in (L n^{1\slash 3},n]\cap\mathbb{Z}$, with probability at least $1-C\exp(-c n^{1\slash 4})$, we have
\begin{eqnarray}\label{NEQ21.1}
  |\Pi_{1,1,j}-\Upsilon_{1,j}|&\leq& n^{1\slash 6}\sqrt{2\slash\beta}\sum_{i=1}^{\lfloor L n^{1\slash 3}\rfloor} \Big|\zeta_i-\frac{\sum_{l=1}^{\beta(n-i)}(W_{i,l}^2-1)}{\sqrt{2\beta(n-i)}}\Big|\hat{f}_j(i)^2\nonumber\\
  &\leq& n^{-1\slash 12}\sqrt{2\slash\beta}\sum_{i=1}^{\lfloor L n^{1\slash 3}\rfloor}\hat{f}_j(i)^2.
\end{eqnarray}
By the definition of $\hat{f}_j$ and (\ref{NEQ20.19})-(\ref{NEQ20.17}), recalling the definition of $L'$ at the beginning of the proof, with probability at least $1-C\exp(-ck^3)$, we have
\begin{eqnarray}\label{NEQ21.2}
&& \Big|\sum_{i=1}^{\lfloor L n^{1\slash 3}\rfloor}\hat{f}_j(i)^2-\int_0^{L'}f_j(x)^2dx\Big|\leq\sum_{i=1}^{\lfloor L n^{1\slash 3}\rfloor}\Big|\hat{f}_j(i)^2-\int_{(i-1)n^{-1\slash 3}}^{i n^{-1\slash 3}}f_j(x)^2 dx\Big| \nonumber\\
 &\leq & \sum_{i=1}^{\lfloor L n^{1\slash 3}\rfloor}\int_{(i-1)n^{-1\slash 3}}^{i n^{-1\slash 3}}\Big|f_j(i n^{-1\slash 3})^2-f_j(x)^2\Big| dx\leq Ck^{38} n^{-1\slash 6},
\end{eqnarray}
which combined with (\ref{NEQ21.1}) leads to 
\begin{equation}\label{NEQ22.12}
    |\Pi_{1,1,j}-\Upsilon_{1,j}|\leq  C n^{-1\slash 12}.
\end{equation}

Now we bound $\Upsilon_{1,j}$. By (\ref{NEQ21.5}) and the Abel transformation,
\begin{eqnarray}\label{NEQ22.7}
   \Upsilon_{1,j} &=&  2\beta^{-1\slash 2}n^{1\slash 3}\sum_{i=1}^{\lfloor L n^{1\slash 3} \rfloor} S_i(\hat{f}_j(i)^2-\hat{f}_j(i+1)^2)\nonumber\\
    &=&-2\beta^{-1\slash 2}\sum_{i=1}^{\lfloor L n^{1\slash 3} \rfloor}B_{i n^{-1\slash 3}}\Big(f_j\big(in^{-1\slash 3}\big)^2-f_j\big((i+1) n^{-1\slash 3}\big)^2\Big)\nonumber\\
    && -2\beta^{-1\slash 2}B_{L'}f_j\big(L'+n^{-1\slash 3}\big)^2.
\end{eqnarray} 

With probability at least $1-C\exp(-ck^3)$, the rest of the statements in this part hold. By (\ref{NEQ20.29}), (\ref{NEQ20.28}), (\ref{NEQ20.14}), (\ref{NEQ20.19}), (\ref{NEQ20.15}), (\ref{NEQ20.30}), and the Cauchy-Schwarz inequality,
\begin{equation}\label{NEQ22.8}
    \Big|B_{L'}f_j\big(L'+n^{-1\slash 3}\big)^2\Big|\leq Ck^{12}\exp(-ck^{30})\leq Cn^{-1\slash 3},
\end{equation}
\begin{eqnarray}\label{NEQ22.9}
  &&  \Big|\int_{0}^{n^{-1\slash 3}}B_x f_j(x) f_j'(x)dx\Big|\leq Ck^{26}\int_{0}^{n^{-1\slash 3}}|f_j'(x)| dx\nonumber\\
  &\leq& Ck^{26}n^{-1\slash 6} \Big(\int_{0}^{n^{-1\slash 3}}f_j'(x)^2dx\Big)^{1\slash 2}\leq Ck^{30} n^{-1\slash 6},
\end{eqnarray}
\begin{equation}\label{NEQ22.10}
    \Big|\int_{L'+n^{-1\slash 3}}^{\infty} B_x f_j(x) f_j'(x)dx\Big|\leq C\int_{L}^{\infty}x^{5\slash 2} \exp\big(-c x^{3\slash 2}\big)dx\leq Cn^{-1\slash 3}.
\end{equation}

Now consider any $i\in [1,Ln^{1\slash 3}]\cap\mathbb{Z}$. By the mean value theorem, there exists some $\theta_i\in [0,1]$, such that
\begin{equation}\label{NEQ22.2}
    f_j\big(i n^{-1\slash 3}\big)^2-f_j\big((i+1) n^{-1\slash 3}\big)^2=-2 n^{-1\slash 3}f_j\big((i+\theta_i) n^{-1\slash 3}\big) f_j'\big((i+\theta_i)n^{-1\slash 3}\big).
\end{equation}
Let $\Phi_i:=B_{i n^{-1\slash 3}}\Big(f_j\big(in^{-1\slash 3}\big)^2-f_j\big((i+1) n^{-1\slash 3}\big)^2\Big)$. By (\ref{NEQ20.19})-(\ref{NEQ20.15}), (\ref{NEQ22.5}), (\ref{NEQ22.2}), and the triangle inequality, for any $x\in [i n^{-1\slash 3},(i+1)n^{-1\slash 3}]$, we have
$\Big| \Phi_i n^{1\slash 3}+ 2B_x f_j(x) f_j'(x)\Big|\leq Cn^{-1\slash 6}(k^{46}+k^{16}|f_j'(x)|)$. Note that by (\ref{NEQ20.14}) and the Cauchy-Schwarz inequality,
\begin{equation*}
    \int_{n^{-1\slash 3}}^{L'+n^{-1\slash 3}}|f_j'(x)|dx\leq \Big(\int_{n^{-1\slash 3}}^{L'+n^{-1\slash 3}}f_j'(x)^2 dx\Big)^{1\slash 2} (L')^{1\slash 2}\leq Ck^{14}.
\end{equation*}
Hence
\begin{eqnarray}\label{NEQ22.6}
&& \Big|   \sum_{i=1}^{\lfloor L n^{1\slash 3} \rfloor}\Phi_i+2\int_{n^{-1\slash 3}}^{L'+n^{-1\slash 3}}B_x f_j(x) f_j'(x)dx\Big|\nonumber\\
&\leq& \sum_{i=1}^{\lfloor L n^{1\slash 3}\rfloor}\Big|\int_{i n^{-1\slash 3}}^{(i+1)n^{-1\slash 3}} \big(\Phi_i n^{1\slash 3}+2 B_x f_j(x) f_j'(x)\big) dx\Big|\nonumber\\
&\leq& \sum_{i=1}^{\lfloor L n^{1\slash 3}\rfloor}\int_{i n^{-1\slash 3}}^{(i+1)n^{-1\slash 3}}\big| \Phi_i n^{1\slash 3}+2 B_x f_j(x) f_j'(x) \big|dx\nonumber\\
&\leq& C k^{66}n^{-1\slash 6}+C k^{16}n^{-1\slash 6}\int_{n^{-1\slash 3}}^{L'+n^{-1\slash 3}}|f_j'(x)|dx\leq Ck^{66}n^{-1\slash 6}.
\end{eqnarray}
By (\ref{NEQ22.7})-(\ref{NEQ22.10}) and (\ref{NEQ22.6}), $|\Upsilon_{1,j}-(4\slash\sqrt{\beta})\int_0^{\infty} B_x f_j(x) f_j'(x)dx|\leq Cn^{-1\slash 12}$, which combined with (\ref{NEQ22.12}) gives
\begin{equation}\label{NEQ22.51}
    \Big|\Pi_{1,1,j}-\frac{4}{\sqrt{\beta}}\int_0^{\infty} B_x f_j(x) f_j'(x)dx\Big|\leq Cn^{-1\slash 12}.
\end{equation}

\textbf{Part 1.2.2}

In this part, we bound $\Pi_{1,2,j}$. Let 
\begin{equation*}
  \Upsilon_{2,j}:=-n^{-1\slash 3}\sum_{i=1}^{n-1}i \hat{f}_j(i)\hat{f}_j(i+1),
\end{equation*}
\begin{equation*}
    \Upsilon_{3,j}:=-n^{-1\slash 3}\sum_{i=1}^{n-1}\frac{ \sqrt{n}-\sqrt{n-i}}{\sqrt{n}+\sqrt{n-i}} i\hat{f}_j(i)\hat{f}_j(i+1).
\end{equation*}
\begin{equation*}
    \Upsilon_{4,j}:=-n^{2\slash 3}\Big( \hat{f}_j(1)^2+\sum_{i=1}^{n-1}\big(\hat{f}_j(i+1)-\hat{f}_j(i)\big)^2 +\hat{f}_j(n)^2\Big),
\end{equation*}
Note that 
\begin{equation}\label{NEQ22.31}
    \Pi_{1,2,j}=\Upsilon_{2,j}+\Upsilon_{3,j}+\Upsilon_{4,j}.
\end{equation}
The rest of the statements in this part hold with probability greater than or equal to $1-C\exp(-c k^3)$.

We first bound $\Upsilon_{2,j}$. By the definition of $\hat{f}_j$, we have
\begin{equation}
    \Upsilon_{2,j}=-n^{-2\slash 3}\sum_{i=0}^{\lfloor Ln^{1\slash 3} \rfloor -1  }  i f_j\big( i n^{-1\slash 3}\big) f_j\big( (i+1)n^{-1\slash 3} \big).
\end{equation}
Now we consider any $i\in \{0,\cdots \lfloor L n^{1\slash 3}\rfloor -1\}$. Let
\begin{equation*}
    \Psi_i:=in^{-1\slash 3} f_j\big( i n^{-1\slash 3}\big) f_j\big( (i+1)n^{-1\slash 3} \big).
\end{equation*}
For any $x\in [i n^{-1\slash 3},(i+1)n^{-1\slash 3}]$, by (\ref{NEQ20.19}), (\ref{NEQ20.17}), and the triangle inequality, $\big|\Psi_i-x f_j(x)^2 \big|\leq Ck^{38}n^{-1\slash 6}$, which leads to
\begin{eqnarray}\label{NEQ22.30}
 &&  \Big| \Upsilon_{2,j}+\int_{0}^{L'}xf_j(x)^2  dx\Big|\leq \sum_{i=0}^{\lfloor L n^{1\slash 3}\rfloor -1  }\Big|\int_{i n^{-1\slash 3}}^{(i+1)n^{-1\slash 3}}(\Psi_i-x f_j(x)^2)dx\Big|\nonumber\\
 &\leq& \sum_{i=0}^{\lfloor L n^{1\slash 3}\rfloor -1  }\int_{i n^{-1\slash 3}}^{(i+1)n^{-1\slash 3}}|\Psi_i-x f_j(x)^2|dx\nonumber\\
 &\leq& Ck^{38}n^{-1\slash 6}L\leq Ck^{58} n^{-1\slash 6}.
\end{eqnarray}
By (\ref{NEQ20.29}), $\int_{L'}^{\infty}xf_j(x)^2 dx\leq \int_{L'}^{\infty} x\exp(-c x^{3\slash 2})dx\leq \exp(-c k^{30})\leq Cn^{-1\slash 12} $, which combined with (\ref{NEQ22.30}) gives
\begin{equation}\label{NEQ22.33}
    \Big| \Upsilon_{2,j}+\int_{0}^{\infty}xf_j(x)^2  dx\Big|\leq C n^{-1\slash 12}.
\end{equation}

Now we bound $\Upsilon_{3,j}$. By the AM-GM inequality and (\ref{NEQ21.2}), 
\begin{equation}\label{NEQ22.42}
    \sum_{i=1}^{\lfloor Ln^{1\slash 3}\rfloor -1}|\hat{f}_j(i)\hat{f}_j(i+1)|\leq \sum_{i=1}^{\lfloor L n^{1\slash 3}\rfloor}\hat{f}_j(i)^2\leq \int_0^{\infty} f_j(x)^2dx+C k^{38}n^{-1\slash 6}\leq C.
\end{equation}
As $\hat{f}_j(i)=0$ for any $i\in (L n^{1\slash 3},n]\cap\mathbb{Z}$, we have
\begin{eqnarray}\label{NEQ22.34}
 && |\Upsilon_{3,j}|=n^{-1\slash 3}\bigg|\sum_{i=1}^{\lfloor L n^{1\slash 3}\rfloor-1} \frac{i^2\hat{f}_j(i)\hat{f}_j(i+1)}{(\sqrt{n}+\sqrt{n-i})^2}\bigg|\nonumber\\
 &\leq& L^2 n^{-2\slash 3}\sum_{i=1}^{\lfloor Ln^{1\slash 3}\rfloor -1}|\hat{f}_j(i)\hat{f}_j(i+1)|\leq Ck^{40}n^{-2\slash 3}\leq C n^{-1\slash 12}.
\end{eqnarray}

Finally, we bound $\Upsilon_{4,j}$. Note that $f_j(0)=0$. By the definition of $\hat{f}_j$,
\begin{equation}\label{NEQ22.25}
    \Upsilon_{4,j}=-n^{1\slash 3}\Big(f_j\big(L'\big)^2+\sum_{i=0}^{\lfloor L n^{1\slash 3}\rfloor-1}\big(f_j\big((i+1)n^{-1\slash 3}\big)-f_j\big(i n^{-1\slash 3}\big)\big)^2\Big). 
\end{equation}
By (\ref{NEQ20.29}), as $L'\geq L-1\geq k^{20}\slash 2$ (by our assumption that $k\geq 100$), we have
\begin{equation}\label{NEQ22.26}
    n^{1\slash 3}f_j(L')^2\leq n^{1\slash 3}\exp(-c k^{30})\leq  Cn^{-1\slash 3}.
\end{equation}
Consider any $i\in \{0,1,\cdots,\lfloor L n^{1\slash 3}\rfloor -1\}$. By the mean value theorem, there exists $\theta_i'\in [0,1]$, such that
\begin{equation}\label{NEQ22.19}
    \big(f_j\big((i+1)n^{-1\slash 3}\big)-f_j\big(i n^{-1\slash 3}\big)\big)^2=n^{-2\slash 3}f_j'\big((i+\theta_i')n^{-1\slash 3}\big)^2.
\end{equation}
Note that by (\ref{NEQ20.14}) and the Cauchy-Schwarz inequality,
\begin{equation}\label{NEQ22.23}
    \int_{0}^{L'}|f_j'(x)|dx\leq \Big(\int_{0}^{L'}f_j'(x)^2 dx\Big)^{1\slash 2} (L')^{1\slash 2}\leq Ck^{14}.
\end{equation}
For any $x\in [in^{-1\slash 3},(i+1) n^{-1\slash 3}]$, by (\ref{NEQ22.5}),
\begin{eqnarray}
  && \big| f_j'\big((i+\theta_i')n^{-1\slash 3}\big)^2-f_j'(x)^2\big|\nonumber\\
  &\leq & \big| f_j'\big((i+\theta_i')n^{-1\slash 3}\big)-f_j'(x)\big|\big(2|f_j'(x)|+\big| f_j'\big((i+\theta_i')n^{-1\slash 3}\big)-f_j'(x)\big|\big)\nonumber\\
  &\leq& C k^{20} n^{-1\slash 6}\big(|f_j'(x)|+k^{20}n^{-1\slash 6}\big),
\end{eqnarray}
which combined with (\ref{NEQ22.19}) and (\ref{NEQ22.23}) gives
\begin{eqnarray}\label{NEQ22.27}
  && \bigg| n^{1\slash 3}\sum_{i=0}^{\lfloor L n^{1\slash 3}\rfloor-1}\big(f_j\big((i+1)n^{-1\slash 3}\big)-f_j\big(i n^{-1\slash 3}\big)\big)^2-\int_{0}^{L'}f_j'(x)^2 dx\bigg| \nonumber\\
  &\leq& Ck^{40}n^{-1\slash 3}L'+Ck^{20} n^{-1\slash 6}\int_{0}^{L'}|f_j'(x)|dx\leq C k^{34}n^{-1\slash 6}.
\end{eqnarray}
By (\ref{NEQ20.28}), we have
\begin{equation}\label{NEQ22.28}
    \int_{L'}^{\infty} f_j'(x)^2dx\leq C\int_{L'}^{\infty} x\exp(-c x^{3\slash 2})dx\leq C\exp(-c k^{30})\leq Cn^{-1\slash 3}.
\end{equation}
Combining (\ref{NEQ22.25}), (\ref{NEQ22.26}), (\ref{NEQ22.27}), and (\ref{NEQ22.28}), we have
\begin{equation}\label{NEQ22.35}
    \Big|\Upsilon_{4,j}+\int_{0}^{\infty}f_j'(x)^2dx\Big|\leq Cn^{-1\slash 12}. 
\end{equation}

By (\ref{NEQ22.31}), (\ref{NEQ22.33}), (\ref{NEQ22.34}), and (\ref{NEQ22.35}), we conclude that
\begin{equation}\label{NEQ22.52}
    \Big|\Pi_{1,2,j}+\int_0^{\infty} x f_j(x)^2dx+\int_{0}^{\infty}f_j'(x)^2dx\Big|\leq C n^{-1\slash 12}.
\end{equation}

\textbf{Part 1.2.3}

In this part, we bound $\Pi_{2,1,j}$. By Lemma \ref{ZL1}, with probability at least $1-C\exp(-c k^3)$, for any $i\in [n-1]$, $|Z_i|\leq k^3$. By the fact that $\hat{f}_j(i)=0$ for any $i\in (L n^{1\slash 3},n]\cap\mathbb{Z}$ and the Cauchy-Schwarz inequality,
\begin{eqnarray}\label{NEQ22.41}
  &&  |\Pi_{2,1,j}|\leq 2k^3 n^{1\slash 6} \sum_{i=1}^{\lfloor L n^{1\slash 3} \rfloor }\big|\hat{f}_j(i)\big|\big|\hat{f}_j(i+1)-\hat{f}_j(i)\big|\nonumber\\ 
  &\leq & 2k^3 n^{1\slash 6}\Big(\sum_{i=1}^{\lfloor L n^{1\slash 3}\rfloor} \hat{f}_j(i)^2\Big)^{1\slash 2}\Big(\sum_{i=1}^{\lfloor L n^{1\slash 3}\rfloor} \big(\hat{f}_j(i+1)-\hat{f}_j(i)\big)^2\Big)^{1\slash 2}.
\end{eqnarray}

For any $i\in [1,Ln^{1\slash 3}-1]\cap\mathbb{Z}$, by the mean value theorem there exists $\theta_i''\in [0,1]$, such that
\begin{eqnarray}\label{NEQ22.40} 
  && \sum_{i=1}^{\lfloor L n^{1\slash 3}\rfloor }\big(\hat{f}_j(i+1)-\hat{f}_j(i)\big)^2\nonumber\\
  &=& n^{-1\slash 3}\sum_{i=1}^{\lfloor L n^{1\slash 3}\rfloor-1}\big(f_j\big((i+1) n^{-1\slash 3}\big)-f_j\big(i n^{-1\slash 3}\big)\big)^2+n^{-1\slash 3}f_j(L')^2\nonumber\\
  &=& n^{-1}\sum_{i=1}^{\lfloor L n^{1\slash 3}\rfloor-1}f_j'\big((i+\theta_i'')n^{-1\slash 3}\big)^2+n^{-1\slash 3}f_j(L')^2.
\end{eqnarray}
With probability at least $1-C\exp(-c k^3)$, the following statements in this paragraph hold. For any $i\in [1,L n^{1\slash 3}-1]\cap\mathbb{Z}$ and any $x\in [i n^{-1\slash 3},(i+1)n^{-1\slash 3}]$, by (\ref{NEQ22.5}), we have
\begin{eqnarray}
    \big|f_j'\big((i+\theta_i'')n^{-1\slash 3}\big)^2-f_j'(x)^2\big|&\leq& \big|f_j'\big((i+\theta_i'')n^{-1\slash 3}\big)-f_j'(x)\big|\nonumber\\
 &\times& \big(2|f_j'(x)|+\big|f_j'\big((i+\theta_i'')n^{-1\slash 3}\big)-f_j'(x)\big|\big)\nonumber\\
 &\leq& C k^{20} n^{-1\slash 6} (|f'_j(x)|+k^{20}n^{-1\slash 6}),
\end{eqnarray}
which via (\ref{NEQ20.14}) and (\ref{NEQ22.23}) leads to
\begin{eqnarray*}
  && n^{-1\slash 3}\sum\limits_{i=1}^{\lfloor L n^{1\slash 3}\rfloor-1}f_j'\big((i+\theta_i'')n^{-1\slash 3}\big)^2 \nonumber\\
 &\leq& \int_{0}^{\infty} f_j'(x)^2 dx + C k^{40}n^{-1\slash 3}L +ck^{20}n^{-1\slash 6}\int_{0}^{L'}|f_j'(x)|dx\leq Ck^5.
\end{eqnarray*}
Hence by (\ref{NEQ22.26}) and (\ref{NEQ22.40}), $\sum\limits_{i=1}^{\lfloor L n^{1\slash 3}\rfloor }\big(\hat{f}_j(i+1)-\hat{f}_j(i)\big)^2\leq Ck^5n^{-2\slash 3}$, which combined with (\ref{NEQ22.42}) and (\ref{NEQ22.41}) gives
\begin{equation}\label{NEQ22.53}
    |\Pi_{2,1,j}|\leq C k^6 n^{-1\slash 6}\leq C n^{-1\slash 12}.
\end{equation}

\textbf{Part 1.2.4}

In this part, we bound $\Pi_{2,2,j}$. By (\ref{NEQ22.44}), (\ref{NEQ22.42}), and the fact that $\hat{f}_j(i)=0$ for any $i\in (L n^{1\slash 3},n]\cap\mathbb{Z}$, we have with probability at least $1-C\exp(-c k^3)$,
\begin{equation}\label{NEQ22.54}
    |\Pi_{2,2,j}|\leq 2n^{-1\slash 6}\sum_{i=1}^{\lfloor L n^{1\slash 3}\rfloor }\hat{f}_j(i)^2\leq Cn^{-1\slash 6}. 
\end{equation}

\textbf{Part 1.2.5}

In this part, we bound $(\hat{f}_j)^{\intercal}\hat{H}_{\beta,n}\hat{f}_{j'}$ for any $j,j'\in [k]$. The following statements in this part hold with probability at least $1-C\exp(-c k^3)$. By (\ref{NEQ22.50}), (\ref{NEQ22.51}), (\ref{NEQ22.52}), (\ref{NEQ22.53}), (\ref{NEQ22.54}), and the union bound, for any $j\in [k]$, we have 
\begin{eqnarray}
  &&  \Big|(\hat{f}_j)^{\intercal}\hat{H}_{\beta,n}\hat{f}_j-\frac{4}{\sqrt{\beta}}\int_0^{\infty} B_x f_j(x) f_j'(x)dx+\int_0^{\infty} x f_j(x)^2dx+\int_{0}^{\infty}f_j'(x)^2dx\Big|\nonumber\\
  && \leq C n^{-1\slash 12}. 
\end{eqnarray}
Hence by (\ref{eigenv3}) and the definition of $\{a_j\}_{j=1}^{\infty}$,
\begin{equation}
    \big|(\hat{f}_j)^{\intercal}\hat{H}_{\beta,n}\hat{f}_j-a_j\big|= \big|(\hat{f}_j)^{\intercal}\hat{H}_{\beta,n}\hat{f}_j+\lambda_j\big|\leq C n^{-1\slash 12}.
\end{equation}
Similarly, we can deduce that for any $j\in [k]$, $|(\hat{f}_j)^{\intercal}\hat{f}_j-1|\leq Cn^{-1\slash 12}$; for any $j,j'\in [k]$ such that $j\neq j'$, $|(\hat{f}_j)^{\intercal}\hat{H}_{\beta,n}\hat{f}_{j'}|\leq Cn^{-1\slash 12}$ and $|(\hat{f}_j)^{\intercal}\hat{f}_{j'}|\leq Cn^{-1\slash 12}$.

\subparagraph{Sub-step 1.3}

In this sub-step, we show that with probability at least $1-C\exp(-c k^3)$, for every $j\in [k]$, $\tilde{\lambda}_j^{(n)}\geq a_j-C n^{-1\slash 24}$. The rest of the statements in this sub-step hold with probability at least $1-C\exp(-c k^3)$. 

We start by showing that $\{\hat{f}_j\}_{j=1}^k$ are linearly independent in $\mathbb{R}^n$ when $n\geq C'$, where $C'$ is a positive constant that only depends on $\beta,e_0$. Suppose that there exist some $j_0\in [k]$ and $t_j\in \mathbb{R}$ for $j\in [k]\backslash\{j_0\}$, such that $\hat{f}_{j_0}=\sum_{j\in [k] \backslash\{j_0\}} t_j \hat{f}_j$. By the AM-GM inequality,
\begin{equation}\label{NEQ32.2}
    \sum_{j,j'\in [k]\backslash\{j_0\}: j\neq j'}|t_jt_{j'}|\leq \frac{1}{2}\sum_{j,j'\in [k]\backslash\{j_0\}: j\neq j'}(t_j^2+t_{j'}^2)\leq k\Big(\sum_{j\in [k]\backslash \{j_0\}} t_j^2\Big).
\end{equation}
Hence by the results in \textbf{Part 1.2.5}, we have
\begin{eqnarray}\label{NEQ31.1}
  &&  1-C n^{-1\slash 12}\leq  (\hat{f}_{j_0})^{\intercal}\hat{f}_{j_0} \nonumber\\
  &=& \sum_{j\in [k]\backslash\{j_0\}} t_j^2  (\hat{f}_j)^{\intercal}\hat{f}_j+\sum_{j,j'\in [k]\backslash\{j_0\}: j\neq j'}t_j t_{j'}(\hat{f}_j)^{\intercal}\hat{f}_{j'}\nonumber\\
  &\leq& (1+Cn^{-1\slash 12})\big(\sum_{j\in [k]\backslash\{j_0\}}t_j^2\big)+Cn^{-1\slash 12}\sum_{j,j'\in [k]\backslash\{j_0\}: j\neq j'}|t_jt_{j'}|\nonumber\\
  &\leq& (1+2C k n^{-1\slash 12})\big(\sum_{j\in [k]\backslash\{j_0\}}t_j^2\big).
\end{eqnarray}
Below we assume that $Ck n^{-1\slash 12}\leq 1\slash 4$. By (\ref{NEQ31.1}), we have
\begin{equation}
    \sum_{j\in [k]\backslash \{j_0\}}t_j^2\geq \frac{1-C n^{-1\slash 12}}{1+2Ck n^{-1\slash 12}}; \text{ similarly, }  \sum_{j\in [k]\backslash \{j_0\}}t_j^2\leq \frac{1+C n^{-1\slash 12}}{1-2Ck n^{-1\slash 12}}.
\end{equation}
Hence for any $j\in [k]\backslash \{j_0\}$, 
\begin{equation}\label{NEQ31.5}
    |t_j|\leq \sqrt{\frac{1+C n^{-1\slash 12}}{1-2Ck n^{-1\slash 12}}}.
\end{equation}
Moreover, there exists some $j_1\in [k]\backslash \{j_0\}$, such that 
\begin{equation}\label{NEQ31.8}
    |t_{j_1}|\geq \sqrt{\frac{1-C n^{-1\slash 12}}{k(1+2Ck n^{-1\slash 12})}}.
\end{equation}
By (\ref{NEQ31.5}), (\ref{NEQ31.8}), and the results in \textbf{Part 1.2.5}, we have 
\begin{eqnarray*}
 && C n^{-1\slash 12}\geq  \big|(\hat{f}_{j_0})^{\intercal}\hat{f}_{j_1}\big|= \big|t_{j_1}(\hat{f}_{j_1})^{\intercal}\hat{f}_{j_1}+\sum_{j\in [k]\backslash\{j_0,j_1\}}t_j (\hat{f}_{j})^{\intercal}\hat{f}_{j_1} \big|\nonumber\\
 &\geq& |t_{j_1}|\big|\hat{f}_{j_1})^{\intercal}\hat{f}_{j_1}\big|-\sum_{j\in [k]\backslash\{j_0,j_1\}}|t_j| \big|(\hat{f}_{j})^{\intercal}\hat{f}_{j_1} \big|\nonumber\\
 &\geq& (1-C n^{-1\slash 12})\sqrt{\frac{1-C n^{-1\slash 12}}{k(1+2Ck n^{-1\slash 12})}}-Ckn^{-1\slash 12}\sqrt{\frac{1+C n^{-1\slash 12}}{1-2Ck n^{-1\slash 12}}}.
\end{eqnarray*}
As $n^{e_0}\leq k\leq n^{1\slash 10000}$, there exists a positive constant $C'$ that only depends on $\beta,e_0$, such that for any $n\geq C'$, we have $Ckn^{-1\slash 12}\leq 1\slash 4$ and
\begin{equation*}
    C n^{-1\slash 12}<(1-C n^{-1\slash 12})\sqrt{\frac{1-C n^{-1\slash 12}}{k(1+2Ck n^{-1\slash 12})}}-Ckn^{-1\slash 12}\sqrt{\frac{1+C n^{-1\slash 12}}{1-2Ck n^{-1\slash 12}}},
\end{equation*}
which leads to a contradiction. Hence we conclude that when $n\geq C'$, $\{\hat{f}_j\}_{j=1}^k$ are linearly independent in $\mathbb{R}^n$. Below we assume that $n\geq C'$ (the case where $n<C'$ can be covered by enlarging the constant $C_1$ in the theorem). 

Now for any $l\in [k]$, consider the linear subspace $U_l$ of $\mathbb{R}^n$ spanned by $\{\hat{f}_j\}_{j=1}^l$. Note that $\dim(U_l)=l$. Consider an arbitrary vector $h_{\bm{\theta},l}=\sum_{j=1}^l \theta_j \hat{f}_j$ in $U_l$ with $\|h_{\bm{\theta},l}\|_2=1$, where $\bm{\theta}=(\theta_1,\theta_2,\cdots,\theta_l)\in\mathbb{R}^l$. By the results in \textbf{Part 1.2.5} and a similar derivation as in (\ref{NEQ32.2}),
\begin{eqnarray*}
  1=\sum_{j=1}^{l}\theta_j^2 (\hat{f}_j)^{\intercal}\hat{f}_j+\sum_{j,j'\in [l]:j\neq j'}\theta_j\theta_{j'} (\hat{f}_j)^{\intercal}\hat{f}_{j'}\leq(1+2C kn^{-1\slash 12})\big(\sum_{j=1}^l \theta_j^2\big),
\end{eqnarray*}
Hence by the results in \textbf{Part 1.2.5} and a similar derivation as in (\ref{NEQ32.2}),
\begin{eqnarray}
 &&(h_{\bm{\theta},l})^{\intercal} \hat{H}_{\beta,n} h_{\bm{\theta},l} = \sum_{j=1}^l \theta_j^2 (\hat{f}_j)^{\intercal} \hat{H}_{\beta,n}\hat{f}_j+\sum_{j,j'\in [l]:j\neq j'}\theta_j\theta_{j'}(\hat{f}_j)^{\intercal} \hat{H}_{\beta,n}\hat{f}_{j'}\nonumber\\
 &\geq& (a_l-2Ck n^{-1\slash 12})\big(\sum_{j=1}^l \theta_j^2\big)\geq \frac{a_l-2Ck n^{-1\slash 12}  }{1+2C kn^{-1\slash 12}}\geq a_l-C n^{-1\slash 24},
\end{eqnarray}
where we use the fact that $|a_j|=|\lambda_j|\leq k^2$ (as shown in \textbf{Sub-step 1.1}). By the Courant-Fischer-Weyl min-max principle, for any $l\in [k]$,
\begin{eqnarray*}
    \tilde{\lambda}_l^{(n)}=\max_{\dim(U)=l}\{\min_{\substack{\mathbf{x}\in U,\\ \|\mathbf{x}\|_2=1}}\{\mathbf{x}^{\intercal}\hat{H}_{\beta,n}\mathbf{x}\}\}\geq \min_{\substack{\mathbf{x}\in U_l,\\ \|\mathbf{x}\|_2=1}}\{\mathbf{x}^{\intercal}\hat{H}_{\beta,n}\mathbf{x}\}\geq a_l-C n^{-1\slash 24},
\end{eqnarray*}
where $U$ is a linear subspace of $\mathbb{R}^n$.

\paragraph{Step 2}

We recall that $L'=\lfloor L n^{1\slash 3}\rfloor n^{-1\slash 3}$. For every $j\in [k]$, we construct an approximation $\hat{g}_j$---a continuous function on $[0,\infty)$---of the eigenvector $g_j$ as follows. For every $i\in [1,L n^{1\slash 3}] \cap \mathbb{Z}$, we let $\hat{g}_j(i n^{-1\slash 3})=n^{1\slash 6} g_j(i)$. We also let $\hat{g}_j(0)=0$, $\hat{g}_j(L'+n^{-1\slash 3})=0$, and linearly interpolate between $(i n^{-1\slash 3},\hat{g}_j(i n^{-1\slash 3}))$ and $((i+1) n^{-1\slash 3},\hat{g}_j((i+1) n^{-1\slash 3}))$ on $[i n^{-1\slash 3},(i+1) n^{-1\slash 3}]$ for every $i\in \{0,1,\cdots,\lfloor L n^{1\slash 3}\rfloor\}$. For any $x>L'+n^{-1\slash 3}$, we let $\hat{g}_j(x)=0$.

We use the same coupling as described in \textbf{Step 1} throughout this step.

\subparagraph{Sub-step 2.1}

In this sub-step, we derive some preliminary bounds on $g_j\in\mathbb{R}^n$, where $j\in [k]$. These bounds will be used in later parts of the proof.

From $(g_j)^{\intercal}\hat{H}_{\beta,n}g_j=\tilde{\lambda}_j^{(n)}$ and the definition of $\{Z_i\}_{i=1}^{n-1}$ in (\ref{psum}), we have 
\begin{eqnarray}\label{EIGENNEQ}
&&    \tilde{\lambda}_j^{(n)} = -2n^{2\slash 3}\sum_{i=1}^n g_j(i)^2+2n^{2\slash 3}\sum_{i=1}^{n-1}g_j(i)g_j(i+1)+n^{1\slash 6}\sum_{i=1}^n\sqrt{\frac{2}{\beta}}\xi_i g_j(i)^2\nonumber\\
&-&2n^{1\slash 6}\sum_{i=1}^{n-1}(\sqrt{n}-\sqrt{n-i}) g_j(i)g_j(i+1)+2n^{1\slash 6}\sum_{i=1}^{n-1} Z_i g_j(i) g_j(i+1).   
\end{eqnarray}

The following statements in this sub-step hold with probability greater than or equal to $1-C\exp(-c k^3)$. By Proposition \ref{Edge} and Lemma \ref{gamma} (whose proofs are independent of other parts of the article), we have $|\tilde{\lambda}_j^{(n)}|\leq Ck^{20}$. Now by Proposition \ref{DiscreteDecay} and the fact that $|g_j(i)| \leq \|g_j\|_2= 1$ for all $i\in [n]$, we have 
\begin{equation}\label{NEQ37.1}
    |g_j(i)|\leq C|g_j(\lceil k^{12} n^{1\slash 3}\rceil)|\exp(-(i^{3\slash 2}n^{-1\slash 2}-k^{21})\slash 12)\leq C\exp(-c i^{3\slash 2} n^{-1\slash 2})
\end{equation}
for any $i\in [\lfloor L n^{1\slash 3}\rfloor,n]\cap\mathbb{Z}$. For any $i\in [n]$, as $\xi_i\sim N(0,1)$, we have $|\xi_i|\leq k^2$; for any $i\in [n-1]$, by Lemma \ref{ZL1}, $|Z_i|\leq k^4$. Thus $|n^{1 \slash 6}\sum_{i=1}^n\xi_ig_j(i)^2|\leq k^2n^{1\slash 6}$ and $|2n^{1\slash 6}\sum_{i=1}^{n-1} Z_{i}g_j(i)g_j(i+1)|\leq k^4n^{1\slash 6}\sum_{i=1}^{n-1}(g_j^2(i)+g_j^2(i+1))\leq 2k^4n^{1\slash 6}$. By (\ref{NEQ37.1}) and using $\sqrt{n}-\sqrt{n-i}\leq i n^{-1\slash 2}$ for any $i\in [n-1]$, we have
\begin{eqnarray*}
&&
\big|n^{1\slash 6}\sum_{i=1}^{n-1}(\sqrt{n}-\sqrt{n-i})g_j(i)g_j(i+1)\big|\leq n^{-1\slash 3}\sum_{i=1}^n i g_j(i)^2\\
&\leq & L\sum_{i=1}^{\lfloor Ln^{1\slash 3}\rfloor}g_j(i)^2+C\sum_{i=\lfloor L n^{1\slash 3}\rfloor+1}^{n}(in^{-1\slash 3})\exp\big(-c (in^{-1\slash 3})^{3\slash 2}\big)\leq C k^{20}.
\end{eqnarray*}
Plugging the above estimates into (\ref{EIGENNEQ}), we obtain that
\begin{equation}\label{NEQ37.10}
    g_j(1)^2+\sum_{i=1}^{n-1}(g_j(i+1)-g_j(i))^2+g_j(n)^2 \leq C k^4 n^{-1\slash 2}.
\end{equation}

\subparagraph{Sub-step 2.2}

Below we fix an arbitrary $j\in [k]$. For any $m,m'\in [2]$, we let $\tilde{\Pi}_{m,m',j}$ be obtained from $\Pi_{m,m',j}$ (as defined at the beginning of \textbf{Sub-step 1.2}) by replacing $\hat{f}_j$ with $g_j$. Note that $(g_j)^{\intercal}\hat{H}_{\beta,n} g_j=\sum\limits_{m,m'\in [2]}\tilde{\Pi}_{m,m',j}$. The following statements in this sub-step hold with probability greater than or equal to $1-C\exp(-c k^3)$. 

\textbf{Part 2.2.1}

We first bound $\tilde{\Pi}_{2,1,j}$ and $\tilde{\Pi}_{2,2,j}$. By (\ref{NEQ37.9}), (\ref{NEQ22.44}), (\ref{NEQ37.1}), (\ref{NEQ37.10}), the estimates on $\{Z_i\}_{i=1}^{n-1}$ in \textbf{Sub-step 2.1}, and the Cauchy-Schwarz inequality, 
\begin{equation}\label{NEQ42.1}
|\tilde{\Pi}_{2,1,j}|
\leq 2n^{1\slash 6}k^4\Big(\sum_{i=1}^{n-1} g_j(i)^2\Big)^{1\slash 2}\Big(\sum_{i=1}^{n-1}(g_j(i+1)-g_j(i))^2\Big)^{1\slash 2}
\leq Ck^6n^{-1\slash 12},
\end{equation}
\begin{eqnarray}\label{NEQ42.2}
|\tilde{\Pi}_{2,2,j}|&\leq& 2n^{1\slash 6}\sum_{i=1}^{\lfloor L n^{1\slash 3} \rfloor}|\Delta_i|g_j(i)^2+C n^{7\slash 6}\sum_{i=\lfloor L n^{1\slash 3}\rfloor +1}^{n}\exp\big(-c(i n^{-1\slash 3})^{3\slash 2}\big) \nonumber  \\
&\leq&  Cn^{-1\slash 6}+C\exp(-c k^{30})\leq Cn^{-1\slash 6}.
\end{eqnarray}

\textbf{Part 2.2.2}

Now for any $m\in [4]$, let $\tilde{\Upsilon}_{m,j}$ obtained from $\Upsilon_{m,j}$ (as defined in \textbf{Parts 1.2.1 and 1.2.2}) by replacing $\hat{f}_j$ with $g_j$. 

We bound $\tilde{\Pi}_{1,1,j}$ as follows. By (\ref{NEQ37.11}), $\big(\sum_{l=1}^{\beta(n-i)}(W_{i,l}^2-1)\big)\slash \big(\beta\sqrt{n-i}\big)\leq k^4$ for any $i\in [n-1]$. Hence by (\ref{NEQ20.25}), (\ref{NEQ37.1}), and the estimates on $\{\xi_i\}_{i=1}^n$ on \textbf{Sub-step 2.1}, we have
\begin{eqnarray}\label{NEQ37.18}
&&  \bigg|n^{1\slash 6}\sqrt{\frac{2}{\beta}}\sum_{i=\lfloor L n^{1\slash 3}\rfloor +1}^n \xi_i g_j(i)^2+n^{1\slash 6}\sum_{i=\lfloor L n^{1\slash 3}\rfloor+1}^{n-1} \frac{\sum_{l=1}^{\beta(n-i)}(W_{i,l}^2-1)}{\beta\sqrt{n-i}} g_j(i)^2\bigg| \nonumber\\
&\leq&  C n^{1\slash 6} k^4 \sum_{i=\lfloor L n^{1\slash 3 }\rfloor +1}^n\exp(-c i^{3\slash 2} n^{-1\slash 2})\leq C\exp(-c k^{30})\leq C n^{-1\slash 3}.
\end{eqnarray}
\begin{eqnarray}
n^{1\slash 6}\sqrt{2\slash \beta}\sum_{i=1}^{\lfloor L n^{1\slash 3}\rfloor} \Big|\zeta_i-\frac{\sum_{l=1}^{\beta(n-i)}(W_{i,l}^2-1)}{\sqrt{2\beta(n-i)}}\Big|g_j(i)^2\leq Cn^{-1\slash 12}.
\end{eqnarray}
Hence we have
\begin{equation}\label{NEQ37.17}
    |\tilde{\Pi}_{1,1,j}-\tilde{\Upsilon}_{1,j}|\leq C n^{-1\slash 12}.
\end{equation}

Below, we let $\tilde{g}_j(i):=g_j(i)$ for $i\in \{1,2,\cdots,\lfloor L n^{1\slash 3}\rfloor\}$ and $\tilde{g}_j(i):=0$ for $i\in\{0,\lfloor L n^{1\slash 3} \rfloor +1,\cdots,n\}$. Following the argument in (\ref{NEQ22.7}), we can deduce that $ \tilde{\Upsilon}_{1,j}=-2n^{1\slash 3}\beta^{-1\slash 2}\sum_{i=1}^{\lfloor L n^{1\slash 3}\rfloor} B_{i n^{-1\slash 3}} (\tilde{g}_j(i)^2-\tilde{g}_j(i+1)^2)$. We also let $\tilde{\Upsilon}_{1,j}':=4\beta^{-1\slash 2}\int_{0}^{\infty}\hat{g}_j'(x) \hat{g}_j(x)B_x dx$ (note that $\hat{g}_j$ is compactly supported and $\hat{g}_j'$ exists a.e. on $[0,\infty)$). Note that
\begin{equation}
    \tilde{\Upsilon}_{1,j}'=4n^{1\slash 2}\beta^{-1\slash 2}\sum_{i=0}^{\lfloor L n^{1\slash 3}\rfloor }\int_{i n^{-1\slash 3}}^{(i+1) n^{-1\slash 3}}B_x(\tilde{g}_j(i+1)-\tilde{g}_j(i))\hat{g}_j(x) dx.
\end{equation}
Let 
\begin{equation*}
    \Gamma_{1,j}:=n^{1\slash 2}\sum_{i=0}^{\lfloor L n^{1\slash 3}\rfloor }\int_{i n^{-1\slash 3}}^{(i+1) n^{-1\slash 3}}(\tilde{g}_j(i+1)-\tilde{g}_j(i))(B_x-B_{i n^{-1\slash 3}})\hat{g}_j(x)dx,
\end{equation*}
\begin{eqnarray*}
    \Gamma_{2,j}&:=&n^{1\slash 2}\sum_{i=0}^{\lfloor L n^{1\slash 3}\rfloor }\int_{i n^{-1\slash 3}}^{(i+1) n^{-1\slash 3}}\Big(B_{i n^{-1\slash 3}}(\tilde{g}_j(i+1)-\tilde{g}_j(i))\\
    &&\quad\quad\quad\quad\quad\quad\quad\quad\quad \times(n^{1\slash 6}(\tilde{g}_j(i)+\tilde{g}_j(i+1))-2\hat{g}_j(x))\Big)dx.
\end{eqnarray*}
Then $|\tilde{\Upsilon}_{1,j}-\tilde{\Upsilon}'_{1,j}|=4\beta^{-1\slash 2}|-\Gamma_{1,j}+\Gamma_{2,j}\slash 2|\leq 4\beta^{-1\slash 2}(|\Gamma_{1,j}|+|\Gamma_{2,j}|)$.

Consider any $i\in\{0,1,\cdots,\lfloor L n^{1\slash 3}\rfloor\}$. For any $x\in [i n^{-1\slash 3},(i+1) n^{-1\slash 3}]$, $|\hat{g}_j(x)| \leq  |\hat{g}_j(i n^{-1\slash 3})|+|\hat{g}_j((i+1)n^{-1\slash 3})|=n^{1\slash 6}(|\tilde{g}_j(i)|+|\tilde{g}_j(i+1)|)$; moreover,
\begin{equation*}
    \int_{i n^{-1\slash 3}}^{(i+1) n^{-1\slash 3}}|n^{1\slash 6}(\tilde{g}_j(i)+\tilde{g}_j(i+1))-2\hat{g}_j(x)|dx\leq |\tilde{g}_j(i+1)-\tilde{g}_j(i)|n^{-1\slash 6}.
\end{equation*}
By (\ref{NEQ37.1}), $|g_j(\lfloor L n^{1\slash 3}\rfloor +1)|\leq C\exp(-c L^{3\slash 2})$, which combined with (\ref{NEQ37.10}) gives
\begin{equation*}
    \sum_{i=0}^{\lfloor L n^{1\slash 3}\rfloor}(\tilde{g}_j(i+1)-\tilde{g}_j(i))^2\leq Ck^4 n^{-1\slash 2}.
\end{equation*}
Hence by (\ref{NEQ20.15}) and the Cauchy-Schwarz inequality,
\begin{eqnarray*}
    &&|\Gamma_{1,j}|\leq Cn^{1\slash 6}   k^2\sum_{i=0}^{\lfloor Ln^{1\slash 3}\rfloor}|\tilde{g}_j(i+1)-\tilde{g}_j(i)|(|\tilde{g}_j(i)|+|\tilde{g}_j(i+1)|)\nonumber\\
    &\leq& Cn^{1\slash 6} k^2\Big(\sum_{i=0}^{\lfloor L n^{1\slash 3}\rfloor}(\tilde{g}_j(i+1)-\tilde{g}_j(i))^2\Big)^{1\slash 2}\Big(\sum_{i=0}^{\lfloor L n^{1\slash 3}\rfloor}(|\tilde{g}_j(i)|+|\tilde{g}_j(i+1)|)^2\Big)^{1\slash 2}\nonumber\\
    &\leq& Ck^4 n^{-1\slash 12}\Big(\sum_{i=0}^{\lfloor L n^{1\slash 3}\rfloor}\tilde{g}_j(i)^2\Big)^{1\slash 2}\leq C k^4 n^{-1\slash 12},
\end{eqnarray*}
\begin{eqnarray*}
    |\Gamma_{2,j}|&\leq& Ck^{12}n^{1\slash 3}\sum_{i=0}^{\lfloor L n^{1\slash 3} \rfloor}(\tilde{g}_j(i+1)-\tilde{g}_j(i))^2\leq Ck^{16}n^{-1\slash 6}.
\end{eqnarray*}
Hence $|\tilde{\Upsilon}_{1,j}-4\beta^{-1\slash 2}\int_{0}^{\infty}\hat{g}_j'(x) \hat{g}_j(x)B_x dx|\leq C n^{-1\slash 16}$. Noting (\ref{NEQ37.17}), we have
\begin{equation}\label{NEQ42.3}
    \Big|\tilde{\Pi}_{1,1,j}-\frac{4}{\sqrt{\beta}}\int_{0}^{\infty}\hat{g}_j'(x) \hat{g}_j(x)B_x dx\Big|\leq C n^{-1\slash 16}. 
\end{equation}

\textbf{Part 2.2.3}

Finally, we bound $\tilde{\Pi}_{1,2,j}$. Note that $\tilde{\Pi}_{1,2,j}=\tilde{\Upsilon}_{2,j}+\tilde{\Upsilon}_{3,j}+\tilde{\Upsilon}_{4,j}$. 

We bound $\tilde{\Upsilon}_{2,j}$ as follows. By (\ref{NEQ37.1}), we have
\begin{eqnarray}\label{NEQ39.2}
 \Big| n^{-1\slash 3}\sum_{i=\lfloor L n^{1\slash 3}\rfloor +1}^{n-1} i g_j(i) g_j(i+1) \Big|&\leq & Cn\sum_{i=\lfloor L n^{1\slash 3}\rfloor +1}^{n-1}\exp(-c(i n^{-1\slash 3})^{3\slash 2})\nonumber\\
& \leq & C\exp(-c k^{30})\leq C n^{-1\slash 3},
\end{eqnarray}
\begin{eqnarray}\label{NEQ39.3}
   \int_{L'}^{\infty}x\hat{g}_j(x)^2dx & =&\int_{L'}^{L'+n^{-1\slash 3}}x\hat{g}_j(x)^2dx\leq (L'+n^{-1\slash 3})g_j(\lfloor L n^{1\slash 3}\rfloor)^2\nonumber\\
 &\leq& C\exp(-c k^{30})\leq C n^{-1\slash 3}.
\end{eqnarray}
Let 
\begin{equation*}
    \Gamma_{3,j}:=n^{-1\slash 3}\sum_{i=1}^{\lfloor L n^{1\slash 3}\rfloor}i g_j(i)(g_j(i+1)-g_j(i)),
\end{equation*}
\begin{equation*}
    \Gamma_{4,j}:=n^{-1\slash 3}\sum_{i=1}^{\lfloor L n^{1\slash 3} \rfloor}ig_j(i)^2-\int_{0}^{L'}x\hat{g}_j(x)^2 dx.
\end{equation*}
Note that for any $i\in [1,L n^{1\slash 3}]\cap\mathbb{Z}$ and any $x\in [(i-1)n^{-1\slash 3}, i n^{-1\slash 3}]$,
\begin{eqnarray}\label{NEQ39.1}
 &&|x\hat{g}_j(x)^2-i g_j(i)^2|\leq |x||n^{1\slash 6}
   g_j(i) -\hat{g}_j(x)|(2n^{1\slash 6}|g_j(i)|+|n^{1\slash 6}g_j(i)-\hat{g}_j(x)|)\nonumber\\
   &&\quad\quad\quad\quad\quad\quad\quad\quad\quad+n^{1\slash 3}|i n^{-1\slash 3}-x|g_j(i)^2\nonumber\\
   &&\leq  Ln^{1\slash 3} |\tilde{g}_j(i)-\tilde{g}_j(i-1)|(2|g_j(i)|+|\tilde{g}_j(i)-\tilde{g}_j(i-1)|)+g_j(i)^2.
\end{eqnarray}
By (\ref{NEQ37.10}), (\ref{NEQ39.1}), and the Cauchy-Schwarz inequality,
\begin{eqnarray*}
   &&\quad\quad |\Gamma_{3,j}|\leq L\sum_{i=1}^{\lfloor L n^{1\slash 3}\rfloor } |g_j(i)(g_j(i+1)-g_j(i))|\nonumber\\
  &\leq& L \Big(\sum_{i=1}^{\lfloor L n^{1\slash 3}\rfloor}g_j(i)^2\Big)^{1\slash 2}\Big(\sum_{i=1}^{\lfloor L n^{1\slash 3}\rfloor}(g_j(i+1)-g_j(i))^2\Big)^{1\slash 2}\leq C k^{22} n^{-1\slash 4},
\end{eqnarray*}
\begin{eqnarray*}
&& |\Gamma_{4,j}|\leq \sum_{i=1}^{\lfloor L n^{1\slash 3}\rfloor } \int_{(i-1) n^{-1\slash 3}}^{i n^{-1\slash 3}}|i g_j(i)^2-x\hat{g}_j(x)^2|dx\\
&\leq& n^{-1\slash 3}\sum_{i=1}^{\lfloor L n^{1\slash 3}  \rfloor}g_j(i)^2+L\sum_{i=1}^{\lfloor L n^{1\slash 3} \rfloor}(\tilde{g}_j(i)-\tilde{g}_j(i-1))^2\\
&& +2L \Big(\sum_{i=1}^{\lfloor L n^{1\slash 3} \rfloor}g_j(i)^2\Big)^{1\slash 2}\Big(\sum_{i=1}^{\lfloor L n^{1\slash 3}\rfloor}(\tilde{g}_j(i)-\tilde{g}_j(i-1))^2\Big)^{1\slash 2}\leq Ck^{22} n^{-1\slash 4}.
\end{eqnarray*}
Combining the above bounds on $\Gamma_{3,j}$ and $\Gamma_{4,j}$ with (\ref{NEQ39.2}) and (\ref{NEQ39.3}), we have
\begin{eqnarray}\label{NEQ41.1}
    \Big|\tilde{\Upsilon}_{2,j}+\int_{0}^{\infty}x\hat{g}_j(x)^2dx\Big|&=&\Big|\Gamma_{3,j}+\Gamma_{4,j}+n^{-1\slash 3}\sum_{i=\lfloor L n^{1\slash 3}\rfloor +1}^{n-1} i g_j(i) g_j(i+1) \nonumber\\
    &&-\int_{L'}^{\infty}x\hat{g}_j(x)^2dx\Big|\leq C n^{-1\slash 6}.
\end{eqnarray}

Now we bound $\tilde{\Upsilon}_{3,j}$. Following the argument in (\ref{NEQ39.2}), we have
\begin{equation}\label{NEQ39.4}
    \Big| n^{-1\slash 3}\sum_{i=\lfloor L n^{1\slash 3}\rfloor +1}^{n-1}\frac{\sqrt{n}-\sqrt{n-i}}{\sqrt{n}+\sqrt{n-i}} i g_j(i) g_j(i+1) \Big|\leq C\exp(-c k^{30})\leq C n^{-1\slash 3}.
\end{equation}
As $(\sqrt{n}-\sqrt{n-i})\slash (\sqrt{n}+\sqrt{n-i})=i (\sqrt{n}+\sqrt{n-i})^{-2}\leq i\slash n$ for any $i\in [n]$, we have
\begin{eqnarray}\label{NEQ39.5}
 &&   \bigg| n^{-1\slash 3}\sum_{i= 1}^{\lfloor L n^{1\slash 3}\rfloor}\frac{\sqrt{n}-\sqrt{n-i}}{\sqrt{n}+\sqrt{n-i}} i g_j(i) g_j(i+1) \bigg|\nonumber\\
 &\leq& n^{-4\slash 3}\sum_{i=1}^{\lfloor L n^{1\slash 3}\rfloor} i^2 |g_j(i)||g_j(i+1)|\nonumber\\
 &\leq& \frac{1}{2}L^2 n^{-2\slash 3}\sum_{i=1}^{\lfloor L n^{1\slash 3}\rfloor}(g_j(i)^2+g_j(i+1)^2)\nonumber\\
 &\leq& C k^{40}n^{-2\slash 3}\leq C n^{-1\slash 3}.
\end{eqnarray}
Combining (\ref{NEQ39.4}) and (\ref{NEQ39.5}) gives
\begin{equation}\label{NEQ41.2}
    |\tilde{\Upsilon}_{3,j}|\leq C n^{-1\slash 3}. 
\end{equation}

Below we bound $\tilde{\Upsilon}_{4,j}$. We note that
\begin{equation}
    \int_{0}^{\infty}(\hat{g}_j'(x))^2 dx=n^{2\slash 3}\Big(g_j(1)^2+\sum_{i=1}^{\lfloor L n^{1\slash 3}\rfloor -1}(g_j(i+1)-g_j(i))^2+g_j(\lfloor L n^{1\slash 3} \rfloor)^2\Big).
\end{equation}
Hence by (\ref{NEQ37.1}) and the AM-GM inequality, we have
\begin{eqnarray}\label{NEQ41.3}
 &&   \Big|\tilde{\Upsilon}_{4,j}+\int_{0}^{\infty}(\hat{g}_j'(x))^2dx\Big|\nonumber\\
 &=&n^{2\slash 3}\Big|\sum_{i=\lfloor L n^{1\slash 3}\rfloor}^{n-1}(g_j(i+1)-g_j(i))^2+g_j(n)^2-g_j(\lfloor L n^{1\slash 3}\rfloor)^2\Big|\nonumber\\
 &\leq& 8 n^{2\slash 3}\sum_{i=\lfloor L n^{1\slash 3}\rfloor}^n g_j(i)^2\leq Cn^{2\slash 3}\sum_{i=\lfloor L n^{1\slash 3}\rfloor}^n\exp(-c(i n^{-1\slash 3})^{3\slash 2})\nonumber\\
 &\leq& C\exp(-c k^{30})\leq C n^{-1\slash 3}.
\end{eqnarray}

Combining (\ref{NEQ41.1}), (\ref{NEQ41.2}), and (\ref{NEQ41.3}), we obtain that
\begin{equation}\label{NEQ42.4}
    \Big|\tilde{\Pi}_{1,2,j}+\int_{0}^{\infty}x\hat{g}_j(x)^2dx+\int_0^{\infty}(\hat{g}_j'(x))^2dx\Big|\leq Cn^{-1\slash 6}. 
\end{equation}

\textbf{Part 2.2.4}

Recall the definition of the bilinear form $\langle \cdot,\cdot \rangle_{H_{\beta}}$ from (\ref{Eqnn6}) in Section \ref{Sect.n}. Note that for any $j\in[k]$,
\begin{equation}
    \langle \hat{g}_j, \hat{g}_j\rangle_{H_{\beta}}=\int_{0}^{\infty}x\hat{g}_j(x)^2dx+\int_0^{\infty}(\hat{g}_j'(x))^2dx-\frac{4}{\sqrt{\beta}}\int_{0}^{\infty}\hat{g}_j'(x) \hat{g}_j(x)B_x dx.
\end{equation}
We denote by $\langle \cdot,\cdot\rangle_{L^2}$ the $L^2$ inner product on $[0,\infty)$. By (\ref{NEQ42.1}), (\ref{NEQ42.2}), (\ref{NEQ42.3}), and (\ref{NEQ42.4}), with probability at least $1-C\exp(-c k^3)$, for any $j\in[k]$, 
\begin{equation}
    |(g_j)^{\intercal}\hat{H}_{\beta,n}g_j+ \langle \hat{g}_j, \hat{g}_j\rangle_{H_{\beta}} |\leq Cn^{-1\slash 16}.
\end{equation}

Similarly, we can deduce that with probability at least $1-C\exp(-c k^3)$, the following holds: For any $j\in[k]$, $|\langle \hat{g}_j,\hat{g}_j\rangle_{L^2}-1|\leq Cn^{-1\slash 16}$; for any $j,j'\in[k]$ such that $j\neq j'$, $|\langle \hat{g}_j,\hat{g}_{j'}\rangle_{L^2}|\leq Cn^{-1\slash 16}$ and $|\langle \hat{g}_j, \hat{g}_{j'}\rangle_{H_{\beta}}|\leq C n^{-1\slash 16}$. Following the argument in \textbf{Sub-step 1.3} and noting (\ref{mini}), we can deduce that with probability at least $1-C\exp(-c k^3)$, for any $j\in [k]$, $\tilde{\lambda}_j^{(n)}\leq a_j+C n^{-1\slash 24}$. This combined with the result from \textbf{Step 1} shows that with probability at least $1-C\exp(-c k^3)$, for any $j\in [k]$, $|\tilde{\lambda}_j^{(n)} -a_j|  \leq   C n^{-1\slash 24}$.

\end{proof}

\section{Edge rigidity for the Gaussian $\beta$-ensemble}\label{Sect.4}

By adapting the proof of \cite[Proposition 6.2]{BEY}, we obtain the following edge rigidity result for the Gaussian $\beta$-ensemble. Note that the scaling of the Gaussian $\beta$-ensemble in this article (as given in Section \ref{Sect.n2}) is different from that in \cite{BEY}.

\begin{proposition}\label{Edge}
Assume that $\beta>0$ and $n\in\mathbb{N}_{+}$. Recall from Section \ref{Sect.n2} that the eigenvalues of $H_{\beta,n}$---the Gaussian $\beta$-ensemble of size $n$---are denoted by $\lambda_1^{(n)}>\lambda_2^{(n)}> \cdots>\lambda_n^{(n)}$. For any $\epsilon\in (0,1)$ and $a_0\in (0,1]$, there exist positive constants $N_0,C,c$ that only depend on $\beta,\epsilon,a_0$, such that for any $n\geq N_0$, $a\in [a_0,1]$, and $k\in [n]$, we have
\begin{equation}
    \mathbb{P}(|n^{-1\slash 2}\lambda_k^{(n)}-\gamma_{k}^{(n)}|\geq n^{-2\slash 3+a}(\hat{k})^{-1\slash 3})\leq C\exp(-cn^{a(1-\epsilon)\slash 2}),
\end{equation}
where $\hat{k}=\min\{n+1-k,k\}$ and $\{\gamma_j^{(n)}\}_{j=1}^n$ is defined in (\ref{classical}).
\end{proposition}
\begin{proof}
For any $j\in [n]$, we define $\hat{\gamma}_j^{(n)}$ by $\int_{\hat{\gamma}_j^{(n)}}^{\infty}\rho_1^{(n)}(x)dx=j\slash n$, where $\rho_1^{(n)}$ is the averaged density of the empirical spectral measure (defined as in \cite[Page 266]{BEY}). From large deviation results on the empirical spectral measure and extreme eigenvalues of $\beta$-ensembles (see, for example, \cite[Section 2.6]{AGZ} and equation (6.2) of \cite{BEY}), we have that for any fixed $\delta>0$, for some positive constants $C,c$ and any $k\in [n]$,
\begin{equation}
    \mathbb{P}\big(\big|n^{-1\slash 2}\lambda_k^{(n)}-\gamma_{k}^{(n)}\big|\geq \delta\big)\leq C\exp(-cn).
\end{equation}
Thus the conclusion holds for scale $a=1$.

In order to prove the result for all $a\in [a_0,1]$, we adapt a bootstrap argument in \cite{BEY}. First we use the accuracy result in the proof of \cite[Theorem 2.4]{BEY}. That is, for any $a\in [a_0,1]$, there is a constant $N_0$ such that for any $n>N_0$ and any $k\in [n]$,
\begin{equation}
    \big|\hat{\gamma}_k^{(n)}-
    \gamma_k^{(n)}\big|\leq n^{-2\slash 3+a}(\hat{k})^{-1\slash 3}.
\end{equation}
As in the proof of \cite[Theorem 2.4]{BEY}, we just need to prove a concentration result for all $a\in[a_0,1]$, that is, we need to show
\begin{equation}\label{Eq3}
    \mathbb{P}\big(\big|n^{-1\slash 2}\lambda_k^{(n)}-\mathbb{E}[n^{-1\slash 2}\lambda_k^{(n)}]\big|\geq n^{-2\slash 3+a}(\hat{k})^{-1\slash 3}\big)\leq C\exp\big(-cn^{a(1-\epsilon)\slash 2}\big).
\end{equation}

To show this concentration result, we adapt the bootstrap argument for \cite[Proposition 6.2]{BEY}. Below we state the adjustment that we need to make to get the conclusion. From the proof of \cite[Lemma 6.10]{BEY}, we can conclude that it suffices to prove the result for the convexified measure $\nu$ defined in \cite[Definition 6.8]{BEY} with the right-hand side of (\ref{Eq3}) replaced by $C\exp(-cn^{a(1-\epsilon)})$. We just need to apply the bootstrap argument to the convexified measure $\nu$. Fix $\epsilon\in(0,1\slash 10)$. Suppose that concentration at some scale $a\in [a_0,1]$ holds. Now we show for any $\epsilon'\in (0,\epsilon\slash 10)$, concentration at scale $(1-\epsilon')a$ holds. To avoid confusion, we denote by $\epsilon_0$ the $\epsilon$ that is used in the proof of \cite[Lemma 6.17]{BEY}, and here we take $\epsilon_0=a\epsilon\slash 2$. For $M=n^{a-\epsilon_0}$, from the proof of \cite[Lemma 6.16]{BEY}, we see that
\begin{equation*}
    S_{\omega^{(k,M)}}\big(d\nu/d\omega^{(k,M)}\big)\leq C\exp\big(-cn^{a(1-\epsilon)}\big).
\end{equation*}
In the proof of \cite[Lemma 6.18]{BEY}, replace $a$ by $a-\epsilon_0$, and take $r$ sufficiently small. This combined with the proof of \cite[Proposition 6.2]{BEY} shows that concentration at scale $a(1-\epsilon')$ holds.
\end{proof}

\section{Proof of large deviation bounds for the Airy point process}\label{Sect.5}

In this section, we finish the proof of the large deviation bounds for the Airy point process as stated in Theorems \ref{Main1} and \ref{Main2}. The strategy is to first use Theorem \ref{Main3}---which provides an approximation of the Airy point process using the Gaussian $\beta$-ensemble---to transfer the required bounds to large deviation bounds for the GUE, and then combine the estimates for the Airy point process in Section \ref{Sect.2} and the edge rigidity result in Section \ref{Sect.4} to obtain the desired conclusions. Throughout this section, we specialize $\beta=2$ and assume the notations in Sections \ref{Sect.n1} and \ref{Sect.n2} with $\beta=2$ (our argument applies to general $\beta\in\mathbb{N}_{+}$ as well; we specialize $\beta=2$ for ease of presentation). Unless otherwise specified, we use $C$ and $c$ to denote positive absolute constants, and the values of these constants may change from line to line.

Below we introduce some definitions that will be used for the remainder of this section. We define
\begin{equation}\label{Definitiony}
    \mathcal{Y}:=\text{ the set of finite signed Borel measures on }\mathbb{R}.
\end{equation}
For any $R>0$ and $\mu,\nu\in\mathcal{Y}$, we define
\begin{equation}
\tilde{d}_R(\mu,\nu):=\sup\limits_{\substack{f:\mathbb{R}\rightarrow\mathbb{R}:\|f\|_{BL}\leq 1,\\\supp(f)\subseteq [-R,R]}} \bigg|\int fd\mu-\int fd\nu \bigg|.
\end{equation}
Note that for any $\mu,\nu\in\mathcal{Y}$, $\tilde{d}_R(\mu,\nu)\leq d_R\big(\mu|_{[-R,R]},\nu|_{[-R,R]}\big)$. For the remainder of this section, we extend the definition of $d_R(\mu,\nu)$ to $\mu,\nu\in\mathcal{Y}$ by setting $d_R(\mu,\nu):=d_R(\mu|_{[-R,R]},\nu|_{[-R,R]})$.

\subsection{Proof of Theorem \ref{Main2}}
\label{sec:5.2}

In this subsection, we give the proof of Theorem \ref{Main2}. We start by introducing some definitions related to the GUE (which corresponds to the Gaussian $\beta$-ensemble with $\beta=2$). Throughout this section, we assume that $k,n\in\mathbb{N}_{+}$, and denote $\Delta:=\{(x,x):x\in\mathbb{R}\}$.

\begin{definition}\label{mu}
Let $\lambda_1^{(n)}>\lambda_2^{(n)}>\cdots>\lambda_n^{(n)}$ be the ordered eigenvalues of the GUE of size $n$. For any $i\in [n]$, we define $\tilde{\lambda}_i^{(n)}:=n^{1\slash 6}(\lambda_i^{(n)}-2\sqrt{n})$ and $b_i:=\left(n\slash k\right)^{2\slash 3}(2-n^{-1\slash 2}\lambda_i^{(n)})$; note that $k^{2\slash 3} b_i=-\tilde{\lambda}_i^{(n)}$. We also define $\mu_0$ to be the finite Borel measure on $\mathbb{R}$ with density 
\begin{equation*}
    \frac{1}{\pi} \sqrt{x}\Big(1-\frac{1}{4}(k\slash n)^{2\slash 3}x\Big)^{1\slash 2}\mathbbm{1}_{[0,4(n\slash k)^{2\slash 3}]}(x),\quad\forall x\in\mathbb{R}
\end{equation*}
with respect to the Lebesgue measure,
and let $\mu_{n,k}:=k^{-1}\sum_{i=1}^n\delta_{b_i}-\mu_0$.
\end{definition}
\begin{definition}\label{xi}
Let 
\begin{equation*}
    \rho_0(x)=\frac{1}{2\pi}\sqrt{4-x^2}\mathbbm{1}_{[-2, 2]}(x), \quad \forall x\in\mathbb{R}
\end{equation*}
be the density of the semicircle law. For any $x\in\mathbb{R}$, we define 
\begin{equation*}
    \xi(x):=-\int_{\mathbb{R}}  \log(|x-y|)\rho_0(y)dy+\frac{1}{4}x^2-\frac{1}{2},\quad \tilde{\xi}(x):=\frac{n}{k} \xi\big(2-(k\slash n)^{2\slash 3}x\big).
\end{equation*}
For any $\mu,\mu_1,\mu_2\in \mathcal{Y}$ (recall (\ref{Definitiony})) that are compactly supported, we define 
\begin{equation*}
    J(\mu):=-\int_{\mathbb{R}^2\backslash\Delta}\log(|x-y|)d\mu(x)d\mu(y),
\end{equation*}
\begin{equation*}
    J_0(\mu):=-\int_{\mathbb{R}^2\backslash\Delta}\log(|x-y|)d\mu(x)d\mu(y)+2\int_{\mathbb{R}}\tilde{\xi}(x)d\mu(x),
\end{equation*}
\begin{equation*}
    J(\mu_1,\mu_2):=-\int_{\mathbb{R}^2\backslash\Delta}\log(|x-y|)d\mu_1(x)d\mu_2(y).
\end{equation*}
\end{definition}

\begin{lemma}\label{L7.1}
The function $\xi(x)$ defined in Definition \ref{xi} satisfies the following properties:
\begin{enumerate}
     \item [(a)] For any $x\in\mathbb{R}$, $\xi(x)=\xi(-x)$.
    \item[(b)] For any $x\in [-2,2]$, $\xi(x)=0$.
    \item [(c)] For any $x>2$, $\xi'(x)=\sqrt{x^2-4}\slash 2$.
    \item [(d)] For any $x>2$, $2\left(x-2\right)^{3\slash 2}\slash 3\leq \xi(x)\leq (x+2)^{1\slash 2}(x-2)^{3\slash 2}\slash 3$.
\end{enumerate}
\end{lemma}
\begin{remark}
From part (d), we have $\lim\limits_{x\rightarrow 2^{+}}\xi(x)\slash (x-2)^{3\slash 2}=2\slash 3$.
\end{remark}

\begin{proof}

We first show part (a). By a change of variables, for any $x\in\mathbb{R}$,
\begin{eqnarray*}
   \xi(-x) &=& -\frac{1}{2\pi}\int_{-2}^2 \log(|-x-y|)\sqrt{4-y^2}dy+\frac{1}{4}x^2-\frac{1}{2}\\
   &=&  -\frac{1}{2\pi}\int_{-2}^2 \log(|x-y|)\sqrt{4-y^2}dy+\frac{1}{4}x^2-\frac{1}{2} = \xi(x).
\end{eqnarray*}

Now we show parts (b) and (c). For any $x\in\mathbb{R}$, we define 
\begin{equation*}
\rho_1(x):=\frac{1}{\pi}\sqrt{2-x^2}\mathbbm{1}_{[-\sqrt{2},\sqrt{2}]}(x),
\end{equation*}
\begin{equation*}
    \phi(x):=\int_{\mathbb{R}}  \log(|x-y|)\rho_0(y)dy,\quad  \psi(x):=\int_{\mathbb{R}} \log(|x-y|)\rho_1(y)dy.
\end{equation*}
By a change of variables, for any $x\in\mathbb{R}$, $\phi(x)=\log(2)\slash 2+\psi(x\slash \sqrt{2})$. By \cite[Lemma 2.7]{AG}, for any $x\in [-\sqrt{2},\sqrt{2}]$, $\psi(x)=x^2\slash 2-(\log(2)+1)\slash 2$. Hence for any $x\in [-2,2]$, $\phi(x)=x^2\slash 4-1\slash 2$ and $\xi(x)=0$. This establishes part (b). For any $x>2$, we have
\begin{equation}\label{Eqn1}
    \xi'(x)=-\frac{1}{2\pi}\int_{-2}^2\frac{\sqrt{4-y^2}}{x-y} dy+\frac{1}{2}x=-\frac{1}{\sqrt{2}\pi}\int_{-\sqrt{2}}^{\sqrt{2}}\frac{\sqrt{2-y^2}}{x\slash \sqrt{2}-y}dy+\frac{1}{2}x.
\end{equation}
By the discussion after equation (18) in \cite{AG}, for any $x>\sqrt{2}$,
\begin{equation}\label{Eqne1}
    \frac{1}{\pi}\int_{-\sqrt{2}}^{\sqrt{2}}\frac{\sqrt{2-y^2}}{x-y}dy=x-\sqrt{x^2-2}.
\end{equation}
Plugging (\ref{Eqne1}) into (\ref{Eqn1}), we obtain that $\xi'(x)=\sqrt{x^2-4}\slash 2$ for any $x>2$. This establishes part (c).

Finally, we show part (d). For any $x>2$, by parts (b) and (c), we have
\begin{equation}\label{Eqn2}
    \xi(x)=\xi(2)+\int_{2}^{x}\xi'(t)dt=\int_{2}^x\frac{\sqrt{t+2}\sqrt{t-2}}{2}dt.
\end{equation}
For any $2\leq t \leq x$, $2\leq\sqrt{t+2}\leq \sqrt{x+2}$. Hence by (\ref{Eqn2}), we obtain that
\begin{equation*}
    \xi(x)\geq \int_{2}^x\sqrt{t-2}dt=\frac{2}{3}(x-2)^{3\slash 2}, 
\end{equation*}
\begin{equation*}
    \xi(x)\leq \frac{\sqrt{x+2}}{2}\int_{2}^x\sqrt{t-2}dt=\frac{\sqrt{x+2}}{3}(x-2)^{3\slash 2}.
\end{equation*}
This establishes part (d).

\end{proof}

\begin{lemma}\label{gamma}
Recall the definitions of $\{\gamma_j\}_{j=1}^{\infty}$ and $\{\gamma_j^{(n)}\}_{j=1}^n$ from Sections \ref{Sect.n1} and \ref{Sect.n2}. For any $i\in [n]$ such that $i\leq n\slash 2$, we have
\begin{eqnarray}\label{estimate3}
&& \max\Big\{2- \Big(\frac{3\pi i}{\sqrt{2} n}\Big)^{2\slash 3},2-\Big(\frac{3\pi i}{2n}\Big)^{2\slash 3}\Big(1-\frac{1}{4}\Big(\frac{3\pi i}{\sqrt{2}n}\Big)^{2\slash 3}\Big)^{-1\slash 3}\Big\}\nonumber\\
&&   \leq \gamma_i^{(n)}\leq 2- \Big(\frac{3\pi i}{2n}\Big)^{2\slash 3}.
\end{eqnarray}
For any $e_0\in (0,1\slash 4)$, there exist positive constants $C_1,C_2$ that only depend on $e_0$, such that for any $n\geq C_1$ and any $i\in [n]$ satisfying $n^{e_0}\leq i\leq n^{1\slash 4}$, we have
\begin{equation}\label{estimate4}
\gamma_i\leq (2-\gamma_i^{(n)})n^{2\slash 3}< \min\big\{\gamma_i+C_2 i ^{2\slash 3}\big((i\slash n)^{2\slash 3}+i^{-1}\big),\gamma_{i+1}\big\}.
\end{equation}
\end{lemma}

\begin{proof}
We first consider the case where $i\in [n]$ and $i\leq n\slash 2$. By (\ref{classical}),
\begin{equation*}
    \frac{1}{2\pi}\int_{\gamma_i^{(n)}}^2\sqrt{(4-x^2)_{+}}dx=\frac{i}{n}\leq \frac{1}{2},
\end{equation*}
which leads to $\gamma_i^{(n)}\in [0,2]$. Hence we have
\begin{eqnarray*}
  &&  \frac{i}{n}=\frac{1}{2\pi}\int_{\gamma_i^{(n)}}^2\sqrt{(4-x^2)_{+}}dx=\frac{1}{2\pi}\int_{\gamma_i^{(n)}}^2\sqrt{2+x}\sqrt{2-x}dx\\
  &\leq& \frac{1}{\pi}\int_{\gamma_i^{(n)}}^2\sqrt{2-x}dx=\frac{2}{3\pi}(2-\gamma_i^{(n)})^{3\slash 2},
\end{eqnarray*}
\begin{eqnarray*}
 && \frac{i}{n}=\frac{1}{2\pi}\int_{\gamma_i^{(n)}}^2\sqrt{2+x}\sqrt{2-x}dx\geq \frac{\sqrt{2}}{2\pi}\int_{\gamma_i^{(n)}}^2\sqrt{2-x}dx=\frac{\sqrt{2}}{3\pi}(2-\gamma_i^{(n)})^{3\slash 2}.
\end{eqnarray*}
Hence
\begin{equation}\label{estimate1}
 2-\Big(\frac{3\pi i}{\sqrt{2} n}\Big)^{   2\slash 3} \leq \gamma_i^{(n)}\leq 2-\Big(\frac{3\pi i}{2 n}\Big)^{2\slash 3}.
\end{equation}
Now note that
\begin{eqnarray*}
   && \frac{i}{n}=\frac{1}{2\pi}\int_{\gamma_i^{(n)}}^2\sqrt{2+x}\sqrt{2-x}dx\geq \frac{1}{2\pi}\sqrt{2+\gamma_i^{(n)}}\int_{\gamma_i^{(n)}}^2\sqrt{2-x}dx\\
   &\geq& \frac{1}{2\pi}\Big(4-\Big(\frac{3\pi i}{\sqrt{2} n}\Big)^{2\slash 3}\Big)^{1\slash 2} \int_{\gamma_i^{(n)}}^2 \sqrt{2-x} dx\\
   &=& \frac{1}{3\pi}\Big(4-\Big(\frac{3\pi i}{\sqrt{2} n}\Big)^{2\slash 3}\Big)^{1\slash 2} (2-\gamma_i^{(n)})^{3\slash 2},
\end{eqnarray*}
where we use (\ref{estimate1}) in the second inequality. Hence 
\begin{equation}\label{estimate2}
    \gamma_i^{(n)}\geq 2-\Big(\frac{3\pi i}{2n}\Big)^{2\slash 3} \Big(1-\frac{1}{4}\Big(\frac{3\pi i}{\sqrt{2} n}\Big)^{2\slash 3}\Big)^{-1\slash 3}.
\end{equation}
Combining (\ref{estimate1}) and (\ref{estimate2}), we obtain (\ref{estimate3}).

Turning to the proof of (\ref{estimate4}), we 
let $C>0$ be the absolute constant appearing in Proposition \ref{Airyop}, and take $C_1=\max\{(4C)^{1\slash e_0},100^{1\slash e_0}\}$ and $C_2=10$. We consider any $n\geq C_1$ and $i\in [n]$ such that $n^{e_0}\leq i\leq n^{1\slash 4}$ throughout the rest of the proof. By Proposition \ref{Airyop}, $\gamma_i=\big(3\pi(i-1\slash 4+R(i))\slash 2\big)^{2\slash 3}$, where $|R(i)|\leq C i^{-1}$. As $i\geq n^{e_0}\geq C_1^{e_0}\geq 4C$, we have $|R(i)|\leq 1\slash 4$. Hence by (\ref{estimate3}), 
\begin{equation}\label{estimate5}
    \gamma_i\leq \Big(\frac{3\pi i}{2}\Big)^{2\slash 3}\leq (2-\gamma_i^{(n)})n^{2\slash 3}.
\end{equation}
Similarly, we can deduce that
\begin{equation}\label{QEA5}
    \gamma_{i+1}\geq \Big(\frac{3\pi (i+1\slash 2)}{2}\Big)^{2\slash 3}.
\end{equation}
As $n\geq C_1\geq 10$ and $i\leq n^{1\slash 2}$, we have
\begin{equation*}
    \frac{1}{4}\Big( \frac{3\pi i}{\sqrt{2} n}\Big)^{2\slash 3}\leq \frac{1}{4}\Big( \frac{3\pi }{\sqrt{2 n} }\Big)^{2\slash 3}\leq \frac{1}{2}.
\end{equation*}
By (\ref{estimate3}) and the fact that $(1-x)^{-1\slash 3}\leq (1-x)^{-1}\leq 1+2x$ for any $x\in [0,1\slash 2]$,
\begin{eqnarray}\label{estimate6}
    (2-\gamma_i^{(n)}) n^{2\slash 3} &\leq & \Big(\frac{3\pi i}{2}\Big)^{2\slash 3}\Big(1-\frac{1}{4}\Big(\frac{3\pi i}{\sqrt{2} n}\Big)^{2\slash 3}\Big)^{-1\slash 3}\nonumber\\
    &\leq& \Big(\frac{3\pi i}{2}\Big)^{2\slash 3}\Big( 1+\frac{1}{2} \Big(\frac{3\pi i}{\sqrt{2} n}\Big)^{2\slash 3}\Big).
\end{eqnarray}
Note that
\begin{equation}\label{estimate7}
    \gamma_i\geq \Big(\frac{3\pi}{2}\Big(i-\frac{1}{2}\Big)\Big)^{2\slash 3}\geq \Big(\frac{3\pi i}{2}\Big)^{2\slash 3}\Big(1-\frac{1}{2 i}\Big).
\end{equation}
Combining (\ref{estimate6}) and (\ref{estimate7}), we obtain that
\begin{eqnarray}\label{estimate8}
    (2-\gamma_i^{(n)})n^{2\slash 3} &\leq& \gamma_i+\frac{1}{2}\Big(\frac{3\pi i}{2}\Big)^{2\slash 3} \Big(\frac{1}{i}+\Big(\frac{3\pi i}{\sqrt{2} n}\Big)^{2\slash 3}\Big)\nonumber\\
    &<& \gamma_i+C_2 i^{2\slash 3}\big((i\slash n)^{2\slash 3}+i^{-1}\big). 
\end{eqnarray}
Note that $i\geq n^{e_0}\geq C_1^{e_0}\geq 100$. Combining this with (\ref{QEA5})-(\ref{estimate6}) and the facts that $i\leq n^{1\slash 4}$ and $1+x\leq (1+2x)^{2\slash 3}$ for any $x\in [0,1\slash 4]$, we obtain that
\begin{eqnarray}\label{QEA8}
    (2-\gamma_i^{(n)})n^{2\slash 3}&\leq& \Big(\frac{3\pi i}{2}\Big)^{2\slash 3}(1+2i^{-2})\leq \Big(\frac{3\pi i}{2}\Big)^{2\slash 3}(1+4 i^{-2})^{2\slash 3} \nonumber\\
    &<& \Big (\frac{3\pi (i+1\slash 2)}{2}\Big)^{2\slash 3}\leq \gamma_{i+1}.
\end{eqnarray}
Combining (\ref{estimate5}), (\ref{estimate8}), and (\ref{QEA8}), we obtain (\ref{estimate4}).

\end{proof}

\begin{lemma}\label{A}
Assume that $n^{e_1}\leq k\leq n^{e_2}$, where $e_1,e_2$ are fixed positive constants satisfying $e_1\leq e_2<1\slash 10000$. For any $R\geq 1$, there exist positive constants $C,c$ (that only depend on $e_1,e_2$) and a positive constant $K_R$ (that only depends on $R,e_1,e_2$), such that for any $n\in\mathbb{N}_{+}$ with $n\geq K_R$, we have 
\begin{equation}\label{EQR1.1}
\mathbb{P}(d_R(\mu_{n,k}|_{[-R,R]},0)> C R k^{-1\slash 4})\leq C\exp(-ck^{1\slash 8}),
\end{equation}
where $\mu_{n,k}$ is defined in Definition \ref{mu}.
\end{lemma}
\begin{proof}

We decompose $\mu_{n,k}$ as $\mu_{n,k}=\mu_1+\mu_2$, where
\begin{equation*}
\mu_1:=k^{-1}\sum_{i=1}^n\big(\delta_{b_i}-\delta_{(n\slash k)^{2\slash 3}(2-\gamma_i^{(n)})}\big), \quad  \mu_2:=k^{-1}\sum_{i=1}^n\delta_{(n\slash k)^{2\slash 3}(2-\gamma_i^{(n)})}-\mu_0.
\end{equation*}
Throughout the rest of the proof, when we say ``for $n$ sufficiently large'', we mean that there exists a positive constant $K'_R$ that only depends on $R,e_1,e_2$, such that when $n\geq K'_R$, the corresponding property holds. In this proof, we use $C$ and $c$ to denote positive constants that only depend on $e_1,e_2$; the values of these constants may change from line to line.

We take $\eta=15$ and denote by $C_0$ the constant $C_2$ in the statement of Theorem \ref{Main3}. We couple the stochastic Airy operator $H_{\beta}$ with $\beta=2$ with the GUE of size $n$ (i.e., $H_{\beta,n}$ with $\beta=2$) as in Theorem \ref{Main3}. We also define the following events (recall (\ref{Eq2.14})):
\begin{itemize}
    \item $\mathcal{A}_1$: the event that $N(k^{2\slash 3}(R+1))\leq \eta (R+1)^{3\slash 2}k$;
    \item $\mathcal{A}_2$: the event that $|(2-n^{-1\slash 2}\lambda_i^{(n)})n^{2\slash 3}-\lambda_i|\leq C_0 n^{-1\slash 24}$ for every\\ $i\in [1,\min\{\eta(R+1)^{3\slash 2}k+1,n\}]\cap\mathbb{Z}$;
    \item $\mathcal{A}_3$: the event that $|n^{-1\slash 2}\lambda_i^{(n)}-\gamma_i^{(n)}|\leq n^{-2\slash 3}k^{1\slash 2}(\min\{i,n+1-i\})^{-1\slash 3}$ for every $i\in [n]$.
\end{itemize}
By Proposition \ref{PropAiry1} and Theorem \ref{Main3}, for $n$ sufficiently large,
\begin{equation}\label{EQN1.1}
    \mathbb{P}(\mathcal{A}_1^c)\leq C\exp(-c(R+1)^3 k^2), \quad \mathbb{P}(\mathcal{A}_2^c)\leq C\exp(-ck^3).
\end{equation}
In Proposition \ref{Edge}, for $n$ sufficiently large, we take $\epsilon=1\slash 2$, $a=\log{k}\slash (2\log{n})$, and obtain that for any $i\in [n]$,
\begin{equation*}
\mathbb{P}(|n^{-1\slash 2}\lambda_i^{(n)}-\gamma_i^{(n)}|\geq n^{-2\slash 3}k^{1\slash 2}(\min\{i,n+1-i\})^{-1\slash 3})\leq C \exp(-ck^{1\slash 8}).
\end{equation*}
By the union bound, we have $\mathbb{P}(\mathcal{A}_3^c)\leq Cn\exp(-c k^{1\slash 8})\leq C\exp(-c k^{1\slash 8})$, which combined with (\ref{EQN1.1}) gives
\begin{equation}\label{EQW1.1}
    \mathbb{P}((\mathcal{A}_1\cap\mathcal{A}_2\cap\mathcal{A}_3)^c)\leq C\exp(-ck^{1\slash 8}).
\end{equation}

In the rest of the proof, we assume that the event $\mathcal{A}_1\cap\mathcal{A}_2\cap\mathcal{A}_3$ holds. We also denote $\bar{b}_i:=(n\slash k)^{2\slash 3}(2-\gamma_i^{(n)})$ for each $i\in [n]$.

Let $i_0:=\lfloor \eta(R+1)^{3\slash 2}k \rfloor+1$. Then, for $n$ sufficiently large, we have
\begin{equation*}
    (2-n^{-1\slash 2}\lambda_{i_0}^{(n)})n^{2\slash 3}\geq \lambda_{i_0}-Cn^{-1\slash 24}\geq k^{2\slash 3}(R+1)-Cn^{-1\slash 24}> k^{2\slash 3}R,
\end{equation*}
which leads to $\#\{i\in [n]:b_i\leq R\}\leq i_0\leq  2\eta(R+1)^{3\slash 2}k$. As the event $\mathcal{A}_3$ holds, for $n$ sufficiently large,
\begin{eqnarray}\label{EQN10.1}
  \sum_{\substack{i\in [n]:b_i\leq R,\\\bar{b}_i\leq R}}|b_i-\bar{b}_i|\leq \sum_{\substack{i\in [n]:\\i\leq 2\eta(R+1)^{3\slash 2}k}}(n\slash k)^{2\slash 3}|n^{-1\slash 2}\lambda_i^{(n)}-\gamma_i^{(n)}|\leq CR k^{1\slash 2}.
\end{eqnarray}

We let $I_1:=\{i\in [n]:(2-n^{-1\slash 2}\lambda_i^{(n)})(n\slash k)^{2\slash 3}\leq R, (2-\gamma_i^{(n)})(n\slash k)^{2\slash 3}>R\}$ and $I_2:=\{i\in[n]:(2-\gamma_i^{(n)})(n\slash k)^{2\slash 3}\leq R, (2-n^{-1\slash 2}\lambda_i^{(n)})(n\slash k)^{2\slash 3}>R\}$. We also let $t_1:=\# I_1$ and $t_2:=\# I_2$. 

Suppose that $t_1\geq 1$. Let $i_1$ be the smallest $i\in [n]$ such that \begin{equation}\label{EQN1.3}
    (2-\gamma_i^{(n)})(n\slash k)^{2\slash 3}>R.
\end{equation}
By the definition of $I_1$, $i_1+t_1-1\leq n$ and $(2-n^{-1\slash 2}\lambda_{i_1+t_1-1}^{(n)})(n\slash k)^{2\slash 3}\leq R$. This combined with (\ref{EQN1.3}) leads to $\gamma_{i_1}^{(n)}\leq n^{-1\slash 2}\lambda_{i_1+t_1-1}^{(n)}$. As the event $\mathcal{A}_3$ holds,
\begin{eqnarray}\label{EQN1.4}
&&\gamma_{i_1}^{(n)}-\gamma_{i_1+t_1-1}^{(n)}\leq n^{-1\slash 2}\lambda_{i_1+t_1-1}^{(n)}-\gamma_{i_1+t_1-1}^{(n)}\nonumber\\
&\leq& n^{-2\slash 3}k^{1\slash 2}(\min\{i_1+t_1-1,n-i_1-t_1+2\})^{-1\slash 3}.
\end{eqnarray}
For $n$ sufficiently large, by the definition of $I_1$ and Lemma \ref{gamma}, we have either $i_1=1$ or $(2-\gamma_{i_1-1}^{(n)})(n\slash k)^{2\slash 3}\leq R$, hence $i_1\leq n\slash 8$ and $\gamma_{i_1}^{(n)}\geq 1$; by (\ref{EQN1.4}), $\gamma_{i_1}^{(n)}-\gamma_{i_1+t_1-1}^{(n)}\leq n^{-2\slash 3} k^{1\slash 2}\leq 1\slash 2$, hence $\gamma_{i_1+t_1-1}^{(n)}\geq 1\slash 2$ and $i_1+t_1-1\leq n \slash 2$. Plugging this into (\ref{EQN1.4}), we obtain
\begin{equation}\label{EQN2.1}
    \gamma_{i_1}^{(n)}-\gamma_{i_1+t_1-1}^{(n)}\leq n^{-2\slash 3}k^{1\slash 2}(i_1+t_1-1)^{-1\slash 3}. 
\end{equation}
By (\ref{classical}), (\ref{EQN2.1}), and Lemma \ref{gamma}, for $n$ sufficiently large,
\begin{eqnarray*}
  &&  \frac{t_1-1}{n}=\frac{1}{2\pi}\int_{\gamma_{i_1+t_1-1}^{(n)}}^{\gamma_{i_1}^{(n)}}\sqrt{4-x^2}dx\leq \pi^{-1}(2-\gamma_{i_1+t_1-1}^{(n)})^{1\slash 2}(\gamma_{i_1}^{(n)}-\gamma_{i_1+t_1-1}^{(n)})\nonumber\\
  &&\leq C k^{1\slash 2}n^{-1}, \text{ hence } \# I_1=t_1\leq C k^{1\slash 2}.
\end{eqnarray*}

Now suppose that $t_2\geq 1$. Suppose that $i_2$ is the smallest $i\in [n]$ such that $(2-n^{-1\slash 2}\lambda_i^{(n)})(n\slash k)^{2\slash 3}>R$. From the definition of $I_2$, we can deduce that $i_2+t_2-1\leq n$ and
\begin{equation}\label{EQN2.3}
    (2-\gamma^{(n)}_{i_2+t_2-1})(n\slash k)^{2\slash 3}\leq R.
\end{equation}
Hence $n^{-1\slash 2}\lambda_{i_2}^{(n)}\leq \gamma_{i_2+t_2-1}^{(n)}$. For $n$ sufficiently large, by (\ref{EQN2.3}), $\gamma_{i_2+t_2-1}^{(n)}\geq 1$, hence $i_2\leq i_2+t_2-1\leq n\slash 2$; as the event $\mathcal{A}_3$ holds,
\begin{eqnarray}\label{EQN2.5}
 \gamma_{i_2}^{(n)}-\gamma_{i_2+t_2-1}^{(n)}\leq \gamma_{i_2}^{(n)}-n^{-1\slash 2}\lambda_{i_2}^{(n)}\leq n^{-2\slash 3}k^{1\slash 2}i_2^{-1\slash 3}.
\end{eqnarray}
By (\ref{classical}), (\ref{EQN2.5}), and Lemma \ref{gamma}, for $n$ sufficiently large,
\begin{eqnarray*}
  &&  \frac{t_2-1}{n}=\frac{1}{2\pi}\int_{\gamma_{i_2+t_2-1}^{(n)}}^{\gamma_{i_2}^{(n)}}\sqrt{4-x^2} dx \leq \pi^{-1}(2-\gamma_{i_2+t_2-1}^{(n)})^{1\slash 2}(\gamma_{i_2}^{(n)}-\gamma_{i_2+t_2-1}^{(n)})\nonumber\\
  &&\leq C k^{1\slash 2}n^{-1}\Big(1+\frac{t_2-1}{i_2}\Big)^{1\slash 3}\leq C k^{1\slash 2}n^{-1}t_2^{1\slash 3},\text{ hence } \# I_2=t_2\leq C k^{3\slash 4}.
\end{eqnarray*}

Throughout the rest of the proof, we assume that $n$ is sufficiently large.

We first bound $d_R(\mu_1|_{[-R,R]},0)$. Consider any function $\phi(x)$ on $[-R,R]$ with $\|\phi\|_{\infty},\|\phi\|_{Lip}\leq 1$. For any $i\in [n]$, $\bar{b}_i\geq 0$; if $b_i\leq -R$, then $|b_i-\bar{b}_i|\geq R\geq 1$. Hence by (\ref{EQN10.1}) and the above bounds on $\# I_1,\# I_2$,
\begin{eqnarray*}
  &&  \Big|\int_{[-R,R]}\phi d\mu_1\Big|=\Big| k^{-1}\sum_{\substack{i\in [n]: b_i\in [-R,R]}}\phi(b_i)-k^{-1}\sum_{\substack{i\in [n]:\bar{b}_i\in [-R,R]}}\phi(\bar{b}_i)\Big|\nonumber\\
  &\leq& k^{-1}\Big(\# I_1+\# I_2+\#\{i\in [n]:b_i<-R,\bar{b}_i\leq R\}+\sum_{\substack{i\in [n]:b_i\in [-R,R],\\\bar{b}_i\in [-R,R]}}|b_i-\bar{b}_i|\Big)\nonumber\\
  &\leq& k^{-1}\Big(\# I_1+\# I_2+\sum_{i\in [n]: b_i\leq R,\bar{b}_i\leq R}|b_i-\bar{b}_i|\Big)\leq C R k^{-1\slash 4}.
\end{eqnarray*}
Hence $d_R(\mu_1|_{[-R,R]},0)\leq CRk^{-1\slash 4}$.

Now we bound $d_R(\mu_2|_{[-R,R]},0)$. Consider any function $\phi(x)$ on $[-R,R]$ such that $\|\phi\|_{\infty}, \|\phi\|_{Lip}\leq 1 $. Let $\gamma_0:=0$ and $i_0'$ be the largest $i\in [n]$ such that $\bar{b}_i\leq R$. By Lemma \ref{gamma}, we have $\bar{b}_{i_0}\geq (3\pi i_0\slash(2 k))^{2\slash 3} >R$, hence $i_0'< i_0$. Hence by (\ref{classical}) and Lemma \ref{gamma},
\begin{eqnarray*}
&&\Big|\int_{[-R,R]} \phi d\mu_2\Big|=\Big|\frac{1}{k}\sum_{i=1}^{i_0'}\phi\big((n\slash k)^{2\slash 3}(2-\gamma_i^{(n)})\big)\\
&&  \quad\quad\quad\quad\quad\quad\quad\quad-\frac{n}{k}\int_{2-(k\slash n)^{2\slash 3}R}^2  \frac{\sqrt{4-t^2}}{2\pi}
\phi\big((n\slash k)^{2\slash 3}(2-t)\big)dt\Big|\\
&\leq & \frac{n}{k}\sum_{i=1}^{i_0'}\int_{\gamma_{i}^{(n)}}^{\gamma_{i-1}^{(n)}}\big|\phi\big((n\slash k)^{2\slash 3}(2-\gamma_i^{(n)})\big)-\phi\big((n\slash k)^{2\slash 3}(2-t)\big)\big|\frac{\sqrt{4-t^2}}{2\pi} dt\\
&& + \frac{n}{k}\int_{2-(k\slash n)^{2\slash 3}R}^{\gamma_{i_0'}^{(n)}} \frac{\sqrt{4-t^2}}{2\pi} \big|\phi\big((n\slash k)^{2\slash 3}(2-t)\big)\big|dt\\
&\leq& \frac{1}{k}\Big(\frac{n}{k}\Big)^{2\slash 3}\sum_{i=1}^{i_0}(\gamma_{i-1}^{(n)}-\gamma_i^{(n)})+\frac{n}{k}\int_{\gamma_{i_0'+1}^{(n)}}^{\gamma_{i_0'}^{(n)}}\frac{\sqrt{4-t^2}}{2\pi} dt\\
&=& \frac{1}{k}\Big(\frac{n}{k}\Big)^{2\slash 3}(2-\gamma_{i_0}^{(n)})+\frac{1}{k}\leq \frac{1}{k}\Big(\frac{n}{k}\Big)^{2\slash 3}\Big(\frac{3\pi i_0}{\sqrt{2} n}\Big)^{2\slash 3}+\frac{1}{k}\leq CR k^{-1}.
\end{eqnarray*}
Hence $d_R(\mu_2|_{[-R,R]},0)\leq CR k^{-1}$.

Therefore, $d_R(\mu_{n,k}|_{[-R,R]},0)\leq d_R(\mu_1|_{[-R,R]},0)+d_R(\mu_2|_{[-R,R]},0)\leq CR k^{-1\slash 4}$. We obtain (\ref{EQR1.1}) by noting (\ref{EQW1.1}).

\end{proof}

Throughout the rest of this section, we fix $R_1,R_2\in\mathbb{R}_{+}$ such that
\begin{equation}\label{cond}
     R_1\geq 10, \quad  R_2\geq 2R_1.
\end{equation}

\begin{definition}
Recall the definition of $\mu_{n,k}$ from Definition \ref{mu}. We let $\mu_1,\mu_2,\mu_2',\mu_3,\mu_3'$ be finite signed Borel measures on $\mathbb{R}$ such that for any $A\in\mathcal{B}_{\mathbb{R}}$,
\begin{eqnarray*}
  &  \mu_1(A)=\mu_{n,k}(A\cap [-R_1,R_1]),\quad \mu_2(A)=\mu_{n,k}(A\cap (R_1,R_2]),\nonumber\\
  & \mu'_{2}(A)=\mu_{n,k}(A\cap[-R_2,-R_1)),\quad \mu_3(A)=\mu_{n,k}(A\cap (R_2,\infty)),\nonumber\\
  & \mu'_3(A)=\mu_{n,k}(A\cap (-\infty,-R_2)).
\end{eqnarray*}
\end{definition}

\begin{lemma}\label{field}
Assume that $k,n\in\mathbb{N}_{+}$ and $n^{5\cdot 10^{-7}}\leq k\leq  n^{10^{-6}}$. Recall that we have fixed $R_1, R_2$ such that (\ref{cond}) holds. For any $x\in [-R_1,R_1]$, let
\begin{equation*}
    E(x):=\int_{(R_2,\infty)} \frac{1}{x-y}d\mu_3(y), \quad E'(x):=\int_{(-\infty,-R_2)} \frac{1}{x-y}d\mu'_3(y).
\end{equation*}
For any $M\geq 1$ and $C>0$, let 
\begin{equation*}
\mathcal{C}_{M,C}:=\big\{|E(x)|\leq CM R_2^{-1\slash 2}\text{ for all }x\in [-R_1, R_1]\big\},
\end{equation*}
\begin{equation*}
\mathcal{C}'_{M,C}:=\big\{|E'(x)|\leq CMR_2^{-5\slash 2}\text{ for all }x\in [-R_1,R_1]\big\},
\end{equation*}
\begin{equation*}
\mathcal{C}''_{M,C}:=\big\{\mu_{n,k}\left((-\infty,-R_2]\right)\leq CMR_2^{-3\slash 2}\big\}.
\end{equation*}
Then there exist positive absolute constants $C_1,C_2$, such that the following holds. For any $M\geq 1$, there exists a positive constant $K_{R_1,R_2,M}$ that only depends on $R_1,R_2,M$, such that when $k\geq K_{R_1,R_2,M}$, we have
\begin{equation}
\mathbb{P}((\mathcal{C}_{M,C_1})^c\cup(\mathcal{C}'_{M,C_1})^c\cup (\mathcal{C}''_{M,C_1})^c)\leq C_2\exp(-Mk^2).
\end{equation}
\end{lemma}
\begin{proof}

Throughout the proof, we assume that $k$ (hence $n$) is sufficiently large (depending on $R_1,R_2,M$). Using Theorem \ref{Main3}, we couple $\{b_i\}_{i=1}^n$ with $\{\lambda_i\}_{i=1}^{\infty}$, such that with probability at least $1-C\exp(-c k^3)$, for any $i\in [k^{50}]$,
\begin{equation}\label{EQA1}
    |k^{2\slash 3}b_i-\lambda_i|\leq C n^{-1\slash 24}.
\end{equation}

For any $x\in [-R_1,R_1]$ and $y\leq -R_2$, we have $|x-y|\geq R_2-R_1\geq R_2\slash 2$. Hence for any $x\in [-R_1,R_1]$,
\begin{equation}\label{EQA2}
  |E'(x)|\leq  \int_{(-\infty,-R_2)} \frac{1}{|x-y|}d\mu_3'(y)\leq 2R_2^{-1}\mu_{n,k}\left((-\infty,-R_2]\right).
\end{equation}
Arguing as in the proof of Proposition \ref{PropAiry1} and recalling (\ref{Eq2.14}), we can deduce that there exists a positive absolute constant $C'$, such that
\begin{eqnarray}
\mathbb{P}(N(-R_2 k^{2\slash 3}\slash 2)>C'M R_2^{-3\slash 2}k\slash 2) \leq C \exp(-M k^2). 
\end{eqnarray}
Now assume that $N(-R_2 k^{2\slash 3}\slash 2)\leq C'M R_2^{-3\slash 2}k\slash 2$ and take $l_0:=\lfloor C'M R_2^{-3\slash 2} k \rfloor$. As we have taken $k$ to be sufficiently large (depending on $R_1,R_2,M$), we have $l_0\in [k^{50}]$ and $\lambda_{l_0}\geq -R_2 k^{2\slash 3}\slash 2$. Assuming that (\ref{EQA1}) holds, we have
\begin{equation}\label{NN00}
    b_{l_0}\geq k^{-2\slash 3}(\lambda_{l_0}-C n^{-1\slash 24})\geq -R_2\slash 2-C n^{-1\slash 24}> -R_2,
\end{equation}
which leads to $\mu_{n,k}((-\infty,-R_2])< l_0\slash k\leq C'M R_2^{-3\slash 2}$. Hence by (\ref{EQA2}),
\begin{equation}\label{Q01}
    \mathbb{P}(\mu_{n,k}((-\infty,-R_2])\geq C' M R_2^{-3\slash 2})\leq C\exp(-Mk^2),
\end{equation}
\begin{equation}\label{Q02}
    \mathbb{P}(|E'(x)|\geq 2C' M R_2^{-5\slash 2}\text{ for some }x\in [-R_1,R_1])\leq C\exp(-Mk^2).
\end{equation}

In the following, we consider an arbitrary $x\in [-R_1,R_1]$ and bound $|E(x)|$. We denote $\gamma_i':=(2-\gamma_i^{(n)})(n\slash k)^{2\slash 3}$ for any $i\in \{0\}\cup [n]$. We have
\begin{equation}\label{ESA10.1}
|E(x)|\leq\Big|\int_{(R_2,\gamma'_{k^{48}})}\frac{1}{x-y}d\mu_3(y)\Big|
+\Big|\int_{[\gamma'_{k^{48}},\infty)}\frac{1}{x-y}d\mu_3(y)\Big|=: M_2+M_1.
\end{equation}
By Proposition \ref{Edge}, with probability at least $1-C\exp(-ck^3)$, 
\begin{equation}
    |n^{-1\slash 2}\lambda_{k^{48}}^{(n)}-\gamma_{k^{48}}^{(n)}|\leq n^{-2\slash 3}k^{-4}, \quad |n^{-1\slash 2}\lambda_{k^{48}+1}^{(n)}-\gamma_{k^{48}+1}^{(n)}|\leq n^{-2\slash 3}k^{-4}.
\end{equation}
By Lemma \ref{gamma} and Proposition \ref{Airyop}, we have $\gamma_{k^{48}}\leq \gamma'_{k^{48}}k^{2\slash 3} < \gamma_{k^{48}+1}$. By Proposition \ref{Airyop}, we have $\min\{\gamma_{k^{48}}-\gamma_{k^{48}-1}, \gamma_{k^{48}+2}-\gamma_{k^{48}+1}\}\geq k^{-16}\slash 4\geq  n^{-1\slash 30}$. By Proposition \ref{PropAiry2}, with probability at least $1-C\exp(-c k^3)$,
\begin{equation}\label{EQA15}
    |N(\gamma'_{k^{48}}k^{2\slash 3}+ n^{-1\slash 30})-k^{48}|\leq \lfloor k^{25}\slash 2 \rfloor,\quad |N(\gamma'_{k^{48}}k^{2\slash 3}- n^{-1\slash 30})-k^{48}|\leq \lfloor k^{25}\slash 2 \rfloor.
\end{equation}
Let $i_0:=k^{48}+\lfloor k^{25}\slash 2 \rfloor+1$ and $i_0':=k^{48}-\lfloor k^{25}\slash 2 \rfloor-1$. By (\ref{EQA15}), we have $\lambda_{i_0}\geq \gamma'_{k^{48}}k^{2\slash 3}+n^{-1\slash 30}$ and $\lambda_{i_0'}\leq \gamma'_{k^{48}}k^{2\slash 3}-n^{-1\slash 30}$. Combining this with (\ref{EQA1}), we obtain that $b_{i_0'}< \gamma'_{k^{48}}< b_{i_0}$. Letting 
\begin{equation}\label{DT}
    T_1:=\#\{i\in [n]: b_i<\gamma'_{k^{48}}\}, 
\end{equation}
we obtain that with probability at least $1-C\exp(-c k^3)$, $|T_1-k^{48}|\leq k^{25}$. By Lemma \ref{gamma}, $\gamma'_{k^{48}}\geq k^{30}$. Hence if $k^{48}-k^{25}\leq T_1\leq k^{48}-1$, for any $x\in [-R_1,R_1]$,
\begin{equation}\label{QQ1.1}
    \Big|\frac{1}{k}\sum_{i=T_1+1}^{k^{48}}\frac{1}{b_i-x}\Big|\leq \frac{k^{24}}{\gamma'_{k^{48}}-R_1}\leq Ck^{-6};
\end{equation}
if $k^{48}\leq T_1\leq k^{48}+k^{25}$, for any $x\in [-R_1,R_1]$,
\begin{equation}\label{QQ1.2}
    \Big|\int_{\gamma'_{k^{48}}}^{\gamma'_{T_1}}\frac{1}{y-x}d\mu_0(y)\Big| \leq \frac{k^{24}}{\gamma'_{k^{48}}-R_1}\leq Ck^{-6}.
\end{equation}

Now we bound $M_1$ (defined in (\ref{ESA10.1})). By Proposition \ref{Edge}, with probability at least $1-C\exp(-ck^3)$, for any $i\in [n]$,
\begin{equation}
    |n^{-1\slash 2}\lambda_i^{(n)}-\gamma_i^{(n)}|\leq n^{-2\slash 3}k^{12}(\min\{i,n+1-i\})^{-1\slash 3}.
\end{equation}
By Lemma \ref{gamma}, $\gamma_0^{(n)}-\gamma_1^{(n)}\leq 4 n^{-2\slash 3}$. By (\ref{classical}) and Lemma \ref{gamma}, for any $i\in [n]$ such that $2\leq i\leq n\slash 2$,
\begin{eqnarray}\label{QSS1}
    \frac{1}{n}&=&\frac{1}{2\pi}\int_{\gamma_{i}^{(n)}}^{\gamma_{i-1}^{(n)}}\sqrt{(4-x^2)_{+}}dx\geq  \frac{\sqrt{2}}{2\pi}(2-\gamma_{i-1}^{(n)})^{1\slash 2} (\gamma_{i-1}^{(n)}-\gamma_i^{(n)})\nonumber\\
  &\geq& \frac{\sqrt{2}}{2\pi} \Big(\frac{3\pi(i-1)}{2n}\Big)^{1\slash 3}(\gamma_{i-1}^{(n)}-\gamma_i^{(n)})\geq \frac{1}{4}(i\slash n)^{1\slash 3} (\gamma_{i-1}^{(n)}-\gamma_i^{(n)}),
\end{eqnarray}
hence $\gamma_{i-1}^{(n)}-\gamma_i^{(n)}\leq 4i^{-1\slash 3}n^{-2\slash 3}$. If $n$ is odd and $i=(n+1)\slash 2$, by (\ref{classical}), arguing as in (\ref{QSS1}), we have $\gamma_{i}^{(n)}=-\gamma_{i-1}^{(n)}$ and \begin{equation}
    \frac{1}{2n}=\frac{1}{2\pi}\int_0^{\gamma^{(n)}_{i-1}}\sqrt{(4-x^2)_{+}}dx\geq \frac{1}{4}(i\slash n)^{1\slash 3}\gamma_{i-1}^{(n)},
\end{equation}
hence $\gamma_{i-1}^{(n)}-\gamma_i^{(n)}=2\gamma_{i-1}^{(n)}\leq 4i^{-1\slash 3} n^{-2\slash 3}$. Now consider any $i\in [n]$ such that $n\slash 2+1\leq i\leq n-1$. By (\ref{classical}),
\begin{equation}\label{QSS2}
    \frac{1}{n}=\frac{1}{2\pi}\int_{\gamma_i^{(n)}}^{\gamma_{i-1}^{(n)}}\sqrt{(4-x^2)_{+}}dx\geq \frac{\sqrt{2}}{2\pi}(2+\gamma_i^{(n)})^{1\slash 2}(\gamma_{i-1}^{(n)}-\gamma_i^{(n)}),
\end{equation}
\begin{equation}\label{QS3}
    \frac{n-i}{n}=\frac{1}{2\pi}\int_{-2}^{\gamma_i^{(n)}}\sqrt{(4-x^2)_{+}}dx\leq \frac{2}{3\pi}(2+\gamma_i^{(n)})^{3\slash 2}.
\end{equation}
By (\ref{QSS2}) and (\ref{QS3}), we have $\gamma_{i-1}^{(n)}-\gamma_i^{(n)}\leq 4 (n+1-i)^{-1\slash 3}n^{-2\slash 3}$. Finally, by (\ref{classical}), we have $\gamma_{n}^{(n)}=-2$ and  
\begin{equation*}
    \frac{1}{n}=\frac{1}{2\pi}\int_{-2}^{\gamma_{n-1}^{(n)}}\sqrt{(4-x^2)_{+}}dx \geq \frac{\sqrt{2}}{2\pi}\int_{-2}^{\gamma_{n-1}^{(n)}}\sqrt{2+x}dx= \frac{\sqrt{2}}{3\pi}(\gamma_{n-1}^{(n)}+2)^{3\slash 2},
\end{equation*}
hence $\gamma_{n-1}^{(n)}-\gamma_n^{(n)}=\gamma_{n-1}^{(n)}+2\leq 4 n^{-2\slash 3}$. Therefore, for any $i  \in [n]$, 
\begin{equation}
    \gamma_{i-1}^{(n)}-\gamma_i^{(n)} \leq  4 n^{-2\slash  3} (\min\{i,n+1-i\})^{-1\slash 3}.
\end{equation}

Therefore, with probability at least $1-C\exp(-c k^3)$, for any $i  \in [n]$ such that $i\geq k^{48}$, the following properties hold:
\begin{itemize}
    \item[(a)] $|b_i-\gamma'_i|=(n\slash k)^{2\slash 3}|n^{-1\slash 2}\lambda_i^{(n)}-\gamma_i^{(n)}|\leq k^{12} (\min\{i,n+1-i\})^{-1\slash 3}$;
    \item[(b)]
    \begin{eqnarray*}
        |b_i-\gamma'_{i-1}|&\leq& (n\slash k)^{2\slash 3}(|n^{-1\slash 2}\lambda_i^{(n)}-\gamma_i^{(n)}|+|\gamma_i^{(n)}-\gamma_{i-1}^{(n)}|)\nonumber\\
        &\leq& 2 k^{12} (\min\{i,n+1-i\})^{-1\slash 3};
    \end{eqnarray*}
    \item[(c)] By Lemma \ref{gamma}, $\gamma_i'\geq 2(\min\{i, \lfloor n\slash 2\rfloor\})^{2\slash 3} k^{-2\slash 3}\geq 2k^{30}$,\\ $\gamma'_{i-1}\geq (\min\{i,\lfloor n\slash 2\rfloor\})^{2\slash 3}k^{-2\slash 3}\geq k^{30}$;
    \item[(d)] By (a) and (c), $b_i\geq (\min\{i,\lfloor n\slash 2\rfloor\})^{2\slash 3}k^{-2\slash 3}\geq k^{30}$.
\end{itemize}
From the above properties, we can deduce that for any $x\in [-R_1,R_1]$, if $i\in [n]$ and $k^{48}\leq i\leq n\slash 2$, then 
\begin{equation}\label{QQ1}
    \Big|\frac{1}{b_i-x}-\frac{1}{\gamma'_i-x}\Big|= \frac{|b_i-\gamma'_i|}{(b_i-x)(\gamma'_i-x)}  \leq C k^{14} i^{-5\slash 3},
\end{equation}
\begin{equation}\label{QQ2}
    \Big|\frac{1}{b_i-x}-\frac{1}{\gamma'_{i-1}-x}\Big|=\frac{|b_i-\gamma'_{i-1}|}{(b_i-x)(\gamma'_{i-1}-x)}\leq Ck^{14} i^{-5\slash 3},
\end{equation}
if $i\in [n]$ and $i>n\slash 2$,
\begin{equation}\label{QQ3}
    \Big|\frac{1}{b_i-x}-\frac{1}{\gamma'_i-x}\Big|\leq C k^{14}n^{-4\slash 3}, \quad \Big|\frac{1}{b_i-x}-\frac{1}{\gamma'_{i-1}-x}\Big|\leq C k^{14} n^{-4\slash 3}.
\end{equation}
Note that for any $x\in [-R_1,R_1]$ and any $i\in [n]$ such that $i\geq k^{48}$, 
\begin{eqnarray}\label{QQ4}
  &&  \Big|\frac{1}{k}\frac{1}{b_i-x}-\int_{\gamma'_{i-1}}^{\gamma'_i}\frac{1}{y-x}d\mu_0(y)\Big|\leq \int_{\gamma'_{i-1}}^{\gamma'_i}\Big|\frac{1}{b_i-x}-\frac{1}{y-x}\Big|d\mu_0(y)\nonumber\\
  &\leq& \frac{1}{k}\max\Big\{\Big|\frac{1}{b_i-x}-\frac{1}{\gamma'_i-x}\Big|,\Big|\frac{1}{b_i-x}-\frac{1}{\gamma'_{i-1}-x}\Big|\Big\}.
\end{eqnarray}
By (\ref{QQ1})-(\ref{QQ4}), with probability at least $1-C\exp(-c k^3)$, the following holds: For any $x\in [-R_1,R_1]$, if $i\in [n]$ and $k^{48}\leq i\leq n\slash 2$,
\begin{equation}\label{QQ1.3}
    \Big|\frac{1}{k}\frac{1}{b_i-x}-\int_{\gamma'_{i-1}}^{\gamma'_i}\frac{1}{y-x}d\mu_0(y)\Big|\leq C k^{13}i^{-5\slash 3};
\end{equation}
if $i\in [n]$ and $i>n\slash 2$,
\begin{equation}\label{QQ1.4}
    \Big|\frac{1}{k}\frac{1}{b_i-x}-\int_{\gamma'_{i-1}}^{\gamma'_i}\frac{1}{y-x}d\mu_0(y)\Big|\leq C k^{13} n^{-4\slash 3}.
\end{equation}

With probability at least $1-C\exp(-c k^3)$, the following statements in this paragraph hold. For any $x\in [-R_1,R_1]$, if $T_1$ as defined in (\ref{DT}) satisfies $T_1\leq k^{48}-1$, then by (\ref{QQ1.1}), (\ref{QQ1.3}), and (\ref{QQ1.4}),
\begin{eqnarray}
 M_1&=&\Big|\sum_{i=k^{48}+1}^n \Big(\frac{1}{k}\frac{1}{b_i-x}-\int_{\gamma'_{i-1}}^{\gamma'_i}\frac{1}{y-x}d\mu_0(y)\Big)+\frac{1}{k}\sum_{i=T_1+1}^{k^{48}}\frac{1}{b_i-x}\Big|\nonumber\\
 &\leq& \sum_{i=k^{48}+1}^n \Big|\frac{1}{k}\frac{1}{b_i-x}-\int_{\gamma'_{i-1}}^{\gamma'_i}\frac{1}{y-x}d\mu_0(y)\Big|+\Big|\frac{1}{k}\sum_{i=T_1+1}^{k^{48}}\frac{1}{b_i-x}\Big|\nonumber\\
 &\leq& C\sum_{i=k^{48}+1}^{\lfloor n\slash 2\rfloor}k^{13} i^{-5\slash 3}+C k^{13} n^{-1\slash 3}+C k^{-6}\leq C k^{-6};
\end{eqnarray}
if $T_1\geq k^{48}$, then by (\ref{QQ1.2}), (\ref{QQ1.3}), and (\ref{QQ1.4}), 
\begin{eqnarray}
 M_1&=&\Big|\sum_{i=T_1+1}^n \Big(\frac{1}{k}\frac{1}{b_i-x}-\int_{\gamma'_{i-1}}^{\gamma'_i}\frac{1}{y-x}d\mu_0(y)\Big)-\int_{\gamma'_{k^{48}}}^{\gamma'_{T_1}}\frac{1}{y-x}d\mu_0(y)\Big|\nonumber\\
 &\leq& \sum_{i=T_1+1}^n \Big|\frac{1}{k}\frac{1}{b_i-x}-\int_{\gamma'_{i-1}}^{\gamma'_i}\frac{1}{y-x}d\mu_0(y)\Big|+\Big|\int_{\gamma'_{k^{48}}}^{\gamma'_{T_1}}\frac{1}{y-x}d\mu_0(y)\Big|\nonumber\\
 &\leq& C\sum_{i=k^{48}+1}^{\lfloor n\slash 2\rfloor}k^{13} i^{-5\slash 3}+C k^{13} n^{-1\slash 3}+C k^{-6}\leq C k^{-6}.
\end{eqnarray}
Hence we conclude that with probability at least $1-C\exp(-c k^3)$, for any $x\in [-R_1,R_1]$, $M_1\leq C k^{-6}$.

Next we bound $M_2$ (defined in (\ref{ESA10.1})). Suppose that $\gamma'_{l-1}\leq R_2<\gamma'_{l}$, where $l\in [n]$. We first show that $l\leq n\slash 2$: If $l\geq \lfloor n\slash 2\rfloor+1$, by Lemma \ref{gamma}, 
\begin{equation*}
    \gamma_{l-1}^{(n)}\leq \gamma_{\lfloor n\slash 2\rfloor}^{(n)}\leq 2-(3\pi\lfloor n\slash 2\rfloor)^{2\slash 3}\slash (2n)^{2\slash 3}\leq 1, \text{ hence } \gamma'_{l-1}\geq (n\slash k)^{2\slash 3}>R_2,
\end{equation*}
which leads to a contradiction. By Lemma \ref{gamma}, 
\begin{equation}\label{QQ1.5}
    R_2\geq \gamma'_{l-1}\geq \Big(\frac{3\pi(l-1)}{2k}\Big)^{2\slash 3}, \text{ hence } l\leq \frac{2}{3\pi}R_2^{3\slash 2} k+1,
\end{equation}
\begin{eqnarray}\label{QQ1.6}
  &&  R_2<\gamma'_l\leq \Big(\frac{3\pi l}{2k}\Big)^{2\slash 3}\Big(1-\frac{1}{4} \Big(\frac{3\pi l}{\sqrt{2} n}\Big)^{2\slash 3}\Big)^{-1\slash 3}\leq \Big(\frac{3\pi l}{2k}\Big)^{2\slash 3}(1-n^{-1\slash 3})^{-1\slash 3},\nonumber\\
  && \text{ hence } l\geq \frac{2}{3\pi} R_2^{3\slash 2}k-1,
\end{eqnarray}
where we use (\ref{QQ1.5}) in the third inequality in (\ref{QQ1.6}).

For any $i\in [k^{47}]$, we let
\begin{equation*}
    S_i:=\mu_{n,k}((-\infty,\gamma'_{ki})),\quad S_i^{+}:=k^{-1}\#\{j\in [n]: b_j<\gamma'_{ki}\}, \quad S_i^{-}:=\mu_0([0,\gamma'_{ki})),
\end{equation*}
\begin{eqnarray*}
 && \tilde{S}_i^{++}:=k^{-1}\#\{j\in\mathbb{N}_{+}: \lambda_j\leq\gamma'_{ki}k^{2\slash 3}+n^{-1\slash 30}\},\\
 && \tilde{S}_i^{+-}:=k^{-1}\#\{j\in\mathbb{N}_{+}: \lambda_j\leq\gamma'_{ki}k^{2\slash 3}-n^{-1\slash 30}\},
\end{eqnarray*}
and note that $S_i=S_i^{+}-S_i^{-}$ and $S_i^{-}=i$. By Lemma \ref{gamma} and Proposition \ref{Airyop}, $\gamma_{ki}\leq \gamma'_{ki}k^{2\slash 3}<\gamma_{ki+1}$. By Proposition \ref{Airyop},
\begin{equation*}
    \min\{\gamma_{ki}-\gamma_{ki-1}, \gamma_{ki+2}-\gamma_{ki+1}\}\geq k^{-16}\slash 4\geq  n^{-1\slash 30}.
\end{equation*}
Hence $\gamma_{ki-1}\leq \gamma'_{ki}k^{2\slash 3}-n^{-1\slash 30}<\gamma'_{ki}k^{2\slash 3}+n^{-1\slash 30}< \gamma_{ki+2}$. By Propositions \ref{PropAiry1} and \ref{PropAiry2}, with probability at least $1-C\exp(-M k^2)$, for any $i\in\mathbb{N}_{+}$ such that $\lceil l\slash k\rceil\leq i\leq k^{1\slash 2}$,
\begin{equation}\label{QQQ1.1}
    \max\{|\tilde{S}_i^{++}-i|,|\tilde{S}_i^{+-}-i|\}\leq CM\log(i);
\end{equation}
for any $i\in\mathbb{N}_{+}$ such that $k^{1\slash 2}\leq i\leq k^{47}$, 
\begin{equation}\label{QQQ1.3}
    \max\{|\tilde{S}_i^{++}-i|,|\tilde{S}_i^{+-}-i|\}\leq C\log(i)(\log(k))^{1\slash 2}(\max\{i\slash k,1\})^{1\slash 2}.
\end{equation}
Now consider any $i\in \mathbb{N}_{+}$ such that $\lceil l\slash k\rceil\leq i\leq k^{47}$, and let $j_0:=k \tilde{S}_i^{++}$ and $j_0':=k\tilde{S}_i^{+-}$. From (\ref{QQQ1.1}) and (\ref{QQQ1.3}), we can deduce that $j_0+1,j_0'\leq k^{50}$. By (\ref{EQA1}), we can deduce that with probability at least $1-C\exp(-Mk^2)$, $\tilde{S}_i^{+-}\leq S_i^{+}\leq \tilde{S}_i^{++}$ as follows: 
\begin{itemize}
    \item[(a)] As $\lambda_{j_0+1}>\gamma'_{ki} k^{2\slash 3}+n^{-1\slash 30}$, we have 
    \begin{equation*}
        b_{j_0+1}\geq k^{-2\slash 3}(\lambda_{j_0+1}-C n^{-1\slash 24})>\gamma'_{ki}, \text{ hence } S_i^{+}\leq j_0\slash k= \tilde{S}_i^{++}.
    \end{equation*}
    \item[(b)] If $j_0'=0$, then $\tilde{S}_i^{+-}=0$ and $S_i^{+}\geq \tilde{S}_i^{+-}$; otherwise, $\lambda_{j_0'}\leq \gamma'_{ki} k^{2\slash 3}-n^{-1\slash 30}$, and
    \begin{equation*}
        b_{j_0'}\leq k^{-2\slash 3}(\lambda_{j_0'}+C n^{-1\slash 24})< \gamma'_{ki}, \text{ hence }S_i^{+}\geq j_0'\slash k=\tilde{S}_i^{+-}.
    \end{equation*}
 \end{itemize}
Therefore, with probability at least $1-C\exp(-M k^2)$, for any $i\in\mathbb{N}_{+}$ such that $\lceil l\slash k\rceil\leq i\leq k^{1\slash 2}$, 
\begin{equation}\label{QQQ2.2}
   |S_i|= |S_i^{+}-i|\leq CM\log(i);
\end{equation}
for any $i\in\mathbb{N}_{+}$ such that $k^{1\slash 2}\leq i\leq k^{47}$,
\begin{equation}\label{QQQ2.3}
    |S_i|=|S_i^{+}-i|\leq C\log(i)(\log(k))^{1\slash 2}(\max\{i\slash k,1\})^{1\slash 2}.
\end{equation}

Consider an arbitrary $i\in\mathbb{N}_{+}$ such that $\lceil l\slash k \rceil \leq i\leq k^{47}-1$. By Lemma \ref{gamma}, $|\gamma'_{(i+1)k}-\gamma'_{ik}|\leq C i^{-1\slash 3}$ and $\gamma'_{ik}\geq i^{2\slash 3}$. Hence for any $x\in [-R_1,R_1]$,
\begin{eqnarray}\label{QQQ2.11}
&&\Big|\int_{[\gamma'_{ik},\gamma'_{(i+1)k})}\frac{1}{y-x}d\mu_{n,k}(y)-\frac{S_{i+1}-S_i}{\gamma'_{ik}-x}\Big|\nonumber\\
&=&\Big|\int_{[\gamma'_{ik},\gamma'_{(i+1)k})}\Big(\frac{1}{y-x}-\frac{1}{\gamma'_{ik}-x}\Big)d\mu_{n,k}(y)\Big|\nonumber\\
&\leq& \int_{[\gamma'_{ik},\gamma'_{(i+1)k})}\frac{(\gamma'_{(i+1)k}-\gamma'_{ik})}{(y-x)(\gamma'_{ik}-x)}d(\mu_{n,k}+2\mu_0)(y)\nonumber\\
& \leq  & Ci^{-5\slash 3}(|S_{i+1}-S_i|+1 ).
\end{eqnarray}

By (\ref{QQQ2.2}) and (\ref{QQQ2.3}), with probability at least $1-C\exp(-Mk^2)$, for any $x\in [-R_1,R_1]$,
\begin{eqnarray}\label{QQQ2.12}
&&\Big|\sum_{i=\lceil l\slash k \rceil}^{k^{47}-1}\frac{S_{i+1}-S_i}{\gamma'_{ik}-x}\Big|\nonumber\\
&\leq& \Big|\sum_{i=\lceil l\slash k \rceil}^{k^{47}-1} S_{i+1}\Big(\frac{1}{\gamma'_{ik}-x}-\frac{1}{\gamma'_{(i+1)k}-x}\Big)\Big|+\frac{|S_{k^{47}}|}{\gamma'_{k^{48}}-x}+\frac{|S_{\lceil l\slash  k \rceil}|}{\gamma'_{\lceil l\slash k \rceil k}-x}\nonumber\\
&\leq& \sum_{i=\lceil l\slash k\rceil}^{k^{47}-1}\frac{|S_{i+1}|(\gamma'_{(i+1)k}-\gamma'_i)}{(\gamma'_{ik}-x)(\gamma'_{(i+1)k}-x)}+\frac{|S_{k^{47}}|}{\gamma'_{k^{48}}-x}+\frac{|S_{\lceil l\slash  k \rceil}|}{\gamma'_{\lceil l\slash k \rceil k}-x} \nonumber \\
&\leq& C\sum_{i=\lceil l\slash k \rceil+1}^{k^{47}}i^{-5\slash 3}|S_i|+\frac{C M \log(R_2)}{R_2}.
\end{eqnarray}

Now let $I_0:=(R_2, \gamma'_{\lceil l\slash k\rceil k})$. As $\gamma'_{l-1}\leq R_2<\gamma'_l$ and $|l-(2 R_2^{3\slash 2}k)\slash (3\pi)|\leq 1$, by Lemma \ref{gamma}, $\gamma_{l-1}\leq R_2 k^{2\slash 3}<\gamma_{l+1}$. By Propositions \ref{PropAiry1} and \ref{PropAiry2} and (\ref{EQA1}), with probability at least $1-C\exp(-Mk^2)$,
\begin{equation*}
    |\#\{j\in [n]:b_j\leq R_2\}-l|\leq CM\log(R_2)k.
\end{equation*}
Noting (\ref{QQQ2.2}) and that $(l-1)\slash k\leq \mu_0([0,R_2])\leq l\slash k$, $\mu_0([0,\gamma'_{\lceil l\slash k \rceil k }))=\lceil l\slash k\rceil$, we have
\begin{eqnarray*}
  && | \mu_{n,k}(I_0)|\leq |S_{\lceil l\slash k\rceil }^{+}-\lceil l\slash k\rceil|+ |k^{-1}\#\{j\in [n]: b_j\leq R_2\}-\mu_0([0,R_2])|\\
  &\leq& CM\log(\lceil l\slash k\rceil)+CM\log(R_2)+k^{-1}\leq CM\log(R_2),
\end{eqnarray*}
\begin{equation*}
    \mu_0(I_0)=\mu_0([0,\gamma'_{\lceil l\slash k \rceil k }))-\mu_0([0,R_2])\leq 2.
\end{equation*}
Hence $\mu_{n,k}(I_0)+2\mu_0(I_0)\leq CM\log(R_2)$. Therefore, with probability at least $1-C\exp(-M k^2)$, for any $x\in [-R_1,R_1]$,
\begin{eqnarray}\label{QQQ2.16}
&&\Big|\int_{I_0}\frac{1}{y-x}d\mu_{n,k}(y)\Big|\leq \frac{2}{R_2}\int_{I_0}d(\mu_{n,k}+2\mu_0)(y)\nonumber\\
&=&\frac{2(\mu_{n,k}(I_0)+2\mu_0(I_0))}{R_2}\leq \frac{CM\log(R_2)}{R_2}.
\end{eqnarray}

With probability at least $1-C\exp(-M k^2)$, the following statements in this paragraph hold. By (\ref{QQQ2.11})-(\ref{QQQ2.16}), for any $x\in [-R_1,R_1]$,
\begin{eqnarray}\label{QQ11}
M_2 &\leq& \Big|\int_{I_0}\frac{1}{y-x}d\mu_{n,k}(y)\Big|+\Big|\sum_{i=\lceil l\slash k \rceil}^{k^{47}-1}\frac{S_{i+1}-S_i}{\gamma'_{ik}-x}\Big|\nonumber\\
&& +\sum_{i=\lceil l\slash k\rceil}^{k^{47}-1}\Big|\int_{[\gamma'_{ik},\gamma'_{(i+1)k})}\frac{1}{y-x}d\mu_{n,k}(y)-\frac{S_{i+1}-S_i}{\gamma'_{ik}-x}\Big|\nonumber\\
&\leq& \frac{CM\log(R_2)}{R_2}+C\sum_{i=\lceil l\slash k\rceil}^{k^{47}}\frac{|S_i|+1}{i^{5\slash 3}}.
\end{eqnarray}
By (\ref{QQQ2.2}), (\ref{QQQ2.3}), and the fact that $|l-(2 R_2^{3\slash 2}k)\slash (3\pi)|\leq 1$,
\begin{equation*}
\sum_{i=\lceil l\slash k \rceil}^{\lfloor k^{1\slash 2}\rfloor }\frac{|S_i|+1}{i^{5\slash 3}}\leq \sum_{i=\lceil l\slash k\rceil}^{\lfloor k^{1\slash 2}\rfloor}\frac{CM \log(i)}{i^{5\slash 3}}
\leq CM R_2^{-1\slash 2},
\end{equation*}
\begin{equation*}
\sum_{i=\lfloor k^{1\slash 2}\rfloor+1}^k \frac{|S_i|+1}{i^{5\slash 3}}\leq \sum_{i=\lfloor k^{1\slash 2} \rfloor+1}^k \frac{C\log(i)(\log(n))^{1\slash 2}}{i^{5\slash 3}}\leq Ck^{-1\slash 6},
\end{equation*}
\begin{equation*}
\sum_{i=k+1}^{k^{47}} \frac{|S_i|+1}{i^{5\slash 3}}\leq \sum_{i=k+1}^{k^{47}}\frac{C\log(i)(\log(n))^{1\slash 2}(i\slash k)^{1\slash 2}}{i^{5\slash 3}}\leq Ck^{-1\slash 6}.
\end{equation*}
Plugging these estimates into (\ref{QQ11}), we obtain that for any $x\in [-R_1,R_1]$, $M_2\leq CMR_2^{-1\slash 2}$.

Combining the above estimates on $M_1$ and $M_2$, we conclude that with probability at least $1-C\exp(-M k^2)$, for any $x\in [-R_1,R_1]$, $|E(x)|\leq C MR_2^{-1\slash 2}$. Noting (\ref{Q01}) and (\ref{Q02}), we obtain the conclusion of the lemma.

\end{proof}

The following lemma provides lower bounds on $J(\mu_1)$ and $J(\mu_1,\mu_2+\mu_2')$.

\begin{lemma}\label{J12}
We assume that $R_1\geq 10$, $R_1^5\leq R_2 \leq R_1^6$, $\delta\in (0,1\slash 2)$, and $\epsilon\in (0,1]$. Let $\mu$ be a finite signed Borel measure on $\mathbb{R}$ such that $\mu(\mathbb{R})=0$ and $\log(r)|\mu|((-r,r)^c)<\epsilon$ for all $r\geq R_1\slash 2$. We also assume that $\tilde{d}_R(\mu_{n,k},\mu)\leq \delta$ for some $R\geq R_2+10$. Then there exist positive constants $C_{\mu}$ (which only depends on $\mu$) and $C_{\mu,R_1}$ (which only depends on $\mu$ and $R_1$), such that
\begin{equation}\label{FM0}
    |\mu_{n,k}([-R_1,R_1])|\leq \frac{2\epsilon}{\log(R_1)}+2\sqrt{\delta} R_1^{1\slash 4},
\end{equation}
\begin{eqnarray}\label{FM1}
J(\mu_1)&\geq& -\int_{[-R_1,R_1]^2} \log(\max\{|x-y|,R_1^{-3}\})d\mu(x)d\mu(y)-C_{\mu}(\epsilon+R_1^{-1})\nonumber\\
&&-C_{\mu,R_1}\sqrt{\delta}-3 k^{-2}\log(R_1)\#\{i\in [n]:b_i\in [-R_1,R_1]\},
\end{eqnarray}
\begin{equation}\label{FM2}
J(\mu_1,\mu_2+\mu_2')\geq -C_{\mu}(\epsilon+R_1^{-1\slash 2})-C_{\mu,R_1}\sqrt{\delta}.
\end{equation}
\end{lemma}
\begin{proof}

For any $\epsilon'\in (0,1]$ and $x\in\mathbb{R}$, we define
\begin{equation*}
\psi_{\epsilon'}(x):=\mathbbm{1}_{[-R_1-\epsilon',R_1+\epsilon']}(x)-\frac{x-R_1}{\epsilon'}\mathbbm{1}_{[R_1, R_1+\epsilon']}(x)+\frac{R_1+x}{\epsilon'}\mathbbm{1}_{[-R_1-\epsilon',-R_1]}(x),
\end{equation*}
\begin{equation*}
    \phi_{\epsilon'}(x):=\mathbbm{1}_{[-R_1,R_1]}(x)-\frac{x-(R_1-\epsilon')}{\epsilon'}\mathbbm{1}_{[R_1-\epsilon',R_1]}(x)+\frac{x+R_1-\epsilon'}{\epsilon'}\mathbbm{1}_{[-R_1,-R_1+\epsilon']}(x). 
\end{equation*}
Note that $\|\psi_{\epsilon'}\|_{BL},\|\phi_{\epsilon'}\|_{BL}\leq (\epsilon')^{-1}$. As $\tilde{d}_R(\mu_{n,k},\mu)\leq\delta$, we have
\begin{eqnarray*}
    & (\mu_{n,k}+\mu_0)([-R_1,R_1])\leq\int\psi_{\epsilon'}d(\mu_{n,k}+\mu_0)\leq \int  \psi_{\epsilon'}d(\mu+\mu_0)+\frac{\delta}{\epsilon'},\\
    & (\mu_{n,k}+\mu_0)([-R_1,R_1])\geq\int\phi_{\epsilon'}d(\mu_{n,k}+\mu_0)\geq \int  \phi_{\epsilon'}d(\mu+\mu_0)-\frac{\delta}{\epsilon'}.
\end{eqnarray*}
Hence noting that $\mu(\mathbb{R})=0$, we have 
\begin{eqnarray*}
 & \mu_{n,k}([-R_1,R_1])\leq\int (\psi_{\epsilon'}(x)-1)d\mu(x)+\int(\psi_{\epsilon'}(x)-\mathbbm{1}_{[-R_1,R_1]}(x))d\mu_0(x)+\frac{\delta}{\epsilon'}\nonumber\\
   &\leq|\mu|((-R_1,R_1)^c)+\frac{2}{\pi}\sqrt{R_1+\epsilon'}\epsilon'+\frac{\delta}{\epsilon'}\leq \frac{\epsilon}{\log(R_1)}+\sqrt{R}_1\epsilon'+\frac{\delta}{\epsilon'},
\end{eqnarray*}
\begin{eqnarray*}
   & \mu_{n,k}([-R_1,R_1])\geq\int(\phi_{\epsilon'}(x)-1)d\mu(x)+\int(\phi_{\epsilon'}(x)-\mathbbm{1}_{[-R_1,R_1]}(x))d\mu_0(x)-\frac{\delta}{\epsilon'} \nonumber\\
   &\geq -|\mu|((-R_1\slash 2,R_1\slash 2)^c)-\frac{2}{\pi}\sqrt{R_1}\epsilon'-\frac{\delta}{\epsilon'}\geq -\frac{2\epsilon}{\log(R_1)}-\sqrt{R_1}\epsilon'-\frac{\delta}{\epsilon'}.
\end{eqnarray*}
Taking $\epsilon'=\sqrt{\delta} R_1^{-1\slash 4}\in (0,1]$, we obtain (\ref{FM0}).   

Below we denote $\gamma:=R_1^{-3}$, and let 
\begin{equation*}
    K_1:=-\int_{[-R_1,R_1]^2} \log(\max\{|x-y|,\gamma\})d\mu_1(x)d\mu_1(y),
\end{equation*}
\begin{equation*}
    K_2:=-\int_{[-R_1,R_1]^2\backslash\Delta} \log\big(|x-y|\slash\gamma\big)\mathbbm{1}_{|x-y|\leq \gamma}d\mu_1(x)d\mu_1(y).
\end{equation*}
Note that
\begin{equation}\label{Eqnn3}
    J(\mu_1) \geq K_1+K_2+k^{-2}\#\{i\in [n]: b_i\in [-R_1,R_1]\}\log(\gamma). 
\end{equation}
As $\mu_1+\mu_0$ is a positive measure, by (\ref{FM0}), we have
\begin{eqnarray}\label{Eqnn4}
K_2&\geq& 2\sqrt{R_1}\int_{[-R_1,R_1]}d(\mu_1+\mu_0)(x)\int_{|y-x|\leq \gamma}\log\big(|x-y|\slash\gamma\big)dy\nonumber\\
&=&-4\sqrt{R_1}\gamma(\mu_{n,k}([-R_1,R_1])+\mu_0([-R_1,R_1]))\geq -C R_1^{-1}.
\end{eqnarray}
For any $x,y\in\mathbb{R}$, we let $h(x,y):=-\log(\max\{|x-y|,\gamma\})$. We also let
\begin{eqnarray*}
     & T_1:=\int_{[-R_1,R_1]^2}h(x,y)d\mu(x)d(\mu_1-\mu)(y),\nonumber\\
     & T_2:=\int_{[-R_1,R_1]^2}h(x,y)d(\mu_1-\mu)(x)d(\mu_1-\mu)(y).
\end{eqnarray*}
Note that $K_1=2T_1+T_2+\int_{[-R_1,R_1]^2}h(x,y)d\mu(x)d\mu(y)$. For any $\epsilon'\in (0,1]$ and $x\in\mathbb{R}$, we define
\begin{eqnarray*}
    \Upsilon_{\epsilon'}(x)&:=& \mathbbm{1}_{[R_1-2\epsilon',R_1+2\epsilon']}(x)-\frac{x-(R_1+\epsilon')}{\epsilon'}\mathbbm{1}_{[R_1+\epsilon', R_1+2\epsilon']}(x)\\
&& +\frac{x-(R_1-\epsilon')}{\epsilon'}\mathbbm{1}_{[R_1-2\epsilon',R_1-\epsilon']}(x).
\end{eqnarray*}
As $\|\Upsilon_{\epsilon'}\|_{BL}\leq (\epsilon')^{-1}$, we have
\begin{equation*}
    (\mu_{n,k}+\mu_0)([R_1-\epsilon',R_1+\epsilon'])\leq \int\Upsilon_{\epsilon'}d(\mu_{n,k}+\mu_0)\leq \int\Upsilon_{\epsilon'}d(\mu+\mu_0)+\frac{\delta}{\epsilon'}.
\end{equation*}
Hence
\begin{eqnarray}\label{EEEE1}
    && |\mu_{n,k}|([R_1-\epsilon',R_1+\epsilon'])\leq (\mu_{n,k}+2\mu_0)([R_1-\epsilon',R_1+\epsilon'])\nonumber\\
    &\leq& 2\mu_0([R_1-2\epsilon',R_1+2\epsilon'])+|\mu|((-R_1\slash 2,R_1\slash 2)^c)+\frac{\delta}{\epsilon'}\nonumber\\
    &\leq& C\Big(\sqrt{R_1}\epsilon'+\frac{\epsilon}{\log(R_1)}+\frac{\delta}{\epsilon'}\Big).
\end{eqnarray}
Similarly,
\begin{equation}\label{EEEE2}
    |\mu_{n,k}|([-R_1-\epsilon',-R_1+\epsilon'])\leq C\Big(\sqrt{R_1}\epsilon'+\frac{\epsilon}{\log(R_1)}+\frac{\delta}{\epsilon'}\Big).
\end{equation}
Hence
\begin{eqnarray}\label{Eqnn1}
  &&  \Big|\int h(x,y)\mathbbm{1}_{[-R_1,R_1]}(x)(\mathbbm{1}_{[-R_1,R_1]}(y)-\phi_{\epsilon'}(y))d\mu(x)d(\mu_1-\mu)(y)\Big| \nonumber\\
  &\leq& C\big(|\mu_{n,k}|([R_1-\epsilon',R_1+\epsilon']\cup [-R_1-\epsilon',-R_1+\epsilon'])+|\mu|((-R_1\slash 2,R_1\slash 2)^c)\big)\nonumber\\
  && \times \log(R_1)|\mu|(\mathbb{R})\nonumber\\
  &\leq& C\log(R_1)|\mu|(\mathbb{R})\Big(\sqrt{R_1}\epsilon'+\frac{\epsilon}{\log(R_1)}+\frac{\delta}{\epsilon'}\Big).
\end{eqnarray}
Note that $\int_{[-R_1,R_1]}h(x,y)\phi_{\epsilon'}(y)d\mu(x)$ as a function of $y\in\mathbb{R}$ has absolute value upper bounded by $C\log(R_1)|\mu|(\mathbb{R})$ and is $(C\log(R_1)\slash \epsilon'+\gamma^{-1})|\mu|(\mathbb{R})$-Lipschitz. Hence
\begin{equation}\label{Eqnn2}
     \Big|\int h(x,y)\mathbbm{1}_{[-R_1,R_1]}(x)\phi_{\epsilon'}(y)d\mu(x)d(\mu_1-\mu)(y)\Big| \leq (C\log(R_1)\slash \epsilon'+\gamma^{-1})|\mu|(\mathbb{R})\delta. 
\end{equation}
Combining (\ref{Eqnn1}) and (\ref{Eqnn2}), taking $\epsilon'=\sqrt{\delta} R_1^{-1\slash 4}$, we obtain that
\begin{equation}\label{Eqnn8}
    |T_1|\leq C|\mu|(\mathbb{R})(\epsilon+R_1^3\sqrt{\delta})\leq C_{\mu}\epsilon +C_{\mu,R_1}\sqrt{\delta}.
\end{equation}
Similarly, we can deduce that 
\begin{equation}\label{Eqnn7}
    |T_2|\leq C_{\mu}\epsilon +C_{\mu,R_1}\sqrt{\delta}.
\end{equation}
The bound (\ref{FM1}) follows from (\ref{Eqnn3}), (\ref{Eqnn4}), (\ref{Eqnn8}), and (\ref{Eqnn7}). The bound (\ref{FM2}) can be similarly established. 

\end{proof}

Now we give the proof of Theorem \ref{Main2} when $\mu(\mathbb{R})=0$.

\begin{proof}[Proof of Theorem \ref{Main2}, $\mu(\mathbb{R}) = 0$ case]
Assume that $\mu\in\mathcal{W}$ and $\mu(\mathbb{R})=0$. We fix any $\epsilon\in (0,1]$, and take $R_1\geq 20$ sufficiently large such that for all $r\geq R_1\slash 2$, $\log(r)|\mu|((-r,r)^c)<\epsilon$. We take $R_2=R_1^6$, $R\geq R_2+10$, $\delta\in (0,1\slash 2)$, and assume that $n^{5\cdot 10^{-7}}\leq k\leq  n^{10^{-6}}$. We also assume that $k$ is sufficiently large throughout the proof. From the joint density of the GUE eigenvalues (see (\ref{Eqn10}) with $\beta=2$; see also \cite{AGZ,SS}), we obtain the joint density of $(b_1,\cdots,b_n)$ as 
\begin{equation}\label{rhod}
\rho(b_1,\cdots,b_n)=\tilde{Z}_n^{-1}\exp(-k^2J_0(\mu_{n,k})),
\end{equation}
where $\tilde{Z}_n$ is a normalizing constant and $J_0(\cdot)$ is defined in Definition \ref{xi}.

Let $C_1$ be the constant appearing in Lemma \ref{field}. For any $\nu,\nu'\in\mathcal{Y}$, we define
\begin{equation*}
    \tilde{d}_{[-R,R]\backslash[-R_1,R_1]}(\nu,\nu'):=\sup\limits_{\substack{f:\mathbb{R}\rightarrow\mathbb{R}:\|f\|_{BL}\leq 1,\\\supp(f)\subseteq [-R,R]\backslash[-R_1,R_1]}}\bigg|\int fd\nu-\int fd\nu'\bigg|.
\end{equation*}
Below we fix an arbitrary $M\geq 1$, and define the following events:
\begin{eqnarray*}
    \mathcal{D}_1(M)&:=& \bigg\{\bigg|\int_{(R_2,\infty)} \frac{1}{y-x}d\mu_3 (y)\bigg|\leq C_1 M R_2^{-1\slash 2}\text{ and}\nonumber\\ && \quad 
 \bigg|\int_{(-\infty,-R_2)}  \frac{1}{y-x}d\mu'_3(y)\bigg|\leq C_1 M R_2^{-5\slash 2} \text{ for all }x\in [-R_1,R_1]\bigg\},
\end{eqnarray*}
\begin{eqnarray*}
    &&\mathcal{D}_2:=\bigg\{\tilde{d}_{[-R,R]\backslash[-R_1,R_1]}(\mu_{n,k},\mu)\leq \delta,\nonumber\\
    &&\quad\quad\quad |\mu_{n,k}|\big([-R_1-\sqrt{\delta},-R_1)\cup(R_1,R_1+\sqrt{\delta}]\big) \leq C_2\Big(\sqrt{\delta R_1}+\frac{\epsilon}{\log(R_1)}\Big)\bigg\},
\end{eqnarray*}
\begin{equation*}
    \mathcal{D}_3(M):=\big\{\#\{i\in [n]:|b_i|\leq R_1\}\leq C_2 M R_1^{3\slash 2}k\big\},
\end{equation*}
where $C_2$ is a sufficiently large positive absolute constant. We also define $\mathcal{D}(M):=\mathcal{D}_1(M)\cap \mathcal{D}_2 \cap \mathcal{D}_3(M)$. By (\ref{EEEE1}) and (\ref{EEEE2}) (with $\epsilon'=\sqrt{\delta}$), we have $\{\tilde{d}_R(\mu_{n,k},\mu)\leq\delta\}\subseteq\mathcal{D}_2$. By Lemma \ref{field}, Theorem \ref{Main3}, and Proposition \ref{PropAiry1}, we have $\mathbb{P}(\mathcal{D}_1(M)^c\cup\mathcal{D}_3(M)^c)\leq C\exp(-Mk^2)$. Hence 
\begin{equation}\label{EEEE3}
\mathbb{P}(\tilde{d}_R(\mu_{n,k},\mu)\leq\delta)
\leq C\exp(-Mk^2)+\mathbb{P}(\{\tilde{d}_R(\mu_{n,k},\mu)\leq \delta\}\cap\mathcal{D}(M)).
\end{equation}

In the following, we condition on the $b_i$'s (where $i\in [n]$) that lie in
$[-R_1,R_1]^c$. Namely, we fix any choice of $K\in \{0\}\cup [n]$ and $\alpha_1,\alpha_2,\cdots,\alpha_{n-K}\in [-R_1,R_1]^c$ such that $\alpha_1<\cdots<\alpha_{n-K}$, and denote 
\begin{equation}
\mathcal{L}:=\{\{b_1,b_2,\cdots,b_n\}\cap [-R_1,R_1]^c=\{\alpha_1,\cdots,\alpha_{n-K}\}\}.
\end{equation}
Below, we consider $K$ and $\alpha_1,\alpha_2,\cdots,\alpha_{n-K}$ such that $\mathcal{L}\subseteq \mathcal{D}(M)$, and condition on $\mathcal{L}$. By the definition of $\mathcal{D}_3(M)$, we have $K\leq C_2 M R_1^{3\slash 2}k$. Let $\vec{x}=(x_1,x_2,\cdots,x_K)$ be such that $\{b_1,\cdots,b_n\}\cap [-R_1,R_1]=\{x_1,\cdots,x_K\}$ and $x_1<\cdots<x_K$. Note that $\mu_{n,k}=k^{-1}\sum_{i=1}^K\delta_{x_i}+k^{-1}\sum_{i=1}^{n-K}\delta_{\alpha_i}-\mu_0$. If $\tilde{d}_R(\mu_{n,k},\mu)\leq \delta$, as $\mu(\mathbb{R})=0$, by Lemma \ref{J12}, we have 
\begin{eqnarray}\label{bdd1}
   |K\slash k-\mu_0([0,R_1])|=|\mu_{n,k}([-R_1,R_1])|\leq \frac{2\epsilon}{\log(R_1)}+2\sqrt{\delta} R_1^{1\slash 4}.
\end{eqnarray}
Hence if (\ref{bdd1}) does not hold, we have $\mathbb{P}(\{\tilde{d}_R(\mu_{n,k},\mu)\leq \delta\}|\mathcal{L})=0$. For the remainder of the proof, we assume that (\ref{bdd1}) holds. By (\ref{rhod}), there exists a constant $C(\mathcal{L})$ that only depends on $\mathcal{L}$, such that for any measurable event $\mathcal{A}$,
\begin{equation*}
    \mathbb{P}(\mathcal{A}|\mathcal{L})=C(\mathcal{L})\int_{\{\vec{x}:-R_1\leq x_1<\cdots<x_K\leq R_1\text{ and } \mathcal{A}\text{ holds}\}}\exp(-k^2J_0(\mu_{n,k}))d\vec{x}.
\end{equation*}
Thus denoting by $\mathcal{E}$ the set of $\vec{x}$ such that $-R_1\leq x_1<\cdots<x_K\leq R_1$ and $\tilde{d}_R(\mu_{n,k},\mu)\leq \delta$, we have   
\begin{eqnarray}\label{EEEE11}
&& \mathbb{P}(\{\tilde{d}_R(\mu_{n,k},\mu)\leq \delta\}|\mathcal{L})
= C(\mathcal{L})\int_{\mathcal{E}}\exp(-k^2J_0(\mu_{n,k}))d\vec{x}\nonumber\\
&\leq& C(\mathcal{L})\exp\Big(-k^2\inf_{\vec{x}\in\mathcal{E}} \{J_0(\mu_{n,k})\}\Big)(2R_1)^K.
\end{eqnarray}
Hereafter, we denote by $C_{\mu}$, $C_{R_1}$ (or $c_{R_1}$), and $C_{\mu,R_1}$ positive constants that depend only on $\mu$, on $R_1$, and on $\mu$ and $R_1$, respectively; the values of such constants may change from line to line. By Lemma \ref{J12}, recalling Definition \ref{mu}, we obtain that 
\begin{eqnarray}\label{EEEE4}
  && \inf_{\vec{x}\in \mathcal{E}} J_0(\mu_{n,k})=\inf_{\vec{x}\in \mathcal{E}}\Big\{J(\mu_1)+2J(\mu_1,\mu_2+\mu_2')+2J(\mu_1,\mu_3+\mu_3')\nonumber\\
    &&\quad\quad\quad\quad\quad\quad\quad\quad\quad+J(\mu_2+\mu_2'+\mu_3+\mu_3')+2\int_{\mathbb{R}}\tilde{\xi}(x)d\mu_{n,k}(x)\Big\} \nonumber\\
    &\geq& -\int_{[-R_1,R_1]^2} \log(\max\{|x-y|,R_1^{-3}\})d\mu(x)d\mu(y)-C_{\mu}(\epsilon+R_1^{-1\slash 2})\nonumber\\
    && -C_{\mu,R_1}\sqrt{\delta} - CMR_1^{3\slash 2}\log(R_1)k^{-1}+J(\mu_2+\mu_2'+\mu_3+\mu_3')\nonumber\\
    && +2\inf_{\vec{x}\in\mathcal{E}} J(\mu_1,\mu_3+\mu_3')+2\inf_{\vec{x}\in\mathcal{E}}\int_{\mathbb{R}}\tilde{\xi}(x)d\mu_{n,k}(x).
\end{eqnarray}

For the remainder of the proof, we assume that $\max\{\epsilon,\sqrt{\delta}R_1^{1\slash 4}\}<1\slash 100$ (which implies $2\epsilon\slash \log(R_1)+2\sqrt{\delta}R_1^{1\slash 4}<1\slash 10$). Now we fix $R_0'\in [1,10]$ such that $L:=K-k\mu_0([R_0',R_1])\in\mathbb{Z}$. Note that by (\ref{bdd1}), as $k$ is taken to be sufficiently large, we have $L\in\mathbb{N}_{+}$. We let $c_0:=0$. For each $i\in [L]$, we let $c_i\in [0,R_0']$ be such that $\mu_0([0,c_i])=i\mu_0([0,R_0'])\slash(L+1)$; for each $i\in [L+1,K]\cap\mathbb{N}$, we let $c_i\in[R_0',R_0]$ be such that $\mu_0([R_0',c_i])=(i-L-1)\mu_0([R_0',R_1])\slash(K-L)$. We define $\mathcal{C}$ to be the set of $\vec{x}$ such that $|x_i-c_i|\leq (2n)^{-1}$ for all $i\in [K]$. Note that
\begin{eqnarray}\label{EEEE12}
 && 1=C(\mathcal{L})\int_{-R_1\leq x_1<\cdots<x_K\leq R_1}\exp(-k^2 J_0(\mu_{n,k}))d\vec{x}\nonumber\\
 &\geq& C(\mathcal{L})\int_{\mathcal{C} }\exp(-k^2 J_0(\mu_{n,k}))d\vec{x}
\geq C(\mathcal{L}) n^{-K} \exp\Big(-k^2\sup_{\vec{x}\in\mathcal{C}}J_0(\mu_{n,k})\Big).\nonumber\\
&&
\end{eqnarray}

Now we bound $\sup_{\vec{x}\in\mathcal{C}}J_0(\mu_{n,k})$. Below we only present the details for bounding $J_0(\mu_{n,k})$ when $\vec{x}=(c_1,\cdots,c_K)$, and the same bound holds for general $\vec{x}\in\mathcal{C}$. Let $\mu_{11}$ and $\mu_{12}$ be finite signed Borel measures on $\mathbb{R}$ such that for any $A\in\mathcal{B}_{\mathbb{R}}$, $\mu_{11}(A)=\mu_1(A\cap [0,R_0'))$ and $\mu_{12}(A)=\mu_1(A\cap [R_0',R_1])$. Note that $\mu_1=\mu_{11}+\mu_{12}$. For any $i,j\in [L]$ with $i\neq j$, we define
\begin{equation*}
L_{ij}:=\frac{1}{k}\log{|c_i-c_j|}-\int_{c_{j-1}}^{c_j}\log(|x-c_i|)d\mu_0(x).
\end{equation*}
For any $i\in [L]$, we define
\begin{eqnarray*}
 && L_{i}:=\frac{1}{k}\sum_{j\in [L]: j\neq i}\log(|c_i-c_j|)-\int_0^{R_0'}\log(|x-c_i|)d\mu_0(x)\nonumber\\
&=& \sum_{j\in [L]: j\neq i}L_{ij}-\int_{c_{i-1}}^{c_i}\log(|x-c_i|)d\mu_0(x)-\int_{c_L}^{R_0'}\log(|x-c_i|)d\mu_0(x).
\end{eqnarray*}
By the definition of $\{c_i\}_{i=1}^K$ and (\ref{bdd1}), for each $i\in [L+1]$, we have 
\begin{equation*}
    \Big|\mu_0([c_{i-1},c_i])-k^{-1}\Big|\leq \frac{C}{k}\Big(\frac{\epsilon}{\log(R_1)}+\sqrt{\delta} R_1^{1\slash 4}\Big),\quad ck^{-1}\leq c_i-c_{i-1} \leq Ck^{-2\slash 3};
\end{equation*}
for each $i\in [L+1,K]\cap\mathbb{N}$, we have
\begin{equation*}
  \mu_0([c_i,c_{i+1}])=k^{-1},\quad c_{R_1} k^{-1}\leq c_{i+1}-c_i\leq C k^{-1},
\end{equation*}
where $c_{K+1}:=R_1$. When $j\in [L]\backslash \{i,i+1\}$, by the above estimates, we have
\begin{eqnarray*}
    |L_{ij}| &\leq& \max \big\{|\log(|c_j-c_i|)|,|\log(|c_{j-1}-c_i|)|\big\}\big|k^{-1}
-\mu_0([c_{j-1},c_j])\big|\nonumber\\
    &&+k^{-1}\Big|\log\Big(\frac{|c_j-c_i|}{|c_{j-1}-c_i|}\Big)\Big|\nonumber\\
    &\leq& Ck^{-1}\Big(\frac{\epsilon}{\log(R_1)}+\sqrt{\delta} R_1^{1\slash 4}\Big)\big(|\log(|c_j-c_i|)|+|\log(|c_{j-1}-c_i|)|\big)\nonumber\\
    && + k^{-1}\Big|\log\Big(\frac{|c_j-c_i|}{|c_{j-1}-c_i|}\Big)\Big|.
\end{eqnarray*}
From this, we can deduce that $|L_i|\leq C\big(\epsilon\slash \log(R_1)+\delta^{1\slash 2}R_1^{1\slash 4}\big)$ for each $i\in [L]$ and $|J(\mu_{11})|\leq C\big(\epsilon\slash \log(R_1)+\delta^{1\slash 2}R_1^{1\slash 4}\big)$ (note that we have taken $k$ to be sufficiently large). Similarly, we can deduce that
\begin{equation*}
    |J(\mu_{12})|\leq C_{R_1}\log(k)\slash k, \quad |J(\mu_{11},\mu_{12})|\leq C_{R_1}\log(k)\slash k,
\end{equation*}
\begin{equation*}
    |J(\mu_{11},\mu_2+\mu_2')|\leq C_{\mu}\epsilon+C_{\mu,R_1}\sqrt{\delta}, \quad |J(\mu_{12},\mu_2+\mu'_{2})|\leq C_{\mu,R_1}\log(k) \slash k.
\end{equation*}
Hence we have 
\begin{eqnarray}\label{EEEE5}
\sup_{\vec{x}\in\mathcal{C}}J_0(\mu_{n,k})&\leq& C_{\mu}\epsilon+C_{\mu,R_1}\sqrt{\delta}+J(\mu_2+\mu_2'+\mu_3+\mu_3')\nonumber\\
&&+2\sup_{\vec{x}\in\mathcal{C}}J(\mu_1,\mu_3+\mu_3')+2\sup_{\vec{x}\in\mathcal{C}}\int_{\mathbb{R}}\tilde{\xi}(x)d\mu_{n,k}(x).
\end{eqnarray}

By Lemma \ref{L7.1}, as $k$ is sufficiently large, for any $x\in [-R_1,0]$, we have $|\tilde{\xi}(x)|\leq R_1^{3\slash 2}$ and $|\tilde{\xi}'(x)|\leq 2\sqrt{R_1}$; for any $x\in [0,R_1]$, we have $\tilde{\xi}(x)=0$. By the definition of $\mathcal{C}$, for any $\vec{x}\in \mathcal{C}$, we have $\int_{[-R_1,0]}\tilde{\xi}(x) d\mu_{n,k}(x)=0$. Let $\psi(x):=\mathbbm{1}_{[-R_1,1]}(x)+(x+R_1-1)\mathbbm{1}_{[-R_1,-R_1+1]}(x)-x\mathbbm{1}_{[0,1]}(x)$ for any $x\in\mathbb{R}$. Note that when $\vec{x}\in\mathcal{E}$, we have 
\begin{eqnarray*}
   && \int_{[-R_1,0]}\tilde{\xi}(x)d\mu_{n,k}(x) \geq \int_{\mathbb{R}}\psi(x)\tilde{\xi}(x)d\mu_{n,k}(x)\nonumber\\
   &\geq&\int_{\mathbb{R}}\psi(x)\tilde{\xi}(x)d\mu(x)-2R_1^{3\slash 2}\delta\geq\int_{[-R_1+1,0]}\tilde{\xi}(x)d\mu(x)-2R_1^{3\slash 2}\delta.
\end{eqnarray*}
Hence
\begin{eqnarray}\label{EEEE6}
 && 2\inf_{\vec{x}\in\mathcal{E}}\int_{\mathbb{R}}\tilde{\xi}(x)d\mu_{n,k}(x)
-2\sup_{\vec{x}\in\mathcal{C}}\int_{\mathbb{R}}\tilde{\xi}(x)d\mu_{n,k}(x)\nonumber\\
&=& 2\inf_{\vec{x}\in\mathcal{E}}\int_{[-R_1,0]}\tilde{\xi}(x)d\mu_{n,k}(x)
-2\sup_{\vec{x}\in\mathcal{C}}\int_{[-R_1,0]}\tilde{\xi}(x)d\mu_{n,k}(x) \nonumber\\
&\geq& 2\int_{[-R_1+1,0]}\tilde{\xi}(x)d\mu(x)-4R_1^{3\slash 2}\delta.
\end{eqnarray}
By Lemma \ref{L7.1} and the dominated convergence theorem (as $|\mu|(\mathbb{R})<\infty$),
\begin{equation}\label{EEEE7}
\lim_{k\rightarrow \infty}2\int_{[-R_1+1,0]}\tilde{\xi}(x)d\mu(x)=\int_{[-R_1+1,0]}\frac{4}{3}|x|^{3\slash 2}d\mu(x).
\end{equation}

Now consider any $\vec{x}=(x_1,\cdots,x_K)\in\mathcal{C}$ and $\vec{y}=(y_1,\cdots,y_K)\in\mathcal{E}$. We denote $\tilde{\mu}_{n,k}:=k^{-1}\sum_{i=1}^K\delta_{y_i}+k^{-1}\sum_{i=1}^{n-K}\delta_{\alpha_i}-\mu_0$, and let $\tilde{\mu}_1$ be the finite signed Borel measure on $\mathbb{R}$ such that $\tilde{\mu}_1(A)=\tilde{\mu}_{n,k}(A\cap [-R_1,R_1])$ for any $A\in\mathcal{B}_{\mathbb{R}}$. For any $s,s'\in [-R_1,R_1]$ such that $s\leq s'$, as $\mathcal{L}\subseteq\mathcal{D}(M)$, we have
\begin{eqnarray*}
&&\bigg|\int (\log(|s-y|)-\log(|s'-y|))d(\mu_3+\mu'_3)(y)\bigg|\\
&\leq& \int_{s}^{s'}\bigg(\bigg|\int\frac{1}{|x-y|}d\mu_3(y)\bigg|+\bigg|\int\frac{1}{|x-y|}d\mu_3'(y)\bigg|\bigg)dx\leq CMR_1 R_2^{-1\slash 2}.
\end{eqnarray*}
Hence we have
\begin{eqnarray*}
&&|J(\mu_1,\mu_3+\mu'_3)-J(\tilde{\mu}_1,\mu_3+\mu_3')|\\
&=&\bigg|\frac{1}{k}\sum_{i=1}^K \int (\log(|x_i-y|)-\log(|y_i-y|))d(\mu_3+\mu'_3)(y)\bigg|\\
&\leq& CKMR_1R_2^{-1\slash 2}k^{-1}\leq CM^2 R_1^{-1\slash 2}.
\end{eqnarray*}
Therefore, we have
\begin{equation}\label{EEEE8}
    \Big|\sup_{\vec{x}\in \mathcal{C}}J(\mu_1,\mu_3+\mu'_3)-\inf_{\vec{x}\in\mathcal{E}}J(\mu_1,\mu_3+\mu'_3)\Big|  \leq CM^2 R_1^{-1\slash 2}.
\end{equation}

Combining (\ref{EEEE4}), (\ref{EEEE5}), (\ref{EEEE6}), and (\ref{EEEE8}), we obtain that
\begin{eqnarray*}\label{EEEE9}
&&\inf_{\vec{x}\in\mathcal{E}}J_0(\mu_{n,k})-\sup_{\vec{x}\in\mathcal{C}}J_0(\mu_{n,k}) \nonumber\\
&\geq& -\int_{[-R_1,R_1]^2}\log(\max\{|x-y|,R_1^{-3}\})d\mu(x)d\mu(y)-CMR_1^{3\slash 2}\log(R_1)k^{-1}\nonumber\\
&& +2\int_{[-R_1+1,0]}\tilde{\xi}(x)d\mu(x)-C_{\mu}(\epsilon+M^2 R_1^{-1\slash 2}
)-C_{\mu,R_1}\sqrt{\delta}=:\Gamma.
\end{eqnarray*}
Hence by (\ref{EEEE11}) and (\ref{EEEE12}), 
\begin{eqnarray}\label{EEEE10}
\mathbb{P}(\{\tilde{d}_R(\mu_{n,k},\mu)\leq \delta\}|\mathcal{L}) &\leq& \exp\Big(-k^2\Big(\inf_{\vec{x}\in\mathcal{E}}J_0(\mu_{n,k})-\sup_{\vec{x}\in\mathcal{C}}J_0(\mu_{n,k})\Big)\Big)(2nR_1)^{K}\nonumber\\
&\leq& \exp(-k^2\Gamma)(2nR_1)^{CMR_1^{3\slash 2}k}.
\end{eqnarray}
We denote 
\begin{eqnarray*}
\tilde{\Gamma}&:=&-\int_{[-R_1,R_1]^2}\log(\max\{|x-y|,R_1^{-3}\})d\mu(x)d\mu(y)\\
&&+\int_{[-R_1+1,0]}\frac{4}{3}|x|^{3\slash 2}d\mu(x)-C_{\mu}(\epsilon+M^2 R_1^{-1\slash 2})-C_{\mu,R_1}\sqrt{\delta}.
\end{eqnarray*}
By (\ref{EEEE3}), (\ref{EEEE7}), and (\ref{EEEE10}), we have
\begin{equation*}
\mathbb{P}(\tilde{d}_{R}(\mu_{n,k},\mu)\leq \delta)\leq C\exp(-Mk^2)+\exp(-k^2\Gamma)(2nR_1)^{CMR_1^{3\slash 2}k},
\end{equation*}
\begin{equation*}
\limsup_{k\rightarrow\infty}\frac{1}{k^2}\log{\mathbb{P}(\tilde{d}_{R}(\mu_{n,k},\mu)\leq \delta)}\leq \max\{-M,-\tilde{\Gamma}\}.
\end{equation*}
By first sending $\delta\rightarrow 0^{+}$, then sending $R\rightarrow\infty$ (hence we can send $R_1\rightarrow\infty$ and $\epsilon\rightarrow 0^{+}$), and finally sending $M\rightarrow\infty$, we obtain that (by the monotone convergence theorem)
\begin{eqnarray}\label{neqq7}
&&\limsup_{R\rightarrow \infty}\limsup_{\delta\rightarrow 0^{+}}\limsup_{k\rightarrow\infty}\frac{1}{k^2}\log{\mathbb{P}(\tilde{d}_{R}(\mu_{n,k},\mu)\leq \delta)}\nonumber\\
&\leq& \liminf_{R\rightarrow \infty}\int_{[-R,R]^2}\log(\max\{|x-y|,R^{-3}\})d\mu(x)d\mu(y)-\int_{(-\infty,0]} \frac{4}{3}|x|^{3\slash 2}d\mu(x). \nonumber\\
&&
\end{eqnarray}

For any function $\phi:\mathbb{R}\rightarrow\mathbb{R}$ with $\supp(\phi)\subseteq[-R,R]$ and $||\phi||_{BL}\leq 1$, as $k\rightarrow\infty$, we have
\begin{equation}\label{EEEE1.2}
\bigg|\int_{0}^R \phi(x) d\mu_0(x)-\int_{0}^R \phi(x) d\nu_0(x)\bigg|\leq \left(1-\sqrt{1-\frac{1}{4}\Big(\frac{k}{n}\Big)^{2\slash 3}R}\right)\frac{2R^{3\slash 2}}{3\pi}\rightarrow 0.
\end{equation}
Below we fix an arbitrary $\eta\geq 15$. By Theorem \ref{Main3}, we can couple $\{b_i\}_{i=1}^n$ with $\{a_i\}_{i=1}^{\infty}$ (the ordered points of the Airy point process), such that if we denote by $\mathcal{A}_1$ the event that $|k^{2\slash 3}b_i+a_i|\leq Cn^{-1\slash 24}$ for all $i\in [\lceil C_3\eta R^{3\slash 2} k\rceil]$ (where $C_3\geq 1$ is a sufficiently large absolute constant), then $\mathbb{P}(\mathcal{A}_1^c)\leq C\exp(-c k^3)$. Let $I_1:=\{i\in [n]:b_i\in [-R,R]\}$, $I_2:=\{i\in\mathbb{N}_{+}:k^{-2\slash 3}a_i\in [-R,R]\}$, and $I:=I_1\cap I_2$. We denote by $\mathcal{A}_2$ the event that
\begin{equation*}
    \max\{\#\{i\in [n]: b_i\leq R\},\#\{i\in\mathbb{N}_{+}:k^{-2\slash 3}a_i\geq -R\}\}\leq C_3\eta R^{3\slash 2}k.
\end{equation*}
By Proposition \ref{PropAiry1}, $\mathbb{P}(\mathcal{A}_2^c)\leq C\exp(-\eta k^2)$. Below we assume that the event $\mathcal{A}_1\cap\mathcal{A}_2$ holds. Note that this implies $\max\{\# I_1,\# I_2\}\leq C_3\eta R^{3\slash 2}k$. We have 
\begin{eqnarray}
&& \bigg|\frac{1}{k}\sum_{i\in I_1}\phi(b_i)-\frac{1}{k}\sum_{i\in I_2}\phi(-k^{-2\slash 3}a_i)\bigg| \nonumber\\
&\leq& \frac{1}{k}\sum_{i\in I}\Big|\phi(b_i)-\phi(-k^{-2\slash 3}a_i)\Big|+\frac{1}{k}\bigg|\sum_{i\in I_1\backslash I}\phi(b_i)\bigg|+\frac{1}{k}\bigg|\sum_{i\in I_2\backslash I}\phi(-k^{-2\slash 3}a_i)\bigg|\nonumber\\
&=:& P_1+P_2+P_3.
\end{eqnarray}
As $k\rightarrow\infty$, we have 
\begin{equation}
P_1\leq \frac{1}{k}\sum_{i\in I}|b_i+k^{-2\slash 3}a_i|\leq C n^{-1\slash 24} k^{-2\slash 3} \eta R^{3\slash 2}\rightarrow 0.
\end{equation}
As $\supp(\phi)\subseteq [-R,R]$ and $\|\phi\|_{Lip}\leq 1$, we have $\phi(R)=\phi(-R)=0$. For any $i\in I_1\backslash I$, we have
\begin{equation*}
    \min\{|b_i-R|,|b_i+R|\}\leq Cn^{-1\slash 24}k^{-2\slash 3},\quad\text{hence }|\phi(b_i)|\leq Cn^{-1\slash 24}k^{-2\slash 3}.
\end{equation*}
For any $i\in I_2\backslash I$, we have
\begin{eqnarray*}
   & \min\{|-k^{-2\slash 3}a_i-R|,|-k^{-2\slash 3}a_i+R|\}\leq Cn^{-1\slash 24}k^{-2\slash 3},\nonumber\\
  &\text{hence }|\phi(-k^{-2\slash 3}a_i)|\leq Cn^{-1\slash 24}k^{-2\slash 3}.
\end{eqnarray*}
Hence as $k\rightarrow\infty$, we have 
\begin{equation}\label{EEEE1.3}
    P_2+P_3\leq  C n^{-1\slash 24}k^{-2\slash 3} \eta R^{3\slash 2}\rightarrow 0.
\end{equation}

Now we let $\tilde{\nu}_{k;R}$ be the finite signed Borel measure on $\mathbb{R}$ such that for any $A\in\mathcal{B}_{\mathbb{R}}$, $\tilde{\nu}_{k;R}(A)=\nu_{k;R}(A\cap [-R,R])$. By (\ref{EEEE1.2})-(\ref{EEEE1.3}), for any $\delta>0$, when $k$ is sufficiently large, 
\begin{equation*}
    \mathbb{P}(\tilde{d}_R(\mu_{n,k},\tilde{\nu}_{k;R})\geq \delta)\leq C\exp(-\eta k^2).
\end{equation*}
As $\tilde{d}_R(\tilde{\nu}_{k;R},\mu)\leq d_R(\nu_{k;R},\mu|_{[-R,R]})$, we have
\begin{equation*}
    \mathbb{P}(d_R(\nu_{k;R},\mu|_{[-R,R]})\leq \delta)\leq \mathbb{P}(\tilde{d}_R(\mu_{n,k},\mu)\leq 2\delta)+\mathbb{P}(\tilde{d}_R(\mu_{n,k},\tilde{\nu}_{k;R})\geq \delta).
\end{equation*}
Noting (\ref{neqq7}) and sending $\eta\rightarrow\infty$, we conclude that
\begin{eqnarray*}
&&\limsup_{R\rightarrow \infty}\limsup_{\delta\rightarrow 0^{+}}\limsup_{k\rightarrow\infty}\frac{1}{k^2}\log{\mathbb{P}(d_{R}(\nu_{k;R},\mu|_{[-R,R]})\leq \delta)}\\
&\leq& \liminf_{R\rightarrow \infty}\int_{[-R,R]^2}\log(\max\{|x-y|,R^{-3}\})d\mu(x)d\mu(y)-\int_{(-\infty,0]}
\frac{4}{3}|x|^{3\slash 2}d\mu(x).
\end{eqnarray*}

\end{proof}

Next we give the proof of Theorem \ref{Main2} when $\mu(\mathbb{R})\neq 0$.

\begin{proof}[Proof of Theorem \ref{Main2}, $\mu(\mathbb{R})\neq 0$ case]

We denote $\kappa:=\mu(\mathbb{R})$. Without loss of generality, we assume that $\kappa>0$. We take $R_1\geq 10$ sufficiently large (depending on $\mu$) such that $|\mu|((-R_1\slash 2,R_1\slash 2)^c)\leq \kappa\slash 100$, and take $R\geq 4R_1$. We assume that $d_R(\nu_{k;R},\mu|_{[-R,R]})\leq\delta$ with $\delta\in (0,\kappa\slash 12)$ and $k$ is sufficiently large, and take $m:=\lceil 2\sqrt{R}\slash \delta\rceil$. For any $\lambda\in [2R_1,R-1]$ and $x\in\mathbb{R}$, we define
\begin{equation*}
    \psi_{\lambda}(x):=\mathbbm{1}_{[-R_1,\lambda+1]}(x)-(x-\lambda)\mathbbm{1}_{[\lambda,\lambda+1]}(x), \quad \tilde{\psi}_{\lambda}(x):=\frac{1}{m}\sum_{j=0}^{m-1}\mathbbm{1}_{[-R_1,\lambda+j\slash m]}(x).
\end{equation*}
It can be checked that
\begin{equation}\label{Rel}
    \int_{[-R,R]} (\psi_{\lambda}-\tilde{\psi}_{\lambda})d\nu_{k;R}\leq \kappa\slash 100+2\delta, \quad \int_{[-R,R]}\psi_{\lambda}d\nu_{k;R}\geq 3\kappa\slash 4-\delta.
\end{equation}
We recall the notations in Section \ref{Sect.n1}. By (\ref{Rel}) and Propositions \ref{Diffu}-\ref{Airyop},  
\begin{eqnarray}\label{EEEE1.5}
&&\frac{1}{m}\sum_{j=0}^{m-1}\Big(N\Big(\Big(\lambda+\frac{j}{m}\Big)k^{2\slash 3}\Big)-N_0\Big(\Big(\lambda+\frac{j}{m}\Big)k^{2\slash 3}\Big)\Big)-N(-R_1k^{2\slash 3})\nonumber\\
&\geq& k\int_{[-R,R]}\tilde{\psi}_{\lambda}d\nu_{k;R}-2\geq (\kappa\slash 2-3\delta)k\geq \kappa k\slash 4.
\end{eqnarray}

We take $K:=\log\log{R}$ and $I_0:=\lfloor \log{R}\slash (2\log{K})\rfloor$. In the following, we assume that $R$ is sufficiently large, so that $K>2R_1$ and $I_0\geq 10$. For any $j_1,\cdots,j_{I_0}\in \{0,1,\cdots, m-1\}$, we let $\mathcal{B}(j_1,j_2,\cdots,j_{I_0})$ be the event that 
\begin{equation*}
    N\big((K^i+j_i\slash m)k^{2\slash 3}\big)-N_0\big((K^i+j_i\slash m)k^{2\slash 3}\big)\geq \kappa k\slash 8
\end{equation*}
for all $i\in [I_0]$. We further define $\mathcal{C}:=\bigcup_{(j_1,\cdots,j_{I_0})\in \{0,1,\cdots,m-1\}^{I_0}}\mathcal{B}(j_1,\cdots,j_{I_0})$. By the arguments above, for any $\delta\in (0,\kappa\slash 12)$, $\{d_R(\nu_{k;R},\mu|_{[-R,R]})\leq \delta\}\subseteq \mathcal{C}$.

Below we bound $\mathbb{P}(\mathcal{B}(j_1,\cdots,j_{I_0}))$ for any $j_1,\cdots,j_{I_0}\in \{0,1,\cdots,m-1\}$. For any $\lambda\in\mathbb{R}$, we define $q_{\lambda}(x),x\geq 0$ by $q_{\lambda}'(x)=x-\lambda-q_{\lambda}^2(x),q_{\lambda}(0)=\infty$. Let $s_0:=0$ and $s_{I_0+1}:=\infty$. For each $i\in [I_0]$, we let $s_i:=(K^i+j_i\slash m)k^{2\slash 3}$, and let $\mathcal{A}_i$ be the event that the absolute difference between the number of blow-ups of $p_{s_i}(x)$ and $q_{s_i}(x)$ on $[0,s_{i-1}]\cup (2s_{i},\infty)$ is at least $K^{-1\slash 3}\kappa k\slash 8$. From the proof of Proposition \ref{PropAiry2}, we have $\mathbb{P}(\mathcal{A}_i)\leq \exp(-cK^{1\slash 3}k^2)$.

Let $\mathcal{A}:=\bigcap_{i\in [I_0]} \mathcal{A}_i^c$. By the union bound, $\mathbb{P}(\mathcal{A}^c)\leq \exp(-cK^{1\slash 3}k^2)$. When the event $\mathcal{B}(j_1,\cdots,j_{I_0})\cap\mathcal{A}$ holds, for each $i\in [I_0]$, the difference between the number of blow-ups of $p_{s_i}(x)$ and $q_{s_i}(x)$ on $(s_{i-1},2s_{i}]$ is greater than or equal to $(1-K^{-1\slash 3})\kappa k\slash 8\geq \kappa k\slash 16$. Hence from the proof of Proposition \ref{PropAiry2} and the strong Markov property, we have
\begin{equation*}
    \mathbb{P}(\mathcal{B}(j_1,\cdots,j_{I_0})\cap\mathcal{A}) \leq \prod_{i=1}^{\lfloor\frac{\log{R}}{2\log{K}}\rfloor} C\exp\left(-c\frac{k^2}{i\log{K}}\right)\leq C\exp\left(-ck^2\frac{\log\log{R}}{\log\log\log{R}}\right).
\end{equation*}
By taking a union bound, we obtain that 
\begin{equation*}
    \mathbb{P}(\mathcal{B}(j_1,\cdots,j_{I_0}))\leq C\exp(-ck^2(\log\log{R})^{1\slash 3}).
\end{equation*}
Hence
\begin{equation*}
   \mathbb{P}(d_R(\nu_{k;R},\mu|_{[-R,R]})\leq \delta)\leq \mathbb{P}(\mathcal{C})\leq C(\lceil 2\sqrt{R}\slash \delta\rceil)^{I_0}\exp(-ck^2(\log\log{R})^{1\slash 3}).
\end{equation*}
We arrive at the conclusion by first taking $k\rightarrow \infty$ and then taking $R\rightarrow \infty$.
\end{proof}

\subsection{Proof of Theorem \ref{Main1}, part (a)}
\label{sec:5.3}

In this subsection, we finish the proof of Theorem \ref{Main1}.

\begin{proof}[Proof of Theorem \ref{Main1}, part (a)]

We use a similar strategy as in the proof of Theorem \ref{Main2}. Recall the definitions of $\mathcal{Z}$, $\{b_i\}_{i=1}^n$, and $\mu_{n,k}$ from (\ref{DefiZ}) and Definition \ref{mu}. 

We fix an arbitrary $\mu\in\mathcal{Z}$ such that $\mu(\mathbb{R})=0$, and take $\delta\in (0,1\slash 2)$. We let $\mathcal{B}$ be the event that $|b_i-R_0|\geq n^{-1\slash 30}$ and $|b_i+R_0|\geq n^{-1\slash 30}$ for all $i\in [n]$, and let $\mathcal{A}:=\{d_{R_0}(\mu_{n,k},\mu)\leq \delta\slash 2\}\cap \mathcal{B}$. Throughout the proof, we assume that $n^{5\cdot 10^{-7}}\leq k\leq  n^{10^{-6}}$ and that $k$ is sufficiently large. We take $R_0'\geq R_0+10$ such that $k\nu_0([0,R_0'])\in\mathbb{Z}$, $\supp(\mu)\subseteq [-R_0'\slash 2,R_0'\slash 2]$, and $R_0'$ is uniformly bounded (for fixed $\mu$ and $R_0$). We take $R_0''=R_0'+10$, $R_1\geq 2R_0''$, and $R_2=R_1^6$ such that $k\mu_0([R_0'',R_1])\in\mathbb{Z}$. We also take $R\geq R_2+10$. We fix an arbitrary $M\geq 1$, and define the following events: 
\begin{eqnarray*}
    \mathcal{D}_1(M)&:=& \bigg\{\bigg|\int_{(R_2,\infty)} \frac{1}{y-x}d\mu_3 (y)\bigg|\leq C_1 M   R_2^{-1\slash 2}\text{ and} \nonumber\\ && \quad 
 \bigg|\int_{(-\infty,-R_2)}\frac{1}{y-x}d\mu'_3(y)\bigg|\leq C_1 M R_2^{-5\slash 2} \text{ for all }x\in [-R_1,R_1]\bigg\},
\end{eqnarray*}
\begin{equation*}
    \mathcal{D}_2(M):=\big\{\#\{i\in [n]:|b_i|\leq R_1\}\leq C_2 M R_1^{3\slash 2}k\big\},
\end{equation*}
where $C_1$ is the constant appearing in Lemma \ref{field} and $C_2$ is a sufficiently large positive absolute constant. We also define $\mathcal{D}(M):=\mathcal{D}_1(M)\cap\mathcal{D}_2(M)$. By Lemma \ref{field}, Theorem \ref{Main3}, and Proposition \ref{PropAiry1},
\begin{equation}\label{EEEE1.7}
    \mathbb{P}(\mathcal{D}(M)^c)\leq C\exp(-Mk^2).
\end{equation}

Similar to the proof of Theorem \ref{Main2}, we fix any choice of $K\in\{0\}\cup [n]$ and $\alpha_1,\cdots,\alpha_{n-K}\in [-R_1,R_1]^c$ such that $\alpha_1<\cdots<\alpha_{n-K}$, and denote
\begin{equation*}
    \mathcal{L}:=\{\{b_1,b_2,\cdots,b_n\}\cap [-R_1,R_1]^c=\{\alpha_1,\cdots,\alpha_{n-K}\}\}.
\end{equation*}
In the following, we consider $K$ and $\alpha_1,\cdots,\alpha_{n-K}$ such that $\mathcal{L}\subseteq\mathcal{D}(M)$, and condition on $\mathcal{L}$. Let $\vec{x}=(x_1,x_2,\cdots,x_K)$ be such that $x_1<\cdots<x_K$ and $\{b_1,\cdots,b_n\}\cap [-R_1,R_1]=\{x_1,\cdots,x_K\}$. Denote by $C_0$ the constant $C$ appearing in Lemma \ref{A} (with $e_1=5\cdot 10^{-7}$ and $e_2=10^{-6}$). We aim to provide a uniform lower bound $T_0\geq 0$ such that
\begin{equation}\label{B}
\mathbb{P}(\mathcal{A}|\mathcal{L})\geq T_0\mathbb{P}(d_R(\mu_{n,k},0)\leq C_0 R k^{-1\slash 4}|\mathcal{L}).
\end{equation}
Assuming this result, by Lemma \ref{A} and (\ref{EEEE1.7}), we have (note that $k$ is sufficiently large)
\begin{eqnarray*}
\mathbb{P}(\mathcal{A})&\geq& \mathbb{P}(\mathcal{A}\cap\mathcal{D}(M))
= \mathbb{E}[\mathbb{P}(\mathcal{A}|\mathcal{L})\mathbbm{1}_{\mathcal{L}\subseteq\mathcal{D}(M)}]\\
&\geq& T_0\mathbb{E}[\mathbb{P}(d_R(\mu_{n,k},0)\leq C_0 R k^{-1\slash 4}|\mathcal{L})\mathbbm{1}_{\mathcal{L}\subseteq\mathcal{D}(M)}]\\
&=&T_0\mathbb{P}(\{d_R(\mu_{n,k},0)\leq C_0 R k^{-1\slash 4}\}\cap\mathcal{D}(M))
\geq \frac{1}{2}T_0.
\end{eqnarray*}

In the following, we will verify (\ref{B}). Without loss of generality, we assume that $\mathbb{P}(d_R(\mu_{n,k},0)\leq C_0 R k^{-1\slash 4}|\mathcal{L})>0$. We denote by $\tilde{\mathcal{A}}$ ($\tilde{\mathcal{B}}$, respectively) the set of $\vec{x}$ for which $-R_1 \leq x_1<\cdots<x_K \leq R_1$ and the event $\mathcal{A}$ ($\mathcal{B}$, respectively) holds, and denote by $\mathcal{E}$ the set of $\vec{x}$ for which $-R_1 \leq x_1<\cdots<x_K \leq R_1$ and $d_R(\mu_{n,k},0)\leq C_0 R k^{-1\slash 4}$. As in the proof of Theorem \ref{Main2}, we have
\begin{equation}\label{EEEE21}
\frac{\mathbb{P}(\mathcal{A}|\mathcal{L})}{\mathbb{P}(d_R(\mu_{n,k},0)\leq C_0 R k^{-1\slash 4}|\mathcal{L})}=\frac{\int_{\tilde{\mathcal{A}}}\exp(-k^2 J_0(\mu_{n,k}))d\vec{x}}
{\int_{\mathcal{E}}\exp(-k^2 J_0(\mu_{n,k}))d\vec{x}}.
\end{equation}
Hereafter, we denote by $C_{R}$ and $c_R$ positive constants that depend only on $\mu$ and $R$; the values of such constants may change from line to line. By Lemma \ref{J12}, for any $\vec{x}\in\mathcal{E}$,  
\begin{equation*}
    \min\{J(\mu_1),J(\mu_1,\mu_2+\mu'_2)\}\geq -C_R k^{-1\slash 10}-C R_1^{-1\slash 2},
\end{equation*}
\begin{eqnarray}
\text{ hence} \quad J_0(\mu_{n,k})&\geq& -C_R k^{-1\slash 10}- C R_1^{-1\slash 2}+J(\mu_2+\mu_2'+\mu_3+\mu_3')\nonumber\\
    &+&2\inf_{\vec{x}\in\mathcal{E}}J(\mu_1,\mu_3+\mu'_3)
+2\inf_{\vec{x}\in\mathcal{E}}\int
_{\mathbb{R}}\tilde{\xi}(x)d\mu_{n,k}(x).
\end{eqnarray}

Without loss of generality, we assume that $\mu(\{x\})=0$ for all $x\in\mathbb{R}$. Let $L:=k\nu_0([0,R_0'])\in\mathbb{N}_{+}$, and let $c_0:=-R_0'$ and $c_L:=R_0'$. For each $i\in [L-1]$, we take $c_i\in [-R_0',R_0']$ such that $(\mu+\nu_0)([-R_0',c_i])=i\slash k$. Now we define $\{\tilde{c}_i\}_{i=0}^L$ as follows. For each $i\in [L]$ with $c_i<-R_0$, we let $\tilde{c}_i:=c_i-2n^{-1\slash 30}$; for each $i\in [L]$ with $c_i>R_0$, we let $\tilde{c}_i:=c_i+2n^{-1\slash 30}$. We also let $\tilde{c}_0:=c_0-2n^{-1\slash 30}$. Below we consider the case where $\{c_1,\cdots,c_L\}\cap [-R_0,R_0]\neq\emptyset$. Suppose that
\begin{equation*}
    \{c_1,\cdots,c_L\}\cap [-R_0,R_0]=\{c_{i_1},c_{i_1+1},\cdots,c_{i_2}\},\quad\text{where }1\leq i_1\leq i_2\leq L.
\end{equation*}
We denote $\bar{c}_{i_1-1}:=-R_0$, $\bar{c}_{i_2+1}:=R_0$, and $\bar{c}_{i}:=c_i$ for each $i\in [i_1,i_2]\cap\mathbb{N}$. Note that $\max\{\bar{c}_{i}-\bar{c}_{i-1}:i\in [i_1,i_2+1]\cap\mathbb{N}\}\geq 2R_0\slash (L+1) \geq c_R\slash k$. Let $i_0:=\min\{i\in [i_1,i_2+1]\cap\mathbb{N}:\bar{c}_i-\bar{c}_{i-1}\geq 2R_0\slash (L+1)\}$. For each $i\in [i_1,i_2]\cap\mathbb{N}$ with $i< i_0$, we let $\tilde{c}_i:=c_i+2n^{-1\slash 30}$; for each $i\in [i_1,i_2]\cap\mathbb{N}$ with $i  \geq  i_0$, we let $\tilde{c}_i:=c_i-2n^{-1\slash 30}$. As $k$ is sufficiently large, we have $\tilde{c}_0<\tilde{c}_1<\tilde{c}_2<\cdots<\tilde{c}_L$.

Note that we have assumed that $\mathbb{P}(d_R(\mu_{n,k},0)\leq C_0 R k^{-1\slash 4}|\mathcal{L})>0$. By Lemma \ref{J12}, we have $|K\slash k-\mu_0([0,R_1])|=|\mu_{n,k}([-R_1,R_1])|\leq C_R k^{-1\slash 10}$. We take $L':=K-L-k\mu_0([R_0'',R_1])$ and $K':=k\mu_0([R_0'',R_1])\in\mathbb{N}_{+}$. As $k$ is sufficiently large, we can deduce that $L'\in\mathbb{N}_{+}$. For each $i\in [L']$, we take $d_i\in (R_0',R_0'')$ such that $\mu_0([R_0',d_i])=i\mu_0([R_0',R_0''])\slash (L'+1)$. For each $i\in [K']$, we take $e_i\in [R_0'',R_1]$ such that $\mu_0([R_0'',e_i])=(i-1)\slash k$.

Now we take $\mathcal{C}$ to be the set of $\vec{x}=(x_1,\cdots,x_K)$, such that for each $i\in[L]$, $x_i=\tilde{c}_i+t_i$ with $0< t_1< t_2<\cdots<t_L\leq (2n)^{-1}$; for each $i\in [L+1,L+L']\cap\mathbb{N}$, $|x_i-d_{i-L}|\leq(2n)^{-1}$; for each $i\in [L+L'+1,K]\cap\mathbb{N}$, $|x_i-e_{i-(L+L')}|\leq(2n)^{-1}$. It can be verified that for any $\vec{x}\in\mathcal{C}$, $d_{R_0}(\mu_{n,k},\mu)\leq \delta\slash 2$ (note that $k$ is taken to be sufficiently large). Moreover, by the construction of $\mathcal{C}$, we have $\mathcal{C}\subseteq\tilde{\mathcal{B}}$, hence $\mathcal{C}\subseteq\tilde{\mathcal{A}}$.

Now we bound $J_0(\mu_{n,k})$ for any $\vec{x}\in\mathcal{C}$. We have
\begin{eqnarray*}
&&-\int_{[-R_0',R_0']^2} \log(|x-y|)d(\mu+\nu_0)(x)d(\mu+\nu_0)(y)\\
&\geq& -\frac{1}{k^2}\sum_{i=1}^L\log(|c_i-c_{i-1}|)-\frac{2}{k^2}\sum_{1\leq j\leq L,0\leq i\leq j-2}\log(|c_i-c_j|)\\
&\geq& -\frac{1}{k^2}\sum_{i=1}^L\log(|\tilde{c}_i-\tilde{c}_{i-1}|)-\frac{2}{k^2}\sum_{1\leq j\leq L,0\leq i\leq j-2}\log(|\tilde{c}_i-\tilde{c}_j|)- C_R k^{-1}\nonumber\\
&\geq& -\frac{1}{k^2}\sum_{i=2}^L\log(|\tilde{c}_i-\tilde{c}_{i-1}|)-\frac{2}{k^2}\sum_{1\leq j\leq L,1\leq i\leq j-2}\log(|\tilde{c}_i-\tilde{c}_j|)- C_R k^{-1}.
\end{eqnarray*}
Hence for any $\vec{x}\in\mathcal{C}$,
\begin{eqnarray*}
&&-\frac{2}{k^2}\sum_{1\leq i<j\leq L}\log(|x_i-x_j|)\nonumber\\
&\leq&  -\frac{1}{k^2}\sum_{i=2}^L\log(|t_i-t_{i-1}|) -\frac{1}{k^2}\sum_{i=2}^L\log(|\tilde{c}_i-\tilde{c}_{i-1}|) -\frac{2}{k^2}\sum_{\substack{1\leq j\leq L,\\1\leq i\leq j-2}}\log(|\tilde{c}_i-\tilde{c}_j|)\nonumber\\
&\leq& -\frac{1}{k^2}\sum_{i=2}^L\log(|t_i-t_{i-1}|)+C_R k^{-1}\nonumber\\
&&-\int_{[-R_0',R_0']^2} \log(|x-y|)d(\mu+\nu_0)(x)d(\mu+\nu_0)(y).
\end{eqnarray*}
Using the above bound, similar to the proof of Theorem \ref{Main2}, we can deduce that for any $\vec{x}\in\mathcal{C}$,
\begin{equation*}
    J(\mu_1)\leq J(\mu)-\frac{1}{k^2}\sum_{i=2}^L\log(|t_i-t_{i-1}|)+C_R k^{-1\slash 25}, \quad J(\mu_1,\mu_2+\mu_2')\leq C_R k^{-1\slash 25}.
\end{equation*}
Therefore, for any $\vec{x}\in\mathcal{C}$, we have
\begin{eqnarray}
    J_0(\mu_{n,k})&\leq& J(\mu)+C_R k^{-1\slash 25}+J(\mu_2+\mu'_2+\mu_3+\mu'_3)+2\sup_{\vec{x}\in\mathcal{C}}J(\mu_1,\mu_3+\mu'_3)\nonumber\\
    &&+2\sup_{\vec{x}\in\mathcal{C}}\int_{\mathbb{R}} \tilde{\xi}(x)d\mu_{n,k}(x)-\frac{1}{k^2}\sum_{i=2}^L\log(|t_i-t_{i-1}|).
\end{eqnarray}

Now similar to the proof of Theorem \ref{Main2}, as $k$ is sufficiently large, we have 
\begin{equation}
\Big|\sup_{\vec{x}\in\mathcal{C}}J(\mu_1,\mu_3+\mu'_3)-\inf_{\vec{x}\in\mathcal{E}}J(\mu_1,\mu_3+\mu'_3)\Big|\leq CM^2 R_1^{-1\slash 2},
\end{equation}
\begin{eqnarray}\label{EEEE22}
&& 2\sup_{\vec{x}\in\mathcal{C}}\int_{\mathbb{R}}\tilde{\xi}(x) d\mu_{n,k}(x)
-2\inf_{\vec{x}\in\mathcal{E}}\int_{\mathbb{R}}\tilde{\xi}(x) d\mu_{n,k}(x) \nonumber\\
&=& 2\sup_{\vec{x}\in\mathcal{C}}\int_{[-R_1,0]}\tilde{\xi}(x) d\mu_{n,k}(x)
-2\inf_{\vec{x}\in\mathcal{E}}\int_{[-R_1,0]}\tilde{\xi}(x) d\mu_{n,k}(x) \nonumber\\
&\leq& 2\sup_{\vec{x}\in\mathcal{C}}\int_{[-R_1,0]} \tilde{\xi}(x)d\mu_{n,k}(x)\leq 2\int_{[-R_1,0]} \tilde{\xi}(x)d\mu(x)+C_R k^{-1},
\end{eqnarray}
where in the last inequality we note that for any $\vec{x}\in\mathcal{C}$ (note that $\tilde{\xi}(x)=0$ for any $x\in [0,R_1]$, and $\tilde{\xi}(x)\in [0,R_1^{3\slash 2}],\tilde{\xi}'(x)\in[-2\sqrt{R}_1,0]$ for any $x\in [-R_1,R_1]$),
\begin{eqnarray*}
  && \int_{[-R_1,0]}\tilde{\xi}(x)d\mu_{n,k}(x)=\frac{1}{k}\sum_{i=1}^L\tilde{\xi}(x_i)\leq  \frac{1}{k}\sum_{i=1}^L\tilde{\xi}(c_i)+C_R n^{-1\slash 30}\nonumber\\
  &\leq& \sum_{i\in [L]:c_i\leq 0}\int_{(c_{i-1},c_i]}\tilde{\xi}(x)d(\mu+\nu_0)(x)+C_R k^{-1}\nonumber\\
  &\leq& \int_{[-R_1,0]}\tilde{\xi}(x)d(\mu+\nu_0)(x)+C_R k^{-1}=\int_{[-R_1,0]}\tilde{\xi}(x)d\mu(x)+C_R k^{-1}.
\end{eqnarray*}

We denote 
\begin{equation*}
\Gamma:=J(\mu)+2\int_{[-R_1,0]}\tilde{\xi}(x)d\mu(x)+CM^2 R_1^{-1\slash 2} + C_R k^{-1\slash 25},
\end{equation*}
\begin{equation*}
\tilde{\Gamma}:=J(\mu)+\int_{[-R_1,0]} \frac{4}{3}|x|^{3\slash 2}d\mu(x)+CM^2 R_1^{-1\slash 2}.
\end{equation*}
As $\mathcal{C}\subseteq\tilde{\mathcal{A}}$, combining (\ref{EEEE21})-(\ref{EEEE22}), we obtain that
\begin{eqnarray*}
&&\frac{\mathbb{P}(\mathcal{A}|\mathcal{L})}{\mathbb{P}(d_R(\mu_{n,k},0)\leq C_0 R k^{-1\slash 4}|\mathcal{L})}\nonumber\\
&\geq& \frac{\exp(-k^2\Gamma)}{(2nR_1)^K}\int_{0<t_1<\cdots<t_L\leq (2n)^{-1}}\prod_{i=2}^L|t_i-t_{i-1}|dt_1\cdots dt_L\\
&\geq& \exp(-k^2\Gamma)(2nR_1)^{-C_Rk}(C_R nk)^{-C_R k}.
\end{eqnarray*}
Thus we can take $T_0=\exp(-k^2\Gamma)(2nR_1)^{-C_Rk}(C_R nk)^{-C_R k}$ in (\ref{B}), and obtain that $\mathbb{P}(\mathcal{A})\geq\exp(-k^2\Gamma) (2n R_1)^{-C_R k} (C_R nk)^{-C_R k}\slash 2$. Hence we have 
\begin{equation*}
\liminf_{k\rightarrow\infty}\frac{1}{k^2}\log \mathbb{P}(\mathcal{A})\geq -\tilde{\Gamma}.
\end{equation*}
Sending $R\rightarrow\infty$, we obtain that
\begin{equation*}
    \liminf_{k\rightarrow\infty}\frac{1}{k^2}\log \mathbb{P}(\mathcal{A})
\geq -I_{\infty,\infty}(\mu).
\end{equation*}
Following a similar argument as in the proof of Theorem \ref{Main2}, we obtain that for any $\eta\geq 15$ and sufficiently large $k$,
\begin{equation*}
    \mathbb{P}(d_{R_0}(\nu_{k;R_0},\mu|_{[-R_0,R_0]})\leq \delta)\geq \mathbb{P}(\mathcal{A})-\mathbb{P}(\mathcal{B}\cap\{d_{R_0}(\mu_{n,k}|_{[-R_0,R_0]},\nu_{k;R_0})\geq\delta\slash 2\}),
\end{equation*}
\begin{equation*}
    \mathbb{P}(\mathcal{B}\cap\{d_{R_0}(\mu_{n,k}|_{[-R_0,R_0]},\nu_{k;R_0})\geq\delta\slash 2\})\leq C\exp(-\eta k^2).
\end{equation*}
Thus by sending $\eta\rightarrow \infty$, we obtain that
\begin{equation*}
\liminf_{\delta\rightarrow 0^{+}}\liminf_{k\rightarrow\infty}\frac{1}{k^2}\log \mathbb{P}(d_{R_0}(\nu_{k;R_0},\mu|_{[-R_0,R_0]})\leq \delta)\\
\geq -I_{\infty,\infty}(\mu).
\end{equation*}

Now for any $\mu\in\mathcal{X}$ and any $\tilde{\mu}\in\mathcal{Z}$ such that $\tilde{\mu}(\mathbb{R})=0$, $\tilde{\mu}|_{[-R_0,R_0]}=\mu$, we have $\mathbb{P}(d_{R_0}(\nu_{k;R_0},\mu)\leq\delta)=\mathbb{P}(d_{R_0}(\nu_{k;R_0},\tilde{\mu}|_{[-R_0,R_0]})\leq \delta)$. Hence by taking a supremum over $\tilde{\mu}$, we have
\begin{equation*}
\liminf_{\delta\rightarrow 0^{+}}\liminf_{k\rightarrow\infty}\frac{1}{k^2}\log{\mathbb{P}(d_{R_0}(\nu_{k;R_0},\mu)\leq \delta)}\geq -I_1(\mu),
\end{equation*}
which implies the conclusion of Theorem \ref{Main1}, part (a).  
\end{proof}

\section*{Acknowledgments}
The author is grateful to Amir Dembo for suggesting the problem, encouragement, and many helpful conversations. The author thanks Ivan Corwin for proposing the problem and helpful conversations, and thanks Li-Cheng Tsai for helpful conversations. 

\bibliographystyle{acm}
\bibliography{Airy.bib}

\end{document}